\documentclass[11pt]{amsart}
\usepackage[utf8]{inputenc}

\title{An explicit presentation for asymptotically rigid mapping class groups}
\author{Sergio Domingo-Zubiaga}
\date{March 2026}

\usepackage{enumitem}
\usepackage{mathrsfs}
\usepackage[export]{adjustbox}
\usepackage{array}
\usepackage[T1]{fontenc}
\usepackage[utf8]{inputenc}
\usepackage{amsfonts, amssymb, amsmath, amsthm}
\usepackage{graphicx}
\usepackage{parskip}
\usepackage{color}
\usepackage{float}
\usepackage{dirtytalk}
\usepackage{mathtools}
\usepackage{tikz-cd}
\usepackage{thm-restate}
\usepackage{ amssymb }
\usepackage{hyperref}
\usepackage{xcolor}
\usepackage{mathtools}
\usepackage{subcaption}
\usepackage{caption}

\begin{document}

\makeatletter
\renewcommand\subsubsection{\@startsection{subsubsection}{3}%
  {\z@}%
  {-3.25ex\@plus -1ex \@minus -.2ex}%
  {-0.5em}%
  {\normalfont\bfseries}}%
\makeatother

\newtheorem{theor}{Theorem}[section]
\newtheorem{lem}[theor]{Lemma}
\newtheorem{cla}[theor]{Claim}
\newtheorem{sublem}[theor]{Sublemma}
\newtheorem{cor}[theor]{Corollary}
\newtheorem{prop}[theor]{Proposition}

\newtheorem{theorem}{Theorem}
\renewcommand*{\thetheorem}{\Roman{theorem}}

\renewcommand{\thesubfigure}{\Alph{subfigure}}

\theoremstyle{remark}
\newtheorem{ex}[theor]{Example}
\newtheorem{rem}[theor]{Remark}
\theoremstyle{definition}
\newtheorem{defi}[theor]{Definition}

\def\Cent{\textnormal{Cent}}
\def\g{\textnormal{genus}}
\def\Map{\textnormal{Map}}
\def\In{\textnormal{Cap}}
\def\QA{\textnormal{QAut}}
\def\Aut{\textnormal{Aut}}
\def\So{\textnormal{Sym}_o}
\def\AI{\textnormal{AIsom}}
\def\capp{\textnormal{cap}}
\def\push{\textnormal{push}}
\def\forget{\textnormal{forget}}
\def\twist{\textnormal{twist}}
\def\Homeo{\textnormal{Homeo}^+}
\def\X{\chi}
\def\Xc{\mathfrak{X}}
\def\R{\mathfrak{R}}
\def\P{\mathcal P}
\def\Pc{\mathcal P_d}
\def\B{\mathcal B}
\def\C{\mathcal C}
\def\G{\mathcal G}
\def\T{\mathcal T}
\def\D{\mathcal D}
\def\K{\mathcal K}
\def\S{\mathcal S}
\def\v{\mathbf v}

\def\dl{\textnormal{lk}^{\downarrow}}

\def\b{\beta}
\def\a{\alpha}
\def\c{\gamma}
\def\t{\theta}
\newcommand{\nt}[1]{\textcolor{blue}{#1}}
\newcommand{\ch}[1]{\textcolor{violet}{#1}}

\def\BB{\mathbb B}
\def\OD{\textnormal{OD}_d}
\def\RR{\mathbb R}
\def\ZZ{\mathbb Z}
\def\SS{\mathbb S}
\def\NN{\mathbb N}
\def\DD{\mathbb D}
\def\H{\mathcal{H}}

\def\HTB{\textnormal{HT}\BB}
\def\HTD{\textnormal{HTD}_d}
\def\DTD{\textnormal{DT}\D}
\def\TCD{\textnormal{TC}\D}
\def\CD{\textnormal{C}\D}
\def\TH{\textnormal{TH}}
\def\Hc{\textnormal{H}}
\def\Sym{\textnormal{Sym}}
\def\id{\textnormal{id}}
\def\int{\textnormal{int}}
\def\Lk{\textnormal{lk}}
\def\Star{\textnormal{star}}
\def\Image{\textnormal{Im}}
\def\Stab{\textnormal{Stab}}

\newcommand{\rom}[1]{\uppercase\expandafter{\romannumeral #1\relax}}

\begin{abstract}
We show that several families of asymptotically rigid mapping class groups arise as explicit quotients of the fundamental group of a graph of groups, with mapping class groups as vertex and edge stabilizers. Using this description, and building on the work of Labruère and Paris \cite{Labruere-Paris}, we compute explicit presentations for asymptotically rigid mapping class groups of surfaces.
\end{abstract}

\maketitle

In recent years, asymptotically rigid mapping class groups of manifolds have been in the spotlight for a variety of reasons (see \cite{Fun-Kap1, Funar, Fun-Kap2, BrThom}). First, they sometimes appear as extensions of Higman-Thompson groups by direct limits of mapping class groups of compact manifolds. In certain cases, this implies that their homology coincides with the stable homology of the underlying manifold (see \cite[Section 10]{Asymp24} and \cite[Section 5]{me}). Additionally, in the context of surfaces, these groups form countable subgroups of the so-called big mapping class groups, an active area of research at the intersection of geometric group theory, low-dimensional topology, topological groups, etc.

One of the main sources of motivation for the study of asymptotically rigid mapping class groups stems from the investigation of their finiteness properties (see \cite{Asymp24, SurfHoughton, Ara-Fun, me, Fun-Kap2, BrThom, hill2025asymptoticallyrigidmappingclass}). In particular, these properties are often closely related to the space of ends of the underlying manifold. For instance, for the families of groups considered in \cite{Asymp24, SurfHoughton, me, hill2025asymptoticallyrigidmappingclass}, the associated asymptotically rigid mapping class group is finitely presented whenever the space of ends contains at least three points. These results are non-constructive, which naturally raises the question of how to obtain explicit finite presentations for this class of groups.

For surfaces of genus zero, this had previously been settled by the work of Funar and Kapoudjian \cite{Fun-Kap1}. More recently, Genevois, Lonjou and Urech \cite[Theorem 3.20]{genevoisIII} gave a finite presentation for the asymptotically rigid mapping class group of the planar surfaces with punctures defined in \cite{BrThom}; and Hill, Kwan, Udall, and West \cite{hill2025asymptoticallyrigidmappingclass} provided a finite presentation for the \textit{pure graph Houghton group}, a finite-index subgroup of the asymptotically rigid mapping class group of the graph. 

\subsection{Statement of results.}\label{MainResults}

Our first result shows that several families of asymptotically rigid mapping class groups are an explicit quotient of the fundamental group of a graph of groups, with mapping class groups as vertex and edge stabilizers. Before stating our result, a graph is a \textit{snake} if it is isomorphic to the graph in Figure \ref{Snake}, for any finite number of vertices. The precise statement of Theorem \ref{Main1} is somewhat involved; we offer here only an abridged version, postponing the details until Section \ref{Sect1}.

\begin{theorem}\label{Main1}
Let $\B$ be a group in one of the following families:
\begin{enumerate}
    \item asymptotic mapping class groups of Cantor manifolds from \cite[Theorem 1.7, Theorem 1.8, Theorem 1.11]{Asymp24},
    \item surface Houghton groups $\B_r$ from \cite{SurfHoughton}, for $r\geq 3$,
    \item graph Houghton groups $B(g, h, r)$ with $r\geq 3$, and asymptotically rigid mapping class groups $B_{d,r}(1, 1)$ with $d\geq 2$ from \cite{hill2025asymptoticallyrigidmappingclass},
    \item asymptotically rigid handlebody groups $\H_{d,r}(O,Y)$ from \cite{me}, for either $d\geq 2$ of $r\geq 3$.
\end{enumerate}
Then $\B$ is isomorphic to an explicit quotient of the fundamental group of a graph of groups $\G_\B$ over a snake graph, where vertex groups are explicit finite extensions of mapping class groups of compact manifolds, and the extra relations correspond to identifying mapping class groups of common submanifolds.

\end{theorem}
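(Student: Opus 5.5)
The plan is to use the Brown-type machinery already present in the finiteness proofs of \cite{Asymp24, SurfHoughton, me, hill2025asymptoticallyrigidmappingclass}. In each case there is a contractible cube (or simplicial) complex $\mathfrak{X}$ on which $\B$ acts cellularly. Its vertices are isotopy classes of \emph{suited} compact submanifolds $Z$, meaning unions of the compact core with finitely many pieces. Stabilizers of vertices are finite extensions of $\Map(Z)$, the finite part coming from permutations of the boundary pieces. There is also a height (Morse) function given by the number of pieces. I would proceed in the following order.

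\textbf{Step 1 (reduction to a sublevel set).} Restrict to the sublevel complex $\mathfrak{X}_{\le n}$ for $n$ large. The descending-link analysis in those papers shows that the descending links are simply connected above a threshold $n_0$. The hypotheses $r\geq 3$ or $d\geq 2$ are exactly what make this hold. By Morse theory, $\mathfrak{X}_{\le n_0}$ is then simply connected.

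\textbf{Step 2 (fundamental domain).} Observe that $\B$ acts transitively on vertices of each fixed height $k$, since any two suited submanifolds with the same number of pieces are related by an asymptotically rigid homeomorphism. Consequently $\mathfrak{X}_{\le n_0}/\B$ has one vertex per height $k\le n_0$. Edges of the quotient join only consecutive heights, because adding one piece raises the height by one. After choosing a nested chain $Z_1\subset Z_2\subset\cdots$ of representatives, the quotient graph of the $1$-skeleton is a snake. The vertex groups are $\Stab(Z_k)$. The edge groups are $\Stab(Z_k)\cap\Stab(Z_{k+1})$, which is again a finite extension of a mapping class group, of $Z_k$ with a marked piece.

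\textbf{Step 3 (Bass--Serre plus $2$-cells).} Apply the presentation theorem for groups acting on simply connected complexes (Brown's theorem or Haefliger's complexes of groups). This gives that $\B$ is the fundamental group of the graph of groups $\G_\B$ on the snake, modulo one relation for each orbit of $2$-cells. The $2$-cells are squares $Z\subset Z\cup P\subset Z\cup P\cup P'$ and $Z\subset Z\cup P'\subset Z\cup P\cup P'$. The associated relations say that the copies of $\Stab(Z)$, or of $\Map$ of the common submanifold, sitting inside the two paths of vertex groups are identified. This is precisely the content "identifying mapping class groups of common submanifolds."

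\textbf{Main obstacle.} The hardest step will be making Step 3 explicit and uniform across the four families. The action is not free on cells, and some $2$-cells have nontrivial setwise stabilizers that swap the two pieces. One therefore has to check that these square relations, together with the edge amalgamations, generate all relations. I would first handle this by passing to the barycentric subdivision so that the action is without inversions. I would then list the $\B$-orbits of squares, which correspond to choices of two pieces added to $Z_k$ up to the stabilizer. Finally I would verify that the twisting elements coming from nontrivial $2$-cell stabilizers are already absorbed into the finite extensions in the vertex groups.
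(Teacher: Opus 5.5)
Your Step~1 and the transitivity argument match the paper, but the conclusion of Step~2 is wrong, and it is the crux. In the cube complex every edge joins heights $h$ and $h+1$, so the quotient of its 1-skeleton is a path, not a snake. The snake of Figure~\ref{Snake} also has edges joining $\v_h$ to $\v_{h+2}$. Moreover, no subcomplex of the cube complex can be a strict fundamental domain with one vertex per height, because a square $[Z,Z\cup P\cup P']$ has two vertices of height $h+1$ in the same orbit.

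The idea you are missing is the paper's complex $\K$. Take the 2-skeleton of $\Xc^{h_0}$ and cut each square along the diagonal from its lowest to its highest vertex. An element $\theta\in\Stab(Z)$ exchanging $P$ and $P'$ fixes that diagonal and swaps the two triangles. So $\K$ has a strict fundamental domain $W_{\K}$ with one vertex per height, edges $\{\v_h,\v_{h+1}\}$ and $\{\v_h,\v_{h+2}\}$, and triangles $\{\v_h,\v_{h+1},\v_{h+2}\}$. This is a snake, or a tongued snake when $r=1$, in which case the extra vertex $\v_0$ is discarded. Brown's strict-fundamental-domain theorem (Theorem~\ref{teoBrown}) then gives $\B$ as $\pi_1(\G_{\B})$ modulo $y_e=1$, with no separate 2-cell relations. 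The square relations become identifications of the diagonal edge groups $\Map_o^{\{A_1,A_2\}}(Z)$ (Lemma~\ref{typesstab}), which contain $\theta$.

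Your ``main obstacle'' is therefore misdiagnosed. Passing to the barycentric subdivision adds barycenters of edges and squares to the fundamental domain, so the resulting graph is no longer a snake. Nor are the swapping elements absorbed into the vertex groups. The relation identifying $\theta\in V_h$ with $\theta\in V_{h+2}$ does not follow from the amalgam along the path, because $\theta\notin E_{h,h+1}$. It is exactly the essential relation $r_{i,i+2}$ computed in Subsection~\ref{4.3}.

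Likewise, the square relation does not identify copies of $\Stab(Z)=V_h$, which is not a subgroup of $V_{h+2}$. It identifies the square stabilizer $\Map_o^{\{A_1,A_2\}}(Z)$, and, modulo what is already identified through $V_{h+1}$, only $\theta$.

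Your route can be repaired. Brown's general theorem applied to the cube complex gives the path amalgam modulo one relation per height identifying a swap $\theta_h$ in $V_h$ and $V_{h+2}$. Encoding each such relation as a diagonal edge with group $\Map_o^{\{A_1,A_2\}}(Z_h)$ recovers the snake. As written, however, the proposal conflates the path with the snake and never produces the diagonal edges or their edge groups.
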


\begin{figure}[H]
\begin{center}
\begin{tikzpicture}
  \filldraw (0,0) circle (2pt) ;
  \filldraw (1,0.5) circle (2pt) ;
  \filldraw (0,1) circle (2pt) ;
  \filldraw (1,1.5) circle (2pt);

  \filldraw (0,2) circle (2pt);
  \filldraw (1,2.5) circle (2pt);
  \filldraw (0,3) circle (2pt) ;
  \filldraw (1,3.5) circle (2pt) ;

  \draw (0,0) --  (1,0.5);
  \draw (0,0) --  (0,1);
  \draw[dashed] (1,0.5) --  (1,1);
  \draw (1,0.5) --  (0,1);
  \draw (1,1.5) --  (0,1);
  \draw (1,1.5) --  (1,0.5);
  \draw[dashed] (0,1) --  (0,1.5);
  \draw[dashed] (1,1.5) --  (0.7,1.65);
  \draw[dashed] (0,2) --  (0.3,1.85);
  
  \draw (1,2.5) --  (1,3.5);
  \draw (1,2.5) --  (0,3);
  \draw (0,3) --  (1,3.5);
  \draw (0,2) --  (0,3);
  \draw (0,2) --  (1,2.5);
  \draw[dashed] (1,2) --  (1,2.5);

\end{tikzpicture}

\end{center}
\caption{A snake graph.}
\label{Snake}
\end{figure}
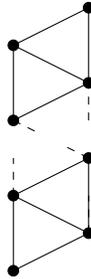

Theorem \ref{Main1} is proven in Section \ref{Sect1}, where we also describe vertex and edge groups.

In Section \ref{Sect2}, starting from the presentation for the mapping class group of a finite type surface of Labruère and Paris \cite{Labruere-Paris}, we determine finite presentations for the vertex groups and finite generating sets for the edge groups of $\G_\B$ when $\B$ is the asymptotically rigid mapping class group of a surface, which we use to compute explicit finite presentations for $\B$ in the surface case (see Sections \ref{Sect4} and \ref{Sect5}). Here, a \textit{boundary swap} is the boundary swapping analog of a half twist in a sense that will be made clear in Subsection \ref{Gens-bpmcg}.

\begin{theorem}\label{Main2}
 Let $O$ and $Y$ be homeomorphic to $\SS^1\times\SS^1$. 
 \begin{enumerate}[label=\alph*)]
     \item The surface Houghton group $\B_{1,r}(O, Y)$ with $r\geq 3$ of \cite{SurfHoughton} has an explicit finite presentation with  $27+4r$ Dehn twists and $4r-4$ boundary swaps as generators, and relations given in Theorem \ref{Presentation1}.
     \item The asymptotically rigid mapping class group $\B_{2,1}(O, Y)$ of \cite{Asymp24} has an explicit finite presentation with  $93$ Dehn twists and $21$ boundary swaps as generators, and relations given in Theorem \ref{Presentation2}.
 \end{enumerate}

\end{theorem}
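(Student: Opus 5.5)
The plan is to derive Theorem \ref{Main2} from Theorem \ref{Main1} by making every ingredient of the graph of groups $\G_\B$ explicit in the surface case with $O$ and $Y$ tori. The standard approach is a Brown-type or Bass--Serre argument. $\B$ acts on a simply connected complex built from the ``suited'' compact subsurfaces (the Stein--Farley type complex used in \cite{Asymp24, SurfHoughton}), and the quotient is the snake graph. By Theorem \ref{Main1}, $\B \cong \pi_1(\G_\B)/\langle\langle R\rangle\rangle$, where $R$ identifies mapping class groups of common subsurfaces. Therefore, to get a finite presentation it suffices to do three things: give finite presentations of the vertex groups, give finite generating sets of the edge groups together with their images under both inclusions, and write the finitely many identification relations in $R$ as words.

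\textbf{Step 1: vertex groups.} Each vertex group is a finite extension of the mapping class group of a compact surface $Z$ (a union of copies of $O$ and $Y$ with boundary). The extension is by the finite group permuting the boundary components compatibly with the rigid structure. For the rigid part, I would start from the Labru\`ere--Paris presentation \cite{Labruere-Paris} of $\Map(Z)$, whose generators are Dehn twists on a standard chain configuration. Then I would pass to the extension in which boundary components may be permuted. The group permuting the boundary is generated by the boundary swaps of Subsection \ref{Gens-bpmcg}. The extension is presented using the standard lemma for group extensions $1\to \Map(Z)\to \Gamma\to \Sym\to 1$: the generators are the twists together with lifts of the Coxeter generators of $\Sym$ (the boundary swaps), and the relations are
\begin{itemize}
\item the Labru\`ere--Paris relations;
\item the braid-type relations among swaps, lifted, with the correction terms expressed in twists (for instance, the square of a swap equals a product of boundary twists);
\item the conjugation action of swaps on the twists.
\end{itemize}
Only the vertices of the snake up to the point where the pattern stabilizes are needed. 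For $\B_{1,r}$ these involve $r$ boundary components, which explains the linear dependence $27+4r$ and $4r-4$.

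\textbf{Step 2: edge groups and identifications.} Each edge group is the mapping class group of the smaller suited subsurface, which is again a finite extension. The Labru\`ere--Paris generators give finite generating sets for these groups. I would choose the curve configurations in adjacent vertices compatibly, so that the edge inclusions send generators to generators, or to short explicit words. With compatible choices, the amalgamation relations become identifications of letters, and the relations in $R$ (common subsurfaces shared by non-adjacent vertices in a triangle of the snake) become short commutation or identification relations. Finally I would use Tietze transformations to remove the redundant generators, and count to reach $27+4r$ twists and $4r-4$ swaps for (a), and $93$ twists and $21$ swaps for (b).

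\textbf{Main obstacle.} The hardest step is Step 1's correction terms together with the compatibility in Step 2. Each lifted swap relation must be expressed exactly as a word in the chosen twists, with correct orientations, and the curve systems of different vertices must agree on overlaps so that the edge maps are literal inclusions of generators. I would first try to fix one global model of the surface with a single ``standard'' curve system, and read every vertex's Labru\`ere--Paris chain off it by restriction. This should make the inclusions tautological and leave only the swap relations to compute by hand, using lantern and chain relations.
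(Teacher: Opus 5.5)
Your overall framework matches the paper's: Brown's theorem over the snake, then presentations of vertex groups, generators of edge groups, and identifications. But the proposal misses the one genuinely new relation, the one coming from the edges $\{\v_i,\v_{i+2}\}$.

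\textbf{The height-two edges.} In the snake the three vertices of every triangle are pairwise adjacent. After imposing $y_e=1$, every edge of $W_\K$, including the height-two ones, contributes identifications of its stabilizer (Lemma \ref{presentationB}). Your description of the edge groups is also off. They are not ``the mapping class group of the smaller suited subsurface'': by Lemma \ref{typesstab} they are $\Map_o^{A_1}(\Sigma)$ for height-one edges and $\Map_o^{\{A_1,A_2\}}(\Sigma)$ for height-two edges.

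The part of $\Map_o^{\{A_1,A_2\}}(S_{\v_i})$ fixing $A_1$ and $A_2$ individually is already accounted for by the two height-one edges. However, this group also contains elements exchanging the two added pieces, and these lie in neither height-one edge group. Identifying such an element, written in $X_i$, with its counterpart written in $X_{i+2}$ gives exactly (T1), (T2) of Theorem \ref{Presentation1} and (Ti) of Theorem \ref{Presentation2}.

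These are far from short identifications. The paper:
\begin{itemize}
\item realizes the exchange by the $180^\circ$ rotation $\Delta(d_1,\dots,d_{2g+1})^{-1}\Delta(h_1,\dots,h_{n-1})$ of the capped surface (with $\Delta(d_1,d_2)^{-2}$ in genus one);
\item lifts it using boundary swaps;
\item determines the boundary-twist correction by comparing squares via Lemmas \ref{PVRepresentations} and \ref{PVRepresentations2}, giving $\phi_0u_{0,1}=\phi_2u_{2,1}$;
\item uses the freedom in the preferred rigid structure to make the two lifts coincide.
\end{itemize}
You predict only commutation or identification relations there, so your plan has no mechanism that produces these relations, and the presentation it outputs would lack them.

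\textbf{The presentation actually asserted.} Even granting the above, your route would not arrive at the presentation in the statement. The counts $27+4r$, $4r-4$, $93$ and $21$ are exactly $|\bigcup_i X_i|$ over $\v_0,\dots,\v_3$ (resp.\ $\v_1,\dots,\v_6$), with no letters eliminated. The number of vertices is fixed by the truncation heights $h_0=3$ and $h_0=6$. These come from simple connectivity of $\Xc^{3}_{1,r}(O,Y)$ and $\Xc^{6}_{2,1}(O,Y)$ (Corollaries \ref{connected0} and \ref{connected}, via descending-link connectivity), not from where a ``pattern stabilizes''. A Tietze pass removing identified letters would lower the count rather than reach it.

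Likewise, the vertex relations in Theorems \ref{Presentation1} and \ref{Presentation2} are those of Theorem \ref{ThMain}. The paper obtains them from the extension $1\rightarrow U_S\rightarrow\Map_o(S)\overset{\In}{\rightarrow}\Map(S')\rightarrow 1$. Its kernel is free abelian on the boundary twists, so the conjugation relations reduce to (Ci) and (Di). The real work is lifting the Labru\`ere--Paris relations of the punctured surface $S'$ after replacing half twists by boundary swaps (Lemmas \ref{lemswap1}, \ref{lemswap2}, \ref{PVRepresentations2}), which produces corrections such as the $u_1$ in (R4).

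Your extension $1\rightarrow\Map(Z)\rightarrow\Map_o(Z)\rightarrow\Sym_n\rightarrow 1$ is legitimate. However, it yields a different generating set, and it forces you to write $b_ixb_i^{-1}$ as a word for every twist generator $x$ of the pure group. It would therefore not reproduce the stated relations.

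Finally, your sample correction term is wrong: $b_i^2$ is not a product of boundary twists. By Lemma \ref{PVRepresentations2}(2) with $l=1$, it equals $a_1u_i^{-1}u_{i+1}^{-1}$, where $a_1$ is the twist about a curve enclosing both boundary components.
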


As will become apparent, the method of Sections \ref{Sect4} and \ref{Sect5} may be used to compute an explicit presentation of $\B_{d,r}(O,Y)$ for any surface $O$, a torus $Y$, and any $d$ and $r$ such that either $d \geq 2$ or $r \geq 3$, although the technical details of the calculations vary in each particular case.

Furthermore, the techniques in this paper can in principle be used to compute explicit presentations for any group $\B$ in Theorem \ref{Main1}. The procedure is as follows: given presentations for the\textit{ boundary permuting mapping class group} of the underlying \textit{suited} manifolds (resp. graphs), one first obtains finite presentations for the vertex groups and finite generating sets for the edge groups of the associated graph of groups $\G_\B$. An explicit presentation for $\B$ is then obtained by reasoning as in Sections \ref{Sect4} and \ref{Sect5}.

\textbf{Acknowledgments.} The author is indebted to Javier Aramayona for proposing the research questions, providing valuable insights into asymptotically rigid mapping class groups, and offering guidance on various aspects of this work. The author acknowledges financial support from the grants CEX2019-000904-S, PGC2018-101179-B-I00, and PID2024-155800NB-C31, all funded by MCIN/AEI.

\section{Preliminaries}\label{Sect0}

Mapping class groups are defined differently depending on the underlying space. For an orientable manifold $M$, $\Map(M)$ is the group of orientation-preserving diffeomorphisms of $M$ fixing $\partial M$, up to isotopy; for a locally finite graph $\Gamma$, $\Map(\Gamma)$ consists of homotopy equivalences of $\Gamma$ modulo homotopy; and for a handlebody $H$, $\Map(H)$ is the group of homeomorphisms of $\partial H$ that extend to $H$, up to isotopy.

Throughout this paper, we assume the reader is familiar with the standard literature on mapping class groups, and refer to \cite{Farb-Margalit} for further background. For the sake of consistency and clarity, throughout this section we define the relevant objects in the surface case. The constructions for the other cases are analogous, and we refer the reader to \cite{Asymp24,SurfHoughton,me,hill2025asymptoticallyrigidmappingclass} for the precise definitions. Nevertheless, when there are key differences we will say so explicitly.

Henceforth, all surfaces are assumed to be connected and orientable, and for such a surface $\Sigma$ we define $$\Map(\Sigma) = \Homeo(\Sigma,\partial \Sigma)/\Homeo_0(\Sigma,\partial \Sigma),$$ where $\Homeo(\Sigma,\partial \Sigma)$ is the group of orientation-preserving homeomorphisms fixing the boundary pointwise, and $\Homeo_0(\Sigma,\partial \Sigma)$ is the group of isotopies fixing the boundary pointwise.

\subsection{Boundary permuting mapping class group}\label{mapo}

Given a surface $\Sigma$ with $b$ boundary components $\{A_1,...,A_b\}$, fix a set of parametrizations for the $A_i$, i.e. a set of orientation-preserving homeomorphisms $\{\mu_i:\SS^1\rightarrow A_i\}_{0\leq i\leq b}$. The \textit{boundary permuting mapping class group} $\Map_{\{\mu_i\}}(\Sigma)$ is the group of isotopy classes (relative to $\partial \Sigma$) of homeomorphisms of $\Sigma$ that respect the $\mu_i$, meaning that for any $\phi\in\Map_{\{\mu_i\}}(\Sigma)$, there exists a permutation $\sigma$ of $\{1,...,b\}$ such that $\phi\circ \mu_i=\mu_{\sigma(i)}$ $\forall i$. For any two sets of parametrizations $\{\mu_i:\SS^1\rightarrow A_i\}_{0\leq i\leq b}$ and $\{\nu_i:\SS^1\rightarrow A_i\}_{0\leq i\leq b}$ there is an isomorphism between $\Map_{\{\mu_i\}}(\Sigma)$ and $\Map_{\{\nu_i\}}(\Sigma)$ given by conjugation by a homeomorphism $\psi$ of $\Sigma$ satisfying $\psi(\mu_i(s))=\nu_i(s)$ for all $s\in\SS^1$ and $i\in\{1,...,b\}$. Hence we denote $\Map_o(\Sigma):=\Map_{\{\mu_i\}}(\Sigma)$ to simplify notation. Note that $\Map(\Sigma)$ is a finite index subgroup of $\Map_o(\Sigma)$.

\subsection{Tree surfaces and rigid structures}

Let $d\geq1$ and $r\geq1$ be integers, $O$ and $Y$ compact surfaces without boundary. Remove $r$ open discs from $O$, giving rise to a surface $O^r$ with $r$ boundary components which we denote by $\{A_1,...,A_r\}$. Similarly, remove $d+1$ open discs from $Y$, giving rise to a surface $Y^d$ with $d+1$ boundary components which we denote by $\{B_0,B_1,...,B_d\}$. Fix orientation-reversing homeomorphisms $\mu_i:A_i\rightarrow B_0$ for $i\in[1,...,r]$ and $\lambda_j:B_j\rightarrow B_0$ for $i\in[1,...,d]$. We define a set of surfaces $\{M_i\}_{i\in\NN}$ as follows:

\begin{itemize}
    \item $M_1=O^r$,
    \item $M_2$ is the result of gluing a copy of $Y^d$ to each $\{A_1,...,A_r\}$ using the maps $\mu_i:A_i\rightarrow B_0$.
    \item For $k\geq 3$, $M_k$ is the result of gluing a copy of $Y^d$ to each boundary component in $M_{
    k-1}\setminus M_{k-2}$ using the maps  $\lambda_j:B_j\rightarrow B_0$.
\end{itemize}

\begin{figure}[H]  
\centering
\includegraphics[width=10.5cm]{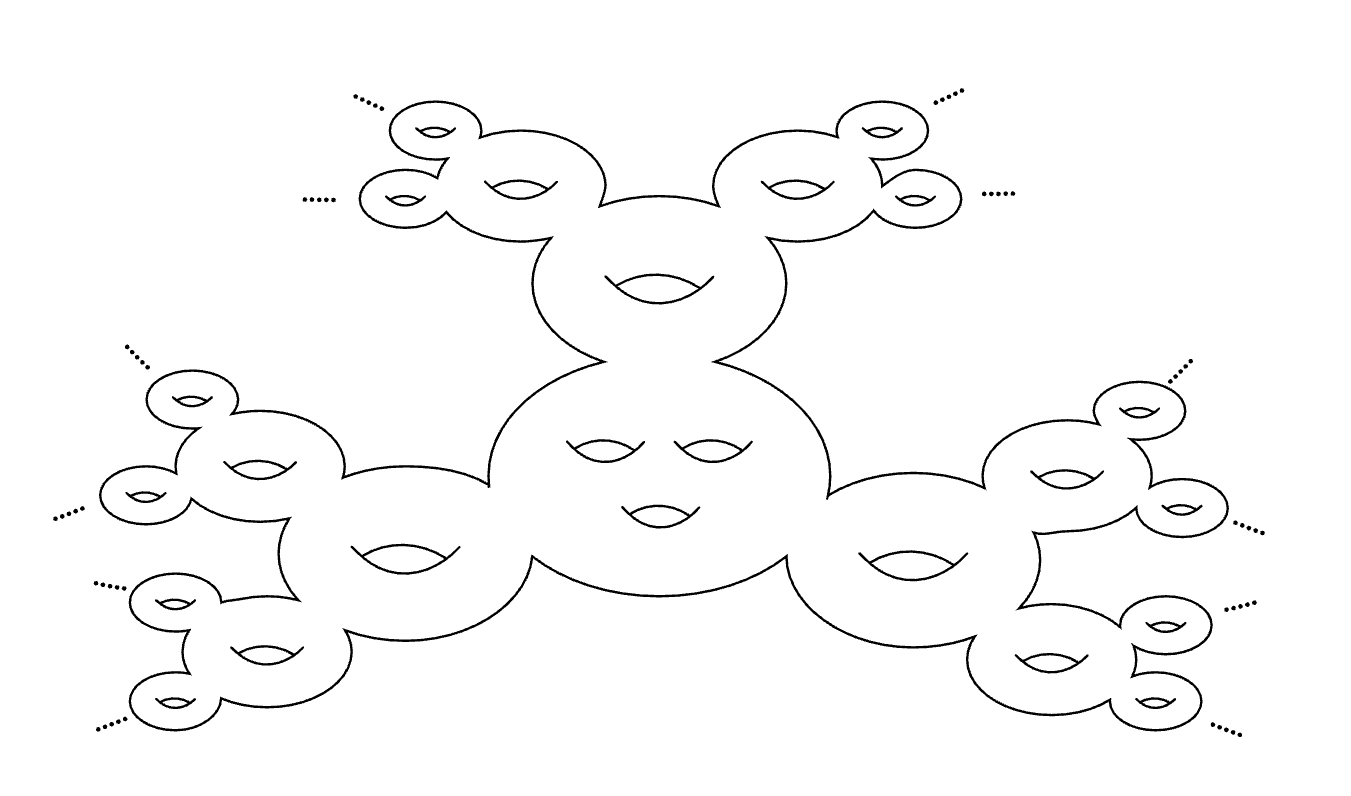}
\caption{The tree surface $\S_{2,1}(O,Y)$, with $O$ of genus 3 and $Y$ of genus 1.}
\label{TreeSurf}
\end{figure}

We define the \textit{tree surface} $\S_{d,r}(O,Y)$ as $\cup_{k=1}^\infty M_k$. Each of the connected components of $\overline{M_k\setminus M_{k-1}}$ for $k\geq 1$ is called a \textit{piece}. By construction, each piece is homeomorphic to $Y^d$. A subsurface $\Sigma\subset\S_{d,r}(O,Y)$ is \textit{suited} if it is connected and is the union of $O^r$ and finitely many pieces. A piece $P$ is \textit{adjacent} to a suited subsurface $\Sigma$ if $P\not\subset \Sigma$ and $P\cap \Sigma\neq \emptyset$. For each piece $P$ of $\S_{d,r}(O,Y)$, choose a homeomorphism $i_P:P\rightarrow Y^d$ that agrees with the $\mu_i$'s and the $\lambda_i$'s used in defining $\S_{d,r}(O,Y)$.  

\begin{rem}\label{Remark1}
When $d\geq2$, the surfaces $\S_{d,r}(O,Y)$ correspond to \textit{Cantor surfaces} as introduced in \cite{Asymp24}, whose end space is a Cantor set. On the other hand, the surfaces $\S_{1,r}(O,Y)$ include the surfaces $\Sigma_r$ in \cite{SurfHoughton}, whose end space consists on $r$ isolated points. In this sense, tree surfaces encompass both constructions.
\end{rem}

\begin{defi}\label{rigstru} (Preferred rigid structure). The set $\{i_P\: : \: P \text{ is a piece}\}$ is called the \textit{preferred rigid structure} on $\S_{d,r}(O,Y)$.
\end{defi}

\subsection{Asymptotically rigid mapping class groups} Let $\S_{d,r}(O,Y)$ be a tree surface equipped with its preferred rigid structure. A homeomorphism $f:\S_{d,r}(O,Y)\rightarrow \S_{d,r}(O,Y)$ is \textit{asymptotically rigid} if there exists a suited subsurface $\Sigma\subset \S_{d,r}(O,Y)$ such that

\begin{enumerate}
    \item $f(\Sigma)$ is suited,
    
    \item $f$ is \textit{rigid outside of $\Sigma$}, that is, for every piece $P$ outside of $\Sigma$, $f(P)$ is also a piece and $i_{f(P)}\circ f_{|P}\circ i^{-1}_P=\id_{Y^d}$.
\end{enumerate}

\begin{defi}\label{def-Asy}(Asymptotically rigid mapping class group)
The \textit{asymptotically rigid mapping class group} $\B_{d,r}(O,Y)$ is the group of asymptotically rigid maps of $\S_{d,r}(O,Y)$, up to isotopy.
\end{defi}

\begin{rem}

The asymptotically rigid mapping class group is analogously defined for every group in Theorem \ref{Main1}, and we still denote it by $\B_{d,r}(O,Y)$, where $O$ and $Y$ are either specific compact manifolds without boundary, finite graphs or compact handlebodies. Although the asymptotically rigid mapping class group of a manifold is generally defined up to \textit{proper isotopy} (see \cite[Subsection 3.3, page 14]{Asymp24}), we use isotopy as both definitions are equivalent in the surface case by \cite[Proposition 3.11]{Asymp24}.
\end{rem}

\subsection{Stein-Farley cube complex}

Consider all ordered pairs $(\Sigma,f)$ where $\Sigma$ is a suited subsurface of $\S_{d,r}(O, Y)$ and $f\in\B_{d,r}(O, Y)$. We deem two such pairs $(\Sigma_1,f_1)$ and $(\Sigma_2,f_2)$ to be equivalent, and write $(\Sigma_1,f_1)\sim (\Sigma_2,f_2)$, if there are representing homeomorphisms (abusing notation) $f_1$ and $f_2$ such that $f_2^{-1}\circ f_1$ maps $\Sigma_1$ onto $\Sigma_2$ and is rigid outside of $\Sigma_1$. We denote by $[(\Sigma,f)]$ the equivalence class of the pair $(\Sigma,f)$ with respect to this relation, and write $\P$ for the set of equivalence classes. Observe that $\B_{d,r}(O, Y)$ acts on $\P$ by left multiplication, namely $g\cdot[(\Sigma,f)]=[(\Sigma,g\circ f)]$.

Consider a pair $(\Sigma,f)$. Since $\Sigma$ is a suited subsurface, it is the union of finitely many pieces and $O^r$. We define the\textit{ height} $h((\Sigma,f))$ of the pair $(\Sigma,f)$ as the number of pieces in $\Sigma$. Note that if $(\Sigma_1,f_1)\sim (\Sigma_2,f_2)$ then $h((\Sigma_1,f_1))=h((\Sigma_2,f_2))$, and thus $h$ descends to a map (abusing notation) $h:\P\rightarrow\NN$, setting the height $h([(\Sigma,f)])$ to be the height of any representative.

We introduce a relation $\preceq$ on the elements of $\P$ by declaring $x_1\preceq x_2$ if and only if $x_1=[\Sigma_1,f]$ and $x_2=[\Sigma_2,f]$ for suited subsurfaces $\Sigma_1\subset \Sigma_2$ such that $\overline{\Sigma_2\setminus \Sigma_1}$ is a disjoint union of pieces.

Define \textit{a closed interval} $[x,y]$ to be the set of elements $z\in\P$ such that $x\preceq z\preceq y$. The relation $\preceq$ can be used to construct a cube complex $\Xc_{d,r}(O, Y)$ with $\P$ as its 0-skeleton and every $[x,y]$ as a $d$-cube with $d=h(y)-h(x)$ (See \cite[Proposition 5.11]{Asymp24} or \cite[Section 4]{SurfHoughton} for details). We will refer to the cube complex $\Xc_{d,r}(O, Y)$ as the \textit{Stein-Farley cube complex} associated to the tree surface $\S_{d,r}(O, Y)$. 

The Stein-Farley cube complex can be analogously defined for any of the asymptotically rigid groups in Theorem \ref{Main1}. The fact that $\Xc_{d,r}(O, Y)$ is a cube complex is not obvious, and we want to remark that it relies on the fact that the corresponding underlying manifolds satisfy the \textit{inclusion} and \textit{intersection} properties. We refer the reader to \cite[Subsection 3.1]{Asymp24} for the precise definitions of these properties.

\subsection{Graphs of groups and fundamental groups}\label{1.5} Let $\Gamma$ be a finite undirected graph with vertex set $V(\Gamma)$ and edge set $E(\Gamma)$. A \textit{graph of groups} $\G$ over a finite graph $\Gamma$ consists of \begin{itemize}
    \item a group $G_v$ for each vertex $v\in V(\Gamma)$,
    \item a group $G_e$ for each directed edge $e=(v_0,v_1)\in E(\Gamma)$, together with a pair of monomorphisms: $$\varphi_{e,0}:G_e\rightarrow G_{v_0}, \quad \varphi_{e,1}:G_e\rightarrow G_{v_1}.$$
\end{itemize} 

Let $T$ be a tree formed by edges in $E(\Gamma)$ containing every vertex in $V(\Gamma)$. Assign a symbol $y_e$ to each $e=(v_0,v_1)\in E(\Gamma)$. The \textit{fundamental group} of $\G$ is the free product $$(\ast_{v \in V(\Gamma)}\; G_v)*(\ast_{e \in E(\Gamma)}\;\langle y_e\rangle)$$ modulo the relations:
\begin{itemize}
\item $y_{\bar{e}} = y^{-1}_e$ for all $e \in E(\Gamma)$, where if $e=(v_i,v_j)$ then $\bar{e}=(v_j,v_i)$,
\item $y_e=1$ for all $e\in T$,
\item $y_e\varphi_{e,0}(x)y_e^{-1}=\varphi_{e,1}(x)$ for all $e\in T$ and every $x\in G_e$.
\end{itemize}

This definition does not depend on the choice of $T$, see \cite[Proposition 20]{Trees}.

\section{Proof of Theorem \ref{Main1}}\label{Sect1}

To prove Theorem \ref{Main1}, we introduce a theorem of Brown which provides a method for expressing a group as the quotient of the fundamental group of a graph of groups, given that the group acts on a simplicial complex under suitable conditions.

\begin{theor}\label{teoBrown}\cite[Theorem $1'$]{BROWN19841}
Let $G$ be a group acting on a simplicial complex $K$, and $W_K$ a subcomplex of $K$ such that:
\begin{enumerate}
    \item $K$ is simply-connected,
    \item Every simplex of $K$ is equivalent modulo $G$ to a unique simplex of $W_K$,
    \item $W_K$ has finite 2-skeleton,
    \item The stabilizer $G_v$ of every vertex $v\in K$ is finitely presented,
    \item The stabilizer $G_e$ of every edge $e\in K$ is finitely generated.
\end{enumerate}

Let $\hat{G}$ be the fundamental group of the graph of groups over the 1-skeleton of $W_K$, with stabilizers of vertices $G_v$ as vertex groups, stabilizers of edges $G_e$ as edge groups, and with the inclusion as the maps $\varphi_{e,0}$ and $\varphi_{e,1}.$ Then $G$ is isomorphic to the quotient of $\hat{G}$ by the relations: $$y_e=1 \textit{ for all }e\in E(W_k).$$ 
\end{theor}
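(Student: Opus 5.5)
The plan is to prove the isomorphism by a developing-complex argument in the style of Serre and Brown. Write $\tilde G$ for the quotient of $\hat G$ by the relations $y_e=1$ for $e\in E(W_K)$. Once the $y_e$ are killed, $\tilde G$ is the colimit of the vertex stabilizers $G_v$, $v\in W_K^{(0)}$, along the inclusions of the edge stabilizers $G_e$, $e\in W_K^{(1)}$. In particular, for each edge $e=(v_0,v_1)$ the two copies of $G_e$ inside $G_{v_0}$ and $G_{v_1}$ are identified. Since all these groups are subgroups of $G$ and the structure maps are inclusions, there is a canonical homomorphism $\Phi:\tilde G\to G$. I will prove that $\Phi$ is an isomorphism. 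Hypotheses (3)--(5) are not needed for the isomorphism itself; they only guarantee that the resulting presentation is finite, which is how the theorem is applied later.

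\emph{Surjectivity.} Let $H=\Phi(\tilde G)$; this is the subgroup generated by the $G_v$ with $v\in W_K$. Condition (2) says that $W_K$ is a strict fundamental domain, and simple connectivity in particular gives that $K$ is connected. Put $K_1=H\cdot W_K$ and $K_2=(G\setminus H)\cdot W_K$. Suppose a simplex lies in $gW_K\cap g'W_K$ with $g\in H$. By uniqueness in (2), $g^{-1}g'$ fixes a simplex of $W_K$, hence fixes a vertex $v$ of it. So $g^{-1}g'\in G_v\subseteq H$, and therefore $g'\in H$. This shows that $K_1$ and $K_2$ are disjoint subcomplexes covering $K$. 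Since $K$ is connected and $K_1\neq\emptyset$, we get $K_2=\emptyset$, so $H=G$.

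\emph{Injectivity.} Build the development $\tilde K=(\tilde G\times W_K)/\sim$, where $(g,x)\sim(gh,x)$ whenever $x$ lies in a simplex $\sigma\subseteq W_K$ and $h$ lies in the image in $\tilde G$ of the stabilizer $G_\sigma$. Here $G_\sigma$ is contained in $G_v$ for every vertex $v$ of $\sigma$, and these images agree in $\tilde G$ thanks to the edge identifications. Then $\tilde G$ acts on $\tilde K$, and $(g,x)\mapsto\Phi(g)x$ defines a $\Phi$-equivariant simplicial map $p:\tilde K\to K$. The same argument as for surjectivity, run inside $\tilde G$, shows that $\tilde K$ is connected.

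The main step, and the one I expect to be the real obstacle, is to show that $p$ is a covering map, or more precisely that it restricts to an isomorphism from the star of $[(1,v)]$ onto the star of $v$ for each vertex $v$. Simplices of $K$ through $v$ are of the form $g\sigma$ with $g\in G_v$ and $\sigma\subseteq W_K$ a simplex containing $v$, again using (2). Since $G_v$ injects into $\tilde G$ as the vertex group, both stars are identified with $G_v\times_{G_\sigma}\{\sigma\}$. The delicate point is to check that two such simplices are identified in $\tilde K$ exactly when they are identified in $K$. This uses $G_{g\sigma}=gG_\sigma g^{-1}$ together with the fact that each $G_\sigma$ already sits inside the relevant edge groups, so that no further relations are needed.

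Once $p$ is a connected covering of the simply connected complex $K$, it is an isomorphism. Now take $k\in\ker\Phi$. Then $k$ acts trivially on $\tilde K\cong K$, so $k$ fixes $[(1,v)]$. The stabilizer of $[(1,v)]$ in $\tilde G$ is the image of $G_v$, on which $\Phi$ is injective; hence $k=1$. Therefore $\Phi$ is an isomorphism $\tilde G\cong G$, which is the statement of Theorem \ref{teoBrown}.
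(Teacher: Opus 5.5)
The paper gives no proof of this statement; it quotes it from Brown. Brown's framework handles general actions on simply connected complexes, where no strict fundamental domain is assumed. That generality costs a tree of representatives, extra edge generators and relations indexed by $2$-cells, and hypotheses (3)--(5) there only make the resulting presentation finite. Your proposal proves the strict-fundamental-domain case directly by the classical development argument (as in Soul\'e, Behr, Serre's \emph{Trees}, Bridson--Haefliger II.12), and it is correct.

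What your route buys is transparency:
\begin{itemize}
\item only (1) and (2) are needed for the isomorphism;
\item the local groups inject into $\tilde G$ for free, because $\Phi\circ\iota_v$ is the inclusion $G_v\hookrightarrow G$;
\item the $2$-simplices of $W_K$ add no relations beyond the edge identifications, since their stabilizers already lie in edge groups and simple connectivity is used only to force the covering $p$ to be trivial.
\end{itemize}
What the citation buys is generality and brevity.

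A few details should be made explicit.
\begin{itemize}
\item \emph{Pointwise stabilizers.} Uniqueness in (2) for $0$-simplices implies that the setwise stabilizer of a simplex fixes it pointwise. This justifies $G_\sigma\subseteq G_v$ and your step from ``fixes a simplex'' to ``fixes a vertex''.
\item \emph{Connectedness of $\tilde K$.} This is not literally the surjectivity argument run in reverse. First note that $W_K$ is connected: a splitting $W_K=A\sqcup B$ would split $K$ into the disjoint subcomplexes $GA$ and $GB$. Then use that $\tilde G$ is generated by the $\iota_v(G_v)$, and that for $h\in\iota_v(G_v)$ the translate $h\cdot[1,W_K]$ meets $[1,W_K]$ at $[1,v]$; induction on word length finishes.
\item \emph{The step you flag as delicate.} It is immediate: $[\iota_v(a),\sigma]=[\iota_v(a'),\sigma]$ if and only if $a^{-1}a'\in G_\sigma$, by injectivity of $\iota_v$. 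The open stars of the vertices in $p^{-1}(v)$ are pairwise disjoint because $p$ is injective on the vertex set of each simplex, so $p$ is indeed a covering.
\item \emph{Graph-of-groups relations.} Your reading imposes the conjugation relation along every edge, not only along $T$ as literally written in Subsection \ref{1.5}. That is the correct reading, and it is the one the paper actually uses in Lemma \ref{presentationB}.
\end{itemize}
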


Whenever a subcomplex $W_K\subset K$ satisfies hypotheses (2) and (3) in the theorem, we say that $W_K$ is a \textit{fundamental domain} of the action of $G$ on $K$. We dedicate the rest of the section to introducing a simplicial complex where each of groups in Theorem \ref{Main1} act satisfying every hypothesis of Theorem \ref{teoBrown}, hence proving Theorem \ref{Main1}. In contrast to the previous section, the definitions and proofs presented are formulated in the general setting, in order to be as precise as possible.

\subsection{A cocompact action}\label{1.1}
Let $\B$ be one of the asymptotically rigid groups in Theorem \ref{Main1}, and $\Xc$ its associated Stein-Farley complex. The complex $\Xc$ serves as a starting point in the search for a complex satisfying every hypothesis in Theorem \ref{teoBrown}. We will modify $\Xc$ to obtain a complex where $\B$ acts cocompactly, this is necessary for the existence of a fundamental domain $W_K$ as it must satisfy hypothesis (3) in Theorem \ref{teoBrown}. 

The \textit{descending link} $\dl(v)$ of $v\in \Xc$ is a simplicial complex with $n$-simplices corresponding to $n+1$-dimensional simplices $\sigma\in \Xc$ with $v\in \sigma$ such that every vertex $w\in\sigma$ satisfies $h(w)\leq h(v)$. Denote by $\Xc^{h_0}$ to the subcomplex of $\Xc$ spanned by vertices of height at most $h_0$.

\begin{rem}
Although the tree surfaces in \cite{SurfHoughton} are of the form $\S_{1,r}(O,Y)$ for $O$ of genus 0, the definition of the Stein-Farley cube complex $\Xc_{1,r}(O,Y)$ and the properties it enjoys do not depend on the genus of $O$, since the inclusion and intersection properties are satisfied independently of $O$. Hence we will freely use the arguments and results of \cite{SurfHoughton}, regardless of the genus of $O$.

\end{rem}

\begin{lem}\label{LemDeslink}
Let $\B$ be one of the asymptotically rigid groups in Theorem \ref{Main1}. There exists $h_0\in\NN$ such that the complex $\Xc^{h_0}$ is simply connected.
\end{lem}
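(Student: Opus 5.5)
I will use Bestvina--Brady discrete Morse theory on the Stein--Farley complex $\Xc$, with the height $h$ as Morse function. First, $\Xc$ is contractible (indeed CAT(0)). This is established in \cite{Asymp24} for Cantor manifolds and in \cite{SurfHoughton}, \cite{me}, \cite{hill2025asymptoticallyrigidmappingclass} for the other families. The contraction comes from $\Xc$ being a directed union of cubes: any finite set of vertices has a common upper bound under $\preceq$, thanks to the inclusion and intersection properties. The height $h$ is affine on cubes and non-constant on edges, since adjacent vertices differ in height by exactly one. So the Morse lemma applies: $\Xc^{h+1}$ is obtained from $\Xc^{h}$ by coning off the descending links $\dl(v)$ of the vertices $v$ of height $h+1$. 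By the Seifert--van Kampen theorem, if $\dl(v)$ is connected and simply connected for every vertex with $h(v)>h_0$, then the inclusion $\Xc^{h_0}\hookrightarrow\Xc^{h}$ is $\pi_1$-surjective and $\pi_1$-injective for every $h\geq h_0$. Since $\Xc=\bigcup_h \Xc^{h}$ is simply connected, it follows that $\Xc^{h_0}$ is simply connected.

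The key step is therefore to show that $\dl(v)$ is simply connected once $h(v)$ is large. This is precisely the input used in the finite presentability proofs of the cited papers, so I will quote it family by family. For a vertex $v=[(\Sigma,f)]$, $\dl(v)$ is identified with a complex of \emph{piece systems}: simplices are collections of pairwise disjoint pieces in $\Sigma$ whose removal leaves a suited subsurface. This complex is homotopy equivalent to a matching-type or arc-type complex on the boundary pattern of $\Sigma$. \cite{Asymp24} proves that it is $(m-1)$-connected, with $m$ growing linearly in $h(v)$, for Cantor manifolds and handlebodies ($d\geq 2$). \cite{SurfHoughton} proves the analogous high connectivity for $d=1$, $r\geq 3$, using a complex modelled on the Houghton group complex. \cite{me} and \cite{hill2025asymptoticallyrigidmappingclass} treat handlebodies and graphs. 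In each case I take $h_0$ so that the connectivity bound is at least $1$ for all heights above $h_0$.

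The main obstacle is making sure the cited connectivity results really deliver simple connectivity (not merely connectivity) of $\dl(v)$ for all sufficiently large heights, uniformly across all four families and in the exact form needed. Some of the cited papers phrase the result via a Morse function on a modified complex, or via an auxiliary simplicial complex rather than the descending link itself. I would first reduce each to a statement about the descending link, through the identification of $\dl(v)$ with a complex of removable pieces. For $d=1$ I would check that the hypothesis $r\geq 3$ is what makes this complex $1$-connected at large height. For $r<3$ with $d=1$ the end space has fewer than three points, and this is exactly where the argument would fail, consistent with the hypotheses of Theorem \ref{Main1}.
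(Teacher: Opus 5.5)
Your proposal is correct and is essentially the paper's argument. The paper invokes \cite[Proposition A.4]{Asymp24} for precisely the Morse-lemma plus van Kampen step you carry out by hand (simply connected descending links above $h_0$ give $\pi_1(\Xc^{h_0})\cong\pi_1(\Xc)$), combines it with contractibility of $\Xc$, and then quotes the eventual simple connectivity of $\dl(v)$ family by family. There are two harmless slips: the common upper bounds in your directedness remark exist for the transitive closure of $\preceq$, not for $\preceq$ itself, which is not transitive; and the paper takes the handlebody input from \cite[Theorem 3.9]{me} rather than from \cite{Asymp24}.
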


\begin{proof}[\sc Proof of Lemma {\rm \ref{LemDeslink}}]

According to \cite[Proposition A.4]{Asymp24}, if there exists $h_0\in \NN$ such that $\dl(v)$ is simply connected for every vertex of height $h(v)> h_0$, then the inclusion $\Xc^{h_0}\hookrightarrow \Xc$ induces an isomorphism $$f:\pi_1(\Xc^{h_0})\rightarrow\pi_1(\Xc).$$ Since the complex $\Xc$ is contractible  (see \cite[Proposition 5.7]{Asymp24}, \cite[Theorem 4.1]{SurfHoughton}, \cite[Theorem 2.4]{me}, \cite[Theorem 5.11]{hill2025asymptoticallyrigidmappingclass}), and hence simply connected, if there exists such $h_0$, the complex $\Xc^{h_0}$ will be simply connected because of the isomorphism $f$.

For each group in Theorem \ref{Main1}, there exists $h_0\in\NN$ such that every $\dl(v)$ with $h(v)\geq h_0$ is simply connected:
\begin{itemize}
\item For asymptotically rigid mapping class group of Cantor manifolds because of \cite[Corollary 6.7, Theorem 7.1, Theorem 8.1, Theorem 9.1]{Asymp24}.
\item For surface Houghton groups $\B_r$ because of \cite[Corollary 4.4, Corollary 5.2]{SurfHoughton}, as long as $r\geq 3$.
\item For Houghton graph groups $B(g,h,r)$ because of \cite[Corollary 5.22]{hill2025asymptoticallyrigidmappingclass}, as long as $r\geq 3$, and for groups $B_{d,r}(1, 1)$ because of \cite[Subsection 7.1]{hill2025asymptoticallyrigidmappingclass}, given that $d\geq2$.

\item For asymptotically rigid handlebody groups $\H_{d,r}(O,Y)$ because of \cite[Theorem 3.9]{me}, as long as either $r\geq 3$ or $d\geq 2$.
\end{itemize}

\end{proof}

In the particular case where $\B$ is the asymptotically rigid mapping class group of a tree surface, we use \cite[Theorem 9.1]{Asymp24} and \cite[Theorem 5.1]{SurfHoughton} to determine that the descending link $\dl(v)$ of a vertex $v=[\Sigma,f]$ with $h(v)\geq r$ is $m$-connected for $$m= \left\lfloor\min\left\{ \frac{g-3}{2}, \frac{|A|+1}{2d-1}-2, |A|-2\right\}\right\rfloor,$$ with $|A|$ the number of boundary components of $\Sigma$, and $g$ its genus. By computing the minimum $h(v)$ with the given equalities, we obtain the following results, which we will use in Sections \ref{Sect4} and \ref{Sect5}.

\begin{cor}\label{connected}
Let $O$ and $Y$ be tori. The complex $\Xc^{6}_{2,1}(O,Y)$ is simply connected.
\end{cor}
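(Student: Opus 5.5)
The plan is to run the argument of Lemma \ref{LemDeslink} with an explicit threshold. By \cite[Proposition A.4]{Asymp24}, it is enough to show that $\dl(v)$ is simply connected for every vertex $v$ of $\Xc_{2,1}(O,Y)$ with $h(v)>6$, that is, $h(v)\geq 7$. Then the inclusion $\Xc^{6}_{2,1}(O,Y)\hookrightarrow\Xc_{2,1}(O,Y)$ induces an isomorphism on $\pi_1$. Since $\Xc_{2,1}(O,Y)$ is contractible by \cite[Proposition 5.7]{Asymp24}, the subcomplex $\Xc^{6}_{2,1}(O,Y)$ is then simply connected. Simple connectivity of $\dl(v)$ means $m\geq 1$ in the connectivity formula quoted just before the statement, which applies because $h(v)\geq r=1$ is automatic here.

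The main step is to express the genus $g$ and the number of boundary components $|A|$ of a suited subsurface $\Sigma$ in terms of its height $h=h(v)$, for $d=2$, $r=1$ and $O,Y$ tori. The base $O^1$ is a torus with one boundary component. Each piece $Y^2$ is a torus with three boundary components, glued to $\Sigma$ along exactly one of them, since suited subsurfaces are connected and built inductively. Adding one piece therefore increases the genus by $1$ and the number of boundary components by $d-1=1$. An induction on the number of pieces gives
\[
g=1+h,\qquad |A|=1+h .
\]
These counts depend only on $h$, so they are the same for every representative of $v$.

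Next I substitute these values into the three terms of the formula, with $d=2$. The genus term $\frac{g-3}{2}\geq 1$ requires $h\geq 4$. The term $\frac{|A|+1}{3}-2\geq 1$ requires $|A|\geq 8$, i.e.\ $h\geq 7$. The term $|A|-2\geq 1$ requires $h\geq 2$. The binding constraint is the middle one, and it gives $m\geq 1$ exactly when $h\geq 7$. This matches the threshold $h_0=6$ in the statement, since $h=6$ gives $\lfloor 7/3-2\rfloor=0$.

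I expect the main obstacle to be bookkeeping rather than anything conceptual. One must confirm that the formula from \cite[Theorem 9.1]{Asymp24} is stated with the conventions used here: that $|A|$ counts all boundary components of $\Sigma$, and that $g$ is the full genus including the contribution of $O$. One must also check that the indexing in \cite[Proposition A.4]{Asymp24} (strict inequality $h(v)>h_0$) lines up with the height filtration $\Xc^{h_0}$. I would verify these against the definitions in \cite{Asymp24} before concluding.
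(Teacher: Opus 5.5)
Your proposal is correct and matches the paper's argument: you substitute $g=|A|=1+h$ into the connectivity bound quoted from \cite[Theorem 9.1]{Asymp24}, observe that the middle term forces $h\geq 7$, and conclude via \cite[Proposition A.4]{Asymp24} and contractibility of $\Xc_{2,1}(O,Y)$. One arithmetic slip: at $h=6$ you have $|A|=7$, so the middle term is $\tfrac{8}{3}-2$, not $\tfrac{7}{3}-2$; its floor is still $0$, and this sharpness check is not needed for the corollary anyway.
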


\begin{cor}\label{connected0}
Let $O$ and $Y$ be tori. For any $r\geq 3$, the complex $\Xc^{3}_{1,r}(O,Y)$ is simply connected.
\end{cor}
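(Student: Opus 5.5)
We apply the criterion from the proof of Lemma \ref{LemDeslink}. By \cite[Proposition A.4]{Asymp24}, the inclusion $\Xc^{3}_{1,r}(O,Y)\hookrightarrow\Xc_{1,r}(O,Y)$ is a $\pi_1$-isomorphism as soon as $\dl(v)$ is simply connected for every vertex $v$ with $h(v)>3$. Since $\Xc_{1,r}(O,Y)$ is contractible by \cite[Theorem 4.1]{SurfHoughton}, this gives the claim. So the whole task is to check $1$-connectivity of descending links at heights $h\geq 4$. We use the connectivity bound recorded before Corollary \ref{connected}, valid for $h(v)\geq r$, namely that $\dl(v)$ is $m$-connected with
$$m=\left\lfloor\min\left\{ \frac{g-3}{2}, \frac{|A|+1}{2d-1}-2, |A|-2\right\}\right\rfloor .$$

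First we compute the invariants of a suited subsurface $\Sigma$ with $h$ pieces when $d=1$ and $O,Y$ are tori. Every piece is a torus with two boundary components, and $O^r$ is a torus with $r$ boundary components. Hence $g=1+h$. Each piece glued onto a boundary component removes one boundary and adds one, so $|A|=r$ for every $h$.

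Now we evaluate the three terms with $d=1$:
\begin{itemize}
\item $(g-3)/2=(h-2)/2$;
\item $(|A|+1)/(2d-1)-2=r-1$;
\item $|A|-2=r-2$.
\end{itemize}
For $m\geq1$ we need $(h-2)/2\geq1$, that is $h\geq4$, together with $r-2\geq1$, which is exactly the hypothesis $r\geq3$. So every vertex of height $h\geq 4$ has a simply connected descending link. This is precisely the range $h>3$ required, and the conclusion follows from the first paragraph.

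The main obstacle is the compatibility between the hypothesis $h(v)\geq r$ of the cited bound and the required range $h(v)\geq 4$. When $r\leq4$ the two ranges match. When $r>4$, vertices with $4\leq h<r$ are not covered by the displayed formula as stated. For these I would go back to \cite[Corollary 5.2, Theorem 5.1]{SurfHoughton}. There, the restriction $h\geq r$ is used only to guarantee that enough pieces are removable, meaning that the descending link is a complex of disjoint ``removable piece'' configurations whose connectivity is governed by genus and number of boundary components. I would check that in the genus-positive setting the same genus bound $(g-3)/2$ controls connectivity for all heights. Failing that, I would use the alternative route of a Morse-theoretic argument with the bound $\min\{(g-3)/2,\,|A|-2\}$ coming directly from \cite[Theorem 9.1]{Asymp24} specialised to $d=1$. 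I also need to confirm that the genus of $\Sigma$ in the cited bound is the genus of the suited subsurface itself, which gives $g=h+1$, rather than the genus of its complement.
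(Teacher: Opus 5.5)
Your main argument is the paper's argument. The paper justifies the corollary by inserting the genus and boundary count of $\Sigma$ into the quoted bound and combining \cite[Proposition A.4]{Asymp24} with contractibility of $\Xc_{1,r}(O,Y)$. Your arithmetic is correct: $g=h+1$, $|A|=r$, and $m\ge 1$ iff $h\ge 4$ and $r\ge 3$, so $h_0=3$. On your last worry, the paper states explicitly that $g$ is the genus of $\Sigma$ itself. Taking $g=h$ instead would give $h_0=4$, which contradicts the statement.

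The point you leave open is a genuine one, and the paper passes over it silently. The bound is only asserted for $h(v)\ge r$. So for $r\ge 5$, the heights $4\le h\le r-1$ are covered neither by your write-up nor by the paper's one-line justification, and your two proposed remedies are intentions rather than arguments.

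Here is a concrete way to close it.
\begin{enumerate}
\item For $d=1$, a $k$-simplex of $\dl([\Sigma,f])$ is a family of $k+1$ pairwise disjoint subsurfaces of $\Sigma$, each homeomorphic to $Y^1$ and containing one component of $\partial\Sigma$, whose complement is homeomorphic to a suited subsurface of height $h-k-1$.
\item The complement of any $k+1$ such subsurfaces is connected, of genus $h-k$, with $r$ boundary components. So the only constraint is $k+1\le h$: the complement must still contain a copy of $O^r$, which has genus $1$.
\item Hence $\dl([\Sigma,f])$ is the $(h-1)$-skeleton of the complex $\mathcal P(\Sigma)$ of all families of pairwise disjoint such subsurfaces. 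Since at most $\min\{r,h+1\}$ of them fit, it coincides with $\mathcal P(\Sigma)$ precisely when $h\ge r$. This is presumably the role of the hypothesis $h(v)\ge r$.
\item You should check that the cited bound is a statement about $\mathcal P(\Sigma)$ for an arbitrary surface, as its dependence on $g$ and $|A|$ alone suggests; see \cite[Theorem 5.1]{SurfHoughton}. If so, $\mathcal P(\Sigma)$ is simply connected once $g=h+1\ge 5$ and $r\ge 3$.
\item Since $h-1\ge 3$, the $(h-1)$-skeleton has the same $\pi_1$ (and is connected).
\end{enumerate}
Thus $\dl(v)$ is simply connected for every $h\ge 4$, independently of $r$, which is exactly what \cite[Proposition A.4]{Asymp24} requires.
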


\subsection{Constructing a fundamental domain}\label{2.2}

Let $\B$ be one of the groups in Theorem \ref{Main1}, and $\Xc^{h_0}$ the corresponding simply connected complex from Lemma \ref{LemDeslink}. In order to construct a simplicial complex and a fundamental domain satisfying the hypotheses of Theorem \ref{teoBrown} for the action of $\B$ on $\Xc^{h_0}$, we modify the complex $\Xc^{h_0}$ as follows: 

Take the $2$-skeleton of $\Xc^{h_0}$, which is a square complex, and subdivide each of its squares into two simplices by connecting the vertices of highest and lowest height by an edge. This gives rise to a simplicial complex $\K$, which is a subcomplex of the Stein simplicial complex mentioned in \cite[Subsection 5.1]{Asymp24}. The Stein simplicial complex is itself a subdivision of the Stein-Farley cube complex. Observe that $\K$ is simply connected since $\Xc^{h_0}$ is. We dedicate the rest of the subsection to finding a fundamental domain $W_{\K}$ for the action of $\B$ on $\K$, i.e. a subcomplex of $\K$ satisfying conditions (2) and (3) in Theorem \ref{teoBrown}.

\begin{rem}
An alternative would be to consider the 2-skeleton of the Stein simplicial complex (up to height $h_0$), rather than the complex $\K$, but the resulting fundamental domain would have more edges. This would translate into a different, non universal family of graphs in Theorem \ref{Main1}, and more relations to consider when computing the presentations in Sections \ref{Sect4} and \ref{Sect5}. To avoid these complications, we work with the complex $\K$ instead.
\end{rem}

As a first step, we have:

\begin{lem}\label{trans}
Let $\B$ be one of the groups in Theorem \ref{Main1}. The action of $\B$ on $\K$ is transitive on $n$-simplices with vertices of the same height.
\end{lem}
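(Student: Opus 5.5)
The plan is to reduce everything to two facts: vertex transitivity within a fixed height, and the fact that the stabilizer of a vertex permutes the pieces adjacent to it arbitrarily. I write the argument in the surface case; the graph and handlebody cases follow verbatim once the corresponding classification statements are substituted.

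\textbf{Step 1: vertices of equal height.} Let $x_1=[\Sigma_1,f_1]$ and $x_2=[\Sigma_2,f_2]$ satisfy $h(x_1)=h(x_2)$. Then $\Sigma_1$ and $\Sigma_2$ are both $O^r$ with the same number of pieces attached. Each piece is a copy of $Y^d$ glued along one boundary circle, so a suited subsurface of height $h$ has genus $\g(O)+h\,\g(Y)$ and $r+h(d-1)$ boundary components. By the classification of surfaces there is therefore a homeomorphism $\varphi:\Sigma_1\to\Sigma_2$. I would choose $\varphi$ so that it carries the boundary parametrizations induced by the rigid structure on $\Sigma_1$ to those on $\Sigma_2$; this is possible because $\Map_o$ allows an arbitrary boundary bijection compatible with the parametrizations. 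The complements $\overline{\S\setminus\Sigma_i}$ are disjoint unions of rooted $d$-ary trees of pieces. Since $\varphi$ respects the parametrizations, it extends to a homeomorphism $\varphi$ of $\S_{d,r}(O,Y)$ that is rigid outside $\Sigma_1$, using $i_{P'}^{-1}\circ i_P$ on each piece. In particular $\varphi\in\B$. Set $g=f_2\varphi f_1^{-1}$. Then $g\cdot x_1=[\Sigma_1,f_2\varphi]$, and this equals $[\Sigma_2,f_2]$ because $f_2^{-1}f_2\varphi=\varphi$ maps $\Sigma_1$ onto $\Sigma_2$ and is rigid outside $\Sigma_1$.

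\textbf{Step 2: the shape of simplices of $\K$.} A simplex of $\K$ lies in some cube $[x,y]$, so all of its vertices are of the form $[\Sigma\cup P_1\cup\dots\cup P_k,f]$ with a common $f$, where $x=[\Sigma,f]$ is its lowest vertex and the $P_i$ are pieces adjacent to $\Sigma$. Concretely:
\begin{itemize}
\item an edge adds either one piece, or two pieces in the case of a diagonal;
\item a triangle is a chain $\Sigma\subset\Sigma\cup P\subset\Sigma\cup P\cup Q$ with $P\neq Q$ adjacent to $\Sigma$.
\end{itemize}
Consequently, two simplices whose vertices have the same heights have the same combinatorial type, namely the same number of added pieces at each level.

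\textbf{Step 3: transitivity on simplices.} By Step 1 I may first translate one simplex so that its lowest vertex agrees with that of the other; call this common lowest vertex $[\Sigma,f]$. Conjugating by $f$, I may assume $f=\id$. It then remains to show that $\Stab([\Sigma,\id])$ acts on ordered tuples of distinct adjacent pieces transitively within each length. Adjacent pieces correspond bijectively to the boundary components of $\Sigma$ other than those coming from $\partial O^r$, which are absent. The elements of $\Map_o(\Sigma)$ extended rigidly, as in Step 1, all lie in this stabilizer. Moreover $\Map_o(\Sigma)$ surjects onto the full symmetric group of $\partial\Sigma$: boundary swaps realize the transpositions. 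Hence any ordered tuple of distinct boundary components can be moved to any other of the same length. This moves an ordered tuple $(P,Q)$ to $(P',Q')$, and therefore moves the whole simplex, since its vertices are determined by the lowest vertex together with the ordered added pieces.

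\textbf{Main obstacle.} The delicate step is Step 1 in the non-surface settings. For graphs and handlebodies one must know that suited submanifolds of equal height are homeomorphic, respectively homotopy equivalent, via a map respecting the boundary or attaching data, and that the boundary-permuting group realizes every permutation. I would quote this from the cited papers, where it is exactly the inclusion property used to build $\Xc$. A second point to check is the case $d=1$: there each piece has only two boundary components, so the boundary count stays equal to $r$. The argument is unchanged in this case, but one should verify that "adjacent pieces" still correspond bijectively to the boundary components of $\Sigma$.
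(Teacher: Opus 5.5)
Your proof is correct and takes the same approach as the paper: suited subsurfaces of equal height are homeomorphic by a map respecting the boundary parametrizations, this map extends rigidly to an element of $\B$, and that gives vertex transitivity. Your Step 3 spells out what the paper compresses into ``the same argument'' for edges and 2-simplices, by letting the rigidly extended $\Map_o(\Sigma)$ act 2-transitively on adjacent pieces. (Two small wording fixes: every boundary component of $\Sigma$, including any uncovered component of $\partial O^r$, bounds exactly one adjacent piece, so the bijection is with all of $\partial\Sigma$; and in Step 3 you translate by $f^{-1}$ rather than conjugate.)
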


\begin{proof}[\sc Proof of Lemma {\rm \ref{trans}}]
Let $[\Sigma_1,\phi_1]$, and $[\Sigma_2, \phi_2]$ be two vertices of the same height. The suited submanifolds $\Sigma_1$ and $\Sigma_2$ have the same number of pieces, hence there exists a homeomorphism sending $\Sigma_1$ to $\Sigma_2$, which can be extended to an asymptotically rigid mapping class $\psi\in \B$. The composition $\phi_2\circ\psi\circ\phi_1^{-1}$ sends $[\Sigma_1,\phi_1]$ to $[\Sigma_2, \phi_2]$, which means that $\B$ acts transitively on vertices $v\in\K$ of the same height. The same argument proves that $\B$ acts transitively on edges between vertices of the same height, and on $2$-simplices with vertices of the same height. 
\end{proof}

We are in a position to find a fundamental domain $W_{\K}$ for the action of $\B$ on $\K$. We say that a simplicial complex $K$ is \textit{flag} if every set of vertices pairwise connected by edges span a simplex. A graph is a \textit{tongued snake} if it is isomorphic to the graph in Figure \ref{tSnake}, for any finite number of vertices. Given a group $\B$ in Theorem \ref{Main1}, let $h_0$ be the bound associated to it in Theorem $\ref{LemDeslink}$

\begin{figure}[h]
\begin{center}
\begin{tikzpicture}

  \filldraw (0,0) circle (2pt) ;
  \filldraw (1,0.5) circle (2pt) ;
  \filldraw (0,1) circle (2pt) ;
  \filldraw (1,1.5) circle (2pt);

  \filldraw (0,2) circle (2pt);
  \filldraw (1,2.5) circle (2pt);
  \filldraw (0,3) circle (2pt) ;
  \filldraw (1,3.5) circle (2pt) ;

  \draw (0,0) --  (1,0.5);
  \draw (0,0) --  (0,1);
  \draw[dashed] (1,0.5) --  (1,1);
  \draw (1,0.5) --  (0,1);
  \draw (1,1.5) --  (0,1);
  \draw (1,1.5) --  (1,0.5);
  \draw[dashed] (0,1) --  (0,1.5);
  \draw[dashed] (1,1.5) --  (0.7,1.65);
  \draw[dashed] (0,2) --  (0.3,1.85);
  
  \draw (1,2.5) --  (1,3.5);
  \draw (1,2.5) --  (0,3);
  \draw (0,3) --  (1,3.5);
  \draw (0,2) --  (0,3);
  \draw (0,2) --  (1,2.5);
  \draw[dashed] (1,2) --  (1,2.5);
  \filldraw (0,-0.75) circle (2pt);

  \draw (0,0) --  (0,-0.75);

\end{tikzpicture}

\end{center}
\caption{A tongued snake.}
\label{tSnake}
\end{figure}
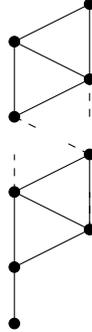

\begin{lem}\label{fudom}
Let $\B$ be one of the groups in Theorem \ref{Main1}. There exists a fundamental domain $W_\K$ for the action of $\B$ on $\K$, isomorphic to the flag complex with either a snake graph or a tongued snake graph as 1-skeleton, and with $h_0+1$ vertices. The fundamental domain $W_\K$ has one vertex of each height $0\leq h\leq h_0$, and the manifolds defining the vertices are obtained by adding two collections of pieces in an alternating fashion to $O^r$ (see Figure \ref{OPQR}).
\end{lem}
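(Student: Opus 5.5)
I will construct $W_\K$ explicitly, check that it is a fundamental domain (conditions (2) and (3) of Theorem \ref{teoBrown}), and then read off its $1$-skeleton.

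\textbf{Choice of vertices.} Let $P_1,P_2,\dots$ be the pieces adjacent to $O^r$ and their successors, enumerated so that every union $O^r\cup P_1\cup\dots\cup P_h$ is suited. Set $\Sigma_h=O^r\cup P_1\cup\dots\cup P_h$ and $v_h=[\Sigma_h,\id]$ for $0\leq h\leq h_0$. These are the $h_0+1$ vertices, one of each height. By Lemma \ref{trans} every vertex of $\K$ is $\B$-equivalent to a vertex of height equal to its own. Since height is $\B$-invariant, it is equivalent to exactly one $v_h$.

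\textbf{Edges and triangles.} An edge of $\K$ joins vertices of heights $h<h'$ with $h'-h\in\{1,2\}$. Height difference $1$ corresponds to an edge of the cube complex. Height difference $2$ corresponds to the diagonal of a square, which joins the lowest and highest vertices. Every vertex $v_h$ with $h\geq1$ is joined to $v_{h-1}$, because $\Sigma_{h-1}\subset\Sigma_h$ differ by one piece. The delicate point is the diagonal $v_{h-2}$--$v_h$. It lies in $\K$ only if $\Sigma_h\setminus\Sigma_{h-2}$ consists of two \emph{disjoint} pieces, so that the interval $[v_{h-2},v_h]$ is a square. This is why the pieces are added \emph{in an alternating fashion} from two collections.

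\begin{enumerate}
\item I would split the added pieces into two families, each contained in a different adjacent branch of $O^r$. Such branches exist since $r\geq3$, or since $d\geq2$ lets us use two children of a piece.
\item I would choose $P_h$ alternately from the two families, so that $P_{h-1}$ and $P_h$ are always disjoint.
\end{enumerate}

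The only possible exception is at the bottom: when $r=1$, the first piece is forced and $O^r$ has only one adjacent piece, so the square $[v_0,v_2]$ does not exist. In that case only the edge $v_0v_1$ is present, which is the \emph{tongue}. Otherwise the triangles $\{v_{h-2},v_{h-1},v_h\}$ give the snake pattern of Figure \ref{Snake}.

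\textbf{Uniqueness.} For conditions (2) and (3), I must show that every edge or triangle of $\K$ is equivalent to exactly one of the chosen ones. Uniqueness is clear, since height pairs or triples determine the chosen simplex. For existence I would refine Lemma \ref{trans}: any two edges (resp.\ triangles) whose vertices have the same heights are $\B$-equivalent. After translating the bottom vertex to $[\Sigma_{h},\id]$, the stabilizer of that vertex should act transitively on the ways of adding one piece, and on the ways of adding two disjoint pieces. This holds because the stabilizer contains mapping classes of $\Sigma_h$ permuting its boundary components rigidly, that is, the boundary permuting mapping class group $\Map_o$ together with rigid extensions.

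I expect the main obstacle to be this transitivity on unordered pairs of disjoint adjacent pieces. It needs a homeomorphism of $\Sigma_h$ realizing an arbitrary permutation of its boundary components while respecting the parametrizations. For surfaces this is standard, and for graphs and handlebodies I would cite the analogous facts from the respective references. A further check is that the chosen simplices really do form a subcomplex that is flag: every triple of pairwise adjacent vertices $v_{h-2},v_{h-1},v_h$ spans a triangle of $\K$, and no other edges exist because heights differ by at most $2$.
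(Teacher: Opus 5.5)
Your proposal is correct and is essentially the paper's proof: two alternating chains of pieces (branching at $O^r$ when $r\geq 2$, and at the unique adjacent piece when $r=1$, which produces the tongue), with Lemma \ref{trans} supplying transitivity on simplices with prescribed heights and the fact that edges of $\K$ join vertices whose heights differ by at most $2$. Your extra justifications simply fill in details the paper leaves implicit: why the diagonal $v_{h-2}v_h$ requires the two added pieces to be disjoint, and why transitivity follows from $\Map_o$ permuting boundary components.
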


\begin{proof}[\sc Proof of Lemma {\rm \ref{fudom}}]

By Lemma \ref{trans} it suffices to choose a subcomplex $W_\K$ with one vertex of each height up to $h_0$, such that every edge and $2$-simplex of $\K$ is in $W_\K$ up to the action of $\B$.

Let $\B$ be a group in Theorem \ref{Main1}. We first consider the case where $O^r$ has $r>1$ adjacent pieces, i.e. $\B$ is different from the following cases:

\begin{enumerate}
    \item asymptotic mapping class groups of Cantor manifolds $\B_{d,1}(O,Y)$,
    \item asymptotically rigid mapping class groups of graphs $B_{d,1}(1, 1)$,
    \item asymptotically rigid handlebody groups $\H_{d,1}(O,Y)$.
\end{enumerate}

By Lemma \ref{trans}, we can choose the following vertices of $W_\K$:
\begin{itemize}
    \item $\v_0=[O^r,\id]$,
    \item $\v_1=[O^r\cup P_1,\id]$,
    \item $\v_2=[O^r\cup P_1 \cup Q_1,\id]$,
    \item $\v_3=[O^r\cup P_1 \cup Q_1 \cup P_2,\id],$
    \item[] ...
    \item $\v_{2k-1}=[O^r\cup P_1 \cup Q_1\cup...\cup P_{k},\id]$,
    \item $\v_{2k}=[O^r\cup P_1 \cup Q_1\cup...\cup P_{k}\cup Q_{k},\id],$
    \item[] ...
    \item $\v_{h_0},$
\end{itemize}

where each $P_i$ is adjacent to $P_{i-1}$ and $P_{i+1}$, each $Q_i$ is adjacent to $Q_{i-1}$ and $Q_{i+1}$, and $P_1$ and $Q_1$ are adjacent to $O^r$, as in Figure \ref{OPQR} left. This complex has a vertex of each height between 0 and $h_0$.

\begin{figure}[H]  
\centering
\includegraphics[width=8.5cm]{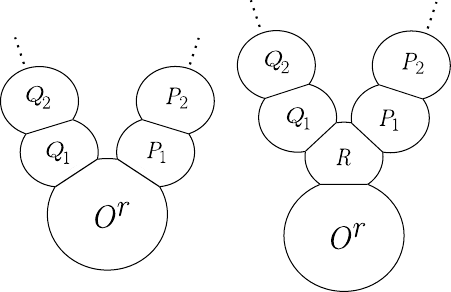}
\caption{On the left, the manifolds defining $W_\K$ when $r\geq1$. On the right, the manifolds defining $W_\K$ when $r=1$.}
\label{OPQR}
\end{figure}

Let $\v_h\in W_{\K}$ denote the unique vertex of height $0\leq h\leq h_0$. There is an edge connecting $\v_h$ and $\v_{h+1}$, since if $\v_h=[\Sigma,\id]$, then either $\v_{h+1}=[\Sigma\cup P_i,\id]$ or $\v_{h+1}=[\Sigma\cup Q_i,\id]$ for some $i$. There is also an edge connecting $\v_h$ and $\v_{h+2}$, since if $\v_h=[\Sigma,\id]$, then either $\v_{h+1}=[\Sigma\cup P_i \cup Q_i,\id]$ or $\v_{h+1}=[\Sigma\cup Q_i\cup P_{i+1},\id]$ for some $i$. Hence $\v_h$ has an edge connecting it to the vertices of heights $h-2$, $h-1$, $h+1$ and $h+2$ (whenever those exist). This means that the 1-skeleton of $W_{\K}$ will be a snake graph (see Figure \ref{Snake}), and we can obtain $W_{\K}$ by adding a $2$-simplex for every triangle.

On the other hand, orbits of edges in $W_{\K}$ by the action of $\B$ cover the 2-skeleton of $\K$, since edges of $\K$ only connect vertices $\v_i$ and $\v_j$ with $|h(\v_i)-h(\v_j)|\leq2$, and the action is transitive on edges between vertices of the same height. We deduce that $W_{\K}$ is a fundamental domain for the action of $\B$ on $\K$.

Now suppose $r=1$, and hence we are in one of the cases (1)-(3) above. We take the simplicial complex $W_\K$ spanned by the vertices: $$\v_{2k-1}=[O^r\cup R \cup P_1 \cup Q_1\cup...\cup Q_{k-1},\id], \quad\v_{2k}=[O^r\cup R \cup P_1 \cup Q_1\cup...\cup Q_{k-1} \cup P_{k},\id],$$ where $R$ es the only piece adjacent to $O^r$, both $P_1$ and $Q_1$ are adjacent to $R$ (see Figure \ref{OPQR} right), and $0\leq k\leq \lfloor \frac{h_0}{2} \rfloor$.

The above reasoning remains true except for $\v_0$, which is not adjacent to $\v_2$ in $W_\K$. However, there is no edge in $\K$ connecting two vertices $v,w\in\K$ of heights $h(v)=0$ and $h(w)=2$, since $O^r$ only has 1 adjacent piece. Hence if $r=1$, the orbits of edges in $W_{\K}$ cover the $2$-skeleton of $\K$ up to the action of $\B$.

We deduce that the 1-skeleton of $W_{\K}$ will be a tongued snake graph (see Figure \ref{tSnake}) and we obtain $W_{\K}$ by adding a $2$-simplex for every triangle.

\end{proof}

We now determine the vertex stabilizers for the action of $\B$ on $\K$.

\begin{lem}\label{finftystab}
Let $\B$ be one of the groups in Theorem \ref{Main1}. The vertex and edge stabilizers for the action of $\B$ on $\K$ are finite index subgroups of boundary permuting mapping class groups of suited manifolds. Particularly, vertex and edge stabilizers are of type $F_\infty$.
\end{lem}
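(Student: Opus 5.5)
The plan is to compute vertex stabilizers first, then obtain edge stabilizers as intersections of vertex stabilizers. For finiteness, I would reduce to the classical fact that mapping class groups of compact manifolds are of type $F_\infty$.

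\textbf{Vertex stabilizers.} By Lemma \ref{trans} and Lemma \ref{fudom}, every vertex of $\K$ is a translate of some $\v_h=[\Sigma,\id]$ with $\Sigma$ suited. Stabilizers of translates are conjugate, so it suffices to treat $[\Sigma,\id]$. By the definition of $\sim$, an element $g\in\B$ fixes $[\Sigma,\id]$ exactly when $g(\Sigma)=\Sigma$ and $g$ is rigid outside $\Sigma$. Restriction to $\Sigma$ then gives a homomorphism
$$\rho:\Stab_\B([\Sigma,\id])\to\Map_o(\Sigma).$$
The image respects the boundary parametrizations induced by the rigid structure, since rigidity outside $\Sigma$ forces $i_{g(P)}\circ g|_P\circ i_P^{-1}=\id$ on adjacent pieces, so $g$ matches the parametrizations on $\partial\Sigma$.

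\emph{Injectivity.} Suppose $g|_\Sigma$ is isotopic to the identity rel $\partial\Sigma$. Then $g$ fixes every boundary component of $\Sigma$, and being rigid outside $\Sigma$ it is the identity on the complement. Hence $g$ is isotopic to the identity.

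\emph{Image.} Any boundary-permuting class on $\Sigma$ that sends a boundary component to one whose complementary subtree is isomorphic (as a rooted tree of pieces) extends rigidly to all of $\S$, since the complementary subtrees can be matched by the rigid structure. The image is therefore the subgroup of $\Map_o(\Sigma)$ whose permutation of $\partial\Sigma$ preserves the types of the complementary components. For tree surfaces with $d\ge 2$ all complementary components are homeomorphic, so the image is all of $\Map_o(\Sigma)$. In the Houghton case ($d=1$) the components correspond to the $r$ rays, so only permutations compatible with the ray structure occur. In either case the image contains $\Map(\Sigma)$, which has finite index in $\Map_o(\Sigma)$. The same argument runs verbatim for graphs and handlebodies, using the analogous notions.

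\textbf{Edge stabilizers.} An edge of $\K$ joins $[\Sigma_1,f]$ and $[\Sigma_2,f]$ with $\Sigma_1\subset\Sigma_2$ and $\overline{\Sigma_2\setminus\Sigma_1}$ a union of pieces. This includes the diagonal edges added in the subdivision, which join the lowest and highest vertices of a square. The edge stabilizer is the intersection of the two vertex stabilizers; it fixes both vertices because heights differ, so it cannot swap them. Under $\rho$ for $\Sigma_2$, this intersection is identified with the classes preserving $\Sigma_1$, rigid on the added pieces, and respecting parametrizations. It contains the subgroup of $\Map(\Sigma_2)$ fixing each added piece pointwise, which is $\Map(\Sigma_1)$ up to boundary twists. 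Hence it is commensurable with, and in fact a finite index subgroup of, $\Map_o(\Sigma_1)$, viewed inside the stabilizer of $[\Sigma_1,f]$.

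\textbf{Finiteness.} Mapping class groups of compact surfaces are $F_\infty$ (Harer), as are handlebody groups and $\mathrm{Out}(F_n)$-type groups of graphs. Finite extensions and finite index subgroups preserve $F_\infty$, so $\Map_o(\Sigma)$ and the subgroups above are $F_\infty$.

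The main obstacle is controlling the image of $\rho$ exactly in the non-surface and $d=1$ settings, and justifying injectivity. For graphs, homotopy equivalences are less rigid, and here I would rely on the corresponding statements in \cite{hill2025asymptoticallyrigidmappingclass} and \cite{me}. In each case only finite index is needed, which follows because the pure part always lies in the image.
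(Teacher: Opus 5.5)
Your proposal is correct in outline but takes a different route from the paper. The paper's proof is a reduction plus citations. Vertex stabilizers of $\K$ are vertex stabilizers of $\Xc$. Because the action preserves height, the stabilizer of a diagonal edge of $\K$ is the stabilizer of the square of $\Xc$ that it subdivides. So every stabilizer in question is a cube stabilizer of $\Xc$, and for each family the required property is quoted from \cite{Asymp24}, \cite{SurfHoughton}, \cite{hill2025asymptoticallyrigidmappingclass} and \cite{me}.

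You instead derive the identification directly, essentially anticipating Lemma \ref{typesstab} in the surface case:
\begin{itemize}
\item restriction to $\Sigma$ gives the map to $\Map_o(\Sigma)$, and rigid extension gives its image;
\item an edge stabilizer is the intersection of two vertex stabilizers (heights differ, so no swap);
\item viewed in $\Map_o(\Sigma_1)$, this intersection is the subgroup preserving the boundary components where the added pieces attach. It contains $\Map(\Sigma_1)$ and so has finite index. This is cleaner than your detour through $\Sigma_2$.
\end{itemize}
This makes the surface case self-contained and shows explicitly where ``finite index'' comes from, which the paper leaves to the citations. It is not uniform, though: for graphs and handlebodies you fall back on the same sources. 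The paper's citation-based proof therefore gets uniformity across all families at no extra cost.

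Two points in your surface argument need fixing.

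First, what you call injectivity is the easy direction. The substantive step, which you skip, is that $\rho$ is well defined on isotopy classes. That is, if a homeomorphism of $\S$ preserves $\Sigma$, is rigid outside $\Sigma$, and is isotopic to the identity, then its restriction is trivial in $\Map_o(\Sigma)$. To prove this:
\begin{itemize}
\item it fixes every component of $\partial\Sigma$, since it fixes all ends;
\item by rigidity it is then the identity off $\Sigma$;
\item finally you need that extension by the identity $\Map(\Sigma)\to\Map(\S)$ is injective. This holds because no complementary component of $\Sigma$ is a disc, punctured disc or annulus.
\end{itemize}
This is the inclusion property the paper invokes via \cite[Proposition 3.6]{Asymp24} in the proof of Lemma \ref{typesstab}.

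Second, your remark for $d=1$ is inaccurate. All complementary rays are identical as rigid structures, so every permutation of $\partial\Sigma$ extends and the image is all of $\Map_o(\Sigma)$. This does not affect your finite-index conclusion.
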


\begin{proof}[\sc Proof of Lemma {\rm \ref{finftystab}}]
Vertex stabilizers of $\K$ are vertex stabilizers of $\Xc$. Edges of $\K$ are either edges of $\Xc$, or diagonals of squares of $\Xc$, hence edge stabilizers of $\K$ correspond to either edge or square stabilizers of $\Xc$. 

The fact that the $k$-cube stabilizers are of type $F_\infty$ follows from the results listed below:
\begin{itemize}

\item For asymptotically rigid mapping class group of Cantor manifolds because of \cite[Lemma 2.4, Subsection 2.1.2, Theorem 2.6]{Asymp24}.

\item For surface Houghton groups $\B_r$ because of \cite[Theorem 4.1]{SurfHoughton}.

\item For Houghton graph groups $B(g,h,r)$ and groups $B_{d,r}(1, 1)$ because of \cite[Theorem 5.11]{hill2025asymptoticallyrigidmappingclass}.

\item For asymptotically rigid handlebody groups $\H_{d,r}(O,Y)$ because of \cite[Corollary 2.8]{me}.

\end{itemize}

\end{proof}

We are now in a position to prove Theorem \ref{Main1}. 

\begin{proof}[\sc Proof of Theorem {\rm \ref{Main1}}]
Let $\B$ be one of the groups in Theorem \ref{Main1}, $\K$ its associated complex from Subsection \ref{2.2}, and $W_\K$ the fundamental domain for the action of $\B$ on $\K$ from Lemma \ref{fudom}. We will check that the action satisfies every condition in Theorem \ref{teoBrown}, which we will in turn use to prove Theorem \ref{Main1}.

Condition (1) holds because $\K$ is simply connected, as $\Xc^{h_0}$ is because of Lemma \ref{LemDeslink}. Conditions (2) and (3) follow from the fact that $W_\K$ is a fundamental domain for the action, see Lemma \ref{fudom}. Finally, Lemma \ref{finftystab} ensures that Conditions (4) and (5) are satisfied. Consequently, all hypotheses of Theorem \ref{teoBrown} are met, and each group $\B$ in Theorem \ref{Main1} is a quotient of the fundamental group of a graph of groups $\G_\B$, whose vertex and edge groups are finite extensions of mapping class groups.

According to Theorem \ref{teoBrown}, $\B$ is isomorphic to the fundamental group of a graph of groups $\G_\B$ defined over the 1–skeleton of $W_\K$. If $r>1$, the 1-skeleton of $W_\K$ is a snake graph, and Theorem \ref{Main1} is directly true. If $r=1$, the 1-skeleton of $W_\K$ is a tongued snake graph (Figure \ref{tSnake}), in this case we modify $\G_\B$. There is only one piece adjacent to $O^r$, which means that every element in the stabilizer of $\v_0$ also stabilizes the edge $(\v_1,\v_2)$, since it stabilizes the only adjacent piece. Hence $\G_\B$ has isomorphic fundamental group to the result of removing the vertex $\v_0$, which is a snake graph. 
\end{proof}

\section{Towards an explicit presentation for $\B$}

Let $B$ be any of the groups in Theorem \ref{Main1}, and let $W_\K$ be the fundamental domain for the action of $\B$ on $\K$ in Lemma \ref{fudom}. Because of Theorem \ref{teoBrown}, $\B$ is the quotient of the fundamental group of the graph of groups $\G_{\B}$ defined over the 1-skeleton of $W_{\K}$, with vertex stabilizers as vertex groups, and edge stabilizers as edge groups. Let $V_i$ be the stabilizer of $\v_i\in V(W_\K)$ and $E_{i,j}$ the stabilizer of $\{\v_i,\v_j\}$. Let $V_i=\langle X_i|R_i\rangle$, and let $X_{i,j}$ be a finite generating set of $E_{i,j}$. By computing the explicit quotient in Theorem \ref{teoBrown}, we deduce the following:

\begin{lem}\label{presentationB}
The group $\B$ has a finite presentation of the form: 
$$\B=\langle\cup X_i|(\cup R_i)\cup(\cup R_{i,j})\rangle,$$ where  $$R_{i,j}=\{o(s)t(s)^{-1}|s\in X_{i,j}\},$$ with each $o(s)$ an expression of $s$ as a word in $X_i$ and $t(s)$ an expression as a word in $X_j$.
\end{lem}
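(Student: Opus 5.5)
The plan is to unwind Theorem \ref{teoBrown} for the action of $\B$ on $\K$ with fundamental domain $W_\K$ from Lemma \ref{fudom}, and then simplify the resulting presentation of the fundamental group modulo the extra relations. By Theorem \ref{teoBrown}, $\B\cong\hat{\B}/\langle\langle y_e : e\in E(W_\K)\rangle\rangle$, where $\hat\B$ is the fundamental group of the graph of groups over the 1-skeleton of $W_\K$. The vertex groups are $V_i=\langle X_i\mid R_i\rangle$ and the edge groups are $E_{i,j}$, with the monomorphisms $\varphi_{e,0},\varphi_{e,1}$ being the inclusions $E_{i,j}\hookrightarrow V_i$ and $E_{i,j}\hookrightarrow V_j$. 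These are genuine inclusions because an element fixing the edge $\{\v_i,\v_j\}$ fixes both endpoints: the action preserves height, so it cannot swap $\v_i$ and $\v_j$ when they have different heights.

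Next I will write out the presentation of Subsection \ref{1.5} explicitly. The generators are $\bigcup X_i$ together with one symbol $y_e$ for each directed edge. The relations are $\bigcup R_i$, $y_{\bar e}=y_e^{-1}$, $y_e=1$ for $e$ in a maximal tree $T$, and $y_e\varphi_{e,0}(x)y_e^{-1}=\varphi_{e,1}(x)$ for $x\in G_e$. I read the last family as holding for every edge, which is the standard definition; the paper's ``$e\in T$'' there appears to be a typo. Imposing Brown's extra relations $y_e=1$ for all $e\in E(W_\K)$ lets us delete every $y_e$ by Tietze transformations. The conjugation relations then become $\varphi_{e,0}(x)=\varphi_{e,1}(x)$ for all $x\in E_{i,j}$.

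The key reduction is that these infinitely many identifications can be replaced by finitely many. Since both $\varphi$'s are homomorphisms, imposing $\varphi_{e,0}(s)=\varphi_{e,1}(s)$ for $s$ in the finite generating set $X_{i,j}$ implies it for all words in $X_{i,j}$, that is, for all of $E_{i,j}$. Concretely, for each $s\in X_{i,j}$, the element $\varphi_{e,0}(s)\in V_i$ is represented by some word $o(s)$ in $X_i$, and $\varphi_{e,1}(s)\in V_j$ by some word $t(s)$ in $X_j$. The relation then reads $o(s)t(s)^{-1}=1$. Any choice of representing words works, because different choices differ by consequences of $R_i$ or $R_j$.

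Finiteness follows from three facts: $W_\K$ is finite, with $h_0+1$ vertices; each $V_i$ is finitely presented (Lemma \ref{finftystab}, type $F_\infty$); and each $E_{i,j}$ is finitely generated. The only step needing care is the Tietze elimination of the edge symbols together with the orientation bookkeeping ($y_{\bar e}$, and which endpoint is $o$ versus $t$). This is routine: for the reverse edge, the relation is the inverse of the same relation.
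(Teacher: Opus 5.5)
Your proposal is correct and follows the paper's own argument: write out the presentation of $\pi_1(\G_{\B})$, use Brown's extra relations $y_e=1$ to eliminate every edge symbol, and read off the surviving identifications $\varphi_{e,0}(x)=\varphi_{e,1}(x)$ as the sets $R_{i,j}$. You are slightly more careful than the paper in two places: you explain why imposing these identifications only on the finite generating sets $X_{i,j}$ suffices, and you take the conjugation relations over all edges rather than only $e\in T$ (the paper's own conclusion uses all edges of $W_{\K}$, so that is indeed a slip).
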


\begin{proof}[\sc Proof of Lemma {\rm \ref{presentationB}}]
Because of the definition of the fundamental group of a graph of groups (see Subsection \ref{1.5}), the fundamental group of the graph of groups $\G_{\B}$ is the quotient of $$(\ast_i\; \langle X_i|R_i\rangle)*(\ast_{e \in E(W_\K)}\;\langle y_e\rangle)$$ by the sets of relations:

\begin{enumerate}
\item $y_{\bar{e}} = y^{-1}_e$ for all $e\in E(W_\K)$, where if $e=(\v_i,\v_j)$, then $\bar{e}=(\v_j,\v_i)$,
\item $y_e=1$ for all $e\in T$,
\item $y_e\varphi_{e,0}(x)y_e^{-1}=\varphi_{e,1}(x)$ for all $e=(\v_i,\v_j)\in T$ and every $x\in E_{i,j}$.
\end{enumerate}

After quotienting by the extra relations $$y_e=1 \textit{ for all }e\in E(W_\K)$$ in  Theorem \ref{teoBrown}, the first two sets of relations become trivial, and the resulting group is the quotient of $(\ast_i\; \langle X_i|R_i\rangle)$ by the relations 

\begin{center}$\varphi_{e,0}(x)=\varphi_{e,1}(x)$ for all $e=\{\v_i,\v_j\}\in E(W_\K)$ and every $x\in E_{i,j}$.
\end{center}
These relations are precisely the sets $R_{i,j}$.

\end{proof}

Theorem \ref{presentationB} provides a method for computing an explicit presentation for the groups in Theorem \ref{Main1}, as long as we have presentations for the vertex stabilizers and generating sets for the edge stabilizers. This is what we will do in the tree surface case, where we will build on work of Labruère and Paris \cite{Labruere-Paris} in order to compute an explicit presentation for $\B$. To this end, in the next section we determine which subgroups of the boundary permuting mapping class group are the vertex and edge stabilizers.

\section{Vertex and edge stabilizers in the surface case}

Let $\B$ be the asymptotically rigid mapping class group of a tree surface and $\Sigma$ be a suited subsurface. Recall from Subsection \ref{mapo} that $\Map_o(\Sigma)$ denotes the boundary permuting mapping class group of $\Sigma$. For any $A_1,A_2\subset\partial \Sigma$ boundary components, let $\Map^{A_1}_o(\Sigma)$ denote the subgroup of $\Map_o(\Sigma)$ of elements fixing $A_1$, and $\Map^{\{A_1,A_2\}}_o(\Sigma)$ the subgroup of elements fixing $\{A_1,A_2\}$ as a set. For a simplex $\sigma\in \K$, we call $\B_\sigma$ the stabilizer of $\sigma$ by the action of $\B$. The next lemma goes along the lines of \cite[Lemma 6.3]{Asymp24}, and describes vertex and edge stabilizers for the action of $\B$ on $\K$ in the tree surface case.

\begin{lem}\label{typesstab}

Consider the action of $\B$ on $\K$:
    \begin{enumerate}
        \item  $\B_{[\Sigma,\phi]}\simeq\Map_o(\Sigma)$. 
        \item Let $P_1$ be a piece adjacent to $\Sigma$. Then $\B_{\{[\Sigma,\phi],[\Sigma\cup P_1,\phi]\}}\simeq\Map_o^{A_1}(\Sigma)$, for $A_1$ any boundary of $\Sigma$.
        \item Let $P_1,P_2$ be distinct pieces, both adjacent to $\Sigma$. Then $\B_{\{[\Sigma,\phi],[\Sigma\cup P_1\cup P_2,\phi]\}}\simeq\Map_o^{\{A_1,A_2\}}(\Sigma)$, for $A_1,A_2$ any couple of distinct boundaries of $\Sigma$.

    \end{enumerate}
\end{lem}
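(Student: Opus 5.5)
First reduce to the case $\phi=\id$. Since $g\cdot[(\Sigma,f)]=[(\Sigma,g\circ f)]$, the stabilizer of $[\Sigma,\phi]$ is $\phi\,\B_{[\Sigma,\id]}\,\phi^{-1}$, and the same holds for the edges $\{[\Sigma,\phi],[\Sigma\cup P_1,\phi]\}$ and $\{[\Sigma,\phi],[\Sigma\cup P_1\cup P_2,\phi]\}$. So we may assume $\phi=\id$.

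For (1), note that $g\in\B$ fixes $[\Sigma,\id]$ exactly when $(\Sigma,g)\sim(\Sigma,\id)$. This means $g$ has a representative with $g(\Sigma)=\Sigma$ that is rigid outside $\Sigma$. Restriction then gives a homomorphism $\B_{[\Sigma,\id]}\to\Map_o(\Sigma)$. The restriction respects the boundary parametrizations because the rigid structure is compatible with the gluing maps $\mu_i,\lambda_j$: the maps $i_{g(P)}\circ g|_P\circ i_P^{-1}=\id$ on the adjacent pieces force $g|_{\partial\Sigma}$ to match parametrizations. The inverse map extends a parametrization-respecting homeomorphism of $\Sigma$ rigidly to the complement. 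This uses that the complement of $\Sigma$ is a disjoint union of copies of the same rooted tree of pieces, one attached to each boundary component, with rigid identifications between them. The two maps are clearly mutually inverse on representatives.

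The main obstacle is showing they are well defined on isotopy classes, in particular that the restriction map is injective. If $g$ restricts to an element isotopic to the identity rel $\partial\Sigma$, we must show $g$ is isotopic to the identity on the whole tree surface. Since $g$ is the identity rigidly outside $\Sigma$, the isotopy extends by the identity on the complement. Conversely, two rigid extensions of isotopic maps differ only inside $\Sigma$. I would cite the argument of \cite[Lemma 6.3]{Asymp24}, which handles exactly this identification, together with \cite[Proposition 3.11]{Asymp24} so that proper isotopy and isotopy coincide.

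For (2) and (3), an element stabilizing an edge must fix both endpoints, since the endpoints have different heights and the action preserves height. Such an element therefore lies in $\B_{[\Sigma,\id]}\cong\Map_o(\Sigma)$ and also stabilizes $[\Sigma\cup P_1,\id]$, respectively $[\Sigma\cup P_1\cup P_2,\id]$. Under the identification in (1), $g$ maps $\Sigma\cup P_1$ to itself up to isotopy exactly when the induced boundary permutation fixes the component $A_1=\Sigma\cap P_1$. Conversely, if the permutation fixes $A_1$, the rigid extension sends $P_1$ to itself. Likewise, $g$ preserves $\Sigma\cup P_1\cup P_2$ exactly when the permutation preserves the set $\{A_1,A_2\}$. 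One also has to check that $g$ fixing the vertex $[\Sigma\cup P_1,\id]$ really forces $g(\Sigma\cup P_1)=\Sigma\cup P_1$ after isotopy, and not merely some other suited surface equivalent to it. This follows because a representative fixing $\Sigma$ and rigid outside $\Sigma$ permutes the adjacent pieces according to its boundary permutation.

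Finally, the phrase "for any boundary" follows from the transitivity argument of Lemma \ref{trans}. Conjugating by an element of $\Map_o(\Sigma)$ that permutes boundary components gives $\Map_o^{A_1}(\Sigma)\cong\Map_o^{A_1'}(\Sigma)$ for any boundary components $A_1,A_1'$, and the same for pairs.
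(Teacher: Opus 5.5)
Your overall route is the paper's: reduce to $\phi=\id$, identify $\B_{[\Sigma,\id]}$ with $\Map_o(\Sigma)$ via restriction $\Phi$ and rigid extension $\Psi$, and then read off the edge stabilizers as the subgroups of $\Map_o(\Sigma)$ fixing $A_1$, respectively the set $\{A_1,A_2\}$. The problem is the step you single out as the main obstacle: the argument you give there is the easy direction. Extending an isotopy rel $\partial\Sigma$ by the identity shows that $\Psi$ is well defined. It also shows that $\Phi$ has trivial kernel, but only \emph{once $\Phi$ is known to be well defined}.

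What actually needs proof is the converse. If $g_1,g_2$ are isotopic homeomorphisms of the tree surface, each mapping $\Sigma$ to itself and rigid outside $\Sigma$, you must show $g_1|_\Sigma=g_2|_\Sigma$ in $\Map_o(\Sigma)$; equivalently, that $\Psi$ is injective. The isotopy between $g_1$ and $g_2$ has no reason to preserve $\Sigma$. So nothing you wrote excludes a nontrivial element of $\Map_o(\Sigma)$ whose rigid extension is isotopic to the identity.

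The paper fills this with the inclusion property. It restricts to a suited $Y\supset\Sigma$ containing the support of the isotopy, so that $g_1$ and $g_2$ agree in $\Map_o(Y)$. It then uses that rigid extension $\Map_o(\Sigma)\to\Map_o(Y)$ is injective for suited $\Sigma\subset Y$. This is genuine geometric input, not a formality. By \cite[Theorem 3.18]{Farb-Margalit}, the kernel of the inclusion homomorphism is generated by twists about boundary curves of $\Sigma$ that bound (punctured) discs, and by products $T_aT_b^{-1}$ for boundary curves $a,b$ cobounding an annulus. The inclusion property says that for suited subsurfaces none of this occurs, because pieces are never discs or annuli.

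Your citation of \cite[Lemma 6.3]{Asymp24} may carry this step. As written, though, your proposal places the difficulty on the wrong side and never invokes the fact that resolves it. You should state and use the inclusion property explicitly, or an equivalent non-isotopy statement for boundary curves of suited subsurfaces.
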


\begin{proof}[\sc Proof of Lemma {\rm \ref{typesstab}}]
We start by proving point (1). After multiplying by $\phi^{-1}$ we may suppose the vertex is $[\Sigma,\id]$. If $\psi\in\B_{[\Sigma,\id]}$ then it must satisfy $\psi(\Sigma)=\Sigma$, and must be rigid away from $\Sigma$. This can be used to define maps:
$$\Phi:\B_{[\Sigma,\id]}\rightarrow \Map_o(\Sigma),$$
\vspace{-1.3em}$$\Psi:\Map_o(\Sigma)\rightarrow\B_{[\Sigma,\id]},$$

where, for $\psi\in B_{[\Sigma,\id]} $, its image $\Phi(\psi)$ is defined by choosing a representative that is rigid away from $\Sigma$ and restricting it to $\Sigma$, which defines an element of $\Map_o(\Sigma)$; similarly, for $f\in \Map_o(\Sigma)$, the image $\Psi(f)$ is the unique rigid extension of any representative of $f$ to a homeomorphism of $\S_{d,r}(O, Y)$. We will first check that these maps are well defined, and then see that they are inverses of each other.

The map $\Psi$ is well defined, since representatives of $f\in \Map_o(\Sigma)$ are isotopic so are their extensions. The map $\Phi$ is well defined: if $\psi_1,\psi_2$ homeomorphisms of $\S_{d,r}(O, Y)$ are isotopic, and are rigid away from $\Sigma$, then restriction to any suited surface $Y$ containing the support of the isotopy defines the same element of $\Map_o(Y)$, and because of the \textit{inclusion property} of tree manifolds (see the proof of \cite[Proposition 3.6]{Asymp24}, which works for tree manifolds), restriction to $\Sigma$ defines the same element of $\Map_o(\Sigma)$. Hence the maps are well defined.

The maps satisfy $\Psi\circ\Phi=\id$, since taking a homeomorphism $\psi$ of $\S_{d,r}(O, Y)$ that is rigid away from $\Sigma$, restricting it to $\Sigma$ and then extending it, gives back $\psi$. Analogously, $\Phi\circ\Psi=\id$. Hence the maps establish an isomorphism, since composition is preserved, and we conclude that $\B_{[\Sigma,\phi]}\simeq\Map_o(\Sigma)$.

To compute the stabilizer of the edge $\{[\Sigma,\id],[\Sigma\cup P_1,\id]\}$, we remark that $$\B_{\{[\Sigma,\id],[\Sigma\cup P_1,\id]\}}\leq\B_{[\Sigma,\id]}. $$ 

Hence we simply have to study the image of $\B_{\{[\Sigma,\id],[\Sigma\cup P_1,\id]\}}$ under $\Phi$, which corresponds to the image of elements $\psi$ that fix $P_1$. The image $\Phi(\psi)$ is an element of $\Map^{A_1}_o(\Sigma)$ for $A_1$ the boundary component that is in $P_1$, and any element of $\Map^{A_1}_o(\Sigma)$ is extended to an element in the stabilizer, hence $\B_{\{[\Sigma,\id],[\Sigma\cup P_1,\id]\}}\simeq\Map_o^{A_1}(\Sigma)$. 

Analogously it can be proved that $\B_{\{[\Sigma,\id],[\Sigma\cup P_1\cup P_2,\id]\}}\simeq\Map_o^{\{A_1,A_2\}}(\Sigma)$.

\end{proof}

Given a presentation for boundary permuting mapping class groups, Theorems \ref{presentationB} and \ref{typesstab} provide a recipe for determining a presentation of $\B_{d,r}(O,Y)$. To this end, the next subsection is dedicated to calculating a presentation of the boundary permuting mapping class group in the tree surface case.

\section{Presentation of the boundary permuting mapping class group of surfaces}\label{Sect2}

In \cite{Labruere-Paris}, Labruère and Paris find a presentation of the mapping class group of a finite type surface, possibly with boundary. Building on this work, in this section we will find a presentation of the boundary permuting mapping class group of a surface, and a set of generators of some of its subgroups.

We first give some definitions. By a \textit{curve} on a surface $\Sigma$ we mean the isotopy class of a closed simple curve on $\Sigma$, and by an \textit{arc} we mean the isotopy class of an simple arc that connects either two boundary components of $\Sigma$, or two punctures of $\Sigma$. We will blur the difference between isotopy classes and their representatives.

Let $S_{g,b,p}$ be a surface of genus $g$, with $p$ punctures and $b$ boundary components. For the rest of the paper, we adopt the notation: $$S=S_{g,n,0} \quad \textnormal{  and  } \quad S'=S_{g,0,n}$$ for $g\geq1$ and $n\geq 0$. 

By selecting $n$ curves each bounding a punctured disc $D_i\subset S'$, we determine an inclusion $S\hookrightarrow S'$ where the image of each of the $n$ boundary components of $S$ is one of the chosen curves. In other words, the surface $S'$ is the result of \textit{capping} every boundary component of $S$ with a punctured disc. This inclusion in turn induces a surjective homomorphism $$\In:\Map_o(S)\rightarrow \Map(S')$$ by, after permuting the boundary components, extending each homeomorphism as the identity in each $D_i$. More precisely, fix a set of parametrizations $\xi_i:\DD^2\rightarrow D_i$. Each map $\phi\in\Map_o(S)$, induces a permutation $\sigma=\sigma(\phi)$ of the $n$ boundary components of $S$. We define $$\In(\phi):=\begin{cases}
  \phi & \text{in } S, \\
 \xi_{\sigma(i)}\circ\xi_i^{-1}& \text{in each }D_i,
\end{cases}$$

The notation $\In(\cdot)$ reflects the fact that the map is the result of iterating the capping homeomorphism introduced in \cite[Proposition 3.19]{Farb-Margalit}, adapted to the boundary‑permuting setting.

Let $U_{S}$ be the subgroup of $\Map_o(S)$ generated by the Dehn twists $\{u_1,...,u_n\}$ in Figure \ref{Generators}, where each $u_i$ is the Dehn twist along $\partial D_i$. The group $U_{S}$ is abelian and has a presentation $$\langle u_1,u_2,...,u_n|R_{U}:=\{u_iu_j=u_ju_i \; \forall i,j\}\rangle.$$ The following lemma relates the groups $\Map_o(S)$ and $\Map(S')$ through a short exact sequence.

\begin{lem}\label{SecMap}

There is an exact sequence: $$1\rightarrow U_{S}\rightarrow\Map_o(S)\overset{\In}{\rightarrow}\Map(S')\rightarrow1.$$

\end{lem}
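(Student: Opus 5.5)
The plan is to reduce Lemma \ref{SecMap} to the classical capping sequence by comparing $\Map_o(S)$ with its finite index subgroup $\Map(S)$, and the pure mapping class group $\mathrm{PMap}(S')$ with $\Map(S')$.

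First, I will check that $\In$ is a well-defined homomorphism. Isotopies relative to $\partial S$ extend by the identity on the discs $D_i$, so isotopic representatives have isotopic cappings. For multiplicativity, take $\phi,\psi$ with induced permutations $\sigma(\phi),\sigma(\psi)$. The disc pieces compose as $\xi_{\sigma(\phi)\sigma(\psi)(i)}\circ\xi_{\sigma(\psi)(i)}^{-1}\circ\xi_{\sigma(\psi)(i)}\circ\xi_i^{-1}$, which is the disc piece of $\phi\psi$. The maps glue continuously along $\partial D_i$ because $\phi$ respects the parametrizations $\mu_i$; here I choose the $\xi_i$ compatible with the $\mu_i$ on $\partial\DD^2$.

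Second, I will prove surjectivity. Any $f\in\Map(S')$ permutes the punctures. Compose $f$ with an element of the form $\In(\rho)$, where $\rho\in\Map_o(S)$ realizes the inverse permutation of boundary components; such $\rho$ exist because $\Map_o(S)$ induces the full symmetric group on boundaries, for instance via boundary swaps. This reduces the question to a pure mapping class. By \cite[Proposition 3.19]{Farb-Margalit}, iterated over the $n$ boundary components, the capping map $\Map(S)\to\mathrm{PMap}(S')$ is surjective.

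Third, I will identify the kernel, which is the main step. Clearly $U_S\subseteq\ker\In$, since each $u_i$ is a twist about $\partial D_i$ and becomes trivial around a puncture. Conversely, if $\In(\phi)=1$, then $\phi$ induces the trivial permutation of punctures, hence the trivial permutation of boundary components, so $\phi\in\Map(S)$. By the capping exact sequence (\cite[Proposition 3.19]{Farb-Margalit}, applied inductively, one boundary at a time), the kernel of $\Map(S)\to\mathrm{PMap}(S')$ is the free abelian group generated by the boundary twists. Here we need $g\geq1$ so that the hypotheses excluding the sporadic cases hold.

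The obstacle I expect is the inductive application of the capping sequence. At each stage the capped boundary twist is central, and the kernel at stage $k$ must map correctly into the next stage. I would handle this by capping all components simultaneously and citing the version of \cite[Proposition 3.19]{Farb-Margalit} for several boundary components. Alternatively, I would compose the successive sequences and use the fact that the $u_i$ commute and remain nontrivial and independent in $\Map(S)$, so their span is $\ZZ^n=U_S$.
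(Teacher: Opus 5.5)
Your proposal is correct and follows essentially the same route as the paper, whose proof consists only of your third step: an element of $\ker(\In)$ induces the trivial permutation of boundary components, hence lies in $\Map(S)$, and then \cite[Theorem 3.18]{Farb-Margalit} identifies the kernel with $U_S$. That theorem is the inclusion homomorphism theorem, which treats all $n$ once-punctured discs simultaneously, so the inductive bookkeeping you flag as the main obstacle is not needed; your extra checks of well-definedness (choosing the $\xi_i$ compatible with the $\mu_i$) and of surjectivity are sound and fill in points the paper only asserts when defining $\In$.
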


\begin{proof}[\sc Proof of Lemma {\rm \ref{SecMap}}]
We verify that the kernel of $\In$ is precisely $U_{S}$. The inclusion $U_{S}\subset \ker(\In)$ is direct. For the inclusion $\ker(\In)\subset U_{S}$, we observe that elements in the kernel do not permute boundary components, and hence belong to $\Map(S)$. Because of \cite[Theorem 3.18]{Farb-Margalit}, any element in $\ker(\In)\cap \Map(S)$ is in $U_{S}$.

\end{proof}

We will use this short exact sequence in order to compute a presentation of the boundary permuting mapping class group. The text subsection introduces one of the various tools needed for this.

\subsection{Exact sequences and presentations}
Through their work, Labruère and Paris use a well-known method for crafting a presentation from a short exact sequence (see \cite[Subsection 2.2]{Labruere-Paris}). 

Let $1\rightarrow K \hookrightarrow G\overset{\pi}{\rightarrow}  H\rightarrow 1$ be a short exact sequence of groups. Given presentations $H=\langle X_H|R_H\rangle$ and $K=\langle X_K|R_K\rangle$,  for each $x\in X_H$, we choose $\tilde{x}\in \pi^{-1}(x)$, and write $$ \tilde{X}_H:=\{\tilde{x}:x\in X_H\}.$$ Given any relation $r=x_1^{\varepsilon_1}...\,x_l^{\varepsilon_l}\in R_H$, if we substitute each $x_i$ for $\tilde{x}_i\in\tilde{X}_H$ we get an element $\tilde{r}=\tilde{x_1}^{\varepsilon_1}...\,\tilde{x_l}^{\varepsilon_l}\in G$, which is in the kernel of $\pi$ as $\pi(\tilde{r})=r$ is a relation of $H$. Treat the group $K$ as a subgroup of $G$. By the exactness of the sequence, one can choose a word $w_r$ with letters in $X_K\subset G$ that represents the same element as $\tilde{r}$. Set $$R_1:=\{\tilde{r}w_r^{-1}:r\in R_H\}.$$ Since $K$ is a normal subgroup of $G$, for each $\tilde{x}\in \tilde{X}_H$ and $y\in X_K$ the conjugate $\tilde{x}y\tilde{x}^{-1}$ is also in $K$ and hence one may choose a word $v(x,y)$ with letters in $X_K$ representing the same element of $G$. Set $$R_2:=\{\tilde{x}y\tilde{x}^{-1}v(x,y)^{-1}:\tilde{x}\in \tilde{X}_H \textnormal{ and } y\in X_K\}.$$

\begin{lem}\label{Lema-present}\cite[Lemma 2.5]{Labruere-Paris}
$G$ admits a presentation $$G=\langle \tilde{X}_H\cup X_K|R_1\cup R_2 \cup R_K\rangle.$$

\end{lem}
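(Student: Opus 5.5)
The plan is to prove the standard fact that a group extension is presented by lifted generators of the quotient together with generators of the kernel, subject to lifted relations, conjugation relations and kernel relations. Let $F$ be the free group on $\tilde{X}_H\cup X_K$, let $N$ be the normal closure of $R_1\cup R_2\cup R_K$ in $F$, and set $\Gamma:=F/N$. Since $G$ is generated by $X_K$ and any lifts of generators of $H$, the natural map $F\to G$ is surjective. Each relator in $R_1\cup R_2\cup R_K$ maps to $1$ in $G$: those in $R_K$ because they hold in $K\le G$, and those in $R_1$ and $R_2$ by the choice of the words $w_r$ and $v(x,y)$. So $F\to G$ factors through a surjection $\theta:\Gamma\to G$, and it remains to show that $\theta$ is injective.

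First I show that $\theta$ restricts to an isomorphism on the subgroup $\bar K\le\Gamma$ generated by the image of $X_K$. The relators $R_K$ give a surjection $K=\langle X_K|R_K\rangle\to\bar K$, and composing it with $\theta$ gives the inclusion $K\hookrightarrow G$, which is injective. Hence $K\to\bar K$ is injective, so it is an isomorphism, and $\theta|_{\bar K}$ is injective.

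Next I show that $\bar K$ is normal in $\Gamma$. The relators in $R_2$ give $\tilde{x}\bar K\tilde{x}^{-1}\subseteq\bar K$ for each $\tilde x\in\tilde X_H$. I also need the corresponding statement for $\tilde{x}^{-1}$. The map $c_{\tilde x}:\bar K\to\bar K$ given by conjugation by $\tilde{x}$ is an injective endomorphism. Transporting it through $\theta|_{\bar K}\cong K$, it corresponds to conjugation by $\theta(\tilde x)$ on $K$, which is surjective because $K$ is normal in $G$. So $c_{\tilde x}$ is onto, and therefore $\tilde{x}^{-1}\bar K\tilde{x}=\bar K$ as well. Since $\Gamma$ is generated by $\tilde X_H\cup X_K$, this gives $\bar K\trianglelefteq\Gamma$.

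Then $\Gamma/\bar K$ is generated by the images of $\tilde X_H$, and by $R_1$ these images satisfy every relation $r\in R_H$. So there is a surjection $H\to\Gamma/\bar K$. Composing it with the map $\Gamma/\bar K\to G/K=H$ induced by $\theta$ sends $x\mapsto x$, so the composite is the identity on $H$. Hence $\bar\theta:\Gamma/\bar K\to H$ is injective.

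Finally, take $g\in\ker\theta$. Its image in $H$ is trivial, so $g\in\bar K$ by injectivity of $\bar\theta$. Then $g=1$ by injectivity of $\theta|_{\bar K}$. Thus $\theta$ is an isomorphism, which gives the stated presentation of $G$.

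I expect the main point needing care to be the normality of $\bar K$, because $R_2$ only records conjugation by the positive generators $\tilde x$. The argument above handles this by using injectivity on $\bar K$ together with the surjectivity of conjugation in $G$.
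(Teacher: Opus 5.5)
Your proof is correct and complete. The paper gives no argument of its own for this lemma: it imports it from Labru\`ere--Paris as the standard presentation of a group extension. Your route, namely the surjection $\theta\colon\Gamma\to G$, then $\bar K\cong K$ from the completeness of $R_K$, then $\Gamma/\bar K\cong H$ via $R_1$, is the standard proof. You also handle the one delicate point correctly: $R_2$ records conjugation by $\tilde x$ but not by $\tilde x^{-1}$, and you fix this by transporting $c_{\tilde x}|_{\bar K}$ through $\theta|_{\bar K}$ to an automorphism of $K$, which shows it is onto $\bar K$.
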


\subsection{Generators of the boundary permuting mapping class group}\label{Gens-bpmcg}

We can compute a set of generators of $\Map_o(S)$ from the exact sequence of Lemma \ref{SecMap} using Lemma \ref{Lema-present}. In this vein, $\Map_o(S)$ will be generated by the Dehn twists $\{u_1,...,u_n\}$, and preimages through $\In(\cdot)$ of each of the generators of $\Map(S')$. 

According to \cite[Corollary 2.11]{Labruere-Paris}, $\Map(S')$ is generated by the Dehn twists \\ $\{x_0,x_1,z,y_1,...,y_{2g-1}\}$, together with the half twists $\{h_1,...,h_{n-1}\}$ in Figure \ref{gen-in}. To find preimages through $\In(\cdot)$ of each $h_i$, we need to introduce \textit{boundary swaps}, which are certain preimages of half twists under $\In(\cdot)$, as we now describe.

\begin{figure}[H]  
\centering
\includegraphics[width=10.5cm]{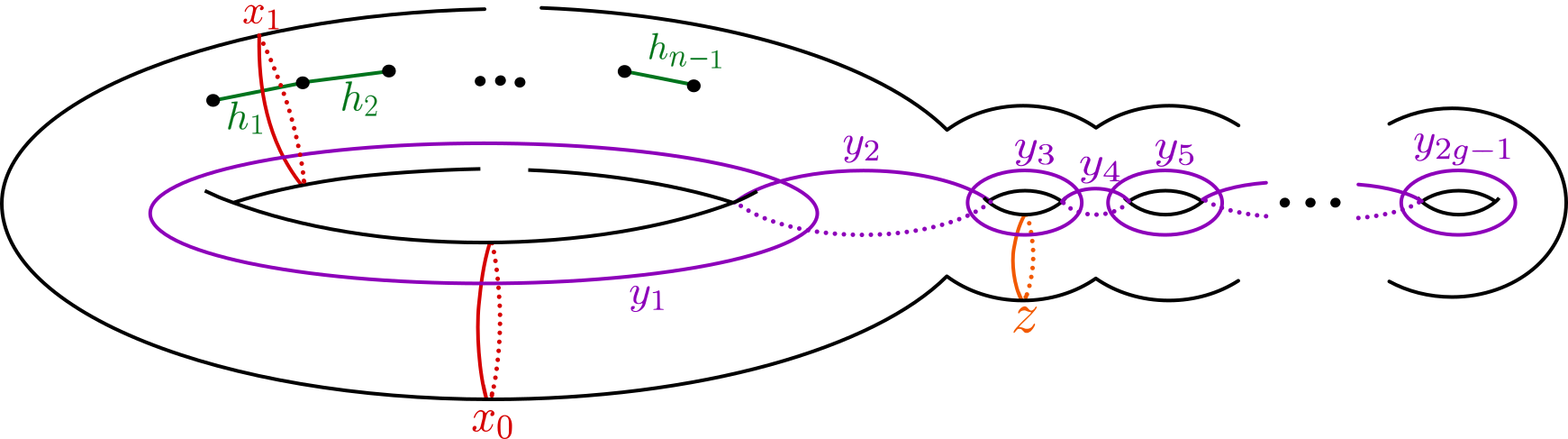}
\caption{}
\label{gen-in}
\end{figure}

Let $\a$ be any arc between two boundary components $A_1,A_2\subset S$. Let $N_\a$ be a closed, regular neighborhood of $\a\cup A_1 \cup A_2$. Any homeomorphism $\psi$ of $N_\a$ fixing the boundary component $C=\partial N_\a$ (see Figure \ref{twistex}) can be extended to an element of $\psi\in\Map_o(S)$ by the identity. We say that $\psi$ is a \textit{boundary swap} along $\a$ if $\In(\psi)$ is a half twist. The following result describes the information that determines a boundary swap.

\begin{figure}[H]  
\centering
\includegraphics[width=9.5cm]{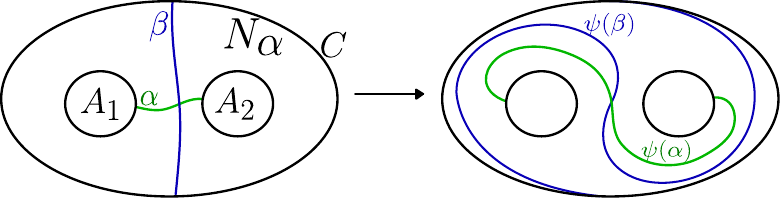}
\caption{An example of a boundary swap.}
\label{twistex}
\end{figure}

\begin{lem}\label{lemswap0}
    A boundary swap $\psi$ along $\a$ is uniquely determined by $\In(\psi)$ and $\psi(\a)$.
\end{lem}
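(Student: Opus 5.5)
Suppose $\psi,\psi'$ are two boundary swaps along $\a$ with $\In(\psi)=\In(\psi')$ and $\psi(\a)=\psi'(\a)$. We will show $\psi=\psi'$ in $\Map_o(S)$. First, by Lemma \ref{SecMap}, $\In(\psi^{-1}\psi')=1$ forces $\psi^{-1}\psi'\in U_S$. Moreover, both $\psi$ and $\psi'$ are supported in $N_\a$, which is a pair of pants with boundary $A_1,A_2,C$. Hence $\psi^{-1}\psi'$ is supported in $N_\a$ as well, and equals $u_1^{a}u_2^{b}$ for some $a,b\in\ZZ$, where $u_1,u_2$ are the twists about curves parallel to $A_1,A_2$. (Note that $C$ is fixed pointwise and the twist about $C$ does not lie in $U_S$ unless $C$ bounds a disc with punctures, which it does not.) A cleaner way to see this is to work in the boundary-permuting mapping class group of the pair of pants $N_\a$ itself. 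That group is an extension of $\ZZ/2$ (swapping $A_1,A_2$) by $\Map(N_\a)\cong\ZZ^3$, generated by the twists about $A_1$, $A_2$ and $C$. The condition $\In(\psi^{-1}\psi')=1$ kills the $C$-twist component, because $\In$ on $N_\a$ lands in the mapping class group of a twice-punctured disc, where the $C$-twist is a nontrivial power of the full twist. Thus $\psi^{-1}\psi'=u_1^au_2^b$.

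Second, we use the arc. From $\psi(\a)=\psi'(\a)$ we get that $u_1^au_2^b$ fixes the isotopy class of $\a$. Here isotopy is taken relative to $\partial S$, with endpoints fixed on $A_1$ and $A_2$, consistent with the parametrizations $\mu_i$. Boundary twists act on arcs by winding the endpoints: $u_1^au_2^b(\a)$ is $\a$ with $a$ turns spun around $A_1$ and $b$ turns around $A_2$. The key step is to show that $u_1^au_2^b(\a)=\a$ forces $a=b=0$. For this we use the standard fact that in a pair of pants, arcs between distinct boundary components, rel endpoints, are classified by the pair of winding numbers at the two ends. One can also argue by intersection numbers: if $a\ne 0$, then $\a$ and $u_1^a u_2^b(\a)$ have nonzero algebraic intersection with an arc from $A_1$ to $C$. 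Alternatively, apply the Alexander method (cut $N_\a$ along $\a$ to get an annulus). A mapping class fixing $\a$ and $\partial N_\a$ restricts to a mapping class of that annulus fixing both boundaries pointwise, so it is a power of the core twist. The core of that annulus is parallel to $C$, whose twist has nonzero image under $\In$, so the power is zero.

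The main obstacle is exactly this last step: making precise that boundary twists $u_1,u_2$ act nontrivially on the arc when the endpoints are constrained. With free endpoints sliding on $A_i$, the arc would be invariant under the twists, and uniqueness would fail. So we must insist that isotopies of arcs fix $\partial S$ pointwise, matching the definition of $\Map_o$. With that convention, the cutting argument above gives $u_1^au_2^b=u_C^k$ with $k=0$, hence $a=b=0$ and $\psi=\psi'$.
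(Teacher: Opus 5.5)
Your proof is correct. It shares the paper's first step: reduce to $\psi^{-1}\psi'\in\ker\In=U_S$, an element that fixes $\a$. It then finishes differently.

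The paper uses a second, auxiliary arc $\b$ that every element of $\ker\In$ fixes. Then $\psi^{-1}\psi'$ fixes both $\a$ and $\b$, and the Alexander method concludes in one stroke.

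You use only $\a$:
\begin{enumerate}
\item You pin down $\psi^{-1}\psi'=u_1^au_2^b$ inside $\Map(N_\a)\cong\ZZ^3=\langle T_{A_1},T_{A_2},T_C\rangle$.
\item You compute the stabilizer of $\a$ there as $\langle T_C\rangle$, by cutting $N_\a$ along $\a$ into an annulus whose core is parallel to $C$.
\item You observe that this stabilizer meets $\langle u_1,u_2\rangle$ trivially. Independence in $\ZZ^3$ already gives this, so your second appeal to $\In$ is optional.
\end{enumerate}

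The paper's version is shorter once $\b$ is drawn. Yours avoids the auxiliary arc and makes the division of labour transparent: $\In(\psi)$ determines $\psi$ modulo $\langle u_1,u_2\rangle$, and $\psi(\a)$ detects exactly those twists. You also correctly flag that arcs must be taken up to isotopy rel endpoints. Without that convention the lemma fails, and the paper only uses it tacitly (e.g.\ in the proof of Lemma~\ref{lemswap1}).

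Two minor points.
\begin{itemize}
\item Your parenthetical reason why $T_C\notin U_S$ is muddled: in $S'$ the curve $C$ does bound a twice-punctured disc. The correct reason, which you give later, is that $\In(T_C)$ is a twist about an essential curve of $S'$ (genus $g\ge1$ on the other side), so it has infinite order.
\item Transferring ``$u_1^au_2^b$ fixes $\a$'' from $S$ to $N_\a$ uses that $N_\a$ is an essential subsurface of $S$. The paper's Alexander-method step inside $N_\a$ makes the same assumption implicitly.
\end{itemize}
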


\begin{proof}[\sc Proof of Lemma {\rm \ref{lemswap0}}]
Suppose there are two boundary swaps $\psi, \;\psi'$ such that $\In(\psi)=\In(\psi')$ and $\psi(\a)=\psi'(\a)$. Because $\psi(\a)=\psi'(\a)$, the composition $\psi^{-1}\circ\psi'$ fixes $\a$. The mapping class $\psi^{-1}\circ\psi'$ also fixes the arc $\b$, since the kernel of $\In(\cdot)$ fixes $\b$ and $\In(\psi^{-1}\circ\psi')=\In(\psi^{-1})\circ\In(\psi')=\id$. Hence because of the Alexander method (see \cite[Proposition 2.8]{Farb-Margalit}), $\psi^{-1}\circ\psi'$ is the identity, since it fixes $\a$ and $\b$.

\end{proof}

\begin{figure}[H]
\includegraphics[width=13.5cm]{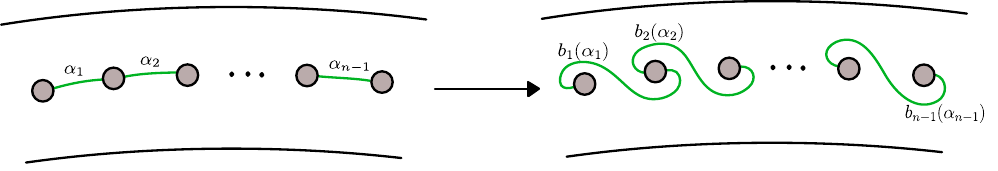}
\centering
\caption{}
\label{election}
\end{figure}

Lemma \ref{lemswap0} implies that a preimage through $\In(\cdot)$ of one of the half-twists $h_i$ is uniquely determined by the image $\In(\a_i)$ of the arc $\a_i$ in Figure \ref{election}. We define $b_i$ to be the only preimage of $h_i$ that sends $\a_i$ to the arc $b_i(\a_i)$ in Figure \ref{election}.

\begin{theor}\label{teogenSPMCG}
The boundary swaps $\{b_1,...,b_{n-1}\}$, together with the Dehn twists \\ $\{x_0,x_1,z,y_1,...,y_{2g-1},u_1,...,u_n\}$ in Figure \ref{Generators}, generate $\Map_o(S)$.
\end{theor}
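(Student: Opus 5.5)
The argument will run through the short exact sequence of Lemma \ref{SecMap},
$$1\rightarrow U_{S}\rightarrow\Map_o(S)\overset{\In}{\rightarrow}\Map(S')\rightarrow1,$$
using only the generating part of Lemma \ref{Lema-present}. That lemma says $\Map_o(S)$ is generated by $X_K\cup\tilde X_H$, where $X_K$ generates the kernel and $\tilde X_H$ consists of chosen lifts of generators of the quotient. So it suffices to check two things: that the kernel $U_S$ is generated by $\{u_1,\dots,u_n\}$, and that each listed generator of $\Map(S')$ has a lift in our list. In fact the statement is independent of which lifts are chosen, since any two lifts of the same element differ by an element of $U_S$.

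For the kernel, $U_S$ is by definition generated by the boundary twists $u_1,\dots,u_n$, and Lemma \ref{SecMap} identifies $U_S$ with $\ker(\In)$. For the quotient, by \cite[Corollary 2.11]{Labruere-Paris} the group $\Map(S')$ is generated by the Dehn twists $x_0,x_1,z,y_1,\dots,y_{2g-1}$ together with the half twists $h_1,\dots,h_{n-1}$.

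Each Dehn twist generator of $\Map(S')$ is a twist along a curve in $S'$ avoiding the punctures. After isotopy, this curve lies in the image of $S\hookrightarrow S'$, that is, in the complement of the capping discs $D_i$. The Dehn twist along the corresponding curve in $S$ (the one drawn in Figure \ref{Generators}, denoted by the same letter) fixes every boundary component. Its image under $\In$ is the twist extended by the identity on each $D_i$, because its permutation $\sigma$ is trivial and $\xi_i\circ\xi_i^{-1}=\id$. Hence $\In(x_0)=x_0$, and similarly for $x_1$, $z$ and the $y_j$.

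For the half twists, $b_i$ is by construction a boundary swap whose image is $h_i$, i.e.\ $\In(b_i)=h_i$. One should also confirm that such a $b_i$ exists, so that the definition preceding the theorem makes sense. To do this, take the arc $\a_i$ joining the boundary components $A_i$ and $A_{i+1}$, and its neighbourhood $N_{\a_i}$, which is a pair of pants. Rotating this pair of pants by $\pi$ about the axis swapping $A_i$ and $A_{i+1}$ respects the chosen parametrizations $\mu_i$, since the rotation can be chosen compatible with them. Then fix the outer boundary $C$ pointwise by composing with a compensating half-rotation in a collar of $C$. After capping, the image under $\In$ is exactly the half twist $h_i$ along $\In(\a_i)$. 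Any other such lift differs from this one by an element of $U_S$. By Lemma \ref{lemswap0} there is then a unique lift sending $\a_i$ to the prescribed arc of Figure \ref{election}, and this lift is $b_i$.

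Combining these steps, Lemma \ref{Lema-present} gives generators $\{u_i\}\cup\{x_0,x_1,z,y_j\}\cup\{b_i\}$, which is the claimed set. The main point requiring care is existence of the lift $b_i$, specifically its compatibility with the boundary parametrizations. The remaining steps are formal consequences of exactness.
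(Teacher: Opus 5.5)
Your proof is correct and follows the paper's route. Like the paper, you apply the generating part of Lemma~\ref{Lema-present} to the exact sequence of Lemma~\ref{SecMap}: $u_1,\dots,u_n$ generate the kernel, and the Dehn twists and the $b_i$ serve as lifts of the Labru\`ere--Paris generators of $\Map(S')$. Your check that a boundary swap lifting $h_i$ exists, and your remark that generation does not depend on the choice of lifts, are things the paper leaves implicit. One small correction there: Lemma~\ref{lemswap0} gives only uniqueness, so getting a lift that sends $\a_i$ to the specific arc of Figure~\ref{election} also needs that arc to be obtained from $\psi(\a_i)$ by twisting about $A_i$ and $A_{i+1}$.
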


\begin{proof}[\sc Proof of Theorem {\rm \ref{teogenSPMCG}}]
Lemma \ref{Lema-present} implies that $\Map_o(S)$ is generated by the Dehn twists $\{u_1,...,u_n\}$, together with a set of preimages of the generators $$\{x_0,x_1,z,y_1,...,y_{2g-1}, h_1,...,h_{n-1}\}$$ of $\Map(S')$ through $\In(\cdot)$.  For each Dehn twist $T_{\c}\in\{x_0,x_1,z,y_1,...,y_{2g-1}\}$ we choose $T_{\c'}$ as a preimage, for $\c'\subset S'$ any curve that the inclusion $S\hookrightarrow S'$ maps to $\c$. We call these preimages $\{x_0,x_1,z,y_1,...,y_{2g-1}\}$, abusing notation. For each $h_i$, we choose $b_i$ as a preimage. Hence $\Map_o(S)$ is generated by the mapping classes in Figure \ref{Generators}.

\end{proof}

\begin{figure}[H]
\includegraphics[width=14.7cm]{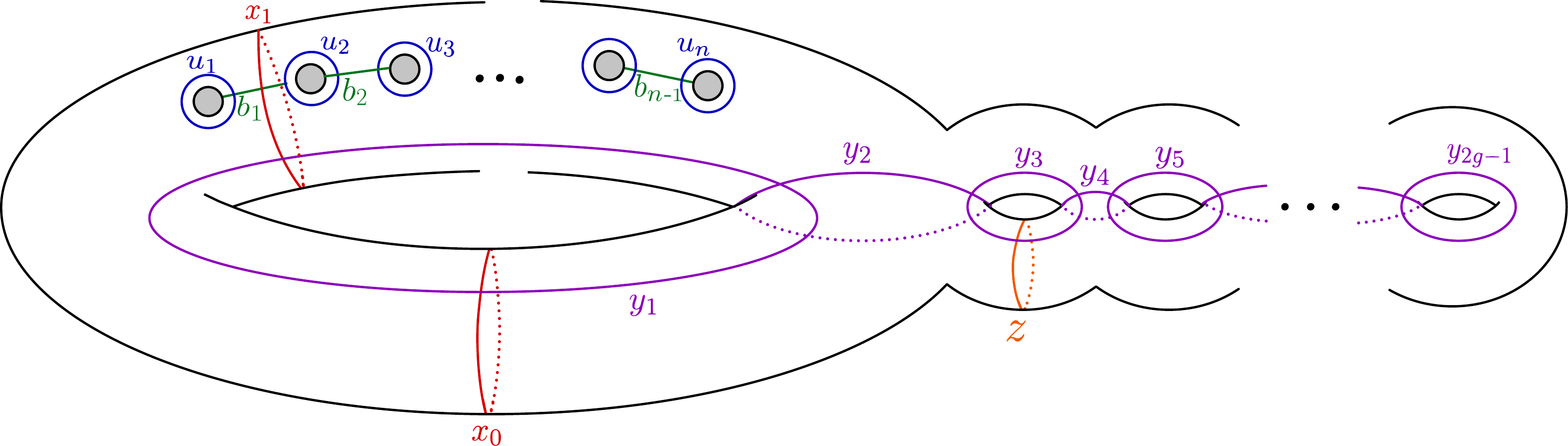}
\centering
\caption{A set of generators of $\Map_o(S)$.}
\label{Generators}
\end{figure}

As mentioned in Section \ref{Sect1}, in order to compute a presentation for $\B_{d,r}(O, Y)$, we also need a set of generators for the edge stabilizers. Particularly, we will need generators for the stabilizers of edges of the form $\{[\Sigma,\id],[\Sigma\cup P_1,\id]\}$ which, according to Lemma \ref{typesstab}, are the groups $\Map_o^{A_1}(S)$. In Sections \ref{Sect4} and \ref{Sect5} we will use these presentations to determine the edge groups $\B_{\{[\Sigma,\id],[\Sigma\cup P_1,\id]\}}$. The piece $P_1$ can attach to $\Sigma$ in two different ways, as will be made precise in Sections \ref{Sect4} and \ref{Sect5}. To account for these differences, we introduce two alternative generating sets that will be more convenient in different parts of our arguments.

Following the same strategy as for calculating generators of $\Map_o(S)$, we can deduce, from \cite[Proposition 2.10]{Labruere-Paris}, the following generating set:

\begin{lem}\label{lemSPMCG-A1}
The boundary swaps $\{b_1,...,b_{n-1}\}$, together with the Dehn twists \\ $\{x_0,x_1,z,y_1,...,y_{2g-1},u_1,...,u_n\}$ in Figure \ref{Gen-1}, generate $\Map_o^{A_1}(S_{g,n+1,0})$.
\end{lem}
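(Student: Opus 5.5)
We mirror the argument of Theorem \ref{teogenSPMCG}, replacing the full capping map by a partial one. Write $S_1:=S_{g,n+1,0}$ with boundary components $A_1,\dots,A_{n+1}$. Cap the $n$ boundary components other than $A_1$ with punctured discs $D_i$ to obtain $S_1':=S_{g,1,n}$, a surface of genus $g$ with one boundary component $A_1$ and $n$ punctures. The same extension-by-$\xi_{\sigma(i)}\circ\xi_i^{-1}$ construction defines a homomorphism $\In:\Map_o^{A_1}(S_1)\to\Map(S_1')$; here $\Map(S_1')$ means mapping classes fixing $A_1$ pointwise and permuting the punctures. Elements of $\Map_o^{A_1}(S_1)$ fix $A_1$ and permute the remaining boundaries while respecting the parametrizations, so the map is well defined.

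Next we establish the analogue of Lemma \ref{SecMap}, namely an exact sequence
\[
1\to U\to \Map_o^{A_1}(S_1)\xrightarrow{\In}\Map(S_1')\to 1 .
\]
Here $U=\langle u_1,\dots,u_n\rangle$ is generated by the twists about the capped boundary curves, excluding $A_1$. For surjectivity, any homeomorphism of $S_1'$ fixing $A_1$ can be isotoped to preserve the union of the discs $D_i$. It can then be isotoped to act on them via the maps $\xi_{\sigma(i)}\circ\xi_i^{-1}$, and restricting to $S_1$ gives a preimage. For the kernel, an element of the kernel induces the trivial permutation, so it lies in $\Map(S_1)$. By iterating the capping sequence \cite[Theorem 3.18]{Farb-Margalit}, exactly as in Lemma \ref{SecMap}, it is a product of the twists $u_i$. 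Note that the twist about $A_1$ is not in the kernel, because $A_1$ is not capped.

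Then we apply Lemma \ref{Lema-present}, using only its generating-set part. By \cite[Proposition 2.10]{Labruere-Paris}, $\Map(S_{g,1,n})$ is generated by the Dehn twists $x_0,x_1,z,y_1,\dots,y_{2g-1}$ and the half twists $h_1,\dots,h_{n-1}$ of Figure \ref{Gen-1}. We lift each Dehn twist $T_\gamma$ to the twist about a curve of $S_1$ mapping to $\gamma$; this is possible since all these curves can be chosen to avoid the $D_i$. We lift each $h_i$ to the boundary swap $b_i$ along the arc $\a_i$, chosen by the normalization of Lemma \ref{lemswap0}. The lemma then shows that these lifts together with $u_1,\dots,u_n$ generate $\Map_o^{A_1}(S_1)$.

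The main obstacle is matching conventions. We must check that the generators in \cite[Proposition 2.10]{Labruere-Paris} for a surface with one boundary component and $n$ punctures are exactly the curves drawn in Figure \ref{Gen-1}, and in particular that no extra generator is needed. If that proposition also includes a twist about a curve parallel to $A_1$, or a different curve family, that generator must be either added to the list or expressed through the others, for instance via a chain or lantern relation. We must also check that the curves $x_0,x_1,z,y_j$ and the arcs $\a_i$ can be drawn disjoint from the capping discs and from $A_1$, so that each $b_i$ is supported in a neighbourhood avoiding $A_1$ and therefore lies in $\Map_o^{A_1}(S_1)$.
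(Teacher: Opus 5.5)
Your proposal is correct and is essentially the paper's argument. The paper only says to repeat the strategy of Theorem~\ref{teogenSPMCG} using \cite[Proposition 2.10]{Labruere-Paris}, and you carry that out: you take the capping sequence of Lemma~\ref{SecMap}, capping only the $n$ boundary components other than $A_1$ to get $S_{g,1,n}$, apply the generating-set part of Lemma~\ref{Lema-present}, lift Dehn twists to Dehn twists and half twists to boundary swaps. The convention check you flag (that the generators of \cite[Proposition 2.10]{Labruere-Paris} are exactly the curves and arcs of Figure~\ref{Gen-1}, with $A_1$ on the genus-zero side of $x_0\cup x_1$) is a point the paper also leaves implicit.
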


\begin{figure}[H]
\includegraphics[width=11.7cm]{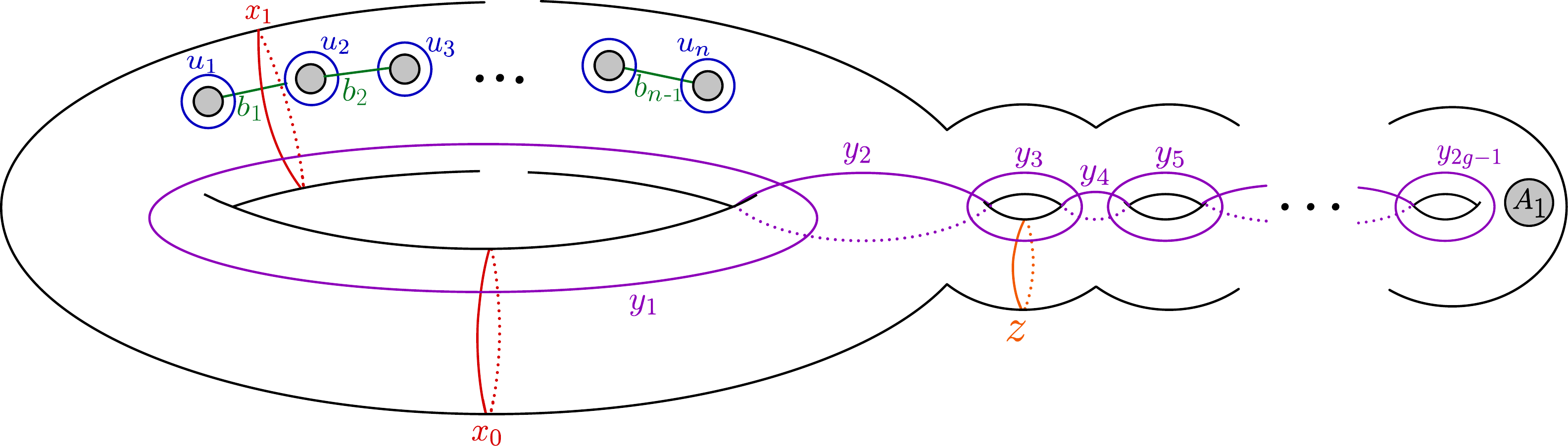}
\centering
\caption{A set of generators of $\Map_o^{A_1}(S_{g,n+1,0})$.}
\label{Gen-1}
\end{figure}

The following generating set of $\Map(S_{g,n+1,0})$ can be deduced from \cite[Proposition 2.10]{Labruere-Paris}: an argument can be made by first capping by a disc one of the boundary components, and then following an analogous reasoning as for deducing Theorem \ref{teogenSPMCG}.

\begin{lem}\label{lemSPMCG-A2}
The boundary swaps $\{b_1,...,b_{n-1}\}$, together with the Dehn twists \\ $\{w_0,w_1,w_2,z,y_1,...,y_{2g-1},u_1,...,u_n\}$ in Figure \ref{Gen-2}, generate $\Map_o^{A_1}(S_{g,n+1,0})$.
\end{lem}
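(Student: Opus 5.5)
The plan is to run the same argument as for Theorem \ref{teogenSPMCG}, with a different short exact sequence adapted to the distinguished boundary $A_1$. Write $S_1:=S_{g,n+1,0}$, with boundary components $A_1$ and the $n$ others, which are the ones bounding the discs $D_i$ in Figure \ref{Gen-2}. Let $S_1'$ be the surface obtained by capping the $n$ boundary components other than $A_1$ with punctured discs. Then $S_1'=S_{g,1,n}$ keeps $A_1$ as its unique boundary component. Since every element of $\Map_o^{A_1}(S_1)$ fixes $A_1$ and hence $\mu_{A_1}$, the capping map $\In$ restricts to a homomorphism
$$\In_1:\Map_o^{A_1}(S_1)\rightarrow \Map(S_{g,1,n}),$$
where the target is the mapping class group fixing $A_1$ pointwise and allowed to permute punctures.

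First I would prove the analogue of Lemma \ref{SecMap}: $\In_1$ is surjective with kernel $U=\langle u_1,\dots,u_n\rangle$.
\begin{itemize}
\item For surjectivity, lift any puncture-permuting class: first permute the $D_i$ via the parametrizations $\xi_i$, then correct on $S_1$.
\item For the kernel, note that a kernel element induces the trivial permutation, so it lies in $\Map(S_1)$. By \cite[Theorem 3.18]{Farb-Margalit}, the kernel of capping the $n$ components is generated by the twists about those components, namely the $u_i$. The twist about $A_1$ is not killed, since $A_1$ is not capped.
\end{itemize}
Then Lemma \ref{Lema-present} says that $\Map_o^{A_1}(S_1)$ is generated by the $u_i$ together with arbitrary preimages of any generating set of $\Map(S_{g,1,n})$.

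Second, I would take a generating set for $\Map(S_{g,1,n})$ from \cite[Proposition 2.10]{Labruere-Paris}. To get the configuration of Figure \ref{Gen-2}, follow the hint in the text: cap $A_1$ with a disc and compare with the generating set of Figure \ref{gen-in} for $S_{g,0,n}$. The extra twists needed to account for the boundary $A_1$ are exactly those along the curves $w_0,w_1,w_2$ that separate or pass near $A_1$. These replace $x_0,x_1$ of the closed configuration, as in the Labruère--Paris chain with one boundary component. The resulting generating set of $\Map(S_{g,1,n})$ consists of the Dehn twists $w_0,w_1,w_2,z,y_1,\dots,y_{2g-1}$ and the half twists $h_1,\dots,h_{n-1}$.

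Third, I would lift the generators:
\begin{itemize}
\item For each Dehn twist $T_\gamma$ choose $T_{\gamma'}$, where $\gamma'\subset S_1$ maps to $\gamma$ under $S_1\hookrightarrow S_1'$. The curves in Figure \ref{Gen-2} are drawn in $S_1$, so this is immediate, and these lifts fix $A_1$.
\item For each $h_i$ choose the boundary swap $b_i$, which is supported in a neighborhood $N_{\alpha_i}$ of an arc joining two capped boundaries. This neighborhood is disjoint from $A_1$, so $b_i\in\Map_o^{A_1}(S_1)$. By Lemma \ref{lemswap0}, the swap $b_i$ is a legitimate preimage of $h_i$.
\end{itemize}
Combining these lifts with the $u_i$ gives the claimed generating set.

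The main obstacle is the second step: matching Labruère--Paris's generators of the one-boundary, $n$-punctured surface with the specific curves $w_0,w_1,w_2$ of Figure \ref{Gen-2}. If their chain does not match literally, I would use a different route. Take their standard set and apply a change of coordinates, i.e. a homeomorphism of $S_{g,1,n}$, to carry their curves to the figure's curves. Conjugating a generating set gives a generating set, so this suffices. A final check is that the arcs $\alpha_i$ avoid $A_1$.
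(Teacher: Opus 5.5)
Your proposal follows essentially the paper's route. The paper only sketches that this generating set follows from \cite[Proposition 2.10]{Labruere-Paris} after capping a boundary component by a disc, combined with the same exact-sequence-and-lifting reasoning as in Theorem \ref{teogenSPMCG}: kernel $\langle u_1,\dots,u_n\rangle$, Lemma \ref{Lema-present}, Dehn twists lifted to Dehn twists and half twists to the swaps $b_i$. That is exactly your three steps.

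As you suspect, all the real content is in your second step, and there the change-of-coordinates fallback will not help if the Labru\`ere--Paris curves sit as in Figure \ref{Gen-1}. Figure \ref{Gen-2} has one more twist and puts $A_1$ on the positive-genus side, so no homeomorphism matches the two configurations. In that case you must actually carry out the disc-capping comparison. This includes checking that $T_{A_1}$ and the disc-pushing maps in the kernel of $\Map(S_{g,1,n})\rightarrow\Map(S_{g,0,n})$ lie in the subgroup generated by $w_0,w_1,w_2,z,y_1,\dots,y_{2g-1},h_1,\dots,h_{n-1}$.
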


\begin{figure}[H]
\includegraphics[width=11.7cm]{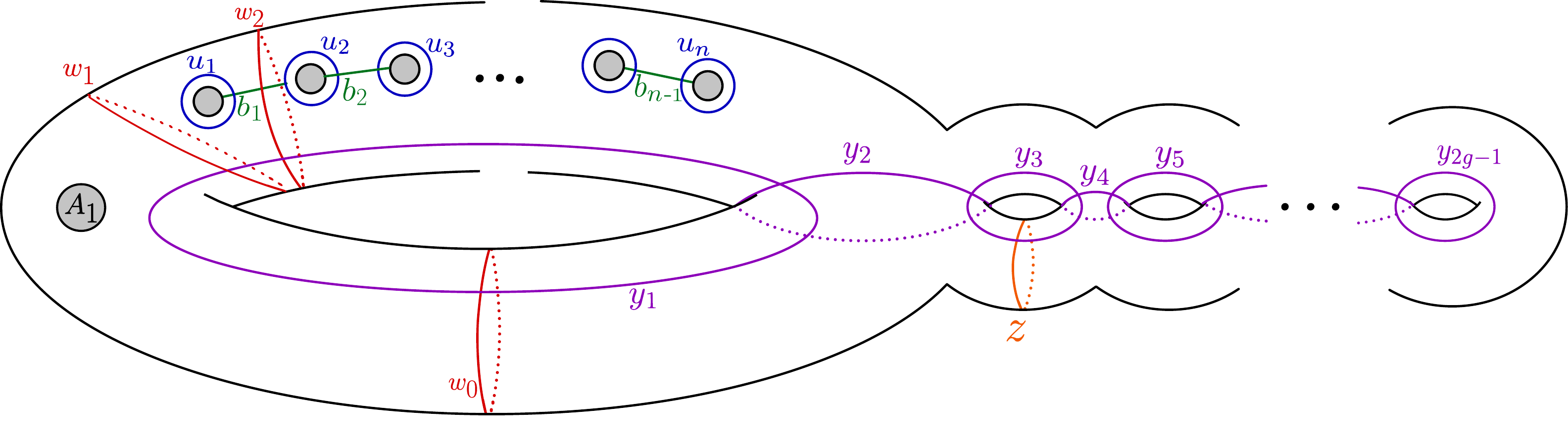}
\centering
\caption{A set of generators of $\Map_o^{A_1}(S_{g,n+1,0})$.}
\label{Gen-2}
\end{figure}

Notice that although the generating sets appear similar, there is a key difference: in Figure \ref{Gen-1}, the boundary component $A_1$ lies in the genus 0 component of $S_{g,n+1,0}\setminus \{x_1,x_0\}$, while in Figure \ref{Gen-2} it lies in the positive genus component of $S_{g,n+1,0}\setminus \{w_2,w_0\}$.

 \subsection{Artin groups and Artin group homomorphisms}\label{2.3} 
The work of Labruère and Paris (see Theorem \cite[Theorem 3.2]{Labruere-Paris}) expresses $\Map(S')$ as the quotient of an Artin group, where the extra relations from the quotient are expressed in terms of elements of Artin groups. This subsection is dedicated to introducing the terminology and tools about Artin groups we need for our presentation of $\Map_o(S)$.
 
An Artin group is given by the following presentation: $$
\left\{ x_1,...,x_n \,\middle|\,
\underbrace{x_ix_jx_i\ldots}_{m_{i,j}\ \text{times}} = \underbrace{x_jx_ix_j\ldots}_{m_{i,j}\ \text{times}}\,
\quad m_{i,j} \in \NN_{\geq 2}\right\}.
$$

An Artin group uniquely defines a labeled graph
$\Gamma$ with vertices $V(\Gamma):=\{x_1,...,x_n\}$, edges $E(\Gamma):=\{[x_{i},x_{j}]\}_{m_{i,j}\geq 3}$, and a labeling $$l:E(\Gamma)\rightarrow \NN_{\geq 3},$$ 
\vspace{-20pt}
$$l([x_{i},x_{j}])=m_{i,j}.$$

Reciprocally, any such graph $\Gamma$ uniquely defines an Artin group $A(\Gamma)$. 

Let $X_\Gamma:=\{x_1,...,x_n\}$ be the generators of $A(\Gamma)$ in the presentation above, and $R_\Gamma$ the set of relations. The \textit{quasi-center}  of $A(\Gamma)$ is the subgroup of elements $a\in A(\Gamma)$ satisfying $a X_\Gamma a^{-1}=X_\Gamma$. The quasi-center is a cyclic group generated by the \textit{fundamental element}, denoted by $\Delta(\Gamma)$ (see \cite[Section 5]{Brieskorn-Saito} and \cite[Notation 1.13]{Deligne}). Certain equalities expressing powers of the fundamental element in terms of the generators of the Artin group are necessary for understanding the relations in Theorem \ref{Teo-L-P}, we present them in Lemma \ref{Fundamental}.

The presentation of the mapping class group of a punctured surface that Labruère and Paris give in Theorem \cite[Theorem 3.2]{Labruere-Paris} is expressed in terms of powers of the fundamental element of \textit{parabolic subgroups}, which are subgroups generated by a subset of the $x_i\in X_\Gamma$. The parabolic subgroups in Theorem \ref{Teo-L-P} are defined by graphs in the families in Figure \ref{Graphs}.

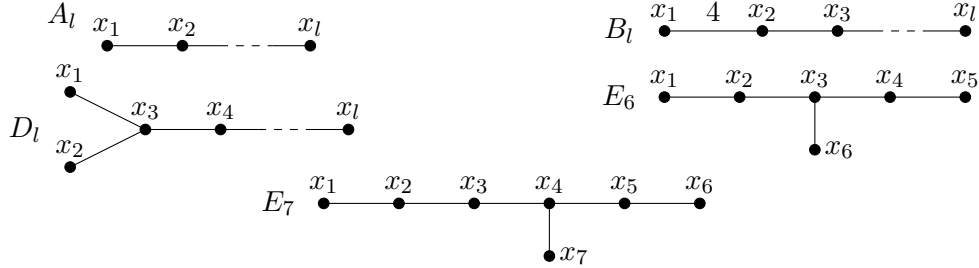
\begin{figure}[H]

\begin{minipage}{0.45\textwidth}
\centering
\begin{tikzpicture}
  \node at (-0.6, 0.4) {$A_l$};
  \filldraw (0,0) circle (2pt) node[above] {$x_1$};
  \filldraw (1,0) circle (2pt) node[above] {$x_2$};
  \filldraw (2.7,0) circle (2pt) node[above] {$x_l$};

  \draw (0,0) -- (1,0);
  \draw (1,0) -- (1.5,0);
  \draw[dashed] (1.5,0) -- (2.2,0);
  \draw (2.2,0) -- (2.7,0);
\end{tikzpicture}
\end{minipage}
\hfill
\begin{minipage}{0.45\textwidth}
\centering
\begin{tikzpicture}
  \node at (-0.6, 0) {$B_l$};
  \filldraw (0,0) circle (2pt) node[above] {$x_1$};
  \filldraw (1.3,0) circle (2pt) node[above] {$x_2$};
  \filldraw (2.3,0) circle (2pt) node[above] {$x_3$};
  \filldraw (4,0) circle (2pt) node[above] {$x_l$};

  \draw (0,0) -- node[above]{4} (1.3,0);
  \draw (1.3,0) -- (2.3,0);
  \draw (2.3,0) -- (2.8,0);
  \draw[dashed] (2.8,0) -- (3.5,0);
  \draw (3.5,0) -- (4,0);
\end{tikzpicture}
\end{minipage}

\begin{minipage}{0.45\textwidth}
\centering
\begin{tikzpicture}
  \node at (-0.6, 0) {$D_l$};
  \filldraw (0,0.5) circle (2pt) node[above] {$x_1$};
  \filldraw (0,-0.5) circle (2pt) node[above] {$x_2$};
  \filldraw (1,0) circle (2pt) node[above] {$x_3$};
  \filldraw (2,0) circle (2pt) node[above] {$x_4$};
  \filldraw (3.7,0) circle (2pt) node[above] {$x_l$};

  \draw (0,0.5) -- (1,0);
  \draw (0,-0.5) -- (1,0);
  \draw (1,0) -- (2,0);
  \draw (2,0) -- (2.5,0);
  \draw[dashed] (2.5,0) -- (3.2,0);
  \draw (3.2,0) -- (3.7,0);
\end{tikzpicture}
\end{minipage}
\hfill
\begin{minipage}{0.45\textwidth}
\centering
\begin{tikzpicture}
  \node at (-0.6, 0) {$E_6$};
  \filldraw (0,0) circle (2pt) node[above] {$x_1$};
  \filldraw (1,0) circle (2pt) node[above] {$x_2$};
  \filldraw (2,0) circle (2pt) node[above] {$x_3$};
  \filldraw (3,0) circle (2pt) node[above] {$x_4$};
  \filldraw (4,0) circle (2pt) node[above] {$x_5$};
  \filldraw (2,-0.7) circle (2pt) node[right] {$x_6$};

  \draw (0,0) -- (1,0);
  \draw (1,0) -- (2,0);
  \draw (2,0) -- (3,0);
  \draw (3,0) -- (4,0);
  \draw (2,0) -- (2,-0.7);
  
\end{tikzpicture}
\end{minipage}

\vspace{-0.3em}

\begin{minipage}{1\textwidth}
\centering
\begin{tikzpicture}
  \node at (-0.6, 0) {$E_7$};
  \filldraw (0,0) circle (2pt) node[above] {$x_1$};
  \filldraw (1,0) circle (2pt) node[above] {$x_2$};
  \filldraw (2,0) circle (2pt) node[above] {$x_3$};
  \filldraw (3,0) circle (2pt) node[above] {$x_4$};
  \filldraw (4,0) circle (2pt) node[above] {$x_5$};
  \filldraw (5,0) circle (2pt) node[above] {$x_6$};
  \filldraw (3,-0.7) circle (2pt) node[right] {$x_7$};

  \draw (0,0) -- (1,0);
  \draw (1,0) -- (2,0);
  \draw (2,0) -- (3,0);
  \draw (3,0) -- (4,0);
  \draw (4,0) -- (5,0);
  \draw (3,0) -- (3,-0.7);
\end{tikzpicture}
\end{minipage}

\caption{Families of graphs.}
\label{Graphs}
\end{figure}

The following expressions are due to Brieskorn and Saito \cite[Lemma 5.8]{Brieskorn-Saito}, we use notation from Labruère and Paris \cite[Proposition 2.8]{Labruere-Paris}.

\begin{lem}\label{Fundamental} The following equalities hold.

\begin{minipage}{0.50\textwidth}
\centering

\begin{itemize}
    \item $\Delta^2(A_{l})=(x_1x_2\dots x_l)^{l+1},$
    \item $\Delta(B_{l})=(x_1x_2\dots x_l)^{l},$
    \item $\Delta(D_{2p})=(x_1x_2\dots x_{2p})^{2p-1},$
\end{itemize}

\end{minipage}
\hfill
\begin{minipage}{0.50\textwidth}
\centering
\begin{itemize}
    \item $\Delta^2(D_{2p+1})=(x_1x_2\dots x_{2p+1})^{4p},$
    \item $\Delta^2(E_6)=(x_1x_2\dots x_6)^{12},$
    \item $\Delta(E_7)=(x_1x_2\dots x_7)^{15}.$
\end{itemize}

\end{minipage}
    
\end{lem}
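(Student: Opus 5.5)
These identities are classical (Brieskorn--Saito), so the plan is to reduce every item to two standard facts about the finite-type Artin groups $A(\Gamma)$ with $\Gamma\in\{A_l,B_l,D_l,E_6,E_7\}$. Let $W(\Gamma)$ be the associated finite Coxeter group, $h$ its Coxeter number, and $N$ the number of reflections (positive roots). Put $c=x_1x_2\cdots x_l$, the lift of a Coxeter element, with the generators ordered as labelled in Figure~\ref{Graphs}.

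The first fact is the Brieskorn--Saito description of the Garside element. $\Delta(\Gamma)$ is the positive lift of the longest element $w_0\in W(\Gamma)$, and it is the least common multiple of the generators in the positive monoid. $\Delta$ generates the quasi-center. Moreover, $\Delta$ itself is central exactly when $w_0=-1$ in the reflection representation. This happens for $B_l$, $D_{2p}$ and $E_7$. For $A_l$, $D_{2p+1}$ and $E_6$, conjugation by $\Delta$ realizes the nontrivial diagram automorphism, and only $\Delta^2$ is central.

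The second fact is the key identity $c^{h}=\Delta^2$ in $A(\Gamma)$. This holds for any ordering of the generators, since all Coxeter elements of a tree diagram are conjugate by cyclic rotations of positive words. To prove it, first show $\ell(c^{h/2})=N$ in the positive monoid when $h$ is even, using the standard decomposition of the positive roots into $h$-element orbits under $c$. When $h$ is even, $c^{h/2}$ maps to $w_0$ and is a reduced positive word for it. Since positive reduced words for the same Coxeter element lift to the same element of the Artin monoid (Matsumoto/Tits), this gives $c^{h/2}=\Delta$. When $h$ is odd, which only happens for $A_l$ with $l$ even, one instead gets $c^{h}=\Delta^2$ directly from the fact that $\Delta^2$ is the positive lift of a reduced expression of length $2N=lh$.

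With these facts the bullet points become bookkeeping with Coxeter numbers:

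\begin{itemize}
\item For $A_l$: $h=l+1$, so $\Delta^2=c^{l+1}$.
\item For $B_l$: $h=2l$, and $w_0=-1$, so $\Delta=c^{h/2}=c^{l}$.
\item For $D_{2p}$: $h=2(2p)-2=4p-2$, so $\Delta=c^{2p-1}$.
\item For $D_{2p+1}$: $h=4p$, and $\Delta$ is not central, so only $\Delta^2=c^{4p}$ is claimed.
\item For $E_6$: $h=12$, so $\Delta^2=c^{12}$.
\item For $E_7$: $h=18$, so $\Delta=c^{9}$.
\end{itemize}

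The $E_7$ case exposes a problem with the statement. It gives $c^{9}=\Delta$, not $c^{15}$ as written. I would follow Labruère--Paris's convention to check whether $x_1\cdots x_7$ there denotes a different product, or whether $15$ is a typo. Taken literally, $\Delta=c^{15}$ would force $c^{6}=1$, which is impossible in a torsion-free group. The case should be corrected before it is cited.

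The main obstacle is the length computation $\ell(c^{h/2})=N$, equivalently that $c^{h/2}$ lifts $w_0$ reducedly. In practice I would not reprove it but cite \cite[Lemma 5.8]{Brieskorn-Saito}, as the statement indicates. Failing that, I would verify each family through the known explicit reduced expressions of $w_0$.
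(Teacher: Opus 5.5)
Your proposal is correct and follows the paper's route. The paper gives no argument beyond citing Brieskorn--Saito (Lemma 5.8, in Labru\`ere--Paris's notation). Your reduction to $c^h=\Delta^2$, with $\Delta=c^{h/2}$ when $w_0=-1$, together with the Coxeter numbers $l+1,\,2l,\,2l-2,\,12,\,18$, is exactly what that lemma records.

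You are also right that the $E_7$ item is misprinted. The correct identity is $\Delta(E_7)=(x_1\cdots x_7)^{9}$, and $15$ is the exponent that belongs to $E_8$ ($h=30$). There is a check that needs neither torsion-freeness nor the correct formula: the degree map $A(\Gamma)\to\ZZ$, $x_i\mapsto 1$, which is well defined because Artin relations have equal length on both sides. $\Delta$ has degree $N=\ell(w_0)$ and $c$ has degree $l$, so $\Delta^k=c^m$ forces $kN=lm$. This is consistent with the other five items and gives $m=63/7=9$ for $E_7$. The misprint carries over to relation (Ri2), resp.\ (R2), in Theorems \ref{Presentation1} and \ref{Presentation2}: the right-hand side $(x_{i,0}y_{i,1}\cdots y_{i,5}z_i)^{15}$ should have exponent $9$.

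If you want your sketch to stand on its own, two steps need repair.

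First, the claim that $c^{h/2}$ maps to $w_0$ holds in two situations:
\begin{itemize}
\item for every ordering when $w_0=-1$ (all exponents are odd, so $c^{h/2}$ acts as $-1$, and the word $c^{h/2}$ of length $lh/2=N$ is then automatically reduced);
\item for a bipartite $c=c_+c_-$.
\end{itemize}
It fails in general. For $c=x_1x_2x_3$ in $A_3$, with $x_i$ acting as the transposition $(i,i+1)$, one gets $c^2\mapsto(13)(24)\neq w_0=(14)(23)$. So for $A_l$, $D_{2p+1}$ and $E_6$ you should prove $\Delta^2=(c_+c_-)^h$ first. Then pass to $x_1\cdots x_l$ by conjugation, using that $\Delta^2$ is central.

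Second, $\Delta^2$ maps to $w_0^2=1$, so it is not the lift of any reduced word. For odd $h$, use instead that $c_+(c_-c_+)^{(h-1)/2}$ and $c_-(c_+c_-)^{(h-1)/2}$ are both reduced words for $w_0$. Hence both lift to $\Delta$, and their product is $(c_+c_-)^h$.

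Finally, it is the full set of $2N$ roots, not the positive roots, that splits into $l$ free $\langle c\rangle$-orbits of size $h$.
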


The following Lemma presents a set of well known relations involving Dehn twists and half twists that can be used to define a homomorphism from certain Artin groups to the mapping class groups of a surface. See \cite[Lemmas 2.1, 2.2, 2.3]{Labruere-Paris}.

\begin{lem}\label{relDehnHalf} Let $x$ and $x'$ be Dehn twists along curves $\c$ and $\c'$. Let $h$ and $h'$ be half twists along arcs $\a$ and $\a'$.

\begin{itemize}
    \item If $\c$ and $\c'$ intersect once then $xx'x=x'xx'$. If $\c$ and $\c'$ are disjoint then $xx'=x'x$.
    \item If $\a$ and $\a'$ only intersect at a common endpoint then $hh'h=h'hh'$. If $\a$ and $\a'$ are disjoint then $hh'=h'h$.
    \item If $\a$ and $\c$ intersect once then $xhxh=hxhx$. If $\a$ and $\c$ are disjoint then $hx=xh$.
\end{itemize}
    
\end{lem}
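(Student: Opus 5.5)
The plan is to reduce every item to one principle, and then to finish the one genuinely non-braid case by a computation in a small model surface. The principle is the conjugation formula: for any mapping class $f$, we have $fT_{\c}f^{-1}=T_{f(\c)}$, and likewise $f h_{\a} f^{-1}=h_{f(\a)}$ for half twists (see \cite[Chapter 3]{Farb-Margalit}).

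The commutation statements come first and are immediate. If $\c\cap\c'=\emptyset$, or $\a\cap\a'=\emptyset$, or $\a\cap\c=\emptyset$, we choose representatives of the two mapping classes supported in disjoint regular neighborhoods of the two curves or arcs. Homeomorphisms with disjoint supports commute, so the classes commute.

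For the braid relations, let $\c,\c'$ meet once. A direct picture in a regular neighborhood of $\c\cup\c'$, which is a one-holed torus, shows that $T_{\c}T_{\c'}(\c)=\c'$. Conjugation then gives $(xx')x(xx')^{-1}=x'$, and this is exactly $xx'x=x'xx'$. The same argument applies to $\a,\a'$ sharing exactly one endpoint. Here the neighborhood of $\a\cup\a'$ is a disc with three punctures, where one checks $h h'(\a)=\a'$. Hence $(hh')h(hh')^{-1}=h'$, which gives $hh'h=h'hh'$.

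The mixed relation $xhxh=hxhx$ is the main obstacle, since it is not a braid relation obtained from a single conjugation. I would first rewrite it as the statement that $x$ commutes with $hxh$. I would then work in the model surface $N$: a closed regular neighborhood of $\a\cup\c$. This $N$ is a sphere with two boundary components and two punctures, namely the endpoints of $\a$, and all four mapping classes are supported in it. By the inclusion homomorphism \cite[Theorem 3.18]{Farb-Margalit}, it suffices to prove the relation in $\Map(N)$.

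The first thing I would try is the Alexander method \cite[Proposition 2.8]{Farb-Margalit}. We fix a small filling system of arcs in $N$: the arc $\a$, an arc $\delta$ joining the two boundary components and crossing $\c$ once, and arcs from each puncture to a boundary component. We then draw the images of this system under $xhxh$ and under $hxhx$, and check that they agree. Both words fix $\partial N$, so this determines the classes.

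If the pictures become unwieldy, my fallback is the branched double cover of $N$ over the two punctures. It is a torus with four boundary components. There $h$ lifts to the Dehn twist about the closed lift $\tilde\a$ of $\a$. The curve $\c$ has two disjoint lifts $\c_1,\c_2$, each meeting $\tilde\a$ once, and $x$ lifts to $T_{\c_1}T_{\c_2}$. In the cover, the relation follows from the first item together with the $B_2$-type relation $(T_{\c_1}T_{\c_2}T_{\tilde\a})^2=(T_{\tilde\a}T_{\c_1}T_{\c_2})^2$, which holds by the braid relations. The Birman–Hilden correspondence then pushes the relation down to $N$.

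This is precisely the content of \cite[Lemmas 2.1, 2.2, 2.3]{Labruere-Paris}, which the statement cites, so in the end one may simply invoke those results.
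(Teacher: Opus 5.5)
The paper gives no argument of its own for this lemma. It simply refers to \cite[Lemmas 2.1, 2.2, 2.3]{Labruere-Paris}, so your closing appeal to those lemmas is exactly the paper's justification, and the proposal is correct on that basis.

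What you add is a self-contained route. For the commutation relations (disjoint supports) and the two braid relations this is the standard argument and is fine: the conjugation formula plus $T_{\gamma}T_{\gamma'}(\gamma)=\gamma'$ in a one-holed torus, resp.\ $hh'(\alpha)=\alpha'$ in a thrice-punctured disc. Rewriting $xhxh=hxhx$ as ``$x$ commutes with $hxh$'' is also correct, as is reducing to $\Map(N)$ with $N\cong S_{0,2,2}$. For the mixed relation, however, the Alexander-method check is only announced, not performed. So your sketch does not yet establish that relation on its own; the citation is doing the work, just as in the paper.

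One correction to the fallback. The double cover of $N$ branched over the two punctures in which $\gamma$ has two lifts is a torus with \emph{two} boundary components, not four. Indeed $\chi(\tilde N)=2\cdot 0-2=-2$. Moreover, each component of $\partial N$ is freely homotopic to $\gamma$ times a loop around one puncture, so it has nontrivial monodromy and lifts to a single circle.

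With that fixed, your computation upstairs is right. Take $X=T_{\gamma_1}T_{\gamma_2}$ and $Y=T_{\tilde\alpha}$. Then $XYXY=YXYX$ is the $A_3$ identity $\sigma_1\sigma_3\sigma_2\sigma_1\sigma_3\sigma_2=\sigma_2\sigma_1\sigma_3\sigma_2\sigma_1\sigma_3$, which follows from the braid and commutation relations alone.

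To push this down to $N$, though, you need the lifting map $\Map(N)\to\Map(\tilde N)$ to be injective. That is the Birman--Hilden property for this particular double cover of an \emph{annulus}. It does hold by the general Birman--Hilden theory for fully ramified double covers. But it is not the disc case treated in \cite{Farb-Margalit}, so it has to be quoted or proved rather than taken for granted.
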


These relations will be used to find homomorphisms $\rho:A(\Gamma)\rightarrow \Map(S')$ for the Artin groups defined by the families of graphs in Figure \ref{Graphs}. The images of the expressions in Lemma \ref{Fundamental} are written as a product of generators of $\Map(S')$, which can be used for finding relations of $\Map(S')$. This is used in the proof of Theorem \ref{Teo-L-P} in \cite{Labruere-Paris}, and we will use it in proving Theorem \ref{ThMain}.

We will abuse notation by identifying each generator $x_i$ with its image under the homomorphism $\rho(\cdot)$. The following Lemma is a direct consequence of \cite[Proposition 2.12]{Labruere-Paris}, \cite[Subsection 9.2]{Farb-Margalit}, and Lemma \ref{relDehnHalf}.

\begin{figure}[htbp]
\centering

\begin{subfigure}{0.48\textwidth}
    \centering
    \includegraphics[width=\linewidth]{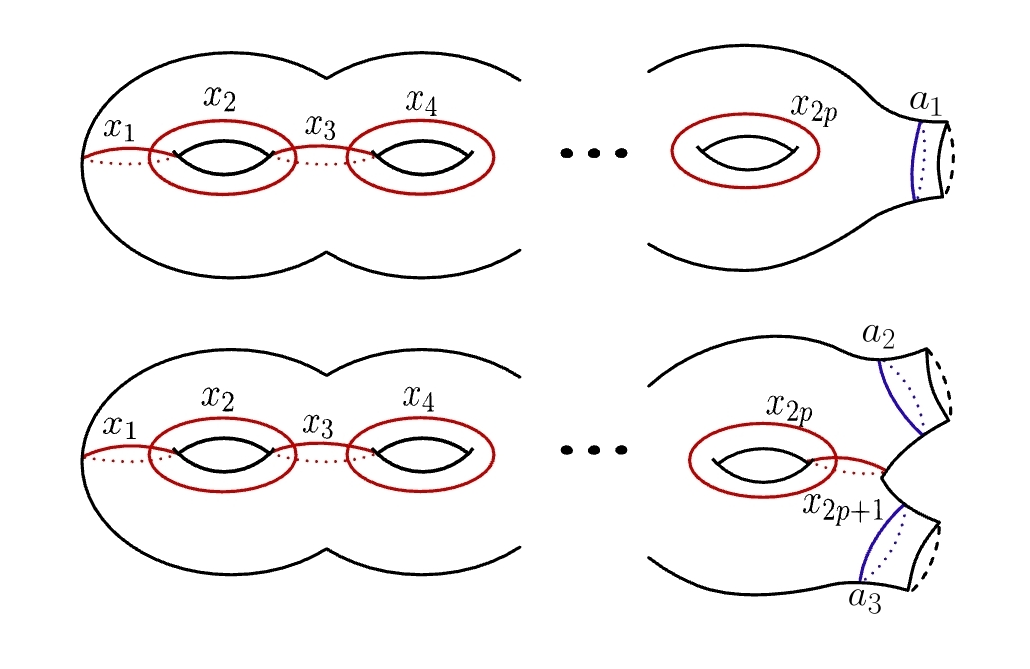}
    \caption{}
    \label{fig:seis-A}
\end{subfigure}
\hfill
\begin{subfigure}{0.48\textwidth}
    \centering
    \includegraphics[width=\linewidth]{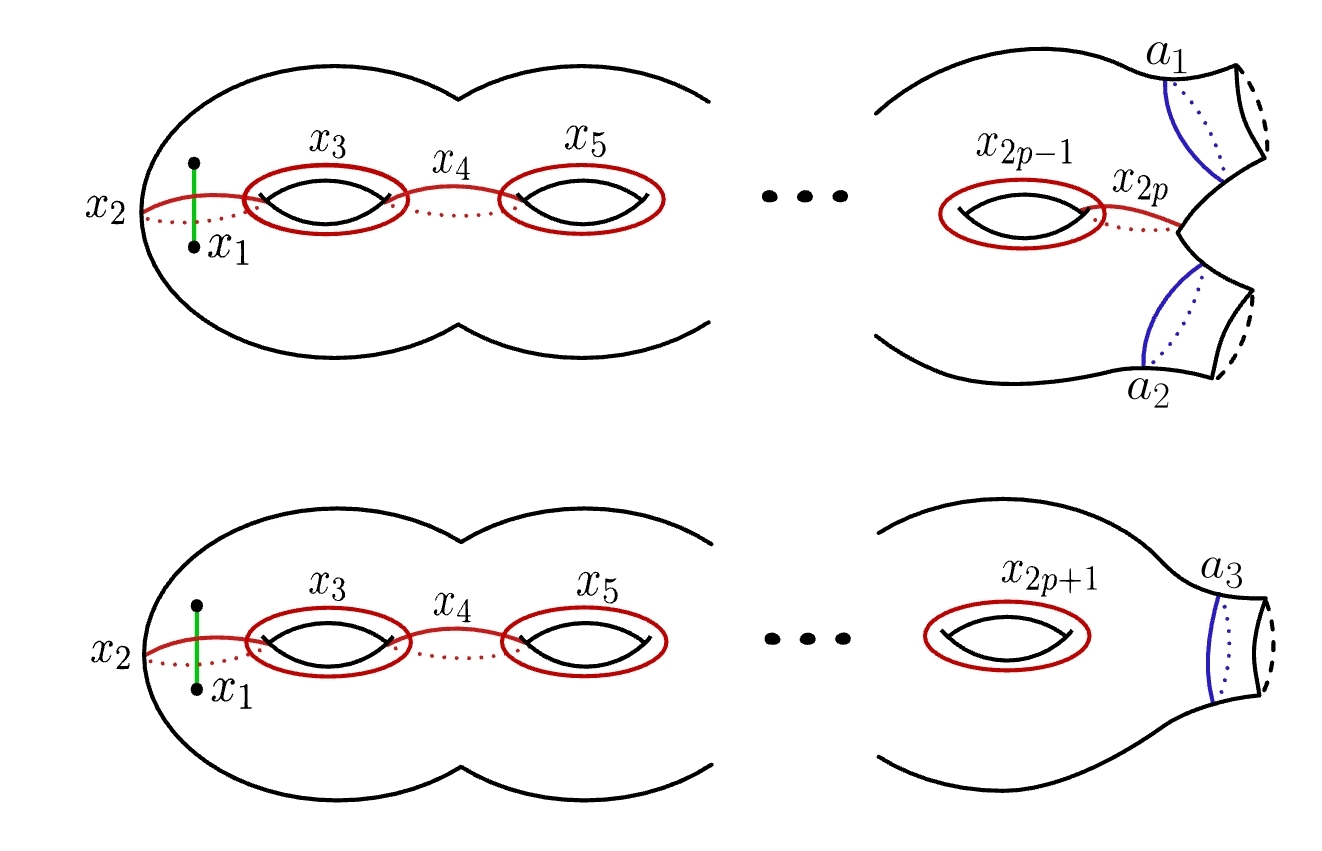}
    \caption{}
    \label{fig:seis-B}
\end{subfigure}

\vspace{0.5cm}

\begin{subfigure}{0.48\textwidth}
    \centering
    \includegraphics[width=\linewidth]{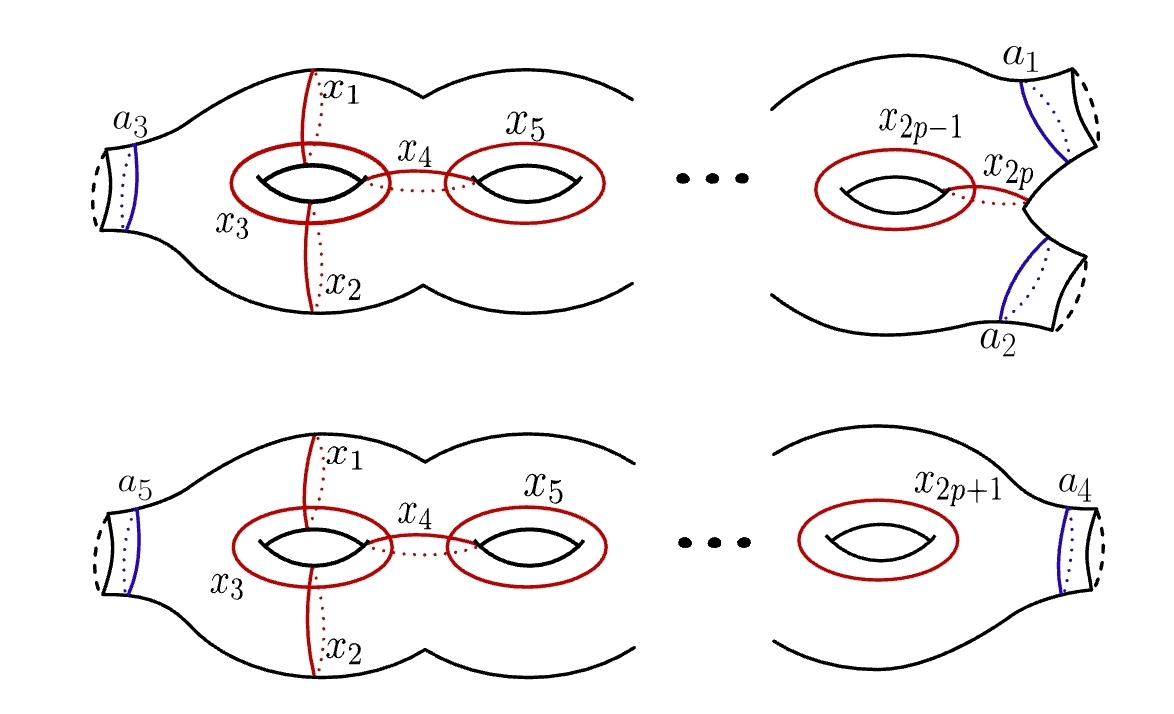}
    \caption{}
    \label{fig:seis-C}
\end{subfigure}
\hfill
\begin{subfigure}{0.5\textwidth}
    \centering
    \includegraphics[width=\linewidth]{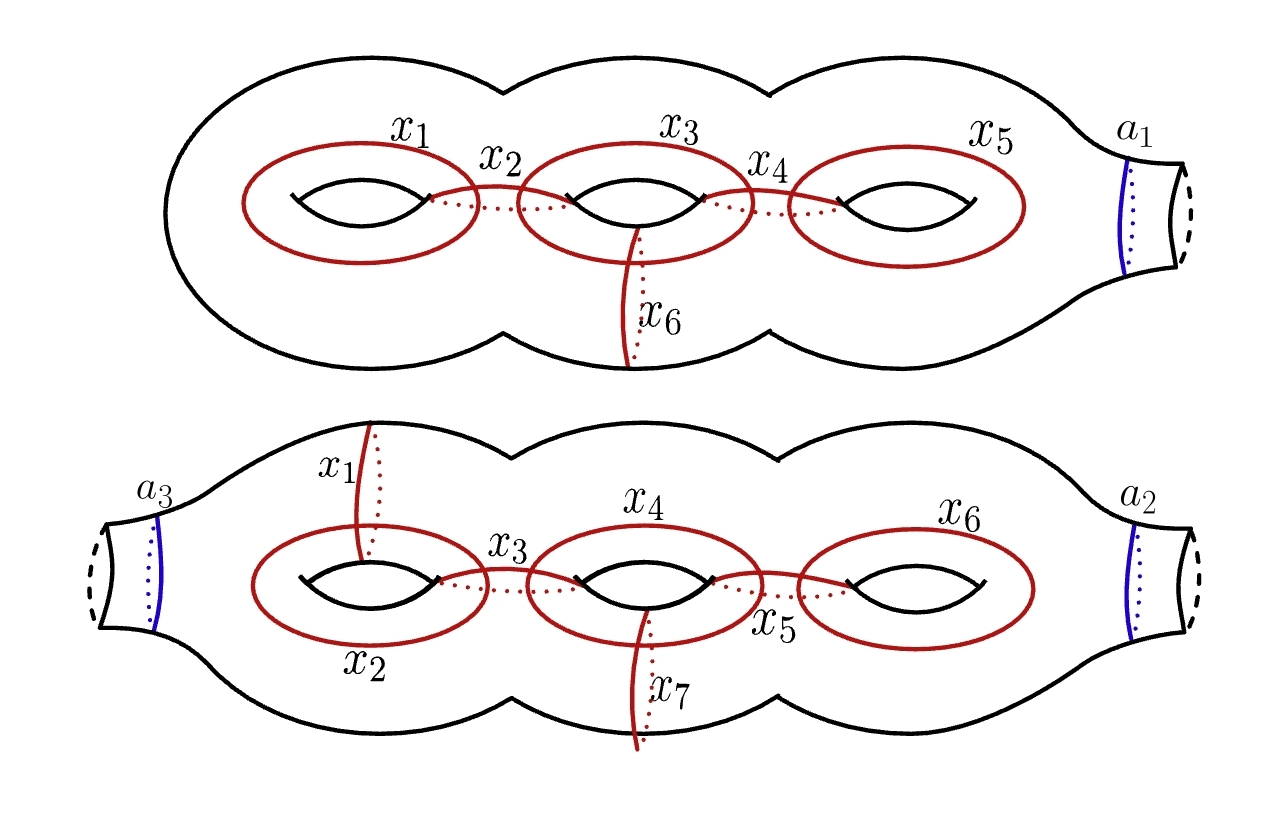}
    \caption{}
    \label{fig:seis-D}
\end{subfigure}

\vspace{0.5cm}

\begin{subfigure}{0.48\textwidth}
    \centering
    \includegraphics[width=\linewidth]{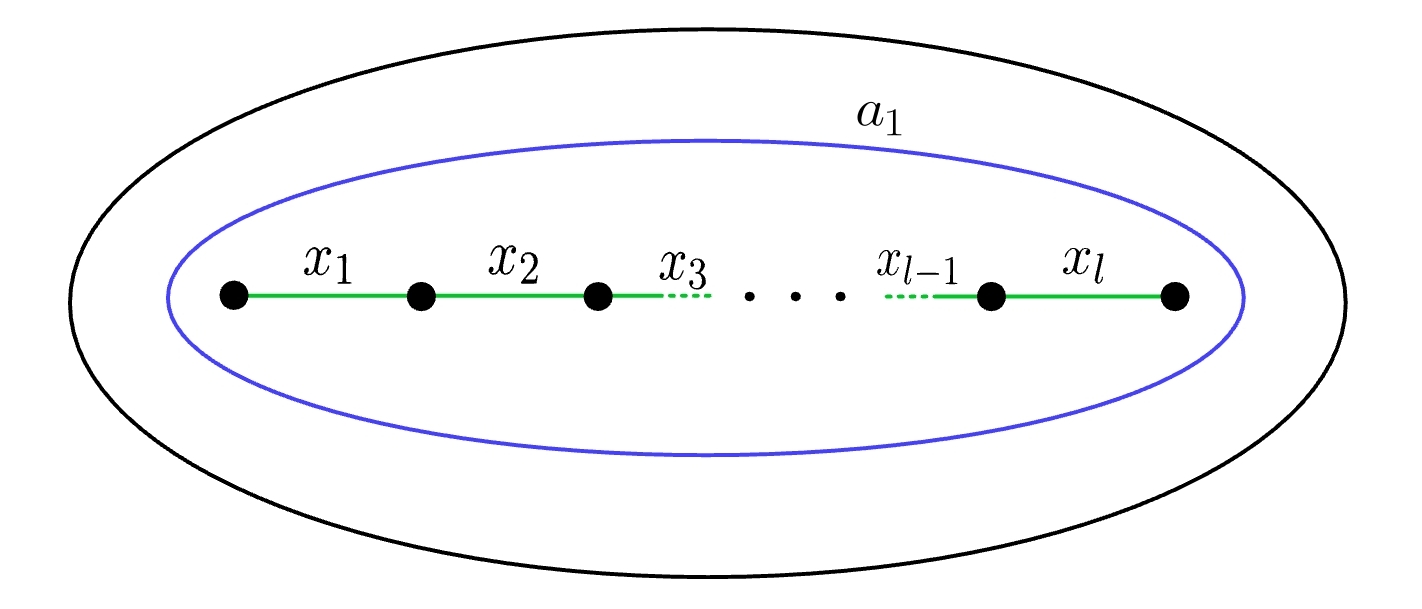}
    \caption{}
    \label{fig:seis-E}
\end{subfigure}
\hfill
\begin{subfigure}{0.48\textwidth}
    \centering
    \includegraphics[width=\linewidth]{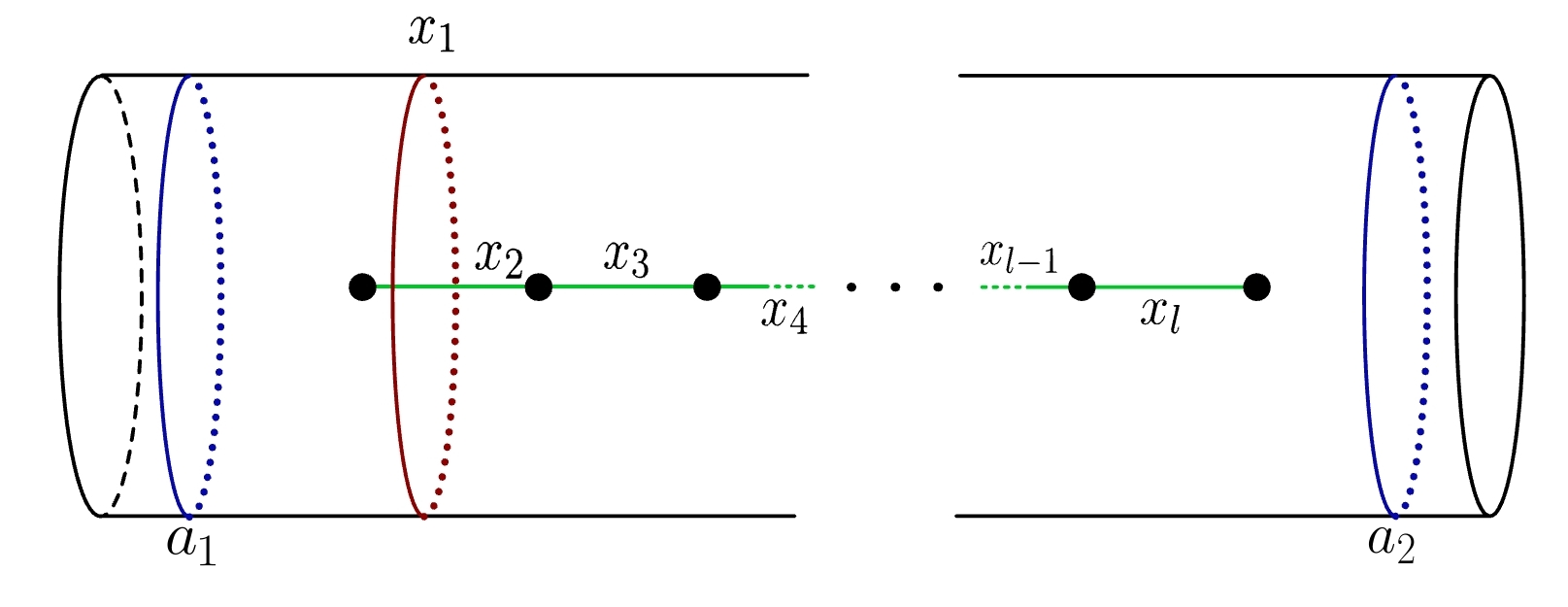}
    \caption{}
    \label{fig:seis-F}
\end{subfigure}

\caption{Homomorphisms $\rho:A(\Gamma)\rightarrow \Map(S')$.}
\label{fig:seis}
\end{figure}

\begin{lem}\label{PVRepresentations} The following maps are homomorphisms:

\begin{enumerate}[label=(\arabic*),leftmargin=*]

\item The map $\rho:A(A_l)\rightarrow \Map(S')$ defined by mapping each generator $x_i$ of $A(A_l)$ to the Dehn twist $x_i$ in Figure \ref{fig:seis}(\subref{fig:seis-A}). Moreover, the following equalities hold, where $a_1$, $a_2$ and $a_3$ are the Dehn twists in Figure \ref{fig:seis}(\subref{fig:seis-A}):

$$\rho(\Delta^4(A_{2p}))=a_1,\quad\rho(\Delta^2(A_{2p+1}))=a_2a_3.$$

\item The map $\rho:A(B_l)\rightarrow \Map(S')$ defined by mapping the generator $x_1$ to the half twist $x_1$ in Figure \ref{fig:seis}(\subref{fig:seis-B}), and each $x_i$ for $i\geq 2$ to the Dehn twist $x_i$ in Figure \ref{fig:seis}(\subref{fig:seis-B}). Moreover, the following equalities hold, where $a_1$, $a_2$ and $a_3$ are the Dehn twists in Figure \ref{fig:seis}(\subref{fig:seis-B}):

$$\rho(\Delta(B_{2p}))=a_1a_2,\quad\rho(\Delta^2(B_{2p+1}))=a_3.$$

\item The map $\rho:A(D_l)\rightarrow \Map(S')$ defined by mapping each generator $x_i$ to the Dehn twist $x_i$ in Figure \ref{fig:seis}(\subref{fig:seis-C}). Moreover, the following equalities hold, where the $a_i$ are the Dehn twists in \ref{fig:seis}(\subref{fig:seis-C}):

$$\rho(\Delta(D_{2p}))=a_1a_2a_3^{p-1},\quad\rho(\Delta^2(D_{2p+1}))=a_4a_5^{2p-1}.$$

\item The maps $\rho:A(E_6)\rightarrow \Map(S')$ and $\rho:A(E_7)\rightarrow \Map(S')$ defined by mapping each generator $x_i$ to the Dehn twist $x_i$ in Figure \ref{fig:seis}(\subref{fig:seis-D}). Moreover, the following equalities hold, where $a_1$, $a_2$ and $a_3$ are the Dehn twists in Figure \ref{fig:seis}(\subref{fig:seis-D}):

$$\rho(\Delta^2(E_6))=a_1,\quad\rho(\Delta(E_7))=a_2a_3^2.$$

\item The map $\rho:A(A_l)\rightarrow\Map(S')$ defined by mapping each generator $x_i$ to the half twist $x_i$ in Figure \ref{fig:seis}(\subref{fig:seis-E}). Moreover, the following equality holds, where $a_1$ is the Dehn twist in Figure \ref{fig:seis}(\subref{fig:seis-E}):

$$\rho(\Delta^2(A_{l}))=a_1.$$

\item The map $\rho:A(B_l)\rightarrow\Map(S')$ defined by mapping the generator $x_1$ to the Dehn twist $x_1$ in Figure \ref{fig:seis}(\subref{fig:seis-F}) and each $x_i$ for $i\geq 2$ to the half twist $x_i$ in Figure \ref{fig:seis}(\subref{fig:seis-F}). Moreover, the following equality holds, where $a_1$ and $a_2$ are the Dehn twists in Figure \ref{fig:seis}(\subref{fig:seis-F}):

$$\rho(\Delta(B_{l})) = a_1^{l-1} a_2. $$

\end{enumerate}

\end{lem}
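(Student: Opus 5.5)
The plan is to treat each item separately, in two steps. First, check that the assignment of generators extends to a homomorphism. Second, verify the stated formula for the image of the relevant power of the fundamental element.

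\textbf{Homomorphism step.} Since $A(\Gamma)$ is presented by braid and commutation relations only, it suffices to check that the images satisfy the Artin relations dictated by the graph. This is a direct application of Lemma \ref{relDehnHalf}, after reading off from Figure \ref{fig:seis} the intersection pattern of the curves and arcs.
\begin{enumerate}
\item In (1), (3) and (4), consecutive curves meet once and non-adjacent ones are disjoint. This gives the braid relation of length $3$ and the commutations.
\item In (5), consecutive arcs share exactly one endpoint and the others are disjoint.
\item In (2) and (6), the single mixed pair consists of an arc and a curve meeting once. This gives the length-$4$ relation $xhxh=hxhx$ required by the label $4$ on the edge of $B_l$. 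All remaining pairs are handled as before.
\end{enumerate}

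\textbf{Fundamental element step.} For the formulas, I will first use Lemma \ref{Fundamental} to rewrite each power of $\Delta$ as a power of the product $x_1\cdots x_l$ of the generators. I then identify the image of this product in $\Map(S')$ using the chain relations from \cite[Subsection 9.2]{Farb-Margalit}.

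For a chain of curves of length $l$, the neighbourhood is a surface with one or two boundary curves. In the even case $l=2p$ there is one boundary curve $a_1$, and $(x_1\cdots x_{2p})^{4p+2}=a_1$. Since $\Delta^4(A_{2p})=(x_1\cdots x_{2p})^{2(2p+1)}$, this gives $\rho(\Delta^4(A_{2p}))=a_1$. In the odd case $l=2p+1$, the relation $(x_1\cdots x_{2p+1})^{2p+2}=a_2a_3$ gives $\rho(\Delta^2(A_{2p+1}))=a_2a_3$.

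Case (5) is the analogous statement for half twists: $(h_1\cdots h_l)^{l+1}$ is the full twist around the disc containing the $l+1$ punctures. This is the classical braid group identity $\Delta^2=$ the boundary twist.

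For the $B$, $D$ and $E$ cases I will not reprove the identities from scratch. Instead I will invoke \cite[Proposition 2.12]{Labruere-Paris}, where exactly these configurations are shown to satisfy the stated equalities. The configurations there are the $D_l$ and $E_6,E_7$ curve configurations in a surface of genus with boundary, and the $B_l$ configurations obtained from a chain ending in a puncture. I will check that the curves labelled $a_i$ in Figure \ref{fig:seis} coincide with the boundary curves of the regular neighbourhoods appearing there.

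For (2) and (6), an alternative self-contained route is available: pass to a branched double cover or use the inclusion $A(B_l)\hookrightarrow A(A_{2l-1})$ or $A(A_l)$ to reduce to the chain case.

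\textbf{Main obstacle.} I expect the main obstacle to be the mixed cases (2) and (6), and in particular the exponent bookkeeping in $\rho(\Delta(B_l))=a_1^{l-1}a_2$. I would handle it by computing $(x_1x_2\cdots x_l)^l$ directly. The plan is to note that $x_2\cdots x_l$ generates a braid group on $l$ punctures inside a disc bounded by $a_2$, and to use that $x_1$ twists around a curve enclosing one puncture. Conjugating stepwise then expresses the product as the boundary twist times $l-1$ copies of $a_1$. This matches the count of punctures enclosed.
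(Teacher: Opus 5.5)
Your proposal is correct and follows the same route as the paper. The paper gives no separate argument: it derives the lemma from Lemma \ref{relDehnHalf} (for the Artin relations), the chain and full-twist relations in \cite{Farb-Margalit}, and \cite[Proposition 2.12]{Labruere-Paris}, and your exponent bookkeeping for (1) and (5) via Lemma \ref{Fundamental} is right.

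If you carry out the direct computation for (6), state the model precisely. The half twists and $x_1$ live in an $l$-punctured annulus cobounded by $a_1$ and $a_2$. In the paper's genus-one application both are the nonseparating curve $x_0$, so $a_2$ does not bound a disc in $S'$. Here $x_1$ is the core of this annulus: it separates the single puncture on the $a_1$ side from the other $l-1$. Read literally as a curve around one puncture in a disc, $x_1$ would have trivial twist in $\Map(S')$.
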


We are finally in a position to introduce the presentation of the mapping class group of a punctured surface calculated by Labruère and Paris. Denote by $\Delta(s_1,s_2,...,s_k)$ to the fundamental element of the parabolic subgroup generated by $ \{s_1,s_2,...,s_k\}$.

\begin{theor}\label{Teo-L-P}\cite[Theorem 3.2]{Labruere-Paris}:
    Let $\Gamma=\Gamma(g,n)$ be the graph drawn in Figure \ref{Graph1}. Then $\Map(S')$ is isomorphic with the quotient of $A(\Gamma)$ by the following relations:\\

\begin{tabular}{p{32pt} c l}

\rm (R1) \quad & $\Delta^4(y_1,y_2,y_3,z)=\Delta^2
(x_0,y_1,y_2,y_3,z)$ & \quad if $g\ge 2$,\\

\rm (R2) \quad & $\Delta^2(y_1,y_2,y_3,y_4,y_5,z)=
\Delta(x_0,y_1,y_2,y_3,y_4,y_5,z)$ & \quad if $g\ge 3$,\\

\rm (R3) \quad & $\Delta(x_0,x_1,y_1,h_1)=
\Delta^2(x_1,y_1,h_1)$ & \quad if $n\ge 2$,\\

\rm (R4) \quad & $\Delta(x_0,x_1,y_1,y_2,y_3,z)=
\Delta^2(x_1,y_1,y_2,y_3,z)$ & \quad if $n\ge 1$ and $g\ge 2$,\\

\rm (R5a) \quad & $ x_0^{2g-n-2}\Delta(x_1,h_1,\dots,
h_{n-1})=\Delta^2(z,y_2,\dots,y_{2g-1})$ & \quad if $g\ge 2$,\\

\rm (R5b) \quad & $x_0^n=\Delta(x_1,h_1,\dots, 
h_{n-1})$ & \quad if $g=1$,\\

\rm (R5c) \quad & $\Delta^4 (x_0,y_1)=\Delta^2(h_1,\dots, 
h_{n-1})$ & \quad if $g=1$.\\

\end{tabular}

\end{theor}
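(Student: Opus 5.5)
The proof I would give follows the strategy of Labruère and Paris: produce a homomorphism from the quotient to $\Map(S')$, show it is surjective, and prove injectivity by induction on the number of punctures $n$ using exact sequences and Lemma \ref{Lema-present}. Throughout, let $\bar A$ denote the quotient of $A(\Gamma)$ by the relations (R1)--(R5c).

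\textbf{Step 1: a homomorphism.} Send each vertex of $\Gamma(g,n)$ to the corresponding generator of $\Map(S')$: the Dehn twists $x_0,x_1,z,y_1,\dots,y_{2g-1}$ and the half twists $h_1,\dots,h_{n-1}$ of Figure \ref{gen-in}. The curves and arcs are arranged so that adjacency in $\Gamma$ corresponds exactly to a single intersection.
\begin{enumerate}
\item The Artin relations, including the $m=4$ relation between $x_1$ and $h_1$, then hold by Lemma \ref{relDehnHalf}, which gives $\rho:A(\Gamma)\to\Map(S')$.
\item Each of (R1)--(R5c) equates fundamental elements of two parabolic subgroups of types $A$, $B$, $D$ or $E$. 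I would identify the type of each parabolic subgroup and read off its image from Lemma \ref{PVRepresentations}. Each side then becomes an explicit product of Dehn twists about boundary curves of a regular neighbourhood of the supporting curves and arcs.
\item In $S'$ some of these boundary curves bound punctured discs, and those twists are trivial. The remaining boundary curves coincide on both sides, so each relation holds and $\rho$ factors through $\bar A$.
\end{enumerate}
Surjectivity of $\bar A\to\Map(S')$ is immediate from the generating set of \cite[Corollary 2.11]{Labruere-Paris}.

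\textbf{Step 2: base cases.} For $n=0$ and $n=1$, with no half twists, I would compare with Matsumoto's presentation of the mapping class group of a surface with one boundary component. That presentation is itself a quotient of an Artin group by relations of the form (R1), (R2). Passing from one boundary component to one puncture kills the boundary twist, which is precisely what (R4) and (R5a)/(R5b) express. Wajnryb's presentation can serve as a cross-check in the low genus cases.

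\textbf{Step 3: induction on $n$.} I would use two exact sequences. The first is
\[
1\to \textnormal{PMap}(S')\to\Map(S')\to\Sym_n\to 1 .
\]
The half twists $h_i$ map onto the Coxeter generators of $\Sym_n$, whose relations lift to the braid relations among the $h_i$ together with the squares $h_i^2$. The second is the Birman exact sequence
\[
1\to\pi_1(S_{g,0,n-1})\to \textnormal{PMap}(S_{g,0,n})\to \textnormal{PMap}(S_{g,0,n-1})\to 1 .
\]
Lemma \ref{Lema-present}, applied first to the Birman sequence and then to the $\Sym_n$ sequence, gives a presentation of $\Map(S')$ from that of $\Map(S_{g,0,n-1})$ (the induction hypothesis) and a presentation of the surface group.

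To prove injectivity I would construct an inverse map from this presentation to $\bar A$. Each point-push generator of $\pi_1$ maps to an explicit word, for instance a product of the form $h_{n-1}^2$ conjugated by Artin generators. It then remains to check that every relation of Lemma \ref{Lema-present} holds in $\bar A$.

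\textbf{Main obstacle.} The hard part is verifying the relations $R_1$ and $R_2$ of Lemma \ref{Lema-present} inside $\bar A$. For $R_2$ this means expressing the conjugates of point-pushes by each generator as words in the pushes, using only the Artin relations and (R1)--(R5c). For $R_1$ it means lifting (R5a)--(R5c) for $n-1$ punctures, which produces a correction term in $\pi_1$ that has to be identified. My first approach would be to exploit the quasi-centrality of the fundamental elements, since $\Delta X\Delta^{-1}=X$: this converts most conjugation relations into Artin relations. The genuinely new identities should then reduce to the commutation of the boundary twists $\Delta^2(x_1,y_1,h_1)$ and $\Delta(x_1,h_1,\dots,h_{n-1})$ with the pushes, which is exactly where (R3) and (R5) get used.
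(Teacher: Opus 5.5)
The paper gives no proof of this statement. It is quoted from Labru\`ere--Paris \cite[Theorem 3.2]{Labruere-Paris} and used only as the input presentation of the quotient in Lemma~\ref{Lema-present} for Theorem~\ref{ThMain}. Your proposal therefore has to stand as an independent reproof, and as set up its induction does not close.

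\textbf{The inductive step.} In Step 3 you apply Lemma~\ref{Lema-present} to the Birman sequence $1\to\pi_1(S_{g,0,n-1})\to\textnormal{PMap}(S_{g,0,n})\to\textnormal{PMap}(S_{g,0,n-1})\to1$. This needs a presentation of the \emph{pure} group $\textnormal{PMap}(S_{g,0,n-1})$. Your inductive hypothesis instead gives a presentation of $\Map(S_{g,0,n-1})$, whose generators $h_1,\dots,h_{n-2}$ do not even lie in the pure group. Passing from one to the other is a Reidemeister--Schreier computation over a normal subgroup of index $(n-1)!$. Its output is no longer of Artin type, so you would have to carry a separate pure statement through the induction, which you do not formulate.

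The alternative version of Birman's sequence does consume $\Map(S_{g,0,n-1})$: namely $1\to\pi_1(S_{g,0,n-1})\to\Map(S')_{p}\to\Map(S_{g,0,n-1})\to1$, with $\Map(S')_p$ the stabiliser of the $n$-th puncture. But this only presents a subgroup of index $n$, which for $n\ge 3$ is not normal, and Lemma~\ref{Lema-present} gives no way to pass from it up to $\Map(S')$. The $\Sym_n$ sequence does not rescue either route, since it only helps once you already have a presentation of $\textnormal{PMap}(S')$, which is exactly what is missing. So the step where the punctures become permutable lacks a tool.

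\textbf{The core verification.} What you call the main obstacle is the whole content of the theorem, and it is left as a heuristic. Quasi-centrality only says that $\Delta(\Gamma')$ permutes the standard generators of the parabolic subgroup $A(\Gamma')$. It gives no control over conjugates of point-pushes by $x_0,x_1,z,y_j,h_i$, and those conjugates are what the relations $R_2$ consist of. The correction terms in the lifted relations $R_1$ are also never identified.

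\textbf{The base cases.} In Step 2, the claim that (R4) and (R5a)/(R5b) express exactly the vanishing of the boundary twist is an unproved Tietze equivalence. For $n=1$ you must eliminate the extra generator $x_1$. You must then show that, modulo Matsumoto's relations, the Artin relations involving $x_1$ together with (R4) and (R5a)/(R5b) are equivalent to $\Delta^4(x_0,y_1,\dots,y_{2g-1})=1$. The case $n=0$ needs a further step (forgetting the puncture, or capping by a disc) that you do not mention.
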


\begin{figure}[H]
\begin{tikzpicture}
  \node at (-1.4, 0) {$\Gamma(g,n)$};
  \filldraw (0,0) circle (2pt) node[left] {$y_1$};
  \filldraw (1,0) circle (2pt) node[above] {$y_2$};
  \filldraw (2,0) circle (2pt) node[above] {$y_3$};
  \filldraw (3,0) circle (2pt) node[above] {$y_4$};
  \filldraw (4.4,0) circle (2pt) node[above] {$y_{2g-1}$};
  \filldraw (0,-1) circle (2pt) node[left] {$x_0$};
  \filldraw (0,1) circle (2pt) node[above] {$x_1$};
  \filldraw (1,1) circle (2pt) node[above] {$h_1$};
  \filldraw (2,1) circle (2pt) node[above] {$h_2$};
  \filldraw (3.4,1) circle (2pt) node[above] {$h_{n-1}$};
  \filldraw (2,-1) circle (2pt) node[left] {$z$};
  \draw (0,0) -- (1,0);
  \draw (1,0) -- (2,0);
  \draw (2,0) -- (3,0);
  \draw (3,0) -- (3.3,0) ;
  \draw[dashed] (3.3,0) -- (4.1,0);
  \draw (4.1,0) -- (4.4,0);
  \draw (0,1) -- node[above]{4} (1,1);
  \draw (1,1) -- (2,1);
  \draw (2,1) -- (2.3,1);
  \draw[dashed] (2.3,1) -- (3.1,1);
  \draw (3.1,1) -- (3.4,1);
  \draw (0,0) -- (0,1);
  \draw (0,0) -- (0,-1);
  \draw (2,0) -- (2,-1);
\end{tikzpicture}
\centering
\caption{}
\label{Graph1}
\end{figure}

\vspace{-19pt}

From Theorem \ref{Teo-L-P}, we want to produce a presentation of the boundary permuting mapping class group using Lemma \ref{Lema-present} applied to the short exact sequence:
$$1\rightarrow U_{S}\rightarrow\Map_o(S)\overset{\In}{\rightarrow}\Map(S')\rightarrow1,$$ 

Lemma \ref{Lema-present} implies that $\Map_o(S)$ has a presentation with generators $$X_{\Map_o}=\{ x_0,x_1,z,y_1,...,y_{2g-1},b_1,...,b_{n-1},u_1,...,u_n\}.$$ and relations $R_{U}\cup R_1\cup R_2,$ with $$R_1=\{\tilde{r}w_r^{-1}:r\in R_{\Map}\}\textnormal{,}$$ $$R_2=\{\tilde{x}y\tilde{x}^{-1}v(x,y)^{-1}:\tilde{x}\in \tilde{X}_{\Map} \textnormal{ and } y\in X_{U}\}.$$

Computing $R_1$ and $R_2$ constitutes the core of the proof of Theorem \ref{ThMain}. Some of the elements in $R_{\Map}$ are relations between powers of the fundamental element whose images under the homomorphisms in Lemma \ref{PVRepresentations} include half twists as factors. So, before proving Theorem \ref{ThMain} we need to adapt these homomorphisms so that the half twists are substituted by boundary swaps. We also need to verify that boundary swaps satisfy some of the relations that half twists do, so that boundary swaps together with Dehn twists also define homomorphisms from Artin groups into $\Map_o(S)$.

\subsection{Artin group homomorphisms in the presence of boundary swaps} We study two relations involving the boundary swaps $\{b_1,...,b_{n-1}\}$ in $X_{\Map_o}$. The fact that these relations are satisfied are crucial for the homomorphisms later in this subsection.

\begin{lem}\label{lemswap1}
Any $b_i,b_{i+1}\in \{b_1,...,b_{n-1}\}$ satisfy: $$b_ib_{i+1}b_i=b_{i+1}b_ib_{i+1}.$$
    
\end{lem}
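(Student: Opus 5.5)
We prove the braid relation by reducing it to a statement about mapping classes supported in a subsurface of genus zero, and then checking it with the Alexander method, in the same spirit as Lemma \ref{lemswap0}.

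First, localize the problem. The boundary swaps $b_i$ and $b_{i+1}$ are supported in the regular neighbourhoods $N_{\a_i}$ and $N_{\a_{i+1}}$, where $\a_i$ and $\a_{i+1}$ are the arcs of Figure \ref{election}. These arcs share the boundary component $A_{i+1}$ as an endpoint and are otherwise disjoint. So both words $b_ib_{i+1}b_i$ and $b_{i+1}b_ib_{i+1}$ are supported in a regular neighbourhood $N$ of $\a_i\cup\a_{i+1}\cup A_i\cup A_{i+1}\cup A_{i+2}$. This $N$ is a sphere with four boundary components: $A_i$, $A_{i+1}$, $A_{i+2}$, and an outer curve $C$. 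Both words fix $C$ pointwise, so it suffices to prove the equality in $\Map_o(N)$ relative to $C$. Set $\phi=(b_ib_{i+1}b_i)^{-1}(b_{i+1}b_ib_{i+1})$; we want to show $\phi=\id$.

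Second, show that $\phi$ lies in the kernel of $\In$. Since $\In$ is a homomorphism, $\In(b_i)=h_i$, and half twists along arcs meeting only at a common endpoint satisfy the braid relation (Lemma \ref{relDehnHalf}), we get $\In(\phi)=1$. By Lemma \ref{SecMap}, $\phi$ is therefore a product of boundary twists $u_i^{k_i}u_{i+1}^{k_{i+1}}u_{i+2}^{k_{i+2}}$. In particular $\phi$ permutes no boundary components.

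Third, show that all the exponents $k_j$ vanish. The plan is to track explicitly the images of the arcs $\a_i$ and $\a_{i+1}$, together with the arc $\b$ used in the proof of Lemma \ref{lemswap0}, under both words, using the pictures defining $b_i(\a_i)$ in Figure \ref{election}. Boundary twists act on arcs ending at $A_j$ by spinning their endpoints, so comparing the images of an arc from each boundary component to $C$ determines every $k_j$. The key point is that $b_i$ respects the parametrizations $\mu_j$: each boundary swap carries $A_i$ to $A_{i+1}$ by the "same" rigid map. Hence both sides send the endpoint of an arc on $A_i$ to the corresponding point of $A_{i+2}$ with the same winding, and so on. Concretely, I would take arcs $\delta_j$ from a basepoint on $C$ to the marked point $\mu_j(1)$ on $A_j$. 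I would then check that both words send $\delta_i$ to the same arc ending at $A_{i+2}$, and similarly for $\delta_{i+1}$ and $\delta_{i+2}$. The Alexander method then gives $\phi=\id$.

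The main obstacle is this third step: controlling the boundary twisting introduced by the boundary swaps. The cleanest route I would try first is a model. Realize $N$ as a round disc with three small round holes placed symmetrically on a line, and realize $b_i$ as a rigid rotation by $\pi$ of a subdisc containing $A_i$ and $A_{i+1}$, tapered off near its boundary. With this choice one should check that the result is the boundary swap of Figure \ref{election}, that is, that it sends $\a_i$ to the prescribed arc; by Lemma \ref{lemswap0}, this identifies it with $b_i$. In this model the holes are carried rigidly, and the braid relation reduces to the classical braid relation for rotations of a disc with marked points. Because the holes are moved by translations composed with rotations that are undone in the same way on both sides, no extra boundary twisting appears, which forces $k_j=0$.
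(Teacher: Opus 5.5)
\textbf{Where the proposal falls short.} Your first two steps are the paper's. $\In$ sends both words to the same element, so $\phi\in\ker\In$, and locality reduces $\phi$ to $u_i^{k_i}u_{i+1}^{k_{i+1}}u_{i+2}^{k_{i+2}}$. The gap is in the third step, which is the only substantive one, and neither argument you give for it works.

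By $\In$ alone, $b_i$ is determined only up to a factor $u_i^{a}u_{i+1}^{c}$, and the braid relation is sensitive to that factor. Suppose $b_i,b_{i+1}$ satisfy it. Using (Ci), (Di) and $u_ib_{i+1}=b_{i+1}u_i$, one gets
$$b_i(b_{i+1}u_{i+1})b_i=b_ib_{i+1}b_i\,u_i,\qquad (b_{i+1}u_{i+1})b_i(b_{i+1}u_{i+1})=b_{i+1}b_ib_{i+1}\,u_iu_{i+1},$$
so $b_i$ and $b_{i+1}u_{i+1}$ do not satisfy it. More generally, replacing $b_i,b_{i+1}$ by $b_iu_i^pu_{i+1}^q$ and $b_{i+1}u_{i+1}^ru_{i+2}^s$ preserves the relation if and only if $p+q=r+s$. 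Yet $b_{i+1}u_{i+1}$ is also a boundary swap along $\alpha_{i+1}$: it is supported in $N_{\alpha_{i+1}}$ and respects the parametrizations. So the inference ``$b_i$ respects the $\mu_j$, hence both sides produce the same winding'' is a non sequitur. The vanishing of the $k_j$ can only come from the specific normalization of $b_i(\alpha_i)$ fixed in Figure \ref{election}, that is, from actually tracking $\alpha_i,\alpha_{i+1}$ (or your $\delta_j$) under both words. You announce this computation but do not carry it out; it is exactly what the paper does in Figure \ref{swap1}.

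\textbf{The rigid model does not close the gap.}
\begin{itemize}
\item If the parametrizations are translates of one another, a rotation by $\pi$ of a subdisc carries $\mu_i$ to $\mu_{i+1}$ precomposed with a rotation by $\pi$. So it is not even in $\Map_o$ unless you choose the $\mu_j$ alternately.
\item More importantly, it is not the paper's $b_i$. The square of a tapered rotation by $\pi$ is the Dehn twist about $\partial N_{\alpha_i}$ alone. By Lemma \ref{PVRepresentations2}(2) with $l=1$, the paper's swap satisfies $b_i^2=a_1u_i^{-1}u_{i+1}^{-1}$. So the identification you postpone would fail.
\item Saying ``the braid relation reduces to the classical one for a disc with marked points'' only reproves your second step. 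Collapsing holes to points forgets exactly the framing data that the $k_j$ measure, so ``no extra boundary twisting appears'' is asserted, not shown.
\end{itemize}

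A model argument can be made rigorous with a framing invariant. On a pure element, the total rotation of each hole's frame along an isotopy is $\pm 2\pi k_j$. Both words give each hole the same total, provided $b_i$ and $b_{i+1}$ rotate the pair they exchange by the same total amount. But verifying that proviso for the $b_i$ of Figure \ref{election} is again the explicit check you have left out.
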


\begin{proof}[\sc Proof of Lemma {\rm \ref{lemswap1}}] 
The image of the boundary swaps under the $\In(\cdot)$ map are half twists, which satisfy the equality. Hence, when dealing with boundary swaps, if both sides of the equality are different, the difference $b_ib_{i+1}b_i(b_{i+1}b_ib_{i+1})^{-1}$ must be a product of the Dehn twists $\{u_1,...,u_{n}\}$ generating $U_{S}$, as $U_{S}$ is the kernel of $\In(\cdot)$. 

Since $b_ib_{i+1}b_i(b_{i+1}b_ib_{i+1})^{-1}$ fixes every arc $\alpha_1,...,\a_{i-1},\a_{i+2},...,\a_{n-1}$ in Figure \ref{election}, we deduce that the product must be of the form $u_{i}^{k_i}u_{i+1}^{k_{i+1}}u_{i+2}^{k_{i+2}}$, since those are the only twists in $\{u_1,...,u_{n}\}$ that fix those arcs. Hence, in order to determine the product $b_ib_{i+1}b_i(b_{i+1}b_ib_{i+1})^{-1}$, it suffices to study the action of $b_ib_{i+1}b_i(\cdot)$ and $b_{i+1}b_ib_{i+1}(\cdot)$ on the arcs $\a_i$ and $\a_{i+1}$, which is done in Figure \ref{swap1}.

\begin{figure}[H]
\hspace{-0.6cm}\includegraphics[width=9cm]{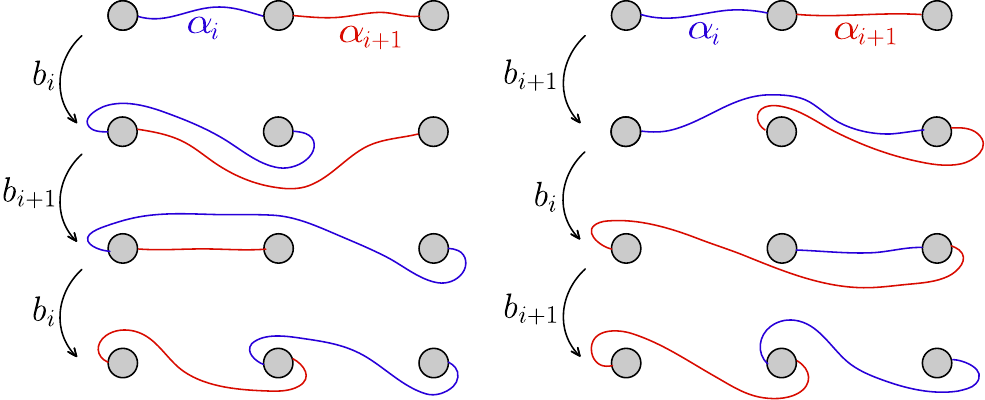}
\centering
\caption{}
\label{swap1}
\end{figure}

Because the result is the same, we conclude the equality in the statement of the lemma.

\end{proof}

\begin{lem}\label{lemswap2}
Let $b_i\in\{b_1,...,b_{n-1}\}$ be a boundary swap along an arc $\a\subset S$ and $x$ a Dehn twist along a curve $\c\subset S$ with $i(\a,\c)=1$. Then $b_i$ and $x$ satisfy: $$b_ixb_ix=xb_ixb_i.$$
\end{lem}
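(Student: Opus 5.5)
I will follow the strategy of Lemma \ref{lemswap1}. Both sides $b_ixb_ix$ and $xb_ixb_i$ map under $\In(\cdot)$ to $h_ixh_ix$ and $xh_ixh_i$, where $h_i=\In(b_i)$ is a half twist along the image of $\a$ and $x$ is the Dehn twist along the image of $\c$. These images still intersect once, so the third item of Lemma \ref{relDehnHalf} gives $\In(b_ixb_ix)=\In(xb_ixb_i)$. By Lemma \ref{SecMap}, the element
$$\delta:=b_ixb_ix\,(xb_ixb_i)^{-1}$$
therefore lies in $U_S=\langle u_1,\dots,u_n\rangle$. It remains to show that $\delta$ is trivial, i.e.\ that all the exponents $k_j$ in $\delta=\prod_j u_j^{k_j}$ vanish.

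\textbf{Localizing $\delta$.} Both $b_i$ and $x$ are supported in a regular neighborhood $N$ of $\a\cup\c\cup A_i\cup A_{i+1}$, where $A_i,A_{i+1}$ are the boundary components joined by $\a$. Hence $\delta$ acts trivially on every arc disjoint from $N$, in particular on the arcs $\a_j$ of Figure \ref{election} with $j\neq i-1,i,i+1$. As in Lemma \ref{lemswap1}, this forces $\delta=u_i^{k_i}u_{i+1}^{k_{i+1}}$: only the boundary twists $u_i,u_{i+1}$ around the components met by $N$ survive.

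\textbf{Determining the exponents.} It suffices to compute the action of $\delta$ on a test arc. I would take an arc $\beta$ from $A_i$ to some third boundary component, or, if $n=2$, an arc from $A_i$ back to itself around a handle disjoint from $\c$, and similarly for $A_{i+1}$. The twist $u_i$ acts on such an arc by spinning its endpoint around $A_i$, so its exponent is detected by the winding of the image near $A_i$. Concretely, I would track $\beta$ under $b_ixb_ix$ and under $xb_ixb_i$ with pictures like Figure \ref{swap1}. An alternative is to reduce to a model computation: the neighborhood $N$ is a genus-one surface (from $\c$ and $\a$ intersecting once) with the two swapped boundary components. Inside $N$, the relation $b_ixb_ix=xb_ixb_i$ is the lift of the $B_2$-type Artin relation, which can be checked by the Alexander method on a small family of arcs in $N$.

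\textbf{Main obstacle.} The delicate step is this last check, because the twisting of boundary parametrizations is invisible after capping. One must carefully follow how $b_i$ rotates $A_i$ and $A_{i+1}$, which is fixed by the choice of $b_i(\a_i)$ in Figure \ref{election}. The symmetry of the relation in $b_i$ and $x$ should make the two sides accumulate the same boundary rotation. I would verify this by drawing the images of $\a$ and of a second arc meeting $\c$ under both words and comparing. If they agree, the Alexander method, together with Lemma \ref{lemswap0}, gives $\delta=1$.
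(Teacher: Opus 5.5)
Your setup matches the paper's. $\In(\delta)=1$ by Lemma \ref{relDehnHalf}, so $\delta=b_ixb_ix(xb_ixb_i)^{-1}\in U_S$, and one expects $\delta=u_i^{k}u_{i+1}^{l}$. But that is the easy half. The content of the lemma is that $k=l=0$, and your proposal never shows it. You only list arcs you \emph{would} track and end with ``if they agree''. The remark that ``symmetry'' makes both sides accumulate the same boundary rotation is a heuristic, not an argument.

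The paper settles exactly this step by tracking the single arc $\a$ under both words (Figure \ref{swap2}). One arc suffices, because $u_i^ku_{i+1}^l$ fixes $\a$ only when $k=l=0$, so your auxiliary arcs $\beta$ are unnecessary.

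Two claims in your sketch are also wrong.
\begin{enumerate}
\item A regular neighbourhood $N$ of $\a\cup\c\cup A_i\cup A_{i+1}$ with $|\a\cap\c|=1$ is not genus one; that holds for two \emph{curves} meeting once. Here $\chi(N)=-2$ and $N$ has four boundary components $A_i,A_{i+1},P,Q$. So $N$ is a four-holed sphere, and $\c$ separates $\{A_i,P\}$ from $\{A_{i+1},Q\}$.
\item The outcome does not depend on the choice of $b_i(\a_i)$ in Figure \ref{election}. Put $b'=b_iw$ with $w=u_i^au_{i+1}^c$. Then $w$ commutes with $x$ and $b_i^{-1}wb_i=u_{i+1}^au_i^c$. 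So $b'xb'x$ and $xb'xb'$ are the old words right-multiplied by $(u_iu_{i+1})^{a+c}$, and $\delta$ is unchanged.
\end{enumerate}

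Here is a figure-free way to close the gap. Work in the boundary-permuting group $G_N$ of $N$ fixing $P\cup Q$; it maps to $\Map_o(S)$ by extending by the identity.

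First, cap $A_i$ and $A_{i+1}$ in $N$ by punctured discs. Lemma \ref{relDehnHalf} then gives $\delta_N=T_{A_i}^kT_{A_{i+1}}^l$ already in $G_N$. This also makes your localisation rigorous, which it currently is not. For an arbitrary $\c$ the arcs $\a_j$ need not avoid $N$, and for $n=3$, $i=1$ no arc in your list controls $u_3$.

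Second, cap $Q$ by a disc. Then $N$ becomes a pair of pants $\Pi$ with boundary $A_i,A_{i+1},P$. The curve $\c$ becomes parallel to $A_{i+1}$, so $x\mapsto T_{A_{i+1}}$. Since $b_iT_{A_{i+1}}b_i^{-1}=T_{A_i}$ and the image of $b_i^2$ lies in $\Map(\Pi)\cong\ZZ^3$, both words map to $T_{A_i}T_{A_{i+1}}b_i^2$. Hence the image of $\delta_N$ is trivial.

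But $T_{A_i}^kT_{A_{i+1}}^l$ survives in $\Map(\Pi)\cong\ZZ^3$ unless $k=l=0$. Therefore $\delta=1$.
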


\begin{proof}[\sc Proof of Lemma {\rm \ref{lemswap2}}]

Following the same approach as in the proof of Lemma \ref{lemswap1}, we study the action of $b_ixb_ix(\cdot)$ and $xb_ixb_i(\cdot)$ on the arc $\alpha$, which we carry out in Figure \ref{swap2}.

\begin{figure}[H]
\includegraphics[width=11cm]{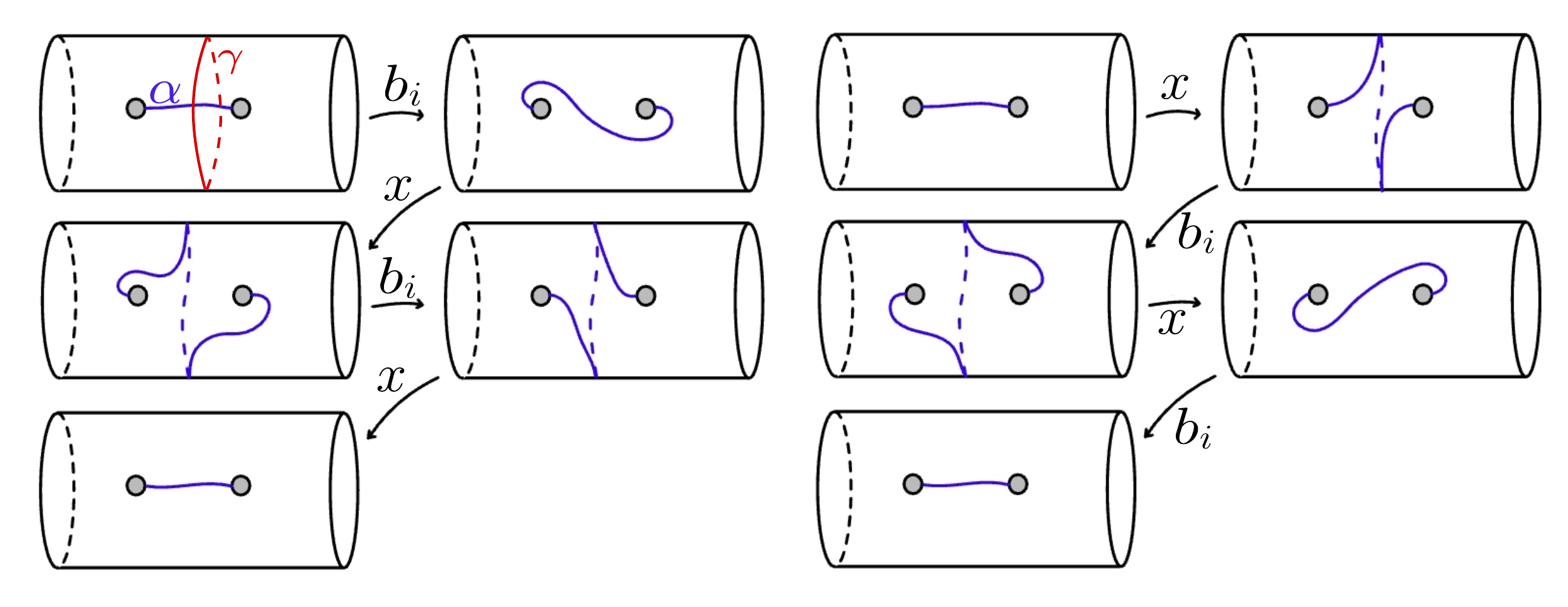}
\centering
\caption{}
\label{swap2}
\end{figure}

Because the result is the same, we conclude the equality in the statement of the lemma.

\end{proof}

We are now in a position to introduce the homomorphisms from Artin groups into $\Map_o(S)$ that we will need for our arguments. As will become apparent, the homomorphisms are a modification of those in Lemma \ref{PVRepresentations} to the case of surfaces with boundary.  

\begin{figure}[htbp]
\centering

\begin{subfigure}{0.48\textwidth}
    \centering
    \includegraphics[width=\linewidth]{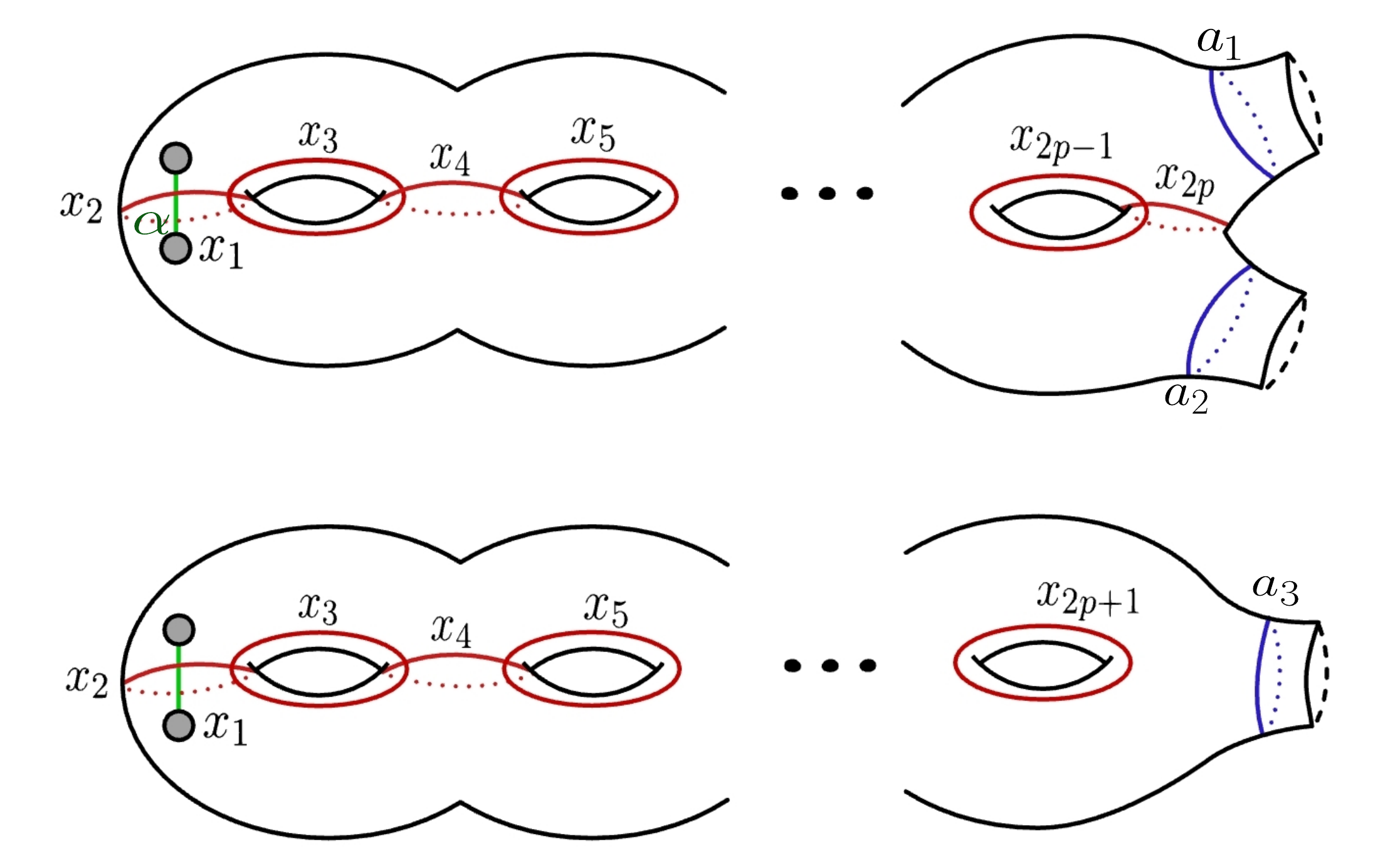}
    \caption{}
    \label{fig:tres-A}
\end{subfigure}
\hfill
\begin{subfigure}{0.48\textwidth}
    \centering
    \includegraphics[width=\linewidth]{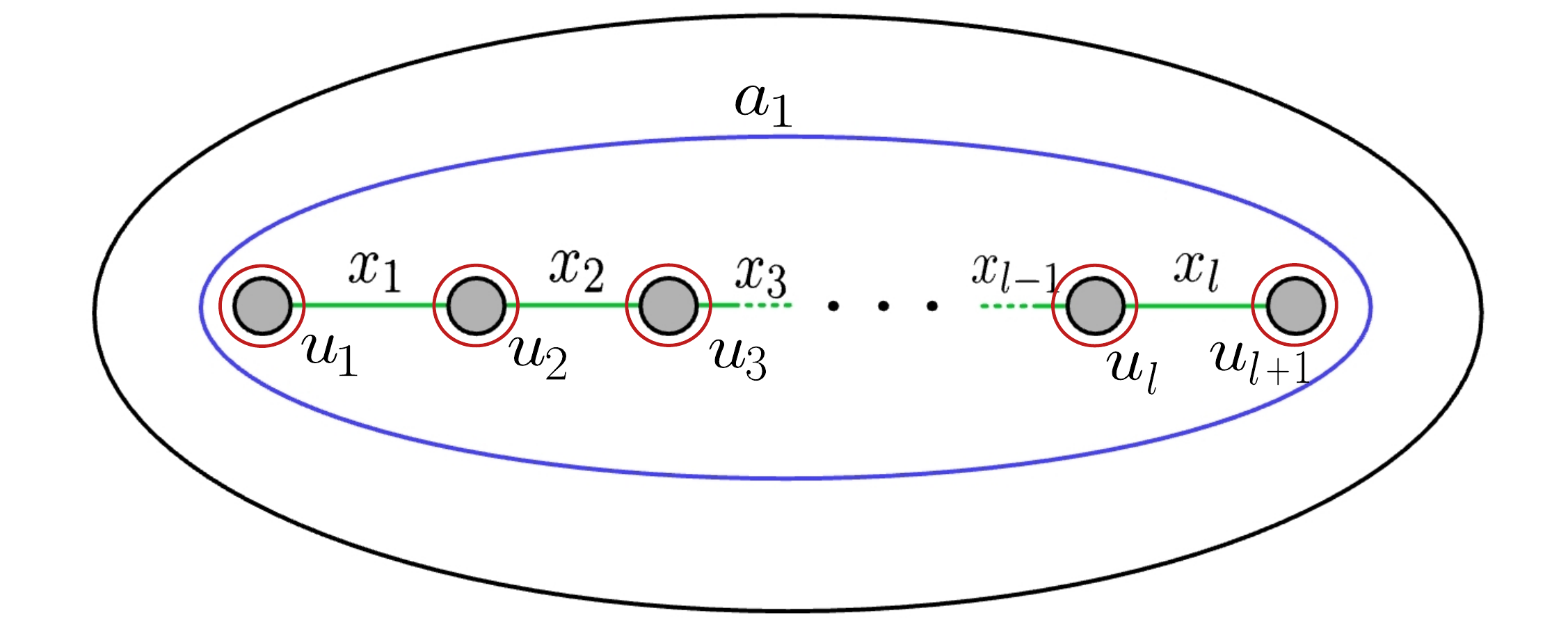}
    \caption{}
    \label{fig:tres-B}
\end{subfigure}

\vspace{0.5cm}

\begin{subfigure}{0.48\textwidth}
    \centering
    \includegraphics[width=\linewidth]{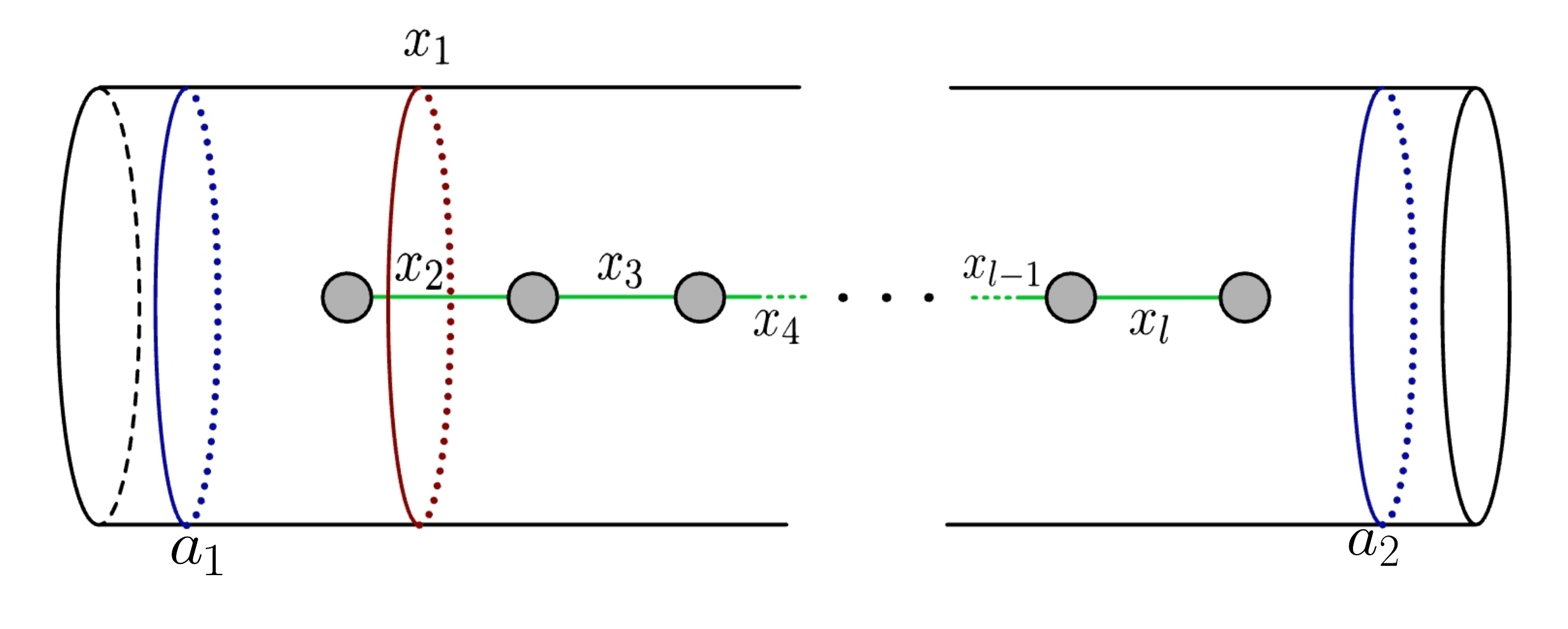}
    \caption{}
    \label{fig:tres-C}
\end{subfigure}

\caption{Homomorphisms $\rho:A(\Gamma)\rightarrow \Map_o(S)$. The darkened circles represent boundary components.}
\label{fig:tres}
\end{figure}

\begin{lem}\label{PVRepresentations2}
The following maps define representations of Artin groups:

\begin{enumerate}[label=(\arabic*),leftmargin=*]

\item The map $\rho:A(B_l)\rightarrow\Map_o(S)$ defined by mapping the generator $x_1$ to the boundary twist $x_1$ in Figure \ref{fig:tres}(\subref{fig:tres-A}), and each $x_i$ for $i\geq 2$ to the Dehn twist $x_i$ in Figure \ref{fig:tres}(\subref{fig:tres-A}). Moreover, the following equalities hold, where $a_1$, $a_2$ and $a_3$ are the Dehn twists in Figure \ref{fig:tres}(\subref{fig:tres-A}):

$$\rho(\Delta(B_{2p}))=a_1a_2,\quad\rho(\Delta^2(B_{2p+1}))=a_3.$$

\item The map $\rho:A(A_l)\rightarrow\Map_o(S)$ defined by mapping each generator $x_i$ to the boundary swap $x_i$ in Figure \ref{fig:tres}(\subref{fig:tres-B}). Moreover, the following equality holds, where $a_1$ and the $u_i$ are the Dehn twists in Figure \ref{fig:tres}(\subref{fig:tres-B}):

$$\rho(\Delta^2(A_{l}))=a_1u_1^{-1}u_2^{-1}...\,u_{l+1}^{-1}.$$

\item The map $\rho:A(B_l)\rightarrow\Map_o(S)$ defined by mapping the generator $x_1$ to the Dehn twist $x_1$ in Figure \ref{fig:tres}(\subref{fig:tres-C}) and each $x_i$ for $i\geq 2$ to the boundary twist $x_i$ in Figure \ref{fig:tres}(\subref{fig:tres-C}) is a representation of $A(B_l)$. Moreover, the following equality holds, where $a_1$ and $a_2$ are the Dehn twists in Figure \ref{fig:tres}(\subref{fig:tres-C}):

$$\rho(\Delta(B_{l})) = a_1^{l-1} a_2. $$

\end{enumerate}    
\end{lem}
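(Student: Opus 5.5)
We prove that each $\rho$ is a homomorphism by checking the defining Artin relations on the images of the generators. We then obtain each fundamental-element identity by lifting the corresponding identity of Lemma \ref{PVRepresentations} through $\In(\cdot)$ and computing the kernel correction. For the homomorphism part, the braid relations among Dehn twists come from Lemma \ref{relDehnHalf}, exactly as in Lemma \ref{PVRepresentations}. Relations between consecutive boundary swaps come from Lemma \ref{lemswap1}, and the length-four relation between a Dehn twist and a boundary swap whose arc meets the curve once comes from Lemma \ref{lemswap2}. It remains to check commutation relations for disjoint supports. Boundary swaps are supported in the regular neighbourhood $N_\a$ of their arc and the two boundary components. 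So if a curve or another arc is disjoint from $N_\a$, the two mapping classes have disjoint supports and commute. This covers all non-adjacent pairs in the graphs $A_l$ and $B_l$ of Figure \ref{fig:tres}, provided the curves and arcs in the figure are drawn with the same intersection pattern as in Figure \ref{fig:seis}.

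For the equalities, we use that $\In\circ\rho$ equals the corresponding homomorphism of Lemma \ref{PVRepresentations}, since $\In$ sends boundary swaps to half twists and Dehn twists to Dehn twists. Consider case (1), where the single boundary swap $x_1$ is followed by Dehn twists, and $\rho(\Delta(B_{2p}))$ and $a_1a_2$ have the same image in $\Map(S')$. Their quotient therefore lies in $U_S=\ker\In$, so it is a product $\prod u_j^{k_j}$. In every case, all generators are supported in a subsurface $N$ (a regular neighbourhood of the chain of curves and arcs). Hence the quotient is supported in $N$, and only the $u_j$ for boundaries inside $N$ can occur. Moreover, every $u_j$ is central in $\Map(S)$. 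To pin down the exponents, we argue as in Lemmas \ref{lemswap1} and \ref{lemswap2}. We choose, for each boundary component $A_j$ in $N$, an arc from $A_j$ to $\partial N$ (or to another fixed boundary). We then compare the images of these arcs under both sides, using the explicit words from Lemma \ref{Fundamental}, e.g. $\Delta(B_l)=(x_1\cdots x_l)^l$. Since $u_j^{k}$ twists such an arc $k$ times around $A_j$, equality of arc images forces the exponents. By the Alexander method, as in Lemma \ref{lemswap0}, equality of these arcs, together with the fact that the quotient already lies in $U_S$, gives the identity.

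The main obstacle is case (2), the relation $\rho(\Delta^2(A_l))=a_1u_1^{-1}\cdots u_{l+1}^{-1}$. Here $N$ is a disc containing $l+1$ boundary components with outer boundary $a_1$, and the full twist $(x_1\cdots x_l)^{l+1}$ of boundary swaps rotates the whole configuration once. I would argue this geometrically rather than by tracing arcs. $\Delta^2$ of the braid group on $l+1$ strands is the full twist, realized by rotating $N$ by $2\pi$ relative to $\partial N$, i.e. $T_{a_1}$. However, our boundary swaps preserve the parametrizations $\mu_i$, whereas a rigid rotation also rotates each small inner boundary circle by $2\pi$. Undoing this spin of each inner circle contributes $u_i^{-1}$ for each $i$. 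To fix the signs, I would first verify the case $l=1$, where $b_1^2=a_1u_1^{-1}u_2^{-1}$, by drawing the action on the arc $\a_1$ and an auxiliary arc; this is the standard \emph{lantern-type} picture for a pair of pants. I would then obtain general $l$ by the same rotation argument, or by induction using $\Delta^2(A_l)=\Delta^2(A_{l-1})\,(x_l\cdots x_1x_1\cdots x_l)$. Case (3) is handled the same way as case (1). There the exponents of $u_j$ cancel, because the composite preserves each boundary component's framing, and the arc-comparison check confirms that no $u_j$ appears.
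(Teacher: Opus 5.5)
Your verification of the Artin relations matches the paper's. So does your overall strategy for the three identities: lift the identities of Lemma~\ref{PVRepresentations} through $\In$, note that the discrepancy lies in $U_S$, and fix the exponents by tracking arcs. The difference is in how the exponents are fixed.

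The paper first reduces to a single unknown. The discrepancy commutes with the boundary swaps in the image of $\rho$, and these swaps permute the relevant $u_j$, so all exponents coincide. It then tracks \emph{one} arc through a factorization of the fundamental element in which each block visibly fixes the arc or moves it in a controlled way:
\begin{itemize}
\item In case (1), $\Delta(B_l)=(x_l\cdots x_2x_1x_2\cdots x_l)\cdots(x_2x_1x_2)x_1$. Every block fixes $\alpha$, since $x_2x_1x_2(\alpha)=\alpha$ by the computation in Lemma~\ref{lemswap2}.
\item In case (2), Garside's formula is regrouped as $\Delta^2(A_l)=p_4p_3p_2p_1$, which carries $\alpha$ to $u_l^{-1}u_{l+1}^{-1}(\alpha)$, giving $k=-1$.
\item In case (3), $\Delta(B_l)=p_4p_3p_2p_1\Delta(B_{l-2})$, and both factors fix $\alpha$.
\end{itemize}
Your plan of pushing one arc per boundary component through $(x_1\cdots x_l)^l$ would also work. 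It is much heavier, though, and you never say what the images are, which is where the content of the lemma lies.

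For case (2), your rotation argument is a genuinely different and more conceptual route. The swaps move the holes without spinning them, while $T_{a_1}$ spins each hole once. This explains why every $u_i$ appears with exponent $-1$, and it reduces the general case to a pair-of-pants computation.

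To be a proof, this has to be phrased in the framed braid group $\Map_o(N)$. For instance, suppose $b_i^2=T_{c_i}u_i^{-1}u_{i+1}^{-1}$ has been checked for each $i$, where $c_i$ is the curve around $A_i\cup A_{i+1}$. Then the family $\{b_i\}$ is conjugate by an element of $U_S$ to the standard translation lifts. Such a conjugation does not change the pure element $\rho(\Delta^2(A_l))$.

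Note that this $l=1$ check is not merely a sign check. It is what ties the statement to the particular lifts $b_i$ fixed in Figure~\ref{election}. Other boundary swaps along the same arcs give different answers, e.g.\ $(b_iu_i)^2=b_i^2u_iu_{i+1}$.

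Finally, in case (3), the remark that the composite ``preserves each boundary component's framing'' is not a justification. The twists $x_1$, $a_1$, $a_2$ can themselves rotate holes, so nothing follows without actual bookkeeping. What gives exponent $0$ there is the arc computation.
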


\begin{proof}[\sc Proof of Lemma {\rm \ref{PVRepresentations2}}] 

Boundary swaps and Dehn twists satisfy the relations given by the graph labels because of Lemmas \ref{lemswap1}, \ref{lemswap2} and \ref{relDehnHalf}, hence the maps define homomorphisms. We only need to determine the image of the powers of the fundamental elements in the lemma. 

\textbf{Computing $\rho(\Delta(B_{2p})) $ and
$\rho(\Delta^2(B_{2p+1}))$ in homomorphism (1).} We explain the reasoning for $2p$, as the case $2p+1$ is analogous. Take the homomorphism $\rho:A(B_{2p})\rightarrow\Map_o(S)$, and for the surface $S$, take the map $\In:\Map_o(S)\rightarrow \Map(S')$ defined at the beginning of Section \ref{Sect2}, where $S'$ has as many punctures as $S$ has boundary components.

The images $\In(\rho(\Delta(B_{2p})))$ and $\In(a_1 a_2)$ are equal because of the homomorphism (2) in Lemma \ref{PVRepresentations}. Hence the difference between $\rho(\Delta(B_{2p}))$ and $a_1 a_2$ in $\Map(S)$ must be a product $u_1^{k_1}u_2^{k_2}$ of the Dehn twists $\{u_1,u_2\}$ generating $U_{S}$, as those are the elements in the kernel of $\In(\cdot)$. Particularly the product must be of the form $u_1^ku_2^k$, as $\rho(\Delta(B_{2p}))^{-1}a_1 a_2$ is in the center of $\Map_o(S)$, and that can only happen if $k_1=k_2$.

We can determine $k$ by studying the image of the arc $\alpha$ that defines $x_1$ in Figure \ref{fig:tres}(\subref{fig:tres-A}) by $\rho(\Delta(B_{2p}))$: the arc $\alpha$ has a different image up to homotopy for each value of $k$, hence its image determines $k$. We use the fact that $\Delta(B_l)=x_lx_{l-1}...x_2x_1x_2...x_{l-1}x_l\Delta(B_{l-1})$ (see \cite[Proposition 2.9]{Labruere-Paris}), and hence $$\Delta(B_l)=(x_lx_{l-1}...x_2x_1x_2...x_{l-1}x_l)(x_{l-1}...x_2x_1x_2...x_{l-1})...(x_2x_1x_2)x_1.$$

Any Dehn twist $x_i$ for $i\neq2$ preserves $\alpha$, and so does the product $x_2x_1x_2$ because of the calculations in the proof of Lemma \ref{lemswap2}. Hence $\rho(\Delta(B_{2p}))$ preserves $\alpha$, as does $a_1a_2$, which means $k=0$, and $\rho(\Delta(B_{2p})) = a_1 a_2$.

\textbf{Computing $\rho(\Delta^2(A_{l}))$:} By a similar reasoning as in the previous point, we deduce that: $\rho(\Delta^2(A_{l}))=a_1u_1^ku_2^k...u_{l+1}^k$. We will use that $$\Delta(A_l)=(x_lx_{l-1}...x_2x_1)(x_{l}...x_2)...(x_lx_{l-1})x_l=(x_1x_2...x_{l-1}x_l)(x_1...x_{l-1})...(x_1x_2)x_1,$$  because of \cite[Subsection 2.1, page 238]{Garside}. By regrouping the product, we have that:
$$\Delta^2(A_{l})=\underbrace{(x_l...x_1)(x_{l}...x_2)}_{p_4}\underbrace{(x_{l}...x_3)...(x_lx_{l-1})x_l}_{p_3}\underbrace{(x_1...x_l)(x_1...x_{l-1})}_{p_2}\underbrace{(x_1...x_{l-2})...(x_1x_2)x_1}_{p_1}.$$

As in the previous point, we determine $k$ by studying the image of the arc $\alpha$ in Figure \ref{MovA} by $\rho(\Delta^2(A_{l}))$. The image $\rho(\Delta^2(A_{l}))(\a)$ will uniquely determine $k$, since for every distinct value of $k$, $a_1u_1^ku_2^k...u_{l+1}^k(\a)$ is a different arc.

The product $p_1=(x_1...x_{l-2})...(x_1x_2)x_1$ leaves $\a$ invariant, while $p_2=(x_1...x_l)(x_1...x_{l-1})$ takes $\a$ to $\b$ (see Figure \ref{MovA}). Then $p_3=(x_l...x_3)...(x_lx_{l-1})x_l$ leaves $\b$ invariant, and $p_4=(x_l...x_1)(x_{l}...x_2)$ takes $\b$ to $\c$. The arc $\c$ is $u_l^{-1}u_{l+1}^{-1}(\a)$, hence $k=-1$, and $\rho(\Delta^2(A_{l}))=a_1u_1^{-1}u_2^{-1}...u_{l+1}^{-1}$.

\begin{figure}[H]
\includegraphics[width=11.7cm]{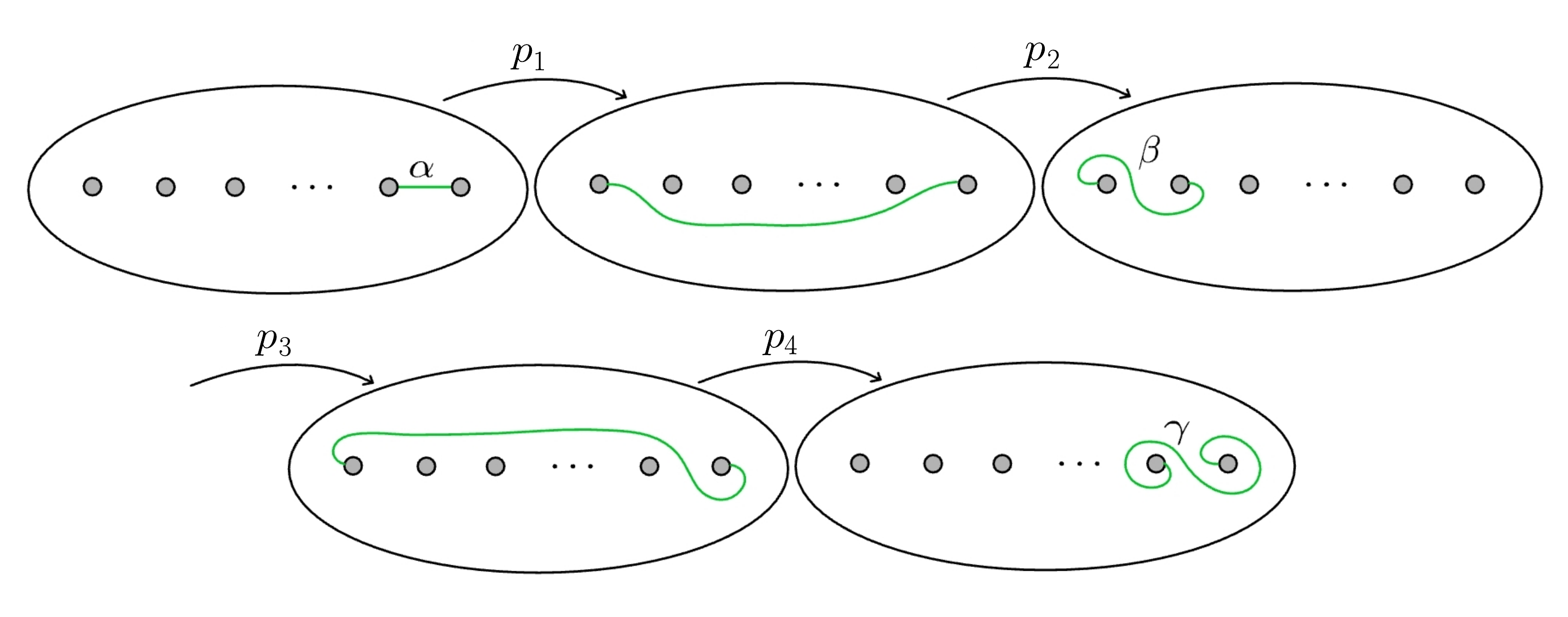}
\centering
\caption{}
\label{MovA}
\end{figure}

\textbf{Computing $\rho(\Delta(B_{l}))$:} Again: $\rho(\Delta(B_{l}))a_1^{-l+1}a_2^{-1}=u_1^ku_2^k...u_l^k$. We use that $\Delta(B_l)=x_lx_{l-1}...x_2x_1x_2...x_{l-1}x_l\Delta(B_{l-1})$. By iterating and regrouping, we have that:
$$\Delta(B_l)=\underbrace{x_lx_{l-1}...x_2}_{p_4}\underbrace{x_1x_2...x_{l-1}}_{p_3}\underbrace{x_lx_{l-1}...x_2}_{p_2}\underbrace{x_1x_2...x_{l-1}}_{p_1}\Delta(B_{l-2})$$

we determine $k$ by studying the image of the arc $\alpha$ in Figure \ref{MovB}. The map $\rho(\Delta(B_{l-2}))$ leaves $\a$ invariant as it has disjoint support, and $$(x_lx_{l-1}...x_2x_1x_2...x_{l-1}x_l)(x_{l-1}...x_2x_1x_2...x_{l-1})=p_4p_3p_2p_1$$ also preserves $\a$ (see Figure \ref{MovB}). Hence $k=0$, and $\rho(\Delta(B_{l})) = a_1^{l-1} a_2.$.

\begin{figure}[H]
\includegraphics[width=13.7cm]{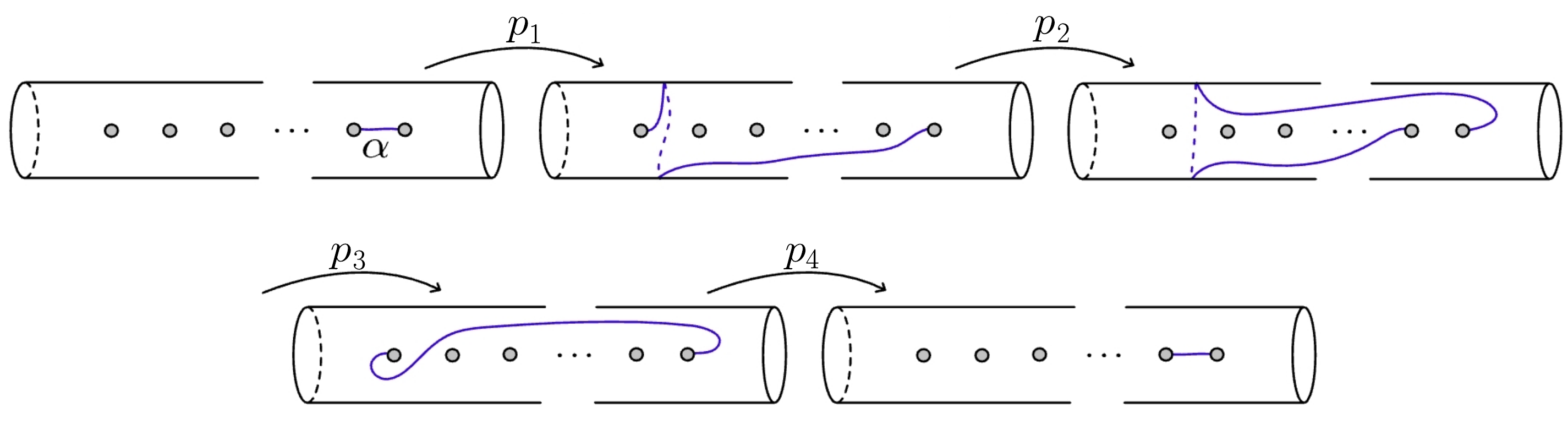}
\centering
\caption{}
\label{MovB}
\end{figure}

\end{proof}

\subsection{The presentation} We now have the tools to compute a presentation of the boundary permuting mapping class group.

\begin{theor}\label{ThMain}
Let $\Psi=\Psi(g,n)$ be the graph in Figure \ref{Graph2}. Then $\Map_o(S)$ has a presentation $$\Map_o(S)=\langle X_{A(\Psi)}|R_{A(\Psi)}\cup R_{S}\rangle,$$ where $ X_{A(\Psi)}$ is the set of generators of the Artin group $A(\Psi)$, $R_{A(\Psi)}$ its set of relations, and $R_{S}$ is the following set of relations:\\

\begin{tabular}{p{32pt} c l}

\rm (R1) \quad & $\Delta^4(y_1,y_2,y_3,z)=\Delta^2
(x_0,y_1,y_2,y_3,z),$ & \quad  if $g\ge 2,$\\

\rm (R2) \quad & $\Delta^2(y_1,y_2,y_3,y_4,y_5,z)=
\Delta(x_0,y_1,y_2,y_3,y_4,y_5,z)$ & \quad if $g\ge 3$,\\

\rm (R3) \quad & $\Delta(x_0,x_1,y_1,b_1)=
\Delta^2(x_1,y_1,b_1)$ & \quad if $n\ge 2$,\\

\rm (R4) \quad & $u_1\Delta(x_0,x_1,y_1,y_2,y_3,z)=
\Delta^2(x_1,y_1,y_2,y_3,z)$ & \quad if $n\ge 1$ and $g\ge 2$,\\

\rm (R5a) \quad & $ x_0^{2g-n-2}\Delta(x_1,b_1,\dots,
b_{n-1})=\Delta^2(z,y_2,\dots,y_{2g-1})$ & \quad if $g\ge 2$,\\

\rm (R5b) \quad & $x_0^n=\Delta(x_1,b_1,\dots, 
b_{n-1})$ & \quad if $g=1$,\\

\rm (R5c) \quad & $\Delta^4 (x_0,y_1)=\Delta^2(b_1,\dots, 
b_{n-1})$ & \quad if $g=1$,\\
 
\rule{0pt}{13pt}\makebox[0pt][l] { for  $i\in\{1,...,n-1\}$:} \\

\rm (Ci) \quad & $b_iu_i=u_{i+1}b_i,$ & \quad\\

\rm (Di) \quad & $u_{i}b_i=b_{i}u_{i+1}.$ & \quad\\

\end{tabular}

\end{theor}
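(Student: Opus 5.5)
Our plan is to apply Lemma \ref{Lema-present} to the short exact sequence of Lemma \ref{SecMap}, $1\to U_S\to\Map_o(S)\xrightarrow{\In}\Map(S')\to 1$. We take the presentation of $\Map(S')$ from Theorem \ref{Teo-L-P}, the presentation $\langle u_1,\dots,u_n\mid R_U\rangle$ of $U_S$, and the lifts $\tilde X_{\Map}=\{x_0,x_1,z,y_1,\dots,y_{2g-1},b_1,\dots,b_{n-1}\}$ of Theorem \ref{teogenSPMCG}. This gives generators $X_{\Map_o}$ and relations $R_U\cup R_1\cup R_2$. The proof then reduces to computing $R_1$ and $R_2$ explicitly and checking that, together with $R_U$, they give exactly $R_{A(\Psi)}\cup R_S$. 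Here we take $\Psi(g,n)$ to be $\Gamma(g,n)$ with $h_i$ replaced by $b_i$, together with the vertices $u_i$; each $u_i$ commutes with every generator except $b_{i-1}$ and $b_i$, so that $R_U$ and the commutation relations are Artin relations.

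\textbf{Computing $R_2$.} For each lift $\tilde x$ and each $u_j$ we need $\tilde x u_j\tilde x^{-1}$ as a word in the $u$'s. Every Dehn twist lift $x_0,x_1,z,y_k$ can be represented by a curve disjoint from all $\partial D_j$, so it commutes with every $u_j$; these commutations are among the Artin relations of $A(\Psi)$. The boundary swap $b_i$ is supported in a neighbourhood of $\a_i\cup A_i\cup A_{i+1}$ and exchanges $A_i$ and $A_{i+1}$. Hence $b_iu_jb_i^{-1}=u_j$ for $j\ne i,i+1$, while $b_iu_ib_i^{-1}=u_{i+1}$ and $b_iu_{i+1}b_i^{-1}=u_i$, since $fT_cf^{-1}=T_{f(c)}$. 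These are exactly (Ci) and (Di), plus further commutations of Artin type.

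\textbf{Computing $R_1$.} For each relation $r$ of Theorem \ref{Teo-L-P} we lift $r$ by replacing $h_i$ with $b_i$, and then determine the element $w_r\in U_S$. There are two kinds of relations to handle.

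The Artin relations among Dehn twists lift with $w_r=1$ by Lemma \ref{relDehnHalf}. The relations involving the $b_i$ lift with $w_r=1$ by Lemmas \ref{lemswap1} and \ref{lemswap2}. The commutations $b_ib_j=b_jb_i$ for $|i-j|\ge2$, and $b_i x=xb_i$ for disjoint supports, hold trivially.

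For (R1)--(R5c) we write both sides through the homomorphisms of Lemmas \ref{PVRepresentations} and \ref{PVRepresentations2}, which express powers of fundamental elements as products of Dehn twists. Comparing these expressions inside $\Map_o(S)$ rather than $\Map(S')$, the discrepancy is a product of $u_j$'s, which we identify by tracking the boundary components enclosed by the relevant curves. For example, in (R4) the curve $a_i$ arising from $\Delta^2(x_1,y_1,y_2,y_3,z)$ in $S'$ surrounds a puncture; in $S$ it differs from the $D_{2p}/E$-type curve by the boundary twist $u_1$, which produces the correction factor $u_1$. For (R1) and (R2) the relevant curves lie in the genus part, away from the boundaries, so no correction appears. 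For (R3), (R5a), (R5b) and (R5c) we use Lemma \ref{PVRepresentations2}(1)--(3). In the case of (R5c), the factor $u_1^{-1}\cdots u_n^{-1}$ from item (2) should cancel against the corresponding boundary twists hidden in $\Delta^4(x_0,y_1)$, since the curve $a_1$ there bounds the subsurface containing all boundaries. For (R5a) and (R5b) we apply item (3), with $a_1=x_0$.

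\textbf{Main obstacle.} The step we expect to be hardest is this bookkeeping of $w_r$: checking that the boundary-parallel twists cancel exactly so that only (R4) acquires a $u_1$. To settle each case we would use the same method as in the proof of Lemma \ref{PVRepresentations2}. Since the discrepancy lies in $U_S$ and is central in the relevant subsurface, it has the form $\prod u_j^{k_j}$, and we determine the exponents $k_j$ by following the images of the arcs $\a_i$ (and of an arc from $A_1$ into the genus) under both sides, as in Lemma \ref{lemswap0}, using the Alexander method.
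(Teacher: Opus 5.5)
Your route is the paper's: Lemma~\ref{Lema-present} applied to the capping sequence, then (Ci)/(Di) from $fT_cf^{-1}=T_{f(c)}$, then the Artin relations lifted via Lemmas~\ref{relDehnHalf}, \ref{lemswap1} and \ref{lemswap2}, and finally the correction words $w_r$ for (R1)--(R5c) read off from Lemmas~\ref{PVRepresentations} and~\ref{PVRepresentations2}. The gap is that the only step with real content, computing $w_r$, is asserted rather than carried out. For (R5c) and (R4) the asserted outcome is not what the method produces.

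\textbf{(R5c).} Nothing is ``hidden'' in $\Delta^4(x_0,y_1)=(x_0y_1)^6$. It is the single twist about $C=\partial N(x_0\cup y_1)$, and for $g=1$ this is exactly the curve $a_1$ enclosing $A_1,\dots,A_n$ in Lemma~\ref{PVRepresentations2}(2). So the factor $u_1^{-1}\cdots u_n^{-1}$ survives, and you get $\Delta^4(x_0,y_1)=\Delta^2(b_1,\dots,b_{n-1})\,u_1\cdots u_n$. This is precisely what the paper itself uses in Section~\ref{Sect4} when it computes $\phi_0^2=u_{0,1}^{-1}\cdots u_{0,r}^{-1}\neq\id$. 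Re-choosing the lifts $b_i$ cannot repair this. Take $g=1$, $n=2$: then $(x_1b_1)^2=T_{x_1}T_{b_1(x_1)}b_1^2$, and $b_1(x_1)$ does not depend on the lift. So the printed (R5b) and (R5c) together would give $T_{x_1}T_{b_1(x_1)}T_C=x_0^2$. But $x_1$, $b_1(x_1)$, $C$ are the lantern curves of the four-holed sphere bounded by $A_1$, $A_2$ and two parallel copies of $x_0$ (this is what the Labru\`ere--Paris relations say after capping). In $S$ the lantern relation therefore gives $x_0^2u_1u_2$, not $x_0^2$.

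\textbf{(R4).} Your heuristic puts the extra $u_1$ on the $\Delta^2$ side only, but $u_1$ occurs on both sides. In $\Delta(D_6)=a_1a_2a_3^2$, the exponent-two component $a_3$ is the one cobounding a pair of pants with $x_0$ and $x_1$. Capping it identifies $x_0$ with $x_1$ and turns $(x_0x_1y_1y_2y_3z)^5$ into $(x_1^2y_1y_2y_3z)^5=(x_1y_1y_2y_3z)^6$, which is the product of the other two boundary twists. This $a_3$ is parallel to $A_1$. The neighbourhood of the chain $x_1,y_1,y_2,y_3,z$ has one boundary curve parallel to $A_1$, and shares its other boundary curve $c'$ with $N(D_6)$; the remaining boundary curve of $N(D_6)$ bounds a disc. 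Hence $\Delta(x_0,x_1,y_1,y_2,y_3,z)=u_1^2T_{c'}=u_1\,\Delta^2(x_1,y_1,y_2,y_3,z)$, the opposite of the printed (R4).

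As a check, take $g=2$, $n=1$. The chain relation gives $\Delta^2(x_1,y_1,y_2,y_3,z)=u_1$, so the printed (R4) would force $(x_0x_1y_1y_2y_3z)^5=1$ in the torsion-free group $\Map(S_{2,1,0})$. Hence $x_0x_1y_1y_2y_3z=1$, which is impossible since this element maps to $6\neq0$ in $H_1(\Map(S_{2,1,0}))\cong\ZZ/10$.

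The paper's own proof slips at the same two bullets. In (R4) it uses the $D_5$ formula for $\{x_1,y_1,y_2,y_3,z\}$, which spans an $A_5$ chain. In (R5c) it treats $\Delta^2(b_1,\dots,b_{n-1})$ as a single twist, contrary to Lemma~\ref{PVRepresentations2}(2). So your method is the right one, but carried out honestly it yields a corrected presentation. The printed (R4) and (R5c) are not relations of $\Map_o(S)$, and the proposal cannot be completed by asserting the cancellations they require.
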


\begin{figure}[H]
\begin{tikzpicture}
  \node at (-1.7, 0) {$\Psi({g,n})$};
  \filldraw (0,-0.2) circle (2pt) node[left] {$y_1$};
  \filldraw (1,-0.2) circle (2pt) node[above] {$y_2$};
  \filldraw (2,-0.2) circle (2pt) node[above] {$y_3$};
  \filldraw (3,-0.2) circle (2pt) node[above] {$y_4$};
  \filldraw (4.4,-0.2) circle (2pt) node[above] {$y_{2g-1}$};
  \filldraw (0,-1) circle (2pt) node[left] {$x_0$};
  \filldraw (0,1) circle (2pt) node[left] {$x_1$};
  \filldraw (1.5,1) circle (2pt) node[below] {$b_1$};
  \filldraw (3,1) circle (2pt) node[below] {$b_2$};
  \filldraw (4.5,1) circle (2pt) node[below] {$b_{n-1}$};
  \filldraw (2,-1) circle (2pt) node[left] {$z$};
  \filldraw (0.75,1.6) circle (2pt) node[above] {$u_1$};
  \filldraw (2.25,1.6) circle (2pt) node[above] {$u_2$};
  \filldraw (5.25,1.6) circle (2pt) node[above] {$u_n$};

  \draw (0.75,1.6) -- node[above]{4} (1.5,1);
  \draw (2.25,1.6) -- node[above]{4} (1.5,1);
  \draw (2.25,1.6) -- node[above]{4} (3,1);
  \draw (5.25,1.6) -- node[above]{4} (4.5,1);
  
  \draw (0,-0.2) -- (1,-0.2);
  \draw (1,-0.2) -- (2,-0.2);
  \draw (2,-0.2) -- (3,-0.2);
  \draw (3,-0.2) -- (3.3,-0.2) ;
  \draw[dashed] (3.3,-0.2) -- (4.1,-0.2);
  \draw (4.1,-0.2) -- (4.4,-0.2);
  \draw (0,1) -- node[below]{4} (1.5,1);
  \draw (1.5,1) -- (3,1);
  \draw (3,1) -- (3.3,1);
  \draw[dashed] (3.3,1) -- (4.2,1);
  \draw (4.2,1) -- (4.5,1);
  \draw (0,0) -- (0,1);
  \draw (0,0) -- (0,-1);
  \draw (2,-0.2) -- (2,-1);
  \draw[line width=1.3pt, dash pattern=on 0pt off 4pt, line cap=round] (3.5,1.6) -- (4,1.6);
\end{tikzpicture}
\centering
\caption{}
\label{Graph2}
\end{figure}

\begin{proof}[\sc Proof of Theorem {\rm \ref{ThMain}}] 

 We want to write $\Map_o(S)$ as a quotient of $A(\Psi)$ by a series of relations. Consider the map: $$\pi:A(\Psi)\rightarrow\Map_o(S)$$ defined by sending each generator $\{ x_0,x_1,z,y_1,...,y_{2g-1},b_1,...,b_{n-1},u_1,...,u_n\}$ of the Artin group to the mapping class with the same name in $\Map_o(S)$ (see Figure \ref{Generators}). Checking that generators of $\Map_o(S)$ satisfy the relations of the Artin group $A(\Psi)$ implies that the map $\pi$ is an epimorphism.

Recall from Subsection \ref{2.3} the exact sequence $$1\rightarrow U_{S}\rightarrow\Map_o(S)\overset{\In}{\rightarrow}\Map(S')\rightarrow1.$$ According to Lemma \ref{Lema-present}, if we choose a preimage $\tilde{x}\in \In^{-1}(x)$ for each $x\in X_H$ and write $$ \tilde{X}_\Map:=\{\tilde{x}:x\in X_\Map\},$$ the group $\Map_o(S)$ has a presentation with generators $X_{\Map_o}$, and relations $R_{U}\cup R_1\cup R_2,$ with $$R_1=\{\tilde{r}w_r^{-1}:r\in R_{\Map}\}\textnormal{,}$$ $$R_2=\{\tilde{x}y\tilde{x}^{-1}v(x,y)^{-1}:\tilde{x}\in \tilde{X}_{\Map} \textnormal{ and } y\in X_{U}\},$$ 

 where if $r=x_1^{\varepsilon_1}...\,x_l^{\varepsilon_l}\in R_\Map$, then $\tilde{r}=\tilde{x_1}^{\varepsilon_1}...\,\tilde{x_l}^{\varepsilon_l}\in \Map_o(S)$.
 
We will compute $R_2$, then check that generators of $\Map_o(S)$ satisfy the relations of the Artin group $A(\Psi)$ (which consist of $R_{U}$, and relations from the Artin group $A(\Gamma)$), and lastly compute $$R'_1=\{\tilde{r}w_r^{-1}:r \textnormal{ is a relation in Theorem \ref{Teo-L-P}}\}\subset R_1.$$ 

The fact that $\pi$ is an epimorphism together with Lemma \ref{Lema-present} will prove that the quotient of $A(\Psi)$ by $R_{U}\cup R_1'\cup R_2$ is isomorphic to $\Map_o(S)$.

\textbf{Computing $R_2$:}

We consider each conjugate $\tilde{x}u_i\tilde{x}$ where $u_i$ a Dehn twist in $X_{U}=\{ u_1,...,u_n\}$ by each \\$\tilde{x}\in\{ x_0,x_1,z,y_1,...,y_{2g-1},b_1,...,b_{n-1}\}$, and write it as a word over $X_{U}$. The Dehn twists in $X_{U}$ commute with every element in $\{ x_0,x_1,z,y_1,..,y_{2g-1},b_1,...,b_{n-1}\}$ except for the $b_i$, so we only have to consider those. Following an arc tracking argument as in the proof of Lemma \ref{PVRepresentations2}, we get that $b_iu_ib_i^{-1}=u_{i+1}$ and  $b_iu_{i+1}b_i^{-1}=u_i$. Hence we get two families of relations:

\hspace{45pt}\rule{0pt}{13pt}\makebox[0pt][l] {For  $i\in\{1,...,n-1\}$:}
\vspace{10pt}
$$ \textnormal{(Ci)} \quad  b_iu_ib_i^{-1}=u_{i+1},\quad \quad \quad \textnormal{(Di)} \quad  b_iu_{i+1}b_i^{-1}=u_{i}.$$

\textbf{Relations of $A(\Psi)$:}

We will check that the relations of the Artin group $A(\Psi)$ are satisfied by the Dehn twists and boundary swaps in $X_{\Map_o}$. Elements in $X_{\Map_o}$ which have representatives with disjoint support commute, and the relations between the Dehn twists are satisfied because of Lemma \ref{relDehnHalf}, while the relation between each $b_i$ and $b_{i+1}$ is satisfied because of Lemma \ref{lemswap1}, and the relation between $x_1$ and $b_1$ is satisfied because of Lemma \ref{lemswap2}. Relations between the $b_i$ and the $u_i$ are a consequence of the relations (Ci) and (Di), as we can check:$$b_iu_ib_iu_i=b_iu_iu_{i+1}b_i=b_iu_{i+1}u_ib_i=u_ib_iu_ib_i.$$

\textbf{Computing $R_1'$:}

We consider each relation in Theorem \ref{Teo-L-P} as an element of $\Map_o(S)$, and write it as a word over $X_{U}=\{ u_1,...,u_n\}$.

\begin{itemize}[leftmargin=*]

\item (R1): $\Delta^4(y_1,y_2,y_3,z)\Delta^{-2}
(x_0,y_1,y_2,y_3,z)$.

By Lemma \ref{PVRepresentations} (1) we have homomorphisms

\begin{center}
\begin{tikzpicture}
    \draw[->] (-1,0) -- node[above]{$\rho_2$} (1.5,0.35);

    \draw[->] (-1,1) -- node[above]{$\rho_1$} (1.5,0.65);
    
    \node at (-1.6,1) {$A(A_4)$};
    \node at (-1.6,0) {$A(A_5)$};
    \node at (4,0.5) {$\Map(S)\subset \Map_o(S)$};
\end{tikzpicture}
\end{center}

with images $$\rho_1 (\Delta^4(y_1,y_2,y_3,z))=a_1, \quad \rho_2(\Delta^2
(x_0,y_1,y_2,y_3,z))=c_1c_2,$$ where $a_1$, $c_1$ and $c_2$ are the Dehn twists in Figure \ref{Rel1}. But $c_2$ is a Dehn twist along a null-homotopic curve, and $a_1=c_1$, hence $$\rho_1(\Delta^{-4}(y_1,y_2,y_3,z))\rho_2(\Delta^2
(x_0,y_1,y_2,y_3,z))=\id$$ in $\Map_o(S)$ and we obtain the relation $\Delta^4(y_1,y_2,y_3,z)=\Delta^{2}
(x_0,y_1,y_2,y_3,z).$

\begin{figure}[H]
\includegraphics[width=11cm]{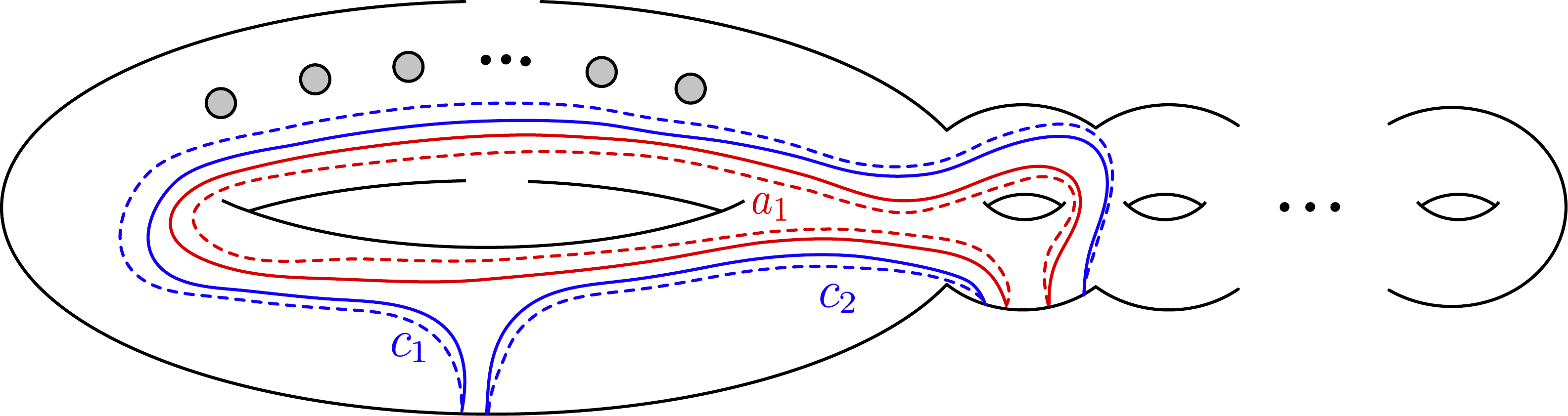}

\centering
\caption{}
\label{Rel1}
\end{figure}

\item (R2): $\Delta^2(y_1,y_2,y_3,y_4,y_5,z)
\Delta^{-1}(x_0,y_1,y_2,y_3,y_4,y_5,z)$.

By Lemma \ref{PVRepresentations} (4) we have homomorphisms

\begin{center}
\begin{tikzpicture}
    \draw[->] (-1,0) -- node[above]{$\rho_2$} (1.5,0.35);

    \draw[->] (-1,1) -- node[above]{$\rho_1$} (1.5,0.65);
    
    \node at (-1.6,1) {$A(E_7)$};
    \node at (-1.6,0) {$A(E_6)$};
    \node at (4,0.5) {$\Map(S)\subset \Map_o(S)$};
\end{tikzpicture}
\end{center}
with images $$\rho_{1}(\Delta^2(y_1,y_2,y_3,y_4,y_5,z))=a_1, \quad \rho_2(\Delta(x_0,y_1,y_2,y_3,y_4,y_5,z))=c_1c_2^2,$$ where $a_1$, $c_1$ and $c_2$ are the Dehn twists in Figure \ref{Rel2}. But $c_2$ is a Dehn twist along a null-homotopic curve, and $a_1=c_1$, hence $$\rho_{1}(\Delta(y_1,y_2,y_3,y_4,y_5,z))
\rho_{2}(\Delta^{-1}(x_0,y_1,y_2,y_3,y_4,y_5,z))=\id$$ in $\Map_o(S)$ and we obtain the relation $\Delta(y_1,y_2,y_3,y_4,y_5,z)=\Delta(x_0,y_1,y_2,y_3,y_4,y_5,z)).$

\begin{figure}[H]
\includegraphics[width=11cm]{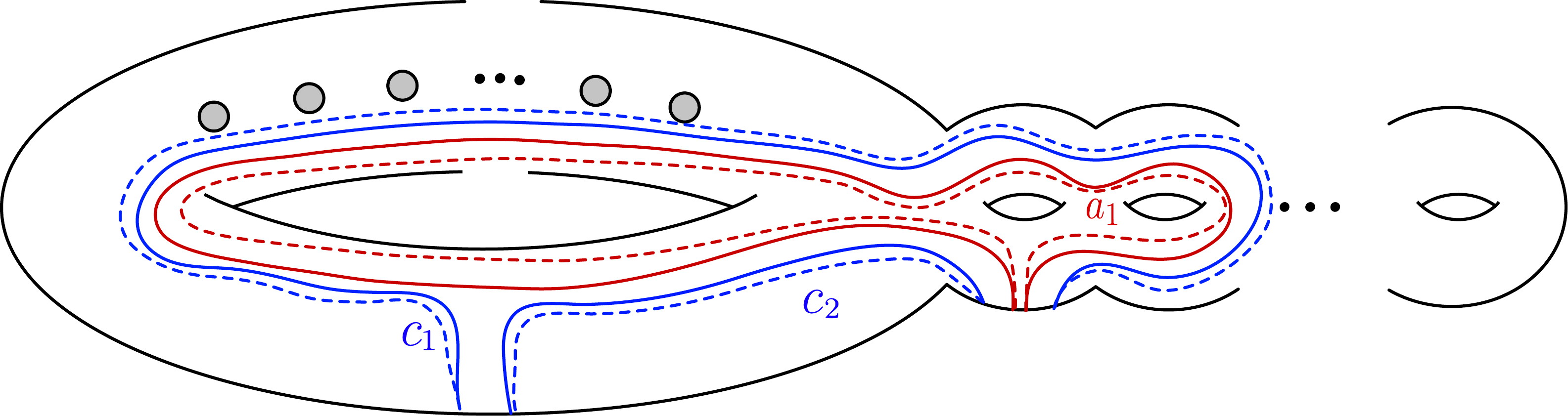}

\centering
\caption{}
\label{Rel2}
\end{figure}

\item (R3): $\Delta(x_0,x_1,y_1,h_1)
\Delta^{-2}(x_1,y_1,h_1)$.

By Lemma \ref{PVRepresentations2} (1) we have homomorphisms

\begin{center}
\begin{tikzpicture}
    \draw[->] (-1,0) -- node[above]{$\rho_2$} (1.5,0.35);

    \draw[->] (-1,1) -- node[above]{$\rho_1$} (1.5,0.65);
    
    \node at (-1.6,1) {$A(B_4)$};
    \node at (-1.6,0) {$A(B_3)$};
    \node at (2.75,0.5) {$ \Map_o(S)$};
\end{tikzpicture}
\end{center}
with images $$\rho_{1}(\Delta(x_0,x_1,y_1,b_1))=a_1, \quad \rho_2(\Delta^{2}(x_1,y_1,b_1))=c_1c_2,$$ where $a_1$, $c_1$ and $c_2$ are the Dehn twists in Figure \ref{Rel3}. The twist $c_1$ is trivial while $c_2=a_1$, hence $$\rho_{1}(\Delta(x_0,x_1,y_1,b_1))
\rho_{2}(\Delta^{-2}(x_1,y_1,b_1))=\id$$ in $\Map_o(S)$ and we obtain the relation $\Delta(x_0,x_1,y_1,b_1)=
\Delta^{2}(x_1,y_1,b_1).$

\begin{figure}[H]
\includegraphics[width=11cm]{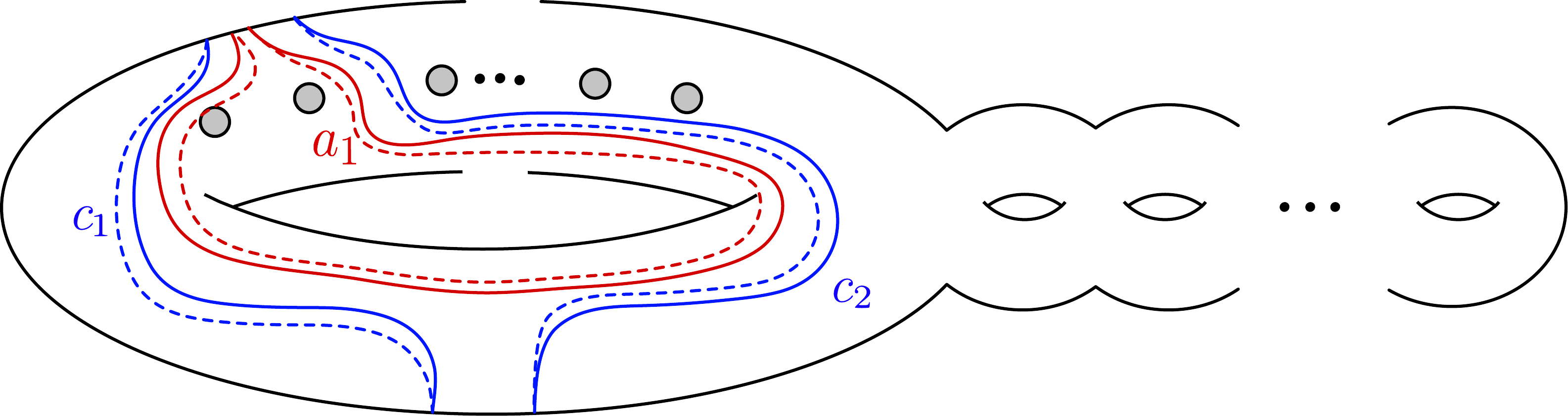}

\centering
\caption{}
\label{Rel3}
\end{figure}

\item (R4): $\Delta(x_0,x_1,y_1,y_2,y_3,z)
\Delta^{-2}(x_1,y_1,y_2,y_3,z)$.

By Lemma \ref{PVRepresentations} (3) we have homomorphisms

\begin{center}
\begin{tikzpicture}
    \draw[->] (-1,0) -- node[above]{$\rho_2$} (1.5,0.35);

    \draw[->] (-1,1) -- node[above]{$\rho_1$} (1.5,0.65);
    
    \node at (-1.6,1) {$A(D_6)$};
    \node at (-1.6,0) {$A(D_5)$};
    \node at (4,0.5) {$\Map(S)\subset \Map_o(S)$};
\end{tikzpicture}
\end{center}
with images $$\rho_{1}(\Delta(x_0,x_1,y_1,y_2,y_3,z))=a_2a_1^{2g+1}, \quad \rho_2(\Delta^2(x_1,y_1,y_2,y_3,z))=c_2c_3c_1^{2g+2},$$ where the $a_i$ and the $c_i$ are the Dehn twists in Figure \ref{Rel4}. The twist $c_2$ is trivial, $c_3=a_2$, and $c_1=a_1=u_1$, hence $$\rho_{1}(\Delta(x_0,x_1,y_1,y_2,y_3,z))
\rho_{2}(\Delta^{-2}(x_1,y_1,y_2,y_3,z))=u_1^{-1}$$ in $\Map_o(S)$ and we obtain the relation $\Delta(x_0,x_1,y_1,y_2,y_3,z)
u_1=\Delta^2(x_1,y_1,y_2,y_3,z).$

\begin{figure}[H]
\includegraphics[width=11cm]{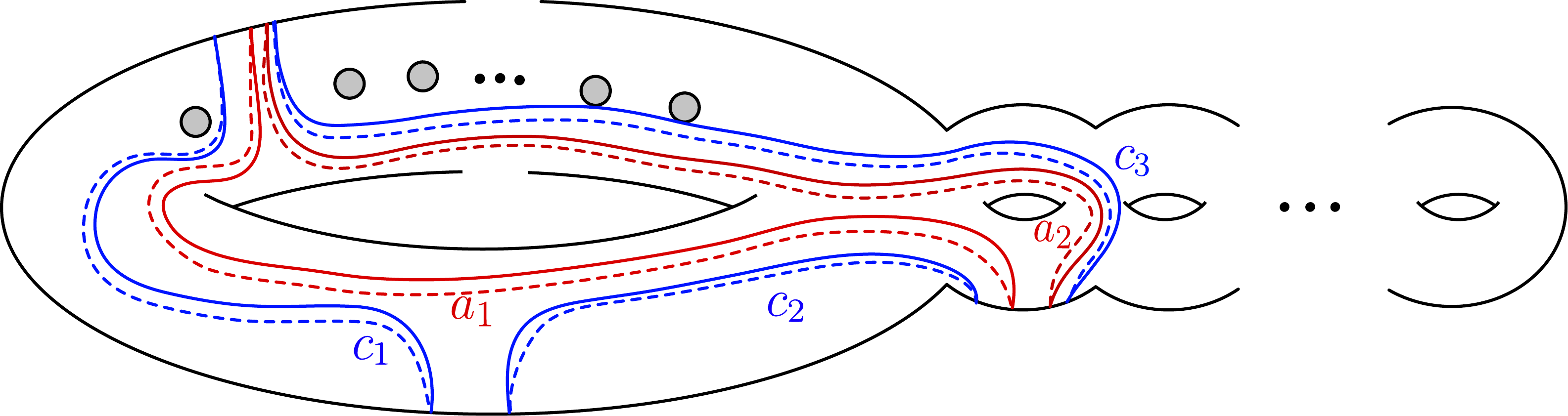}

\centering
\caption{}
\label{Rel4}
\end{figure}

\item (R5a): $ x_0^{2g-n-2}\Delta(x_1,h_1,\dots,
h_{n-1})\Delta^{-2}(z,y_2,\dots,y_{2g-1})$.

By Lemma \ref{PVRepresentations2} (3) and Lemma \ref{PVRepresentations} (3) we have homomorphisms

\begin{center}
\begin{tikzpicture}
    \draw[->] (-1,0) -- node[below]{$\rho_2$} (2,0);

    \draw[->] (-1,1) -- node[above]{$\rho_1$} (2,1);

    \node at (3.1,0.5) {$\cup$};
    \node at (-1.6,1) {$A(B_n)$};
    \node at (-1.9,0) {$A(D_{2g-1})$};
    \node at (3.1,1) {$\Map_o(S)$};
    \node at (3.1,0) {$\Map(S)$};
\end{tikzpicture}
\end{center} 

with images $$\rho_{1}(\Delta(x_1,b_1,\dots,
b_{n-1}))=a_1^{n-1}a_2, \quad \rho_2(\Delta^{2}(z,y_2,\dots,y_{2g-1}))=c_1^{2g-3}c_2,$$ where $a_1$, $a_2$, $c_1$ and $c_2$ are the Dehn twists in Figure \ref{Rel5}. The twists satisfy $c_1=a_1=x_0$, and $c_2=a_2$, hence $$x_0^{2g-n-2}\rho_1(\Delta(x_1,b_1,\dots,
b_{n-1}))\rho_2(\Delta^{-2}(z,y_2,\dots,y_{2g-1}))=\id$$ in $\Map_o(S)$ and we obtain the relation $ x_0^{2g-n-2}\Delta(x_1,b_1,\dots,
b_{n-1})=\Delta^{2}(z,y_2,\dots,y_{2g-1})$.

\begin{figure}[H]
\includegraphics[width=11cm]{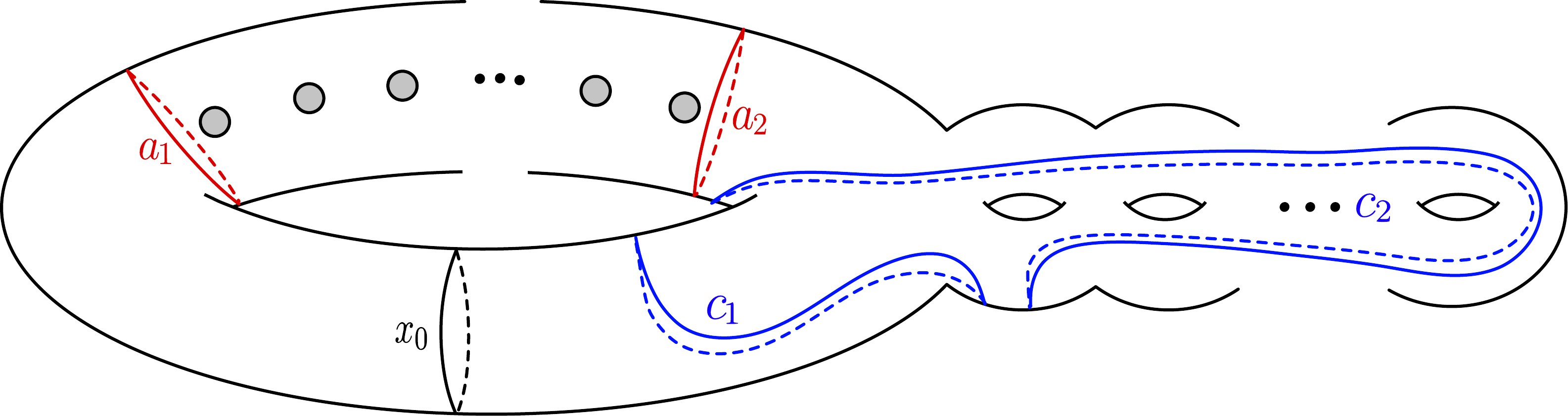}

\centering
\caption{}
\label{Rel5}
\end{figure}

\item (R5b): $x_0^n\Delta^{-1}(x_1,h_1,\dots, 
h_{n-1})$.

By Lemma \ref{PVRepresentations2} (3) we have a homomorphism

\begin{center}
\begin{tikzpicture}

    \draw[->] (-1,1) -- node[above]{$\rho$} (2,1);

    \node at (-1.6,1) {$A(B_n)$};

    \node at (3.1,1) {$\Map_o(S)$};

\end{tikzpicture}
\end{center} 

with image $$\rho(\Delta(x_1,b_1,\dots,
b_{n-1}))=a_1^{n-1}a_2,$$ where $a_1$ and $a_2$ are the Dehn twists in Figure \ref{Rel6}, which satisfy $a_1=a_2=x_0$, and hence 
$$\rho(\Delta(x_1,b_1,\dots,
b_{n-1}))=x_0^{n}$$ in $\Map_o(S)$, thus we obtain the relation $x_0^n=\Delta^{1}(x_1,b_1,\dots, 
b_{n-1})$.

\begin{figure}[H]
\includegraphics[width=8.5cm]{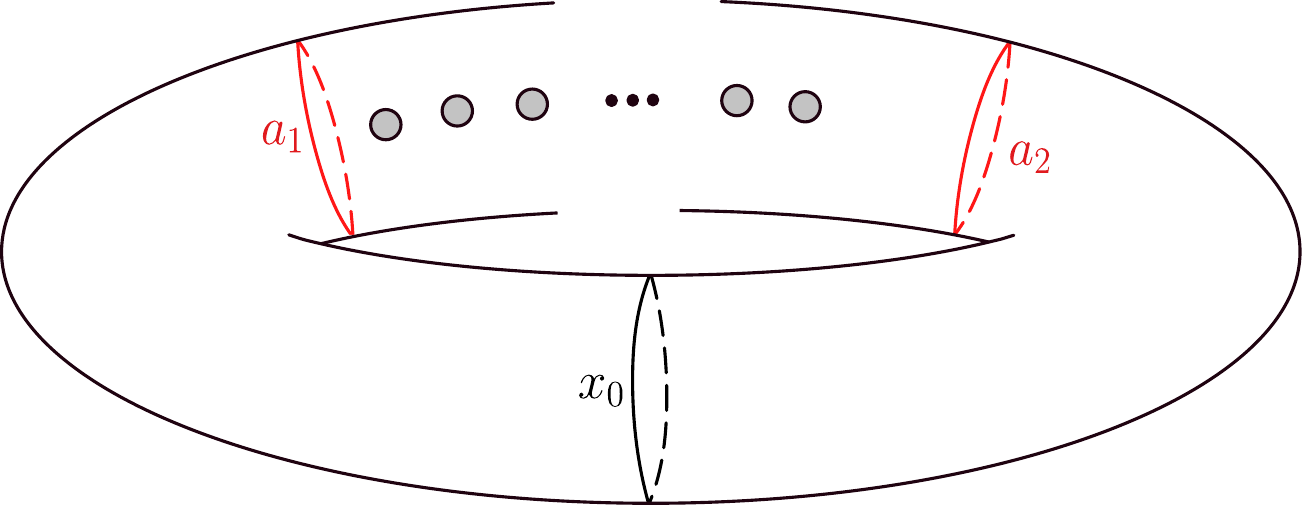}

\centering
\caption{}
\label{Rel6}
\end{figure}

\item (R5c): $\Delta^4 (x_0,y_1)\Delta^{-2}(h_1,\dots, 
h_{n-1})$. 

By Lemma \ref{PVRepresentations2} (2) and Lemma \ref{PVRepresentations} (1) we have homomorphisms

\begin{center}
\begin{tikzpicture}
    \draw[->] (-1,0) -- node[below]{$\rho_2$} (2,0);

    \draw[->] (-1,1) -- node[above]{$\rho_1$} (2,1);

    \node at (3.1,0.5) {$\cap$};
    \node at (-1.6,1) {$A(A_{2})$};
    \node at (-1.8,0) {$A(A_{n-1})$};
    \node at (3.1,1) {$\Map(S)$};
    \node at (3.1,0) {$\Map_o(S)$};
\end{tikzpicture}
\end{center} 

with images $$\rho_{1}(\Delta^4 (x_0,y_1))=a_1, \quad \rho_2(\Delta^{-2}(b_1,\dots, 
b_{n-1}))=c_1,$$ where $a_1$ and $c_1$ are the Dehn twists in Figure \ref{Rel7}, which satisfy $c_1=a_1$, and hence $$\rho_1(\Delta^4 (x_0,y_1))\rho_2(\Delta^{-2}(b_1,\dots, b_{n-1}))=\id$$ in $\Map_o(S)$, thus we obtain the relation $\Delta^4 (x_0,y_1)=\Delta^{2}(b_1,\dots, b_{n-1})$.

\begin{figure}[H]
\includegraphics[width=8.5cm]{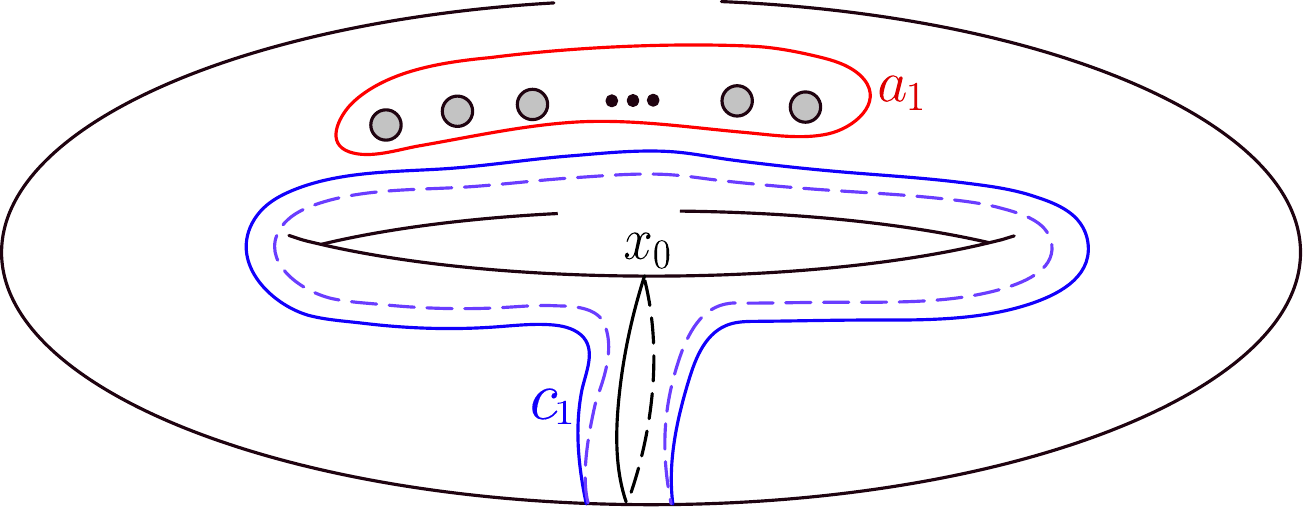}

\centering
\caption{}
\label{Rel7}
\end{figure}

We conclude that $\Map_o(S)$ can be expressed as the quotient $A(\Psi)$ by the sets of relations $R_2$ and $R_1'$, which finishes the proof.

\end{itemize}

\end{proof}

\section{Explicit presentation of $\B_{1,r}(O,Y)$}\label{Sect4}

Armed with the presentation of $\Map_o$ given by Theorem \ref{ThMain}, we will compute presentations of $\B_{1,r}(O, Y)$ for $r\geq 3$, and with $Y$ and $O$ tori, which are the content of Theorem \ref{Presentation1}. 

Because of Lemma \ref{fudom}, we can take as a fundamental domain $W_{\K_{1,r}^{3}(O,Y)}$ for the action of $\B_{1,r}(O, Y)$ on $\K$ the subcomplex: 

\begin{minipage}{0.4\textwidth}
  \centering
\begin{tikzpicture}
  \filldraw (0,0) circle (2pt) node[left] {$\v_0$};
  \filldraw (1,0.5) circle (2pt) node[right] {$\v_1$};
  \filldraw (0,1) circle (2pt) node[left] {$\v_2$};
  \filldraw (1,1.5) circle (2pt) node[right] {$\v_3$};

  \draw (0,0) --  (1,0.5);
  \draw (0,0) --  (0,1);
  \draw (1,0.5) --  (1,1.5);
  \draw (1,0.5) --  (0,1);
  \draw (0,1) --  (1,1.5);

\end{tikzpicture}
\end{minipage}
\hspace{-30pt}
\begin{minipage}{0.6\textwidth}
where $\v_i=[(S_{\v_i},\id)]$, for $S_{\v_i}$ the suited surface in Figure \ref{exp1}.  
\end{minipage}

\begin{figure}[H]  
\centering
\includegraphics[width=10cm]{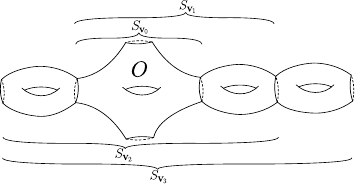}
\vspace{0.2cm}
\caption{A representation of the surfaces $S_{\v_i}$ for $r=4$. The union of $S_{\v_{i}}$ and a piece yields $S_{\v_{i+1}}$, while the union of $S_{\v_{i}}$ and two pieces results in $S_{\v_{i+2}}$. This determines the edges of $W_{\K_{1,4}^{3}(O,Y)}$. }
\label{exp1}
\end{figure}

The stabilizer $V_i$ of each vertex $\v_i\in W_{\K_{1,r}^{3}(O,Y)}$ has a presentation $V_i=\langle X_i|R_i\rangle$ given in Theorem \ref{ThMain}, and by Theorem \ref{presentationB} we know:
$$\B_{1,r}(O,Y)=\langle\cup X_i|(\cup R_i)\cup(\cup R_{i,j})\rangle, \textnormal{ whith  } \; R_{i,j}=\{o(s)t(s)^{-1}|s\in X_{i,j}\},$$
where each $o(s)$ is an expression of $s$ as a word in $X_i$, $t(s)$ is an expression of $s$ as a word in $X_j$, and the $X_{i,j}$ are sets of generators of the edge stabilizers. We will first give a generating set for $\B_{1,r}(O,Y)$, and then compute the sets $R_{i,j}$.

\subsection{Vertex stabilizers}\label{vertex0}
By Lemma \ref{typesstab}, the stabilizer $V_i$ of each vertex $\v_i$ is isomorphic to $\Map_o(S_{\v_i})$, which by Theorem \ref{ThMain} has a presentation $\langle X_i|R_i\rangle$ with 
$$X_0:=\{x_{0,0},x_{0,1},y_{0,1}, b_{0,1},b_{0,2},...,b_{0,r-1},u_{0,1},u_{0,2},...,u_{0,r}\},$$
$$X_i:=\{x_{i,0},x_{i,1},y_{i,1},y_{i,2},...,y_{i,2i+1}, z_i, b_{i,1},b_{i,2},...,b_{i,r-1},u_{i,1},u_{i,2},...,u_{i,r}\},\quad \textnormal{for }i\geq1,$$ and the relations described in Theorem \ref{ThMain}. Each $X_i$ is represented in Figure \ref{gen0}, with the curves defining Dehn twists and the arcs defining boundary swaps. The following is an immediate consequence of Theorem \ref{presentationB}:

\begin{cor}\label{GenB0}
The union $\underset{ 0\leq i\leq3}{\bigcup}{X_i}$ is a generating set of $\B_{1,r}(O, Y)$, where $Y$ and $O$ are tori and $r\geq3$.
    
\end{cor}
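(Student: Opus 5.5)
This follows by assembling results already established: Brown's theorem, the fundamental domain, the stabilizer computations and the generating sets of Section \ref{Sect2}. First, by Corollary \ref{connected0}, for $O$ and $Y$ tori and $r\geq 3$ the complex $\Xc^{3}_{1,r}(O,Y)$ is simply connected. Hence the simplicial complex $\K$ built from its $2$-skeleton in Subsection \ref{2.2} is simply connected, so hypothesis (1) of Theorem \ref{teoBrown} holds with $h_0=3$. Since $r\geq 3>1$, Lemma \ref{fudom} gives a fundamental domain $W_\K$ whose $1$-skeleton is a snake graph with the four vertices $\v_0,\dots,\v_3$ of heights $0,\dots,3$, as drawn above. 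Lemma \ref{finftystab} supplies the finiteness hypotheses (4) and (5). Thus Lemma \ref{presentationB} applies, and $\B_{1,r}(O,Y)$ is the quotient of the free product of the vertex groups $V_0,\dots,V_3$ by the edge relations.

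Second, I identify the generators of the vertex groups. By Lemma \ref{typesstab}(1), each $V_i$ is the stabilizer of $[S_{\v_i},\id]$ and is isomorphic to $\Map_o(S_{\v_i})$. The surface $S_{\v_i}$ is a torus with $i$ tori attached, so it has genus $i+1$ and exactly $r$ boundary components, since each piece of $Y^1$ replaces one boundary circle by another. By Theorem \ref{teogenSPMCG}, equivalently the generator set of Theorem \ref{ThMain}, $\Map_o(S_{\v_i})$ is generated by the Dehn twists and boundary swaps listed in $X_i$. The counts match: $y_{i,1},\dots,y_{i,2(i+1)-1}$ gives $2i+1$ curves $y$. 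For $i=0$ the genus is $1$, so only $y_{0,1}$ appears and there is no $z$, which matches $X_0$.

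Third, I conclude. The presentation of Lemma \ref{presentationB} has generating set $\bigcup_i X_i$, because the edge relations $R_{i,j}$ add no generators. Concretely, the isomorphism of Theorem \ref{teoBrown} sends each vertex group to its stabilizer subgroup via the extension-by-rigidity map $\Psi$ from the proof of Lemma \ref{typesstab}. The images of $X_0\cup X_1\cup X_2\cup X_3$ therefore generate $\B_{1,r}(O,Y)$.

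The only real point to check is that the abstract vertex generators are identified with the concrete mapping classes of Figure \ref{gen0}, extended rigidly outside $S_{\v_i}$. This identification is exactly the content of Lemma \ref{typesstab}, so I expect no genuine obstacle beyond bookkeeping of the genus and the number of boundary components.
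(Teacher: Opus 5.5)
Your proposal is correct and is the paper's own argument. The paper states the corollary as an immediate consequence of Lemma \ref{presentationB}, applied with $h_0=3$ from Corollary \ref{connected0}, the snake fundamental domain on $\v_0,\dots,\v_3$, and the identification $V_i\cong\Map_o(S_{\v_i})$ from Lemma \ref{typesstab} with the generating sets of Theorem \ref{teogenSPMCG}/\ref{ThMain}; your count of genus $i+1$ and $r$ boundary components for $S_{\v_i}$ is exactly the bookkeeping that matches these sets to the $X_i$.
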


\begin{figure}[H]  
\centering
\includegraphics[width=14.5cm]{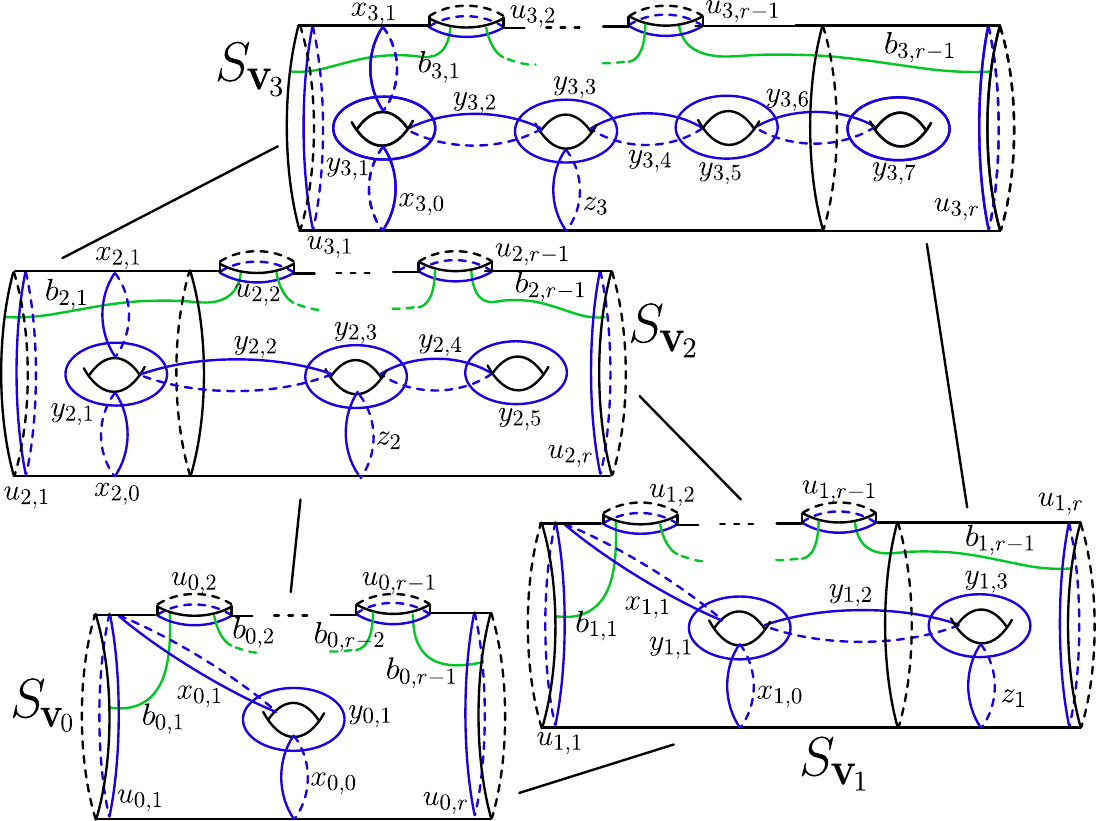}
\vspace{0.2cm}
\caption{Generators of $\B_{1,r}(O, Y)$. We represent each set $X_i$ as curves and arcs in $S_{\v_i}$.}
\label{gen0}
\end{figure}

It thus remains to give a complete set of relations, which are the ones afforded by Lemma \ref{presentationB}. To this end, we compute the sets $R_{i,j}$. 

\subsection{Edge stabilizers of the form $E_{i,i+1}$}\label{4.2} We will now compute the sets of relations $R_{i,i+1}$.

\subsubsection{Computing $R_{0,1}$}  By Lemma \ref{typesstab} (2) and Lemma \ref{lemSPMCG-A1}, the Dehn twists and boundary swaps $r_i$ in Figure \ref{1-20} are a set of generators of $E_{0,1}$. We express each of them as a word in $X_0$ and $X_1$ and hence determine the elements of $R_{0,1}$:

\hspace{45pt}\begin{minipage}{0.3\textwidth}
  \centering
  \begin{itemize}
      \item $r_1: x_{0,0}=x_{1,0},$
      \item $r_2: x_{0,1}=x_{1,1},$
      \item $r_3: y_{0,1}=y_{1,1}.$
  \end{itemize}

\end{minipage}
\hfill
\begin{minipage}{0.5\textwidth}
\begin{itemize}
\item For $j\in \{1,...,r-1\}$:
  \begin{center}
  \begin{itemize}
      \item[] $r_{u_j}: u_{0,j}=u_{1,j}.$
  \end{itemize}
  \end{center}
\item For $j\in \{1,...,r-2\}$:
  \begin{center}
  \begin{itemize}
      \item[] $r_{b_j}: b_{0,j}=b_{1,j}.$
  \end{itemize}
  \end{center}
\end{itemize}
  
\end{minipage}

\begin{figure}[H]
\includegraphics[height=3.5  cm]{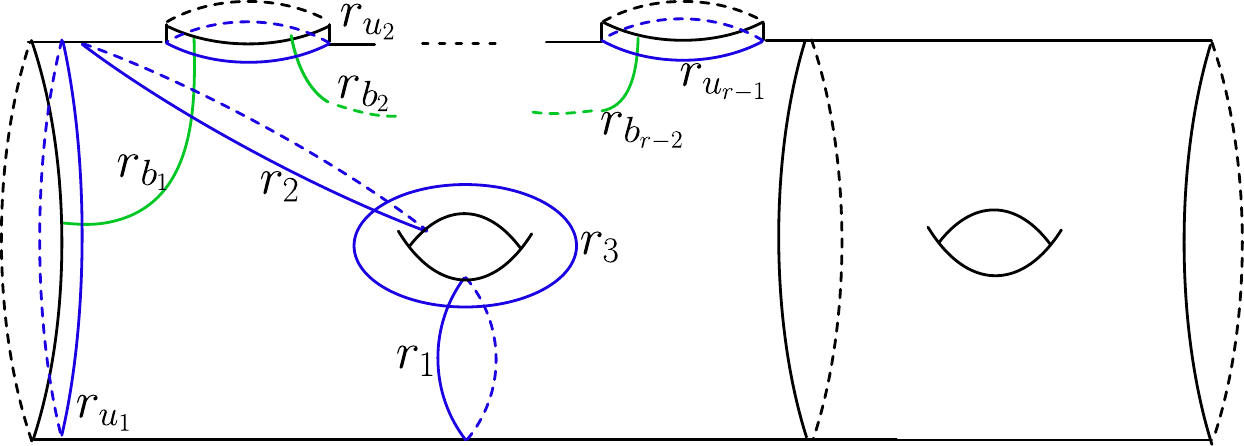}
\centering  
\caption{}
\label{1-20}
\end{figure}

\subsubsection{Computing $R_{1,2}$}\label{6.2.2} Because of Lemma \ref{typesstab} part (2) and Lemma \ref{lemSPMCG-A2}, the Dehn twists and boundary swaps $r_i$ in Figure \ref{1-20} are a set of generators of $E_{1,2}$. We express each of them as a word in $X_1$ and $X_2$ and hence determine $R_{1,2}$:

\hspace{45pt}\begin{minipage}{0.3\textwidth}
  \centering
  \begin{itemize}
      \item $r_1: x_{2,0}=z_{3},$
      \item $r_4: y_{2,1}=y_{3,3},$
      \item $r_5: y_{2,2}=y_{3,4},$
      \item $r_6: y_{2,3}=y_{3,5},$
  \end{itemize}

\end{minipage}
\hfill
\begin{minipage}{0.5\textwidth}
\begin{itemize}
\item For $j\in \{2,...,r\}$:
  \begin{center}
  \begin{itemize}
      \item[] $r_{u_j}: u_{1,j}=u_{2,j}.$
  \end{itemize}
  \end{center}
\item For $j\in \{2,...,r-1\}$:
  \begin{center}
  \begin{itemize}
      \item[] $r_{b_j}: b_{1,j}=b_{2,j}.$
  \end{itemize}
  \end{center}
\end{itemize}
  \end{minipage}

\begin{figure}[H]
\includegraphics[width=10cm]{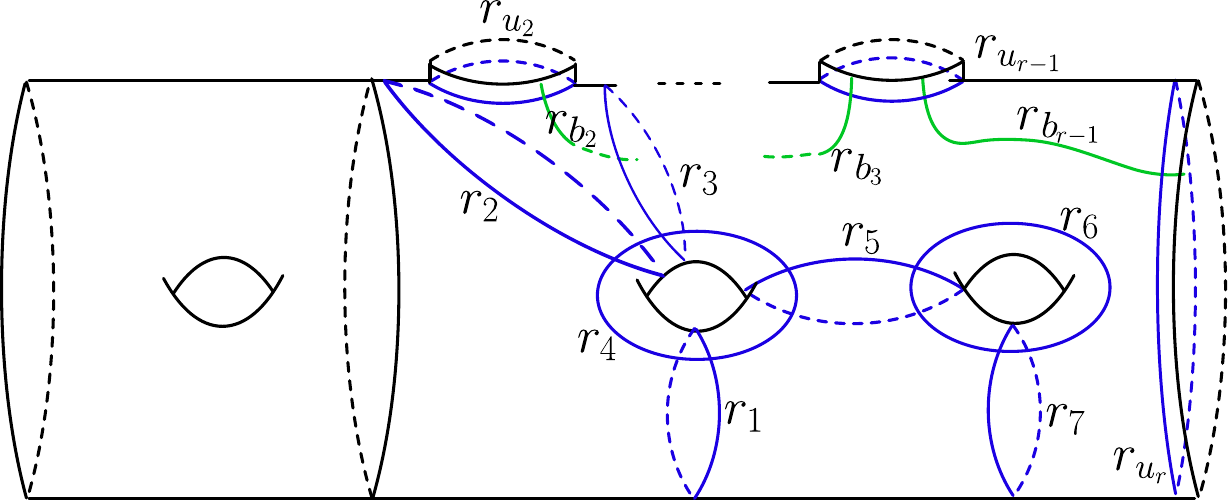}
\centering
\caption{}
\label{2-30}
\end{figure}

The Dehn twists $r_2$, $r_3$ and $r_7$ are not elements of $X_2$, hence we figure out expressions for them in terms of elements in $X_2$, in order to compute the corresponding relations.

\textbf{Computing $r_7$:} The twist $r_7$ is $z_1$ viewed as an element of $X_1$. We will write it as a product of elements in $X_2$. Let $\mathfrak{m}_i$ be the multitwist $$\mathfrak{m}_i=y_{i,5}y_{i,4}y_{i,3}z_{i}y_{i,2}y_{i,1}y_{i,3}y_{i,2}y_{i,4}y_{i,3}y_{i,5}y_{i,4}z_{i}y_{i,3}y_{i,2}y_{i,1}.$$
According to the calculations of Humphries in \cite[Proof of Theorem, page 45]{HumpGen}, one can obtain the Dehn twist $r_7$ by conjugating $x_{2,0}$ by $\mathfrak{m}$, yielding
$r_7=\mathfrak{m}_2x_{2,0}\mathfrak{m}_2^{-1}$. Hence we obtain the relation: $$r_7: z_1=\mathfrak{m}_2x_{2,0}\mathfrak{m}_2^{-1}.$$

\textbf{Computing $r_2$:} The twist $r_2$ is $x_{1,1}$ viewed as an element of $X_1$. On the other hand, using the homomorphism $\rho$ of $A(D_4)$ in Lemma \ref{PVRepresentations} (3) for the group generated by $\{x_{2,1},x_{2,0},y_{2,1},y_{2,2}\}$, we know that $\Delta(x_{2,1},x_{2,0},y_{2,1},y_{2,2})=u_{2,1}x_{1,1}z_2$, and we have the relation:
$$r_2:\Delta(x_{2,1},x_{2,0},y_{2,1},y_{2,2})=u_{2,1}x_{1,1}z_2.$$

\textbf{Computing $r_3$:} For $r_3$, by Lemmas \ref{PVRepresentations} (1) and \ref{PVRepresentations2} (3) applied to the groups generated by $\{x_{2,0},y_{2,1},y_{2,2}\}$ and $\{x_{1,1},b_{2,1}\}$, we deduce that \begin{equation}\label{eqr3}
    r_3=\Delta(x_{1,1},b_{2,1})\Delta^{-2}(x_{2,0},y_{2,1},y_{2,2})z_2.\end{equation} Given that $x_{1,1}$ can be expressed as a product of elements in $X_2$ because of relation $r_2$, and every other factor in Equation \ref{eqr3} is an element of $X_2$, there is no need to add $r_3$ to $R_{1,2}$ since $r_3$ can be expressed as a product of elements in $X_2$.

\subsubsection{Computing $R_{2,3}$} Using the same argument as for $R_{0,1}$ adapted to the the generators $X_2$ and $X_3$ given by Lemma \ref{lemSPMCG-A1}, we obtain:

    \begin{minipage}{0.3\textwidth}
  \centering
  \begin{itemize}[label=$\bullet$]
      \item $r_1: x_{2,0}=x_{3,0},$
      \item $r_2: x_{2,1}=x_{3,1},$
      \item $r_3: y_{2,1}=y_{3,1},$
      \item $r_4: y_{2,2}=y_{3,2},$
      \item $r_5: y_{2,3}=y_{3,3},$
  \end{itemize}

\end{minipage}
\hfill
\begin{minipage}{0.3\textwidth}
  \centering
  \begin{itemize}[label=$\bullet$]
      \item $r_6: y_{2,4}=y_{3,4},$
      \item $r_7: y_{2,5}=y_{3,5},$
      \item $r_{8}: z_{2}=z_3.$
  \end{itemize}

\end{minipage}
\hspace{-7pt}
\begin{minipage}{0.38\textwidth}
\begin{itemize}
\item For $j\in \{1,...,r-1\}$:
  \begin{center}
  \begin{itemize}
      \item[] $r_{u_j}: u_{2,j}=u_{3,j}.$
  \end{itemize}
  \end{center}
\item For $j\in \{1,...,r-2\}$:
  \begin{center}
  \begin{itemize}
      \item[] $r_{b_j}: b_{2,j}=b_{3,j}.$
  \end{itemize}
  \end{center}
\end{itemize}
\end{minipage}

\subsection{Edge stabilizers of the form $E_{i,i+2}$}\label{4.3}
By Lemma \ref{typesstab}, we know $E_{i,i+2}\simeq \Map_o^{\{A_1,A_2\}}(S_{\v_i})$. 

Given any element $\t\in  \Map_o^{\{A_1,A_2\}}(S_{\v_i})$ that swaps $A_1$ and $A_2$, we can find a set of generators $X_{i,i+2}$ of $E_{i,i+2}$ containing $\t$, and where every other element $x\in X_{i,i+2}$ fixes both $A_1$ and $A_2$, hence $x\in\Map_o^{A_1}(S_{\v_i})\cap\Map_o^{A_2}(S_{\v_i})$. Such a set of generators can be achieved by choosing any generating set $X$ containing $\t$, and substituting any generator $x\in X$ which swaps $A_1$ and $A_2$ for $x\t$. The following immediate inclusions: $$\Map_o^{A_1}(S_{\v_i})\cap\Map_o^{A_2}(S_{\v_i})\subset\Map_o^{A_1}(S_{\v_i})\simeq E_{i,i+1}$$ 

\begin{center}$\Map_o^{A_1}(S_{\v_i})\cap\Map_o^{A_2}(S_{\v_i})\subset\Map_o^{A_2}(S_{\v_i}\cup P_1)\simeq E_{i,i+1}$ 

\end{center} 
where $P_1$ is the piece adjacent to $S_{\v_i}$ containing $A_1$, imply that any relation in $R_{i,i+2}$ will be redundant given both $R_{i,i+1}$ and $R_{i+1,i+2}$, except for the relation coming from $\t$.

Hence it suffices to find a relation $r_{i,i+2}$ corresponding to an element that permutes the two pieces in $S_{\v_{i+2}}\setminus S_{\v_i}$. To this end, we will focus on finding an element of $\Map_o^{\{A_1,A_2\}}(S_{\v_i})$ that swaps $A_1$ and $A_2$.

A candidate to swapping $A_1$ and $A_2$ are $180^\circ$ rotations of $S_{\v_i}$. We will first determine how to write a $180^\circ$ rotation of the surface $S'$ with $n$ punctures but no boundary components as a product of Dehn twists and half twists; and then we will deduce a homomorphism of $S_{\v_i}$ that swaps $A_1$ and $A_2$.

\subsubsection{Rotation of $S'$.}\label{4.3.1} 

We will write the rotation $\phi\in \Map(S')$ of $180^\circ$ with axis the vertical line in  Figure \ref{fig:dos}(\subref{fig:dos-A}) as a product of Dehn twists and half twists, which in turn will allow us to find a homomorphism of $S_{\v_i}$ that swaps $A_1$ and $A_2$. The map $\phi$ fixes the curve $\alpha$ in Figure \ref{fig:dos}(\subref{fig:dos-A}), hence, according to \cite[Proposition 3.20]{Farb-Margalit}, we can compute separately:
\begin{enumerate}
    \item The $180^\circ$ rotation $\phi_D$ of $D$ in Figure \ref{fig:dos}(\subref{fig:dos-A}),
    \item The $180^\circ$ rotation $\phi_Q$ of $Q$ in Figure \ref{fig:dos}(\subref{fig:dos-A}),
\end{enumerate}
and then write 
\vspace{-5pt}$$\phi=\phi_D\phi_Q T_\a^k$$ for $T_\a$ the Dehn twist around the curve $\a$ in Figure \ref{fig:dos}(\subref{fig:dos-A}), and $k$ an integer.

\textbf{Rotation $\phi_Q$:} For $g\geq 2$ we use the homomorphism $\rho$ for $A_{2g+1}$ in Lemma \ref{PVRepresentations} (1) for the group generated by the Dehn twists $d_1,...,d_{2g+1}$ in Figure \ref{fig:dos}(\subref{fig:dos-A}). Because of \cite[Section 2, Lemma 2 (ii)]{Garside}, we know that $\Delta(d_1,...,d_{2g+1})=(d_1,...,d_{2g+1})(d_1,...,d_{2g})...(d_1d_2)d_1$ satisfies $$\Delta(d_1,...,d_{2g+1})d_i\Delta(d_1,...,d_{2g+1})^{-1}=d_{2g+2-i}.$$ By \cite[Fact 3.6, Fact 3.7]{Farb-Margalit}, this means that $\rho(\Delta(d_1,...,d_{2g+1}))$ takes the curve defining $d_i$ to the curve defining $d_{2g+2-i}$, and vice versa. Hence the composition $\phi_Q^{-1}\rho(\Delta(d_1,...,d_{2g+1}))$ fixes each curve defining the $d_i$. By the Alexander method (see \cite[Subsection 2.3]{Farb-Margalit}) this means that $\phi_Q^{-1}\rho(\Delta(d_1,...,d_{2g+1}))$ either swaps $Q_1$ and $Q_2$ in Figure \ref{fig:dos}(\subref{fig:dos-B}) or is the identity. But $Q_1$ has a puncture, which $Q_2$ does not, hence $\phi_Q^{-1}\rho(\Delta(d_1,...,d_{2g+1}))$ cannot swap them so we deduce $\phi_Q=\rho(\Delta(d_1,...,d_{2g+1}))$.

\begin{figure}[htbp]
\centering

\begin{subfigure}{0.48\textwidth}
    \centering
    \includegraphics[width=\linewidth]{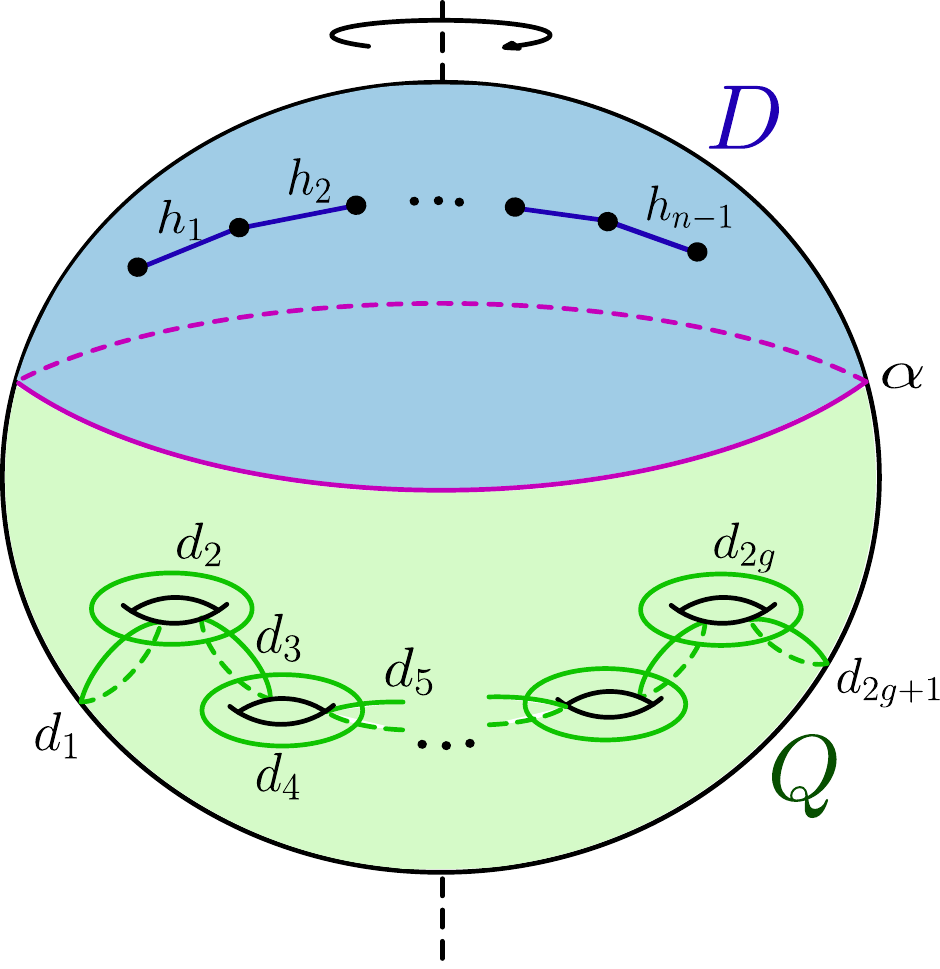}
    \caption{}
    \label{fig:dos-A}
\end{subfigure}
\hfill
\begin{subfigure}{0.48\textwidth}
    \centering
    
    \includegraphics[width=\linewidth]{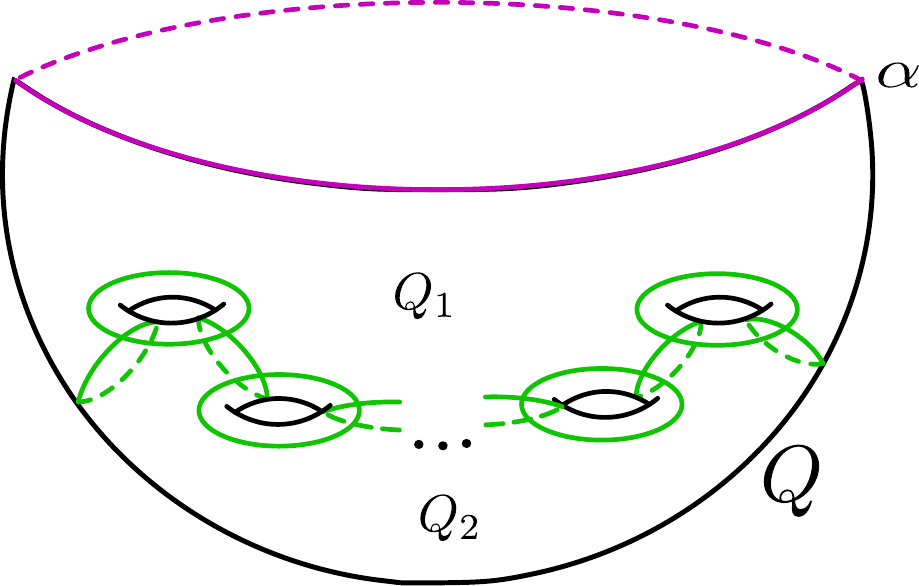}
    \vspace{6pt}
    \caption{}
    \label{fig:dos-B}
\end{subfigure}

\caption{}
\label{fig:dos}
\end{figure}

The particular case where $g=1$ differs in that there is no chain of curves as in Figure \ref{fig:dos}(\subref{fig:dos-B}) with an odd number of curves, since we only have $d_1$ and $d_2$. However, in this case it is easy to see that $\phi_Q=\rho(\Delta(d_1,d_2))^2$ is the desired rotation. 

\textbf{Rotation $\phi_D$:} We will use the homomorphism $\rho$ of $A_{n-1}$ in Lemma \ref{PVRepresentations} (5) for the group generated by the half twists $h_1,...,h_{n-1}$ in Figure \ref{fig:dos}(\subref{fig:dos-A}). Because of \cite[Section 2, Lemma 2 (ii)]{Garside}, we know that $$\Delta(h_1,...,h_{n-1})h_i\Delta(h_1,...,h_{n-1})^{-1}=h_{n-i}.$$

Half twists are uniquely defined by an arc, and the proof of \cite[Fact 3.7]{Farb-Margalit} works for half twists, hence $\rho(\Delta(h_1,...,h_{n-1}))$ takes the arc defining $h_i$ to the arc defining $h_{n-i}$, and vice versa.  Take $\phi_D$ the rotation of $D$ of $180^\circ$. The composition $\phi_D^{-1}\rho(\Delta(h_1,...,h_{n-1}))$ fixes each arc defining the $h_i$, by the Alexander method (see \cite[Subsection 2.3]{Farb-Margalit}) this implies that $\phi_D^{-1}\rho(\Delta(h_1,...,h_{n-1}))=\id$, and $\phi_D=\rho\Delta(h_1,...,h_{n-1}))$.

We  now compute the rotation of $S'$ for $g\geq 2$. Take the product $$\phi_Q^{-1}\phi_D=\rho(\Delta(d_1,...,d_{2g+1}))^{-1}\rho(\Delta(h_1,...,h_{n-1})).$$ We know that $\phi=\phi_Q^{-1}\phi_DT_\a^{k}$ for some $k$ and we also know that $\phi^2=\id$. Because of Subsection \ref{2.3}, we know that $\rho(\Delta(d_1,...,d_{2g+1}))^2=T_\a=\rho(\Delta(h_1,...,h_{n-1}))^2$, hence $$\id=(\phi_Q^{-1}\phi_DT_\a^{k})^2=\rho(\Delta(d_1,...,d_{2g+1}))^{-2}\rho(\Delta(h_1,...,h_{n-1}))^2T_\a^{2k}=T_\a^{-1}T_\a^1T_\a^{2k},$$ which means $k=0$ and $$\phi=\rho(\Delta(d_1,...,d_{2g+1}))^{-1}\rho(\Delta(h_1,...,h_{n-1})).$$

For the case where $g=1$, an analogous reasoning yields that $$\phi=\rho(\Delta(d_1,d_{2}))^{-2}\rho(\Delta(h_1,...,h_{n-1})).$$

We now explicitly compute the relations derived from the rotations.

\subsubsection{Computing $r_{0,2}$} The product $$\phi_0=\rho(\Delta(x_{0,0}, y_{0,1}))^{-2}\rho(\Delta(b_{0,1},b_{0,2},...,b_{0,r-1}))$$ defines an element of $\Map_o(S_{\v_0})$, which is not a $180^\circ$ rotation since $$\phi_0^2=\rho(\Delta(x_{0,0},y_{0,1}))^{-4}\rho(\Delta(b_{0,1},b_{0,2},...,b_{0,r-1}))^2=u_{0,1}^{-1}u_{0,2}^{-1}...u_{0,r}^{-1}\neq \id$$ because of Lemma \ref{PVRepresentations} (1) and (5). Since $\In(\phi_0)$ is a $180^\circ$ rotation of $S'_{\v_0}$ (the result of capping $S_{\v_0}$), it swaps the punctures coming from $A_1$ and $A_2$, thus $\phi_0$ exchanges $A_1$ and $A_2$ in Figure \ref{tubo0}.

The product $$\phi_2=\rho(\Delta(x_{2,0},y_{2,1},y_{2,2},y_{2,3},y_{2,4},y_{2,5},z_1))^{-1}\rho(\Delta(b_{2,1},b_{2,2},...,b_{2,r-1}))$$ defines an element of $\Map^o(S_{\v_2})$, which also exchanges $A_1$ and $A_2$. Both $\phi_0$ and $\phi_2$ are expressed as a product of generators in $\underset{ 0\leq i\leq3}{\bigcup}{X_i}$.

\vspace{-0.2cm}
\begin{figure}[H]  
\centering
\includegraphics[width=11.5cm]{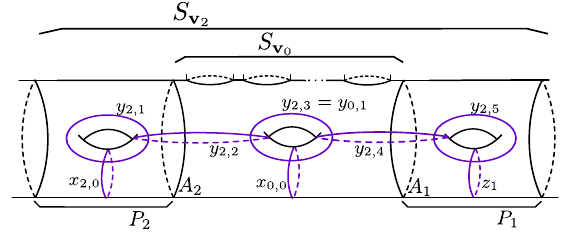}
\vspace{-0.2cm}
\caption{}
\label{tubo0}
\end{figure}

The maps $\phi_0$ and $\phi_2$ can be extended to elements of $\B_{d,r}(O, Y)$ that act rigidly away from $S_{\v_2}$. Because of how we defined $\phi_0$ and $\phi_2$ as preimages through $\In(\cdot)$ of the $180^\circ$ rotations, their extensions must define the same elements up to Dehn twisting around the boundary components, hence
 \begin{equation}\label{eq0}
    \phi_2u_{2,1}^{k_1}u_{2,2}^{k_2}...u_{2,r}^{k_r}=\phi_0u_{0,1}^{l_1}u_{0,2}^{l_2}...u_{0,r}^{l_r}.\end{equation} for some integers $l_1,...,l_r$ and $k_1,...,k_r$. Taking the squares yields:

\begin{equation}\label{eq02}
\begin{split}
\rho(\Delta(x_{0,0},y_{0,1}))^{-4}\rho(\Delta(b_{0,1},b_{0,2},...,b_{0,r-1}))^2u_{0,1}^{l_1+l_r}u_{0,2}^{l_2+l_{r-1}}...u_{0,r}^{l_r+l_1}=\\\rho(\Delta(x_{2,0},y_{2,1},y_{2,2},y_{2,3},y_{2,4},y_{2,5},z_1))^{-2}\rho(\Delta(b_{2,1},b_{2,2},...,b_{2,r-1}))^2u_{2,1}^{k_1+k_r}...u_{2,r}^{k_r+k_1}, 
\end{split}
\end{equation}
which by Lemmas \ref{PVRepresentations} (1) and \ref{PVRepresentations2} (2) can be rewritten as
$$ T_{\a_0}^{-1}T_{\a_0}u_{0,1}^{-1}...u_{0,r}^{-1}u_{0,1}^{l_1+l_r}u_{0,2}^{l_2+l_{r-1}}...u_{0,r}^{l_r+l_1}=T_{\a_2}^{-1}T_{\a_2}u_{2,1}^{-1}...u_{2,r}^{-1}u_{2,1}^{k_1+k_r}u_{2,2}^{k_2+k_{r-1}}...u_{2,r}^{k_r+k_1}.$$

From this, we can deduce a set of sufficient conditions for \eqref{eq02} to be satisfied: we know that, for $j\in \{2,r-1\}$, we have $u_{0,j}=u_{2,j}$, hence if  \begin{itemize}
    \item $l_1=k_1=1$,
    \item $l_2=l_3=...=l_r=k_2=...=k_r=0$
\end{itemize}
then Equation \ref{eq02} is satisfied. 

These conditions are in fact also sufficient for \eqref{eq0} to be satisfied, as for those values we get $$(\phi_0u_{0,1})^2|_{\S_{d,r}(O, Y)\setminus S_{\v_0}}=\id=(\phi_2u_{2,1})^2|_{\S_{d,r}(O, Y)\setminus S_{\v_0}},$$ which means that $$\phi_0u_{0,1}(P_1)=P_2, \quad\phi_0u_{0,1}(P_2)=P_1,$$ 
hence $\phi_0u_{0,1}(\cdot)$ defines a homeomorphism that swaps $P_1$ and $P_2$, and $$\phi_2u_{2,1}(P_1)=P_2, \quad \phi_2u_{2,1}(P_2)=P_1,$$ hence $\phi_2u_{2,1}(\cdot)$ defines a homeomorphism that swaps $P_1$ and $P_2$. 

Recall that the preferred rigid structure of Definition \ref{rigstru} was arbitrarily chosen, and in fact the choice has not affected any of the arguments in the paper up to this point. We can then retroactively set the preferred rigid structure so that $\phi_0u_{0,1}=\phi_2u_{2,1}$. This yields the following relation: $$r_{1,3}:\phi_0u_{0,1}=\phi_2u_{2,1},
$$ with 
\vspace{-7pt}$$
\phi_0=\rho(\Delta(x_{0,0},y_{0,1}))^{-2}\rho(\Delta(b_{0,1},b_{0,2},...,b_{0,r-1})),
$$$$
\phi_2=\rho(\Delta(x_{2,0},y_{2,1},y_{2,2},y_{2,3},y_{2,4},y_{2,5},z_1))^{-1}\rho(\Delta(b_{2,1},b_{2,2},...,b_{2,r-1})).$$

\begin{rem}
    In fact, any presentation of $\S_{d,r}(O,Y)$ fixes a preferred rigid structure, since relations between generators determine how the pieces join together.
\end{rem}

\subsubsection{Computing $r_{1,3}$} Using the Humphries calculations in \cite[Proof of Theorem, page 45]{HumpGen}, set $t_3$ to be: $$t_3=\mathfrak{n}_3z_{3}\mathfrak{n}_3^{-1},$$ with 
\vspace{-18pt}\begin{align}
     \mathfrak{n}_3= y_{3,7}y_{3,6}y_{3,5}z_{1}y_{3,4}y_{3,3}y_{3,5}y_{3,4}\cdot\nonumber\\\cdot y_{3,6}y_{3,5}y_{3,7}y_{3,6}z_{1}y_{3,5}y_{3,4}y_{3,3}\nonumber.
    \end{align}

Define: 
\vspace{-14pt}$$\phi_1=\Delta(x_{1,0},y_{1,1},...,y_{1,3},z_1)^{-1}\Delta(b_{1,1},...,b_{1,r}),$$
$$\phi_3=\Delta(x_{3,0},y_{3,1},...,y_{3,7},t_3)^{-1}\Delta(b_{3,1},...,b_{3,r}).$$
By an analogous reasoning as for $r_{0,2}$, we get the following relation: $$r_{1,3}:\phi_{1}u_{1,1}=\phi_3u_{3,1}.$$

\subsection{The presentation} We collect the generators in Subsection \ref{vertex0} and every set of relations in Subsections \ref{4.2} and \ref{4.3}, and Theorem \ref{ThMain}, which gives rise to the following presentation of $\B_{1,r}(O, Y)$.

\begin{theor}\label{Presentation1}
 Let $O$ and $Y$ be tori. The group $\B_{1,r}(O, Y)$ has a presentation with generators the Dehn twists $$\left(\overset{3}{\underset{i=1}{\cup}}\{x_{i,0},x_{i,1},y_{i,1},y_{i,2},...,y_{i,2i+1}, z_i, u_{i,1},u_{i,2},...,u_{i,r}\}\right){\cup}\{x_{0,0},x_{0,1},y_{0,1}, u_{0,1},u_{0,2},...,u_{0,r}\}$$ and the boundary swaps $$\overset{3}{\underset{i=0}{\cup}}\{ b_{i,1},b_{i,2},...,b_{i,r-1},\},$$ and the relations are:

 \begin{enumerate}
     \item Relations from the vertex stabilizer $V_0$:

\begin{itemize}[leftmargin=0pt]
\item Braid relations.  Let  $(a,b)\in \{(x_{0,1},y_{0,1}),(x_{0,0},y_{0,1}),\\(b_{0,1},b_{0,2}),...,(b_{0,r-2},b_{0,r-1})\}$:

\hspace{2pt} \textnormal{(A$_{a,b}$)}\quad  $aba=bab$.

\item 4-length relations. Let  $(a,b)\in \{(x_{0,1},b_{0,1}),(u_{0,1},b_{0,1}),...,(u_{0,r-1},b_{0,r-1}),\\(u_{0,2},b_{0,1}),...,(u_{0,r},b_{0,r-1})\}$:

\hspace{2pt}\textnormal{(A$_{a,b}$)} \quad  $abab=baba$.

\item Commutation. Let  $(a,b)$ be a pair of generators in $X_0$ that do not satisfy the braid or 4-length relations. Then:

\hspace{2pt} \textnormal{(A$_{a,b}$)}\quad  $ab=ba$.

\item Other relations.     
    
\begin{tabular}{p{30pt} c l}

\rm (R03) & $(x_{0,0}x_{0,1}y_{0,1}b_{0,1})^4=
(x_{0,1}y_{0,1}b_{0,1})^6$,\\

\rm (R05b) & $x_0^r=(x_{0,1}b_{0,1}\dots b_{0,r-1})^r$,\\

\rm (R05c) & $ (x_{0,0}y_{0,1})^8=(b_1\dots 
b_{r-1})^r$,\\
 
\rule{0pt}{13pt}\makebox[0pt][l] { For  $j\in\{1,...,r-1\}$:} \\
\rm (C01j) \quad & $b_{0,j}u_{0,j}=u_{0,j+1}b_{0,j},$ & \quad\\
\rm (C02j) \quad & $u_{0,j}b_{0,j}=b_{0,j}u_{0,j+1}.$ & \quad\\
\end{tabular}
\end{itemize}

\item Relations from the vertex stabilizer $V_i$, for $i\in \{1,2,3\}$:

\begin{itemize}[leftmargin=0pt]
\item Braid relations.  Let  $(a,b)\in \{(x_{i,1},y_{i,1}),(x_{i,0},y_{i,1}),(z_{i},y_{i,3}),\\(b_{i,1},b_{i,2}),...,(b_{i,r-2},b_{i,r-1}),(y_{i,1},y_{i,2}),...,(y_{i,2i},y_{i,2i+1})\}$:

\hspace{2pt} \textnormal{(A$_{a,b}$)}\quad  $aba=bab$.

\item 4-length relations. Let  $(a,b)\in \{(x_{i,1},b_{i,1}),(u_{i,1},b_{i,1}),...,(u_{i,r-1},b_{i,r-i}),\\(u_{i,2},b_{i,1}),...,(u_{i,r},b_{i,r-1})\}$:

\hspace{2pt} \textnormal{(A$_{a,b}$)}\quad  $abab=baba$.

\item Commutation. Let  $(a,b)$ be a pair of generators in $X_i$ that do not satisfy the braid or 4-length relations. Then:

\hspace{2pt} \textnormal{(A$_{a,b}$)}\quad  $ab=ba$.

\item Other relations.
    
\begin{tabular}{p{30pt} c l}

\rm (Ri1) \quad & $(y_{i,1}y_{i,2}y_{i,3}z_{i})^{10}=
(x_{i,0}y_{i,1}y_{i,2}y_{i,3}z_{i})^6$,\\

\rm (Ri2) \quad & $(y_{i,1}y_{i,2}y_{i,3}y_{i,4}y_{i,5}z_{i})^{12}=
(x_{i,0}y_{i,1}y_{i,2}y_{i,3}y_{i,4}y_{i,5}z_{i})^{15}$,\\

\rm (Ri3) \quad & $(x_{i,0}x_{i,1}y_{i,1}b_{i,1})^4=
(x_{i,1}y_{i,1}b_{i,1})^6$,\\

\rm (Ri4) \quad & $u_{i,1}(x_{i,0}x_{i,1}y_{i,1}y_{i,2}y_{i,3}z_{i})^5=
(x_{i,1}y_{i,1}y_{i,2}y_{i,3}z_{i})^8$,\\

\rm (Ri5a) \quad & $ x_{i,0}^{2i-r}(x_{i,1}b_{i,1}\dots
b_{i,r-1})^{r}=(z_{i}y_{i,2}\dots y_{i,2i+1})^{4i}$,\\
 
\rule{0pt}{13pt}\makebox[0pt][l] { For  $j\in\{1,...,r-1\}$:} \\
\rm (Ci1j) \quad & $b_{i,j}u_{i,j}=u_{i,j+1}b_{i,j},$ & \quad\\
\rm (Ci2j) \quad & $u_{i,j}b_{i,j}=b_{i,j}u_{i,j+1}.$ & \quad\\
\end{tabular}

\end{itemize}

\item Relations from edge stabilizers of the form $E_{i,i+1}$:

\hspace{0pt}\begin{minipage}{0.45\textwidth}
  \centering
  \begin{itemize}[label={},leftmargin=*]
      \item \textnormal{(S1)}$\quad x_{0,0}=x_{1,0},$
      \item \textnormal{(S2)}$\quad x_{0,1}=x_{1,1},$
      \item \textnormal{(S3)}$\quad y_{0,1}=y_{1,1},$
  \end{itemize}

\end{minipage}
\hfill
\begin{minipage}{0.55\textwidth}
For $j\in \{1,...,r-1\}$:
  \begin{center}
  \begin{itemize}[label={},leftmargin=*]
      \item \textnormal{(S4j)}$\quad u_{0,j}=u_{1,j},$
  \end{itemize}
  \end{center}
For $j\in \{1,...,r-2\}$:
  \begin{center}
  \begin{itemize}[label={},leftmargin=*]
      \item \textnormal{(S5j)}$\quad b_{0,j}=b_{1,j},$
  \end{itemize}
  \end{center}

\end{minipage}


\hspace{0pt}\begin{minipage}{0.45\textwidth}
  \centering
  \begin{itemize}[label={},leftmargin=*]
      \item \textnormal{(S6)}$\quad x_{2,0}=z_{3},$
      \item \textnormal{(S7)}$\quad y_{2,1}=y_{3,3},$
      \item \textnormal{(S8)}$\quad y_{2,2}=y_{3,4},$
      \item \textnormal{(S9)}$\quad y_{2,3}=y_{3,5},$
  \end{itemize}

\end{minipage}
\hfill
\begin{minipage}{0.55\textwidth}
For $j\in \{2,...,r\}$:
  \begin{center}
  \begin{itemize}[label={},leftmargin=*]
      \item \textnormal{(S10j)}$\quad u_{1,j}=u_{2,j},$
  \end{itemize}
  \end{center}
For $j\in \{2,...,r-1\}$:
  \begin{center}
  \begin{itemize}[label={},leftmargin=*]
      \item \textnormal{(S11j)}$\quad b_{1,j}=b_{2,j},$
  \end{itemize}
  \end{center}
  \end{minipage}

$$  \textnormal{(S12)}\quad (x_{2,1}x_{2,0}y_{2,1}y_{2,2})^3=u_{2,1}x_{1,1}z_2,$$\vspace{-0.63cm}
$$\textnormal{(S13)}\quad z_1=\mathfrak{m}_2x_{2,0}\mathfrak{m}_2^{-1},$$


 \hspace{-30pt}\begin{minipage}{0.3\textwidth}
  \centering
  \begin{itemize}[label={},leftmargin=*]
      \item $\textnormal{(S14)}\quad x_{2,0}=x_{3,0},$
      \item $\textnormal{(S15)}\quad x_{2,1}=x_{3,1},$
      \item $\textnormal{(S16)}\quad y_{2,1}=y_{3,1},$
      \item $\textnormal{(S17)}\quad y_{2,2}=y_{3,2},$
  \end{itemize}

\end{minipage}
\hfill
\begin{minipage}{0.3\textwidth}
  \centering
  \begin{itemize}[label={},leftmargin=*]
      \item $\textnormal{(S18)}\quad y_{2,3}=y_{3,3},$
      \item $\textnormal{(S19)}\quad y_{2,4}=y_{3,4},$
      \item $\textnormal{(S20)}\quad y_{2,5}=y_{3,5},$
      \item $\textnormal{(S21)}\quad z_{2}=z_3,$
  \end{itemize}

\end{minipage}
\hfill
\begin{minipage}{0.3\textwidth}
For $j\in \{1,...,r-1\}$:
  \begin{center}
  \begin{itemize}[label={},leftmargin=*]
      \item $\textnormal{(S22j)}\quad u_{2,j}=u_{3,j}.$
  \end{itemize}
  \end{center}
For $j\in \{1,...,r-2\}$:
  \begin{center}
  \begin{itemize}[label={},leftmargin=*]
      \item $\textnormal{(S23j)}\quad b_{2,j}=b_{3,j}.$
  \end{itemize}
  \end{center}
  
\end{minipage}


\item Relations from edge stabilizers of the form $E_{i,i+2}$: $$\textnormal{(T1)}\quad \phi_0u_{0,1}=\phi_2u_{2,1},\quad \quad \textnormal{(T2)}\quad \phi_{1}u_{1,1}=\phi_3u_{3,1}.$$

 \end{enumerate}
\end{theor}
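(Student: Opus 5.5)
The proof assembles the pieces computed in this section via Lemma \ref{presentationB}. By Corollary \ref{connected0}, $\Xc^{3}_{1,r}(O,Y)$ is simply connected for $r\geq 3$, so we may take $h_0=3$. Lemma \ref{fudom} then gives the fundamental domain $W_{\K^3_{1,r}(O,Y)}$: a snake on the four vertices $\v_0,\dots,\v_3$, with edges $\{\v_i,\v_{i+1}\}$ for $0\le i\le 2$ and $\{\v_i,\v_{i+2}\}$ for $i=0,1$. Theorem \ref{teoBrown} applies by Lemma \ref{finftystab}, so Lemma \ref{presentationB} yields
\[
\B_{1,r}(O,Y)=\Big\langle \bigcup_{i=0}^3 X_i \;\Big|\; \bigcup_i R_i\cup\bigcup_{\{i,j\}} R_{i,j}\Big\rangle .
\]
It remains to identify each ingredient with the corresponding block of the statement.

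\textbf{Vertex relations.} By Lemma \ref{typesstab}(1), $V_i\cong\Map_o(S_{\v_i})$, where $S_{\v_i}$ has genus $i+1$ and $r$ boundary components. Theorem \ref{ThMain} gives $R_i$ as the Artin relations of $\Psi(i+1,r)$ together with $R_S$. To reach the explicit form in the statement, I would read off the labels of $\Psi$: edges labelled $3$ give the braid relations, edges labelled $4$ give the length-4 relations, and non-edges give commutations. Each $\Delta$ in $R_S$ is then rewritten using Lemma \ref{Fundamental}. For example, $\Delta^2(A_5)$ becomes a fifth-power expression and $\Delta(D_6)=(\cdots)^5$. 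For $i=0$ (genus one) only (R3), (R5b), (R5c) survive; for $i\ge1$ only (R1)--(R5a) survive, with exponent $2g-n-2=2i-r$.

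\textbf{Edge relations.} For $E_{i,i+1}$, Lemma \ref{typesstab}(2) identifies the stabilizer with $\Map_o^{A_1}(S_{\v_i})$. I would use the generating sets of Lemmas \ref{lemSPMCG-A1} and \ref{lemSPMCG-A2}, according to how the new piece attaches. Their elements are then written in both $X_i$ and $X_{i+1}$, giving exactly (S1)--(S23j). The relations $r_2$ and $r_7$ use the $D_4$ fundamental element and Humphries' conjugation. The relation $r_3$ is redundant, as explained in \S\ref{6.2.2}.

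For $E_{i,i+2}$, the argument of \S\ref{4.3} reduces $R_{i,i+2}$ modulo $R_{i,i+1}\cup R_{i+1,i+2}$ to a single relation. That relation comes from an element swapping $A_1$ and $A_2$, and it yields (T1) and (T2).

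\textbf{Main obstacle.} The delicate step is justifying the edge-stabilizer reduction rigorously. One must check that $\Map_o^{A_1}\cap\Map_o^{A_2}$ is generated by elements whose images in both $V_i$ and $V_{i+2}$ are already identified through $V_{i+1}$. One must also check that the exponents in $\phi_0u_{0,1}=\phi_2u_{2,1}$ are consistent, which uses the normalization of the rigid structure. I would handle this by tracking arcs, as in Lemma \ref{lemswap0}: a boundary-permuting class is determined by its image under $\In$ together with the images of the arcs $\a_i$. With that checked, the union of all the relations listed above is a complete presentation, proving the theorem.
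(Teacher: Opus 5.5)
Your proposal is correct and follows the paper's own route. It applies Lemma \ref{presentationB} over the four-vertex snake given by Corollary \ref{connected0} and Lemma \ref{fudom}, takes the vertex relations from Theorem \ref{ThMain} read off $\Psi(i+1,r)$, takes the $R_{i,i+1}$ of Subsection \ref{4.2}, and reduces each $R_{i,i+2}$ to the single rotation relation (T1)/(T2) via Subsection \ref{4.3} and the normalization of the rigid structure. Two small bookkeeping slips: by Lemma \ref{Fundamental}, $\Delta^2(A_5)=(x_1\cdots x_5)^{6}$ is a sixth power (as on the right of (Ri1)), not a fifth power; and (R2) requires $g\ge 3$, so it only appears for $i=2,3$.
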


\section{Explicit presentation of $B_{2,1}(O,Y)$}\label{Sect5}

Next, we compute a presentation of $\B_{2,1}(O, Y)$  with $Y$ and $O$ tori, given in Theorem \ref{Presentation2}. The strategy follows the same spirit as that of Section \ref{Sect4}.

Because of Lemma \ref{fudom}, we can take as a fundamental domain $W_{\K_{2,1}^{6}(O,Y)}$ for the action of $\B_{2,1}(O, Y)$ on $\K$ the subcomplex: 

\begin{minipage}{0.7\textwidth}
  \centering
\begin{tikzpicture}
  \filldraw (0,0) circle (2pt) node[left] {$\mathbf{v}_1$};
  \filldraw (1,0.5) circle (2pt) node[right] {$\mathbf{v}_2$};
  \filldraw (0,1) circle (2pt) node[left] {$\mathbf{v}_3$};
  \filldraw (1,1.5) circle (2pt) node[right] {$\mathbf{v}_4$};
  \filldraw (0,2) circle (2pt) node[left] {$\mathbf{v}_5$};
  \filldraw (1,2.5) circle (2pt) node[right] {$\mathbf{v}_6$};

  \draw (0,0) --  (1,0.5);
  \draw (0,0) --  (0,1);
  \draw (1,0.5) --  (1,1.5);
  \draw (1,0.5) --  (0,1);
  \draw (0,1) --  (0,2);
  \draw (0,1) --  (1,1.5);
  \draw (1,1.5) --  (1,2.5);
  \draw (1,1.5) --  (0,2);
  \draw (0,2) --  (1,2.5);

\end{tikzpicture}
\end{minipage}
\hspace{-0.25\textwidth}
\begin{minipage}{0.3\textwidth}
where $\v_i=[(S_{\v_i},\id)]$, for $S_{\v_i}$ the suited surface in Figure \ref{exp2}.
\end{minipage}

\begin{figure}[H]  
\centering
\includegraphics[width=13.5cm]{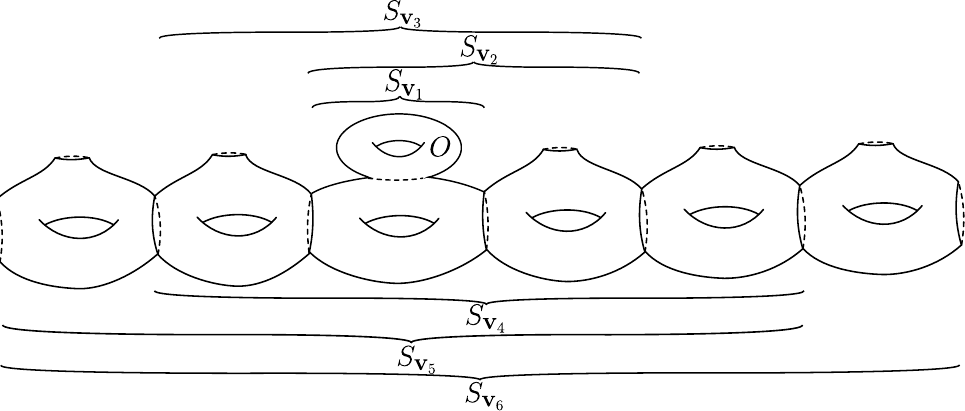}
\vspace{0.2cm}
\caption{The union of $S_{\mathbf{v}_{i}}$ and a piece yields $S_{\mathbf{v}_{i+1}}$, while the union of $S_{\mathbf{v}_{i}}$ and two pieces results in $S_{\mathbf{v}_{i+2}}$. This determines the edges of $W_{\K_{2,1}^{6}(O,Y)}$. }
\label{exp2}
\end{figure}

As in Section \ref{Sect4}, by Theorem \ref{presentationB} we know:
$$\B_{2,1}(O,Y)=\langle\cup X_i|(\cup R_i)\cup(\cup R_{i,j})\rangle, \textnormal{ whith  } \; R_{i,j}=\{o(s)t(s)^{-1}|s\in X_{i,j}\},$$
where each $o(s)$ is an expression of $s$ as a word in $X_i$, $t(s)$ is an expression of $s$ as a word in $X_j$, the $X_{i,j}$ are sets of generators of the edge stabilizers, and each $V_i=\langle X_i|R_i\rangle$.

The calculations in this Section differ from the star surface case (Section \ref{Sect4}) in that the different number of boundaries in each piece demands individual arguments to be made for each value of $d$, particularly, the number of generators and relations in each $V_i$ grows with $d$. As mentioned in Subsection \ref{MainResults}, this method can be used to compute an explicit presentation of $\B_{d,r}(O,Y)$ for any surface $O$, a torus $Y$, and any $d$ and $r$ such that either $d \geq 2$ or $r \geq 3$.

\subsection{Vertex stabilizers}\label{vertex}

By Lemma \ref{typesstab}, the stabilizer $V_i$ of each vertex $\v_i$ is isomorphic to $\Map_o(S_{\v_i})$, which by Theorem \ref{ThMain} has a presentation $\langle X_i|R_i\rangle$ with $$X_i:=\{x_{i,0},x_{i,1},y_{i,1},y_{i,2},...,y_{i,2i+1}, z_i, b_{i,1},b_{i,2},...,b_{i,i},u_{i,1},u_{i,2},...,u_{i,i+1}\},$$ and the relations described in Theorem \ref{ThMain}. Each $X_i$ is represented in Figure \ref{gen}, with the curves defining Dehn twists and the arcs defining boundary swaps. The following is an immediate consequence of Theorem \ref{presentationB}:

\begin{cor}\label{GenB}
The union $\underset{ 1\leq i\leq6}{\bigcup}{X_i}$ is a set of generators of $\B_{2,1}(O, Y)$, where $Y$ and $O$ are tori.
    
\end{cor}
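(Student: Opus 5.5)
This corollary is a direct specialization of Lemma \ref{presentationB}, applied to the fundamental domain $W_{\K_{2,1}^{6}(O,Y)}$ fixed above. First, Corollary \ref{connected} gives that $\Xc^{6}_{2,1}(O,Y)$ is simply connected. So the complex $\K$ built from its $2$-skeleton is simply connected, and hypothesis (1) of Theorem \ref{teoBrown} holds with $h_0=6$. Since $r=1$, we are in the tongued-snake case of Lemma \ref{fudom}. The proof of Theorem \ref{Main1} showed that the stabilizer of $\v_0$ is contained in the edge group of $(\v_1,\v_2)$. Hence $\v_0$ can be discarded, and the relevant graph is the snake on $\v_1,\dots,\v_6$ drawn above.

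Next I would invoke Theorem \ref{teoBrown} together with Lemma \ref{presentationB}. They say that $\B_{2,1}(O,Y)$ is the quotient of $\ast_i V_i$ by the edge identifications $R_{i,j}$, with no stable letters surviving, because every $y_e$ is killed. In particular, the natural map $\ast_{1\le i\le 6} V_i\to \B_{2,1}(O,Y)$ is surjective.

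Finally, Lemma \ref{typesstab} (1) identifies $V_i\cong \Map_o(S_{\v_i})$, and Theorem \ref{teogenSPMCG} (equivalently Theorem \ref{ThMain}) shows that $X_i$ generates $\Map_o(S_{\v_i})$. Each $X_i$ is drawn in Figure \ref{gen} inside $S_{\v_i}$ and extended rigidly outside it. Surjectivity then gives that $\bigcup_{1\le i\le 6}X_i$ generates $\B_{2,1}(O,Y)$.

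The only point that needs care is discarding $\v_0$. We must check that $V_0=\Map_o(O^1)$ really embeds in $V_1$, that is, that it lies in the stabilizer of the edge $\{\v_0,\v_1\}$. This holds because $O^1$ has a single boundary component, so every element of $V_0$ fixes the unique adjacent piece $R$. Consequently $X_0$ is redundant and can be omitted from the generating set.
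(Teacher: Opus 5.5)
Your proposal is correct and follows the paper's route. The paper states the corollary as an immediate consequence of Lemma \ref{presentationB}, with $V_i\cong\Map_o(S_{\v_i})$ by Lemma \ref{typesstab} and generated by $X_i$ by Theorem \ref{ThMain}. Your extra check that $V_0=\Map_o(O^1)$ lies in $V_1$, so that $X_0$ is redundant once the tongue is removed, is the same reduction made at the end of the proof of Theorem \ref{Main1}.
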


\begin{figure}[H]  
\centering
\includegraphics[width=14.5cm]{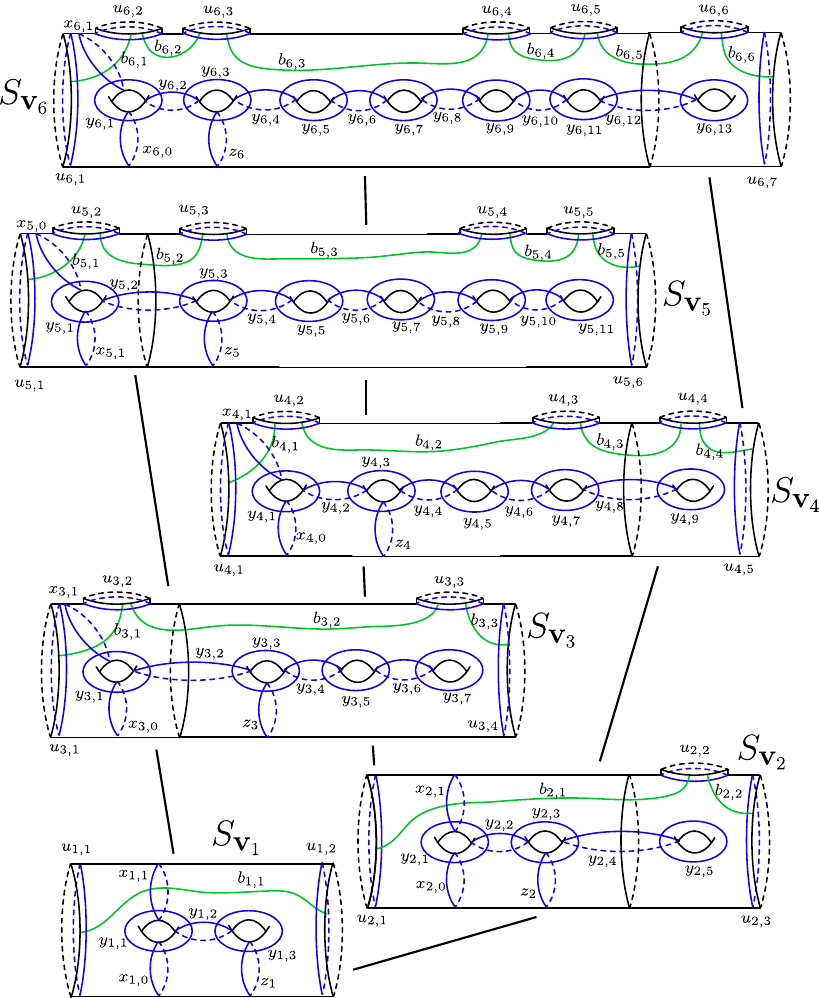}
\vspace{0.2cm}
\caption{Generators of $\B_{2,1}(O, Y)$. We represent each set $X_i$ as curves and arcs in $S_{\v_i}$. }
\label{gen}
\end{figure}

It remains to give a complete set of relations, which are the ones afforded by Lemma \ref{presentationB}. To this end, we compute the sets $R_{i,j}$. 

\subsection{Edge stabilizers of the form $E_{i,i+1}$}\label{5.2} We will now compute the sets of relations $R_{i,i+1}$.

\subsubsection{Computing $R_{1,2}$} The group $E_{1,2}\simeq \Map(S_{\mathbf{V}_1})$ by Lemma \ref{typesstab}, and the fact that both of the boundary components are fixed by mapping classes. Because of \cite[Proposition 2.10, Theorem 3.1]{Labruere-Paris}, the Dehn twists in Figure \ref{1-2} are a set of generators of $E_{1,2}$. We express each of them as a word in $X_1$ and $X_2$ and hence determine $R_{1,2}$: 

\begin{minipage}{0.3\textwidth}
  \centering
  \begin{itemize}
      \item $r_1: x_{1,0}=x_{2,0},$
      \item $r_2: x_{1,1}=x_{2,1},$
  \end{itemize}

\end{minipage}
\hfill
\begin{minipage}{0.3\textwidth}
  \centering
  \begin{itemize}
      \item $r_3: y_{1,1}=y_{2,1},$
      \item $r_4: y_{1,2}=y_{2,2},$
  \end{itemize}

\end{minipage}
\hfill
\begin{minipage}{0.3\textwidth}
  \begin{center}
  \begin{itemize}
      \item $r_5: y_{1,3}=y_{2,3},$
      \item $r_6: z_{1}=z_{2}.$
  \end{itemize}
  \end{center}
  
\end{minipage}

\begin{figure}[H]
\includegraphics[height=3.2cm]{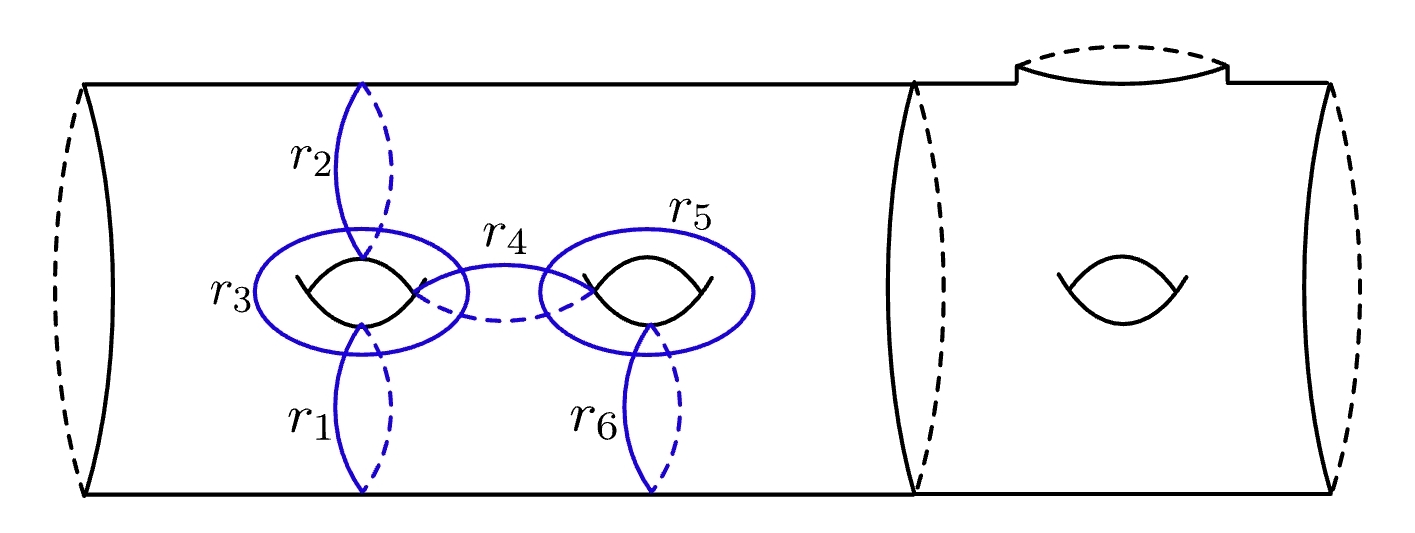}
\centering
\caption{}
\label{1-2}
\end{figure}

\subsubsection{Computing $R_{2,3}$}\label{7.2.2} By Lemmas \ref{typesstab} part (2) and \ref{lemSPMCG-A2}, the Dehn twists and boundary swaps in Figure \ref{2-3} are a set of generators of $E_{2,3}$. We compute $R_{2,3}$:

\begin{minipage}{0.3\textwidth}
  \centering
  \begin{itemize}
      \item $r_1: x_{2,0}=z_{3},$
      \item $r_3: y_{2,1}=y_{3,3},$
      \item $r_4: y_{2,2}=y_{3,4},$
  \end{itemize}

\end{minipage}
\hfill
\begin{minipage}{0.3\textwidth}
  \centering
  \begin{itemize}
      \item $r_5: y_{2,3}=y_{3,5},$
      \item $r_6: y_{2,4}=y_{3,6},$
      \item $r_7: y_{2,5}=y_{3,7},$
  \end{itemize}

\end{minipage}
\hfill
\begin{minipage}{0.3\textwidth}
  \begin{center}
  \begin{itemize}
      \item $r_{10}: b_{2,2}=b_{3,3},$
      \item $r_{11}: u_{2,2}=u_{3,3},$
      \item $r_{12}: u_{2,3}=u_{3,4},$
      
  \end{itemize}
  \end{center}
  
\end{minipage}

\begin{figure}[H]
\includegraphics[width=10cm]{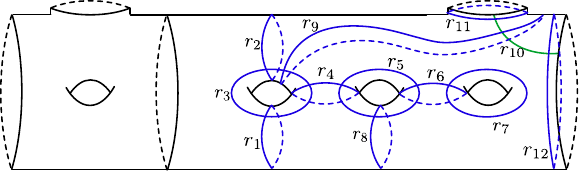}
\centering
\caption{}
\label{2-3}
\end{figure}

The Dehn twists $r_2$, $r_8$ and $r_9$ are not elements of $X_3$, hence we have to figure out expressions for them:

\textbf{Computing $r_8$:} The twist $r_8$ is $z_2$ viewed as an element of $X_2$. Let $\mathfrak{m}_i$ be the product of Dehn twists in Subsection \ref{6.2.2}.
According to the calculations of Humphries in \cite[Proof of Theorem, page 45]{HumpGen}, we have
$r_8=\mathfrak{m}_3x_{3,0}\mathfrak{m}_3^{-1}$. Hence we obtain the relation: $$r_8: z_2=\mathfrak{m}_3x_{3,0}\mathfrak{m}_3^{-1}.$$

\textbf{Computing $r_2$:} The twist $r_2$ is $x_{2,1}$ viewed as an element of $X_2$. On the other hand, by Lemma \ref{PVRepresentations2} (1) applied to the parabolic subgroup generated by $\{x_{t 3,1},b_{3,1}\}$, we get that $\Delta(x_{3,1},b_{3,1})=a_1x_{3,0}$, hence $a_1=\Delta(x_{3,1},b_{3,1})x_{3,0}^{-1}$. By Lemma \ref{PVRepresentations} (3) applied to the group generated by $\{a_1,x_{3,0},y_{3,1},y_{3,2}\}$, we know that $\Delta(a_1,x_{3,0},y_{3,1},y_{3,2})=a_2x_{2,1}z_3$, where $a_2=b_{3,1}^2u_1u_2$ (see Figure \ref{relespecial}). From these equalities we can deduce $\Delta(a_1,x_{3,0},y_{3,1},y_{3,2})=b_{3,1}^2u_1u_2x_{2,1}z_3$, and hence:
$$r_2:\Delta(\Delta(x_{3,1},b_{3,1})x_{3,0}^{-1},x_{3,0},y_{3,1},y_{3,2})=b_{3,1}^2u_{3,1}u_{3,2}x_{2,1}z_3.$$

\begin{figure}[H]  
\centering
\includegraphics[width=11cm]{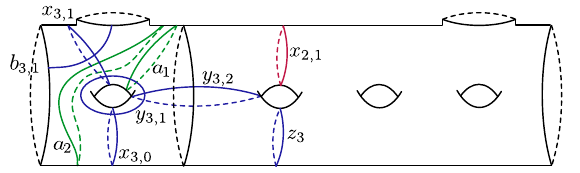}
\caption{}
\label{relespecial}
\end{figure}

\textbf{Computing $r_9$:} For $r_9$, by Lemmas \ref{PVRepresentations} (3) and \ref{PVRepresentations2} (3) applied to the groups generated by $\{x_{3,0},x_{3,1},y_{3,1},y_{3,2}\}$ and $\{x_{2,1},b_{3,2}\}$, we deduce that \begin{equation}\label{eqr9}
    r_9=\Delta(x_{2,1},b_{3,2})\Delta^{-1}(x_{3,0},x_{3,1},y_{3,1},y_{3,2})z_3u_{3,1}.\end{equation} Given that $x_{2,1}$ can be expressed as a product of elements in $X_3$ because of relation $r_2$, and every other factor in Equation \ref{eqr9} is an element of $X_3$, there is no need to add $r_9$ to $R_{2,3}$ since $r_9$ can be expressed as a product of elements in $X_3$.

\subsubsection{Computing $R_{i,i+1}$ for $i\geq3$.}\label{7.2.3} We now display the remaining sets of relations and omit the computations, as the determination of $R_{3,4}$ and $R_{5,6}$ is analogous to that of $R_{1,2}$, with generators given by Lemma \ref{lemSPMCG-A1}; while the calculations involved in $R_{4,5}$ are analogous to those in $R_{2,3}$, with the generators of $E_{4,5}$ determined by Lemma \ref{lemSPMCG-A2}.

\begin{itemize}[label=-]
    \item Relations $R_{3,4}:$
    
    \hspace{-1.8cm}
    \begin{minipage}{0.3\textwidth}
  \centering
  \begin{itemize}[label=$\bullet$]
      \item $r_1: x_{3,0}=x_{4,0},$
      \item $r_2: x_{3,1}=x_{4,1},$
      \item $r_3: y_{3,1}=y_{4,1},$
      \item $r_4: y_{3,2}=y_{4,2},$
      \item $r_5: y_{3,3}=y_{4,3},$
  \end{itemize}

\end{minipage}
\hfill
\begin{minipage}{0.3\textwidth}
  \centering
  \begin{itemize}[label=$\bullet$]
      \item $r_6: y_{3,4}=y_{4,4},$
      \item $r_7: y_{3,5}=y_{4,5},$
      \item $r_8: y_{3,6}=y_{4,6},$
      \item $r_9: y_{3,7}=y_{4,7},$
      \item $r_{10}: z_{3}=z_4,$
  \end{itemize}

\end{minipage}
\hfill
\begin{minipage}{0.3\textwidth}
  \begin{center}
  \begin{itemize}[label=$\bullet$]
      \item $r_{11}: b_{3,1}=b_{4,1},$
      \item $r_{12}: b_{3,2}=b_{4,2},$
      \item $r_{13}: u_{3,1}=u_{4,1},$
      \item $r_{14}: u_{3,2}=u_{4,2},$
      \item $r_{15}: u_{3,3}=u_{4,3}.$
  \end{itemize}
  \end{center}
  
\end{minipage}

\item Relations $R_{4,5}:$
    
    \hspace{-1.8cm}
    \begin{minipage}{0.3\textwidth}
  \centering
  \begin{itemize}[label=$\bullet$]
      \item $r_1: x_{4,0}=z_{5},$
      \item $r_3: y_{4,1}=y_{5,3},$
      \item $r_4: y_{4,2}=y_{5,4},$
      \item $r_5: y_{4,3}=y_{5,5},$
      \item $r_6: y_{4,4}=y_{5,6},$
      \item $r_7: y_{4,5}=y_{5,7},$
  \end{itemize}

\end{minipage}
\hfill
\begin{minipage}{0.3\textwidth}
  \centering
  \begin{itemize}[label=$\bullet$]
      \item $r_8: y_{4,6}=y_{5,8},$
      \item $r_9: y_{4,7}=y_{5,9},$
      \item $r_{10}: y_{4,8}=y_{5,10},$
      \item $r_{11}: y_{4,9}=y_{5,11},$
      \item $r_{13}: b_{4,2}=b_{5,3},$
      \item $r_{14}: b_{4,3}=b_{5,4},$
  \end{itemize}

\end{minipage}
\hfill
\begin{minipage}{0.3\textwidth}
  \begin{center}
  \begin{itemize}[label=$\bullet$]
      \item $r_{15}: b_{4,4}=b_{5,5},$
      \item $r_{16}: u_{4,2}=u_{5,3},$
      \item $r_{17}: u_{4,3}=u_{5,4},$
      \item $r_{18}: u_{4,4}=u_{5,5},$
      \item $r_{19}: u_{4,5}=u_{5,6},$
  \end{itemize}
  \end{center}
  
\end{minipage}
$$\bullet \;\; r_2: \Delta(\Delta(x_{5,1},b_{5,1})x_{5,0}^{-1},x_{5,0},y_{5,1},y_{5,2})=b_{5,1}^2u_{5,1}u_{5,2}x_{4,1}z_5,$$\vspace{-0.6cm}$$\bullet \;\; r_{12}: z_4=\mathfrak{m}_5x_{5,0}\mathfrak{m}_5^{-1}.$$

    \item Relations $R_{5,6}:$
    
    \hspace{-1.8cm}
    \begin{minipage}{0.3\textwidth}
  \centering
  \begin{itemize}[label=$\bullet$]
      \item $r_1: x_{5,0}=x_{6,0},$
      \item $r_2: x_{5,1}=x_{6,1},$
      \item $r_3: y_{5,1}=y_{6,1},$
      \item $r_4: y_{5,2}=y_{6,2},$
      \item $r_5: y_{5,3}=y_{6,3},$
      \item $r_6: y_{5,4}=y_{6,4},$
      \item $r_7: y_{5,5}=y_{6,5},$
      \item $r_8: y_{5,6}=y_{6,6},$
  \end{itemize}

\end{minipage}
\hfill
\begin{minipage}{0.3\textwidth}
  \centering
  \begin{itemize}[label=$\bullet$]
      \item $r_9: y_{5,7}=y_{6,7},$
      \item $r_{10}: y_{5,8}=y_{6,8},$
      \item $r_{11}: y_{5,9}=y_{6,9},$
      \item $r_{12}: y_{5,10}=y_{6,10},$
      \item $r_{13}: y_{5,11}=y_{6,11},$
      \item $r_{14}: z_{5}=z_{6},$
      \item $r_{15}: b_{5,1}=b_{6,1},$
      \item $r_{16}: b_{5,2}=b_{6,2},$
  \end{itemize}

\end{minipage}
\hfill
\begin{minipage}{0.3\textwidth}
  \begin{center}
  \begin{itemize}[label=$\bullet$]
      \item $r_{17}: b_{5,3}=b_{6,3},$
      \item $r_{18}: b_{5,4}=b_{6,4},$
      \item $r_{19}: u_{5,1}=u_{6,1},$
      \item $r_{20}: u_{5,2}=u_{6,2},$
      \item $r_{21}: u_{5,3}=u_{6,3},$
      \item $r_{22}: u_{5,4}=u_{6,4},$
      \item $r_{23}: u_{5,5}=u_{6,5}.$
  \end{itemize}
  \end{center}
  
\end{minipage}

\end{itemize}

\subsection{Edge stabilizers of the form $E_{i,i+2}$}\label{5.3}
As it was done in Subsection \ref{4.3}, it suffices to find a relation $r_{i,i+2}$ corresponding to an element that permutes the two pieces in $S_{\v_{i+2}}\setminus S_{\v_i}$.

Set $t_i$ to be the product recursively defined as:
\begin{itemize}
    \item $t_0=z_1$,
    \item $t_2=\mathfrak{m}_2x_{2,0}\mathfrak{m}_2^{-1},$
    \item $t_i=t_{i-1}$ if $i$ odd,
    \item $t_i=\mathfrak{n}_it_{i-4}\mathfrak{n}_i^{-1}$ if $i$ even, with \begin{align}
     \mathfrak{n}_i= y_{i,2i+1}y_{i,2i}y_{i,2i-1}t_{i-2}y_{i,2i-2}y_{i,2i-3}y_{i,2i-1}y_{i,2i-2}\cdot\nonumber\\\cdot y_{i,2i}y_{i,2i-1}y_{i,2i+1}y_{i,2i}t_{i-2}y_{i,2i-1}y_{i,2i-2}y_{i,2i-3}\nonumber.
    \end{align}
\end{itemize}
Define $$\phi_i=\Delta(x_{i,0},y_{i,1},...,y_{i,2i+1},t_i)^{-1}\Delta(b_{i,1},...,b_{i,i}).$$ By an analogous reasoning as for $r_{1,3}$, we get the following family of relations: for $ 2\leq i\leq 4,$ $$r_{i,i+2}:\phi_{i+2}u_{i+2,1}u_{i+2,2}=\phi_iu_{i,1}.$$

\subsection{The presentation} We collect the generators in Subsection \ref{vertex} and every set of relations in Subsections \ref{5.2} and \ref{5.3}, and Theorem \ref{ThMain}, which gives rise to the following presentation of $\B_{2,1}(O, Y)$.

\begin{theor}\label{Presentation2}
 Let $O$ and $Y$ be tori. The group $\B_{2,1}(O, Y)$ has a presentation with generators the Dehn twists $$\overset{6}{\underset{i=1}{\cup}}\{x_{i,0},x_{i,1},y_{i,1},y_{i,2},...,y_{i,2i+1}, z_i,u_{i,1},u_{i,2},...,u_{i,i+1}\}$$ and the boundary swaps $$\overset{6}{\underset{i=1}{\cup}}\{ b_{i,1},b_{i,2},...,b_{i,i}\},$$ and the relations are:

\begin{enumerate}
     \item  Relations from the vertex stabilizers:
    For $i\in \{1,...,6\}$,

\begin{itemize}[leftmargin=0pt]
\item Braid relations.  Let  $(a,b)\in \{(x_{i,1},y_{i,1}),(x_{i,0},y_{i,1}),(z_{i},y_{i,3}),\\(b_{i,1},b_{i,2}),...,(b_{i,i-1},b_{i,i}),(y_{i,1},y_{i,2}),...,(y_{i,2i},y_{i,2i+1})\}$:

\hspace{2pt} \textnormal{(A$_{a,b}$)}\quad  $aba=bab$.

\item 4-length relations. Let  $(a,b)\in \{(x_{i,1},b_{i,1}),(u_{i,1},b_{i,1}),...,(u_{i,i},b_{i,i}),\\(u_{i,2},b_{i,1}),...,(u_{i,i+1},b_{i,i})\}$:

\hspace{2pt} \textnormal{(A$_{a,b}$)}\quad  $abab=baba$.

\item Commutation. Let  $(a,b)$ be a pair of generators in $X_i$ that do not satisfy the braid or 4-length relations. Then:

\hspace{2pt} \textnormal{(A$_{a,b}$)}\quad  $ab=ba$.

\item Other relations.

\begin{tabular}{p{30pt} c l}
\rm(R1) \quad & $(y_{i,1}y_{i,2}y_{i,3}z_{i})^{10}=
(x_{i,0}y_{i,1}y_{i,2}y_{i,3}z_{i})^6$,\\

\rm(R2) \quad & $(y_{i,1}y_{i,2}y_{i,3}y_{i,4}y_{i,5}z_{i})^{12}=
(x_{i,0}y_{i,1}y_{i,2}y_{i,3}y_{i,4}y_{i,5}z_{i})^{15}$,\\

\rm(R3) \quad & $(x_{i,0}x_{i,1}y_{i,1}b_{i,1})^4=
(x_{i,1}y_{i,1}b_{i,1})^6$,\\

\rm(R4) \quad & $u_{i,1}(x_{i,0}x_{i,1}y_{i,1}y_{i,2}y_{i,3}z_{i})^5=
(x_{i,1}y_{i,1}y_{i,2}y_{i,3}z_{i})^8$,\\

\rm(R5a) \quad & $ x_{i,0}^{2g-n-2}(x_{i,1}b_{i,1}\dots
b_{i,n-1})^{n}=(z_{i}y_{i,2}\dots y_{i,2g-1})^{4g-4}$,\\
 
\rule{0pt}{13pt}\makebox[0pt][l] { For  $j\in\{1,...,i\}$:} \\
\rm(C1j) \quad & $b_{i,j}u_{i,j}=u_{i,j+1}b_{i,j},$ & \quad\\
\rm(C2j) \quad & $u_{i,j}b_{i,j}=b_{i,j}u_{i,j+1}.$ & \quad\\
\end{tabular}
\end{itemize}
\item Relations from edge stabilizers of the form $E_{i,i+1}$:

\hspace{-30pt}\begin{minipage}{0.3\textwidth}
  \centering
  \begin{itemize}[label={},leftmargin=*]
      \item $\textnormal{(S1)}\quad  x_{1,0}=x_{2,0},$
      \item $\textnormal{(S2)}\quad  x_{1,1}=x_{2,1},$
  \end{itemize}

\end{minipage}
\hfill
\begin{minipage}{0.3\textwidth}
  \centering
  \begin{itemize}[label={},leftmargin=*]
      \item $\textnormal{(S3)}\quad  y_{1,1}=y_{2,1},$
      \item $\textnormal{(S4)}\quad  y_{1,2}=y_{2,2},$
  \end{itemize}

\end{minipage}
\hfill
\begin{minipage}{0.3\textwidth}
  \begin{center}
  \begin{itemize}[label={},leftmargin=*]
      \item $\textnormal{(S5)}\quad  y_{1,3}=y_{2,3},$
      \item $\textnormal{(S6)}\quad  z_{1}=z_{2},$
  \end{itemize}
  \end{center}
  
\end{minipage}


\hspace{-30pt}\begin{minipage}{0.3\textwidth}
  \centering
  \begin{itemize}[label={},leftmargin=*]
      \item $\textnormal{(S7)}\quad x_{2,0}=z_{3},$
      \item $\textnormal{(S8)}\quad y_{2,1}=y_{3,3},$
      \item $\textnormal{(S9)}\quad y_{2,2}=y_{3,4},$
  \end{itemize}

\end{minipage}
\hfill
\begin{minipage}{0.3\textwidth}
  \centering
  \begin{itemize}[label={},leftmargin=*]
      \item $\textnormal{(S10)}\quad y_{2,3}=y_{3,5},$
      \item $\textnormal{(S11)}\quad y_{2,4}=y_{3,6},$
      \item $\textnormal{(S12)}\quad y_{2,5}=y_{3,7},$
  \end{itemize}

\end{minipage}
\hfill
\begin{minipage}{0.3\textwidth}
  \begin{center}
  \begin{itemize}[label={},leftmargin=*]
      \item $\textnormal{(S13)}\quad b_{2,2}=b_{3,3},$
      \item $\textnormal{(S14)}\quad u_{2,2}=u_{3,3},$
      \item $\textnormal{(S15)}\quad u_{2,3}=u_{3,4},$
      
  \end{itemize}
  \end{center}
  
\end{minipage}

$$\textnormal{(S16)}\quad((x_{3,1}b_{3,1})^2x_{3,0}^{-1}x_{3,0}y_{3,1}y_{3,2})^3=b_{3,1}^2u_{3,1}u_{3,2}x_{2,1}z_3.$$\vspace{-0.6cm}$$\textnormal{(S17)}\quad z_2=\mathfrak{m}_3x_{3,0}\mathfrak{m}_3^{-1},$$


\hspace{-30pt}\begin{minipage}{0.3\textwidth}
  \centering
  \begin{itemize}[label={},leftmargin=*]
      \item $\textnormal{(S18)}\quad x_{3,0}=x_{4,0},$
      \item $\textnormal{(S19)}\quad x_{3,1}=x_{4,1},$
      \item $\textnormal{(S20)}\quad y_{3,1}=y_{4,1},$
      \item $\textnormal{(S21)}\quad y_{3,2}=y_{4,2},$
      \item $\textnormal{(S22)}\quad y_{3,3}=y_{4,3},$
  \end{itemize}

\end{minipage}
\hfill
\begin{minipage}{0.3\textwidth}
  \centering
  \begin{itemize}[label={},leftmargin=*]
      \item $\textnormal{(S23)}\quad  y_{3,4}=y_{4,4},$
      \item $\textnormal{(S24)}\quad  y_{3,5}=y_{4,5},$
      \item $\textnormal{(S25)}\quad  y_{3,6}=y_{4,6},$
      \item $\textnormal{(S26)}\quad y_{3,7}=y_{4,7},$
      \item $\textnormal{(S27)}\quad  z_{3}=z_4,$
  \end{itemize}

\end{minipage}
\hfill
\begin{minipage}{0.3\textwidth}
  \begin{center}
  \begin{itemize}[label={},leftmargin=*]
      \item $\textnormal{(S28)}\quad  b_{3,1}=b_{4,1},$
      \item $\textnormal{(S29)}\quad  b_{3,2}=b_{4,2},$
      \item $\textnormal{(S30)}\quad  u_{3,1}=u_{4,1},$
      \item $\textnormal{(S31)}\quad  u_{3,2}=u_{4,2},$
      \item $\textnormal{(S32)}\quad  u_{3,3}=u_{4,3},$
  \end{itemize}
  \end{center}
  
\end{minipage}


\hspace{-30pt}\begin{minipage}{0.3\textwidth}
  \centering
  \begin{itemize}[label={},leftmargin=*]
      \item $\textnormal{(S33)}\quad x_{4,0}=z_{5},$
      \item $\textnormal{(S34)}\quad y_{4,1}=y_{5,3},$
      \item $\textnormal{(S35)}\quad y_{4,2}=y_{5,4},$
      \item $\textnormal{(S36)}\quad y_{4,3}=y_{5,5},$
      \item $\textnormal{(S37)}\quad y_{4,4}=y_{5,6},$
      \item $\textnormal{(S38)}\quad y_{4,5}=y_{5,7},$
  \end{itemize}

\end{minipage}
\hfill
\begin{minipage}{0.3\textwidth}
  \centering
  \begin{itemize}[label={},leftmargin=*]
      \item $\textnormal{(S39)}\quad y_{4,6}=y_{5,8},$
      \item $\textnormal{(S40)}\quad y_{4,7}=y_{5,9},$
      \item $\textnormal{(S41)}\quad y_{4,8}=y_{5,10},$
      \item $\textnormal{(S42)}\quad y_{4,9}=y_{5,11},$
      \item $\textnormal{(S43)}\quad b_{4,2}=b_{5,3},$
      \item $\textnormal{(S44)}\quad b_{4,3}=b_{5,4},$
  \end{itemize}

\end{minipage}
\hfill
\begin{minipage}{0.3\textwidth}
  \begin{center}
  \begin{itemize}[label={},leftmargin=*]
      \item $\textnormal{(S45)}\quad b_{4,4}=b_{5,5},$
      \item $\textnormal{(S46)}\quad u_{4,2}=u_{5,3},$
      \item $\textnormal{(S47)}\quad u_{4,3}=u_{5,4},$
      \item $\textnormal{(S48)}\quad u_{4,4}=u_{5,5},$
      \item $\textnormal{(S49)}\quad u_{4,5}=u_{5,6},$
  \end{itemize}
  \end{center}
  
\end{minipage}
$$\textnormal{(S50)}\quad \Delta(\Delta(x_{5,1},b_{5,1})x_{5,0}^{-1},x_{5,0},y_{5,1},y_{5,2})=b_{5,1}^2u_{5,1}u_{5,2}x_{4,1}z_5,$$\vspace{-0.6cm}$$\textnormal{(S51)}\quad z_4=\mathfrak{m}_5x_{5,0}\mathfrak{m}_5^{-1},$$


\hspace{-30pt}\begin{minipage}{0.3\textwidth}
  \centering
  \begin{itemize}[label={},leftmargin=*]
      \item $\textnormal{(S52)}\quad x_{5,0}=x_{6,0},$
      \item $\textnormal{(S53)}\quad x_{5,1}=x_{6,1},$
      \item $\textnormal{(S54)}\quad y_{5,1}=y_{6,1},$
      \item $\textnormal{(S55)}\quad y_{5,2}=y_{6,2},$
      \item $\textnormal{(S56)}\quad y_{5,3}=y_{6,3},$
      \item $\textnormal{(S57)}\quad y_{5,4}=y_{6,4},$
      \item $\textnormal{(S58)}\quad y_{5,5}=y_{6,5},$
      \item $\textnormal{(S59)}\quad y_{5,6}=y_{6,6},$
  \end{itemize}

\end{minipage}
\hfill
\begin{minipage}{0.3\textwidth}
  \centering
  \begin{itemize}[label={},leftmargin=*]
      \item $\textnormal{(S60)}\quad y_{5,7}=y_{6,7},$
      \item $\textnormal{(S61)}\quad y_{5,8}=y_{6,8},$
      \item $\textnormal{(S62)}\quad y_{5,9}=y_{6,9},$
      \item $\textnormal{(S63)}\quad y_{5,10}=y_{6,10},$
      \item $\textnormal{(S64)}\quad y_{5,11}=y_{6,11},$
      \item $\textnormal{(S65)}\quad z_{5}=z_{6},$
      \item $\textnormal{(S66)}\quad b_{5,1}=b_{6,1},$
      \item $\textnormal{(S67)}\quad b_{5,2}=b_{6,2},$
  \end{itemize}

\end{minipage}
\hfill
\begin{minipage}{0.3\textwidth}
  \begin{center}
  \begin{itemize}[label={},leftmargin=*]
      \item $\textnormal{(S68)}\quad b_{5,3}=b_{6,3},$
      \item $\textnormal{(S69)}\quad b_{5,4}=b_{6,4},$
      \item $\textnormal{(S70)}\quad u_{5,1}=u_{6,1},$
      \item $\textnormal{(S71)}\quad u_{5,2}=u_{6,2},$
      \item $\textnormal{(S72)}\quad u_{5,3}=u_{6,3},$
      \item $\textnormal{(S73)}\quad u_{5,4}=u_{6,4},$
      \item $\textnormal{(S74)}\quad u_{5,5}=u_{6,5}.$
  \end{itemize}
  \end{center}
  
\end{minipage}

\item Relations from edge stabilizers of the form $E_{i,i+2}$: For $i\in \{1,...,4\}$, $$\textnormal{(Ti)}\quad \phi_{i+2}u_{i+2,1}u_{i+2,2}=\phi_iu_{i,1}.$$

 \end{enumerate}
\end{theor}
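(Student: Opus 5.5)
The proof is an assembly of results already established in the paper; no new topology is needed beyond bookkeeping. I would start from Corollary \ref{connected}: the complex $\Xc^{6}_{2,1}(O,Y)$ is simply connected. Taking its $2$-skeleton and subdividing squares gives $\K$, which is simply connected. Since $r=1$, Lemma \ref{fudom} provides a fundamental domain with a tongued snake as $1$-skeleton. The proof of Theorem \ref{Main1} then lets us discard $\v_0$, because $\Map_o(O^1)$ stabilizes the unique adjacent piece $R$ and so lies in $E_{1,2}$. This leaves the snake on $\v_1,\dots,\v_6$ drawn before Figure \ref{exp2}, with edges $\{\v_i,\v_{i+1}\}$ and $\{\v_i,\v_{i+2}\}$. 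Theorem \ref{teoBrown}, together with Lemma \ref{finftystab} for hypotheses (4)--(5), feeds into Lemma \ref{presentationB}. This gives $\B_{2,1}(O,Y)=\langle \cup X_i \mid \cup R_i \cup \cup R_{i,j}\rangle$.

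Next I would identify each $V_i$ with $\Map_o(S_{\v_i})$ via Lemma \ref{typesstab}(1). Here $S_{\v_i}$ has genus $i+1$ and $i+1$ boundary components. Theorem \ref{ThMain} then supplies $X_i$ and $R_i$: the Artin relations of $\Psi(i+1,i+1)$ (braid, $4$-length and commutation), (R1)--(R5a), and (Ci),(Di). Rewriting each fundamental element with Lemma \ref{Fundamental} gives the explicit powers in part (1) of the statement. The only check is that the indices match the graph $\Psi$; in particular, (R5b) and (R5c) drop out since $g\geq 2$.

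For the edges $E_{i,i+1}$ I would use Lemma \ref{typesstab}(2), which identifies $E_{i,i+1}$ with $\Map_o^{A_1}(S_{\v_i})$. The generating set comes from Lemma \ref{lemSPMCG-A1} when the new piece is attached on the genus-zero side ($i$ odd). It comes from Lemma \ref{lemSPMCG-A2} otherwise ($i=2,4$). Each generator is then written as a word in $X_i$ and in $X_{i+1}$. Most generators are literally generators on both sides, which gives (S1)--(S15), (S18)--(S49) and (S52)--(S74). The nontrivial ones are $z_i$, $x_{i,1}$ and one further twist, exactly as computed in Subsection \ref{7.2.2}:
\begin{itemize}
\item Humphries' conjugation gives (S17) and (S51).
\item The $D_4$ fundamental element via Lemma \ref{PVRepresentations}(3), combined with Lemma \ref{PVRepresentations2}(1), gives (S16) and (S50).
\item The remaining twist ($r_9$) is a consequence of these, so it is omitted.
\end{itemize}

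For the edges $E_{i,i+2}$ I would argue as in Subsection \ref{4.3}. Using Lemma \ref{typesstab}(3), every generator of $E_{i,i+2}$ except one swap $\t$ lies in $\Map_o^{A_1}\cap\Map_o^{A_2}$, and hence is already accounted for by $R_{i,i+1}$ and $R_{i+1,i+2}$. It therefore suffices to take $\t$ to be the lift $\phi_i$ of the $180^\circ$ rotation from Subsection \ref{4.3.1}, built from $\Delta$ of a $y$-chain together with $t_i$ (defined in Subsection \ref{5.3}) and $\Delta$ of the boundary swaps.

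The main obstacle is fixing the correction factors of boundary twists. I would square both sides and use Lemma \ref{PVRepresentations}(1) and Lemma \ref{PVRepresentations2}(2), which show that $\phi_i^2$ is a product of $u_{i,j}^{-1}$'s. From this I would solve for exponents that make both sides involutions swapping the two new pieces. This yields $\phi_{i+2}u_{i+2,1}u_{i+2,2}=\phi_i u_{i,1}$, after retroactively adjusting the rigid structure as in the $r_{0,2}$ computation. One must check that this is consistent simultaneously for all $i$, which I expect to go through by choosing the structure inductively in height.

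Finally I would count generators to confirm the totals of $93$ twists and $21$ swaps in Theorem \ref{Main2}(b).
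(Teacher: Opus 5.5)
Up to the $E_{i,i+2}$ edges, your plan is the paper's own argument: Brown's theorem on the snake $\v_1,\dots,\v_6$ after collapsing $\v_0$, Theorem \ref{ThMain} with $g=n=i+1$, and Lemmas \ref{lemSPMCG-A1} and \ref{lemSPMCG-A2} according to parity, with (S16), (S17), (S50), (S51) as the only non-literal edge relations. The step that fails is the one you call the main obstacle: the boundary-twist corrections cannot be fixed by making both sides involutions.

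Since $\Delta(b_{i,1},\dots,b_{i,i})$ induces the reversal $j\mapsto n+1-j$ of the $n=i+1$ boundary components, the relations (C1j), (C2j) together with $\phi_i^2=u_{i,1}^{-1}\cdots u_{i,n}^{-1}$ give
\[
(\phi_i\, u_{i,1}^{k_1}\cdots u_{i,n}^{k_n})^2=u_{i,1}^{k_1+k_n-1}\,u_{i,2}^{k_2+k_{n-1}-1}\cdots u_{i,n}^{k_n+k_1-1}.
\]
For $i=2,4$ the number of boundary components is odd on both sides, so the middle boundary component is fixed and its exponent $2k-1$ never vanishes. Hence no lift on either side is an involution. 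This is consistent with the fact that a nontrivial element of $\Map_o$ fixing a boundary component has infinite order. Moreover, for every $i\geq 2$ the relation you say you obtain is not a relation between involutions, since $(\phi_iu_{i,1})^2=u_{i,2}^{-1}\cdots u_{i,i}^{-1}\neq 1$. Your criterion reproduces (Ti) only for $i=1$.

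What the argument needs is that the two sides have the same square. This is what the paper's $r_{0,2}$ computation actually does: there the common square is $u_{0,2}^{-1}\cdots u_{0,r-1}^{-1}$, not the identity. The common square should be a product of twists about boundary curves of $S_{\v_i}$ that are still boundary curves of $S_{\v_{i+2}}$. Then each side squares to something supported in $S_{\v_i}$, so it acts as an involution on the two new pieces, which is what the retroactive choice of rigid structure requires.

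With $n'=i+3$ the formula gives $(\phi_{i+2}u_{i+2,1}u_{i+2,2})^2=u_{i+2,3}^{-1}\cdots u_{i+2,i+1}^{-1}$. This equals $(\phi_iu_{i,1})^2$ because the $E_{i,i+1}$ relations give $u_{i,j}=u_{i+2,j+1}$ for $2\le j\le i$; for instance $u_{2,2}=u_{3,3}=u_{4,3}$ by (S14) and (S32). This is exactly what dictates the asymmetric corrections $u_{i+2,1}u_{i+2,2}$ versus $u_{i,1}$.

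Equal squares do not force equal elements, so the leftover ambiguity must then be removed by the choice of rigid structure. That is where your inductive-in-height consistency check is genuinely needed. A smaller bookkeeping point: for $i=1$ one has $g=2$, so (R2) drops out as well, not only (R5b) and (R5c).
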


\bibliographystyle{plain}
\bibliography{name.bib}

\end{document}